\newtheoremstyle{plain}
{6pt}   
{6pt}   
{\itshape}  
{0pt}       
{\bfseries} 
{.}         
{5pt plus 1pt minus 1pt} 
{}          
\newtheoremstyle{definition}
{6pt}   
{6pt}   
{\normalfont}  
{0pt}       
{\bfseries} 
{.}         
{5pt plus 1pt minus 1pt} 
{}          
\theoremstyle{plain}
\newtheorem*{thm*}{Theorem}
\newtheorem{thm}{Theorem}[section]
\newtheorem{prop}[thm]{Proposition}
\newtheorem{cor}[thm]{Corollary}
\newtheorem{lem}[thm]{Lemma}
\newtheorem{theorem}{Theorem}
\theoremstyle{definition}
\newtheorem{defn}[thm]{Definition}
\newtheorem{ex}[thm]{Example}
\newtheorem{rmk}[thm]{Remark}
\numberwithin{equation}{thm}
\newcommand{\emphbf}[1]{\emph{\textbf{#1}}}
\DeclareMathAlphabet{\mathpzc}{OT1}{pzc}{m}{it}
\newcommand{\module}[1]{\mathsf{#1}}
\DeclareMathOperator{\radoperator}{rad}
\DeclareMathOperator{\Kopf}{top}
\DeclareMathOperator{\id}{id}
\DeclareMathOperator{\Hom}{Hom}
\DeclareMathOperator{\Ext}{Ext}
\DeclareMathOperator{\Mod}{Mod}
\DeclareMathOperator{\modu}{mod}
\DeclareMathOperator{\rep}{rep}
\DeclareMathOperator{\coker}{coker}
\DeclareMathOperator{\Id}{Id}
\DeclareMathOperator{\inj}{inj}
\DeclareMathOperator{\op}{op}
\DeclareMathOperator{\map}{\mathnormal{g}}
\DeclareMathOperator{\arrowin}{in}
\DeclareMathOperator{\Mimo}{Mimo}
\DeclareMathOperator{\Mo}{\mathfrak{Q}}
\DeclareMathOperator{\Mono}{Mono}
\DeclareMathOperator{\mono}{mono}
\begin{document}
	
	\title{A functorial approach to monomorphism categories II: Indecomposables}
	\author{Nan Gao}
	\author{Julian K\"ulshammer}
	\author{Sondre Kvamme}
	\author{Chrysostomos Psaroudakis}
	\date{\today}
	
	\address{Nan Gao\\
		Departement of Mathematics, Shanghai University\\
		Shanghai 200444, PR China
	} \email{nangao@shu.edu.cn}
	
	\address{Julian K\"ulshammer\\
		Department of Mathematics
		Uppsala University \\ Box 480 \\ 75106 Uppsala,
		Sweden} \email{julian.kuelshammer@math.uu.se}
	
	\address{Chrysostomos Psaroudakis\\
		Department of Mathematics
		Aristotle University of Thessaloniki \\ Thessaloniki, 54124, Greece}  \email{chpsaroud@math.auth.gr}
	
	\address{Sondre Kvamme\\
		Department of Mathematical Sciences, Norwegian University of Science and Technology
		\\ 7491 Trondheim, Norway
	} \email{sondre.kvamme@ntnu.no}

	\begin{abstract}
		We investigate the (separated) monomorphism category $\operatorname{mono}(Q,\Lambda)$ of a quiver over an Artin algebra $\Lambda$. We show that there exists an epivalence (called representation equivalence in the terminology of Auslander) from $\overline{\operatorname{mono}}(Q,\Lambda)$ to $\operatorname{rep}(Q,\overline{\operatorname{mod}}\, \Lambda)$, where  $\operatorname{mod}\Lambda$ is the category of finitely generated $\Lambda$-modules and $\overline{\operatorname{mod}}\, \Lambda$ and $\overline{\operatorname{mono}}(Q,\Lambda)$ denote the respective injectively stable categories. Furthermore, if $Q$ has at least one arrow, then we show that this is an equivalence if and only if $\Lambda$ is hereditary. In general, the epivalence induces a bijection between indecomposable objects in $\operatorname{rep}(Q,\overline{\operatorname{mod}}\, \Lambda)$ and non-injective indecomposable objects in $\operatorname{mono}(Q,\Lambda)$, and we show that the generalized Mimo-construction, an explicit minimal right approximation into $\operatorname{mono}{(Q,\Lambda)}$, gives an inverse to this bijection. We apply these results to describe the indecomposables in the monomorphism category of radical square zero Nakayama algebras, and to give a bijection between the indecomposables in the monomorphism category of two artinian uniserial rings of Loewy length $3$ with the same residue field.
		
		The main tool to prove these results is the language of a free monad of an exact endofunctor on an arbitrary abelian category. This allows us to avoid the technical combinatorics arising from quiver representations. The setup also specializes to more general settings, such as representations of modulations. In particular, we obtain new results on the singularity category of the algebras $H$ which were introduced by Geiss, Leclerc, and Schr\"oer in order to extend their results relating cluster algebras and  Lusztig's semicanonical basis to symmetrizable Cartan matrices. We also recover results on the $\imath$quivers algebras which were introduced by Lu and Wang to realize $\imath$quantum groups via semi-derived Hall algebras.
	\end{abstract}

\subjclass[2020]{16G60, 16G20, 13C14, 18A25, 18C20.}
 
	\maketitle
	
	\tableofcontents

	\section{Introduction}
	The fundamental theorem of finitely generated abelian groups is one of the oldest classification results in algebra. It was implicitly stated by Kronecker in \cite[\S 1]{Kro70}, and later more explicitly by Frobenius and Stickelberger in \cite[IV, p. 231]{FS79}. After its proof people started studying subgroups of finite abelian groups, with work by Miller \cite{Mil04,Mil05}, Hilton \cite{Hil07}, and Birkhoff \cite{Bir35}. In this case one is not only interested in the abstract groups, but also in their embedding. The classification of all such embeddings can be reduced to determining all indecomposable embeddings into a finitely generated $\mathbb{Z}/(p^n)$-module, for $p$ a prime. The difficulty of this problem increases with $n$, with $n=5$ only first determined in \cite{RW99} in 1999. For $n\geq 7$ the problem is controlled wild and there is no hope for a classification \cite{RS06}. The boundary case $n=6$ is still open, called the Birkhoff problem after the paper \cite{Bir35}, where Birkhoff posed the question of describing embeddings of finite abelian groups. In that paper he also constructed a $1$-parameter family of indecomposable embeddings of two fixed $\mathbb{Z}/(p^6)$-modules whose cardinality goes to infinity as $p$ goes to infinity, indicating that the problem is quite challenging.
	
	The study of submodules was revitalized by work of Ringel and Schmidmeier in a series of papers \cite{RS06, RS08, RS08b,RS24}. There they considered submodule categories of Artin algebras, with particular emphasis on the truncated polynomial ring $\Bbbk[x]/(x^n)$. These categories have connections to Hall algebras \cite{Sch12},  Littlewood--Richardson tableaux \cite{KS15,KS22,Sch11}, metabelian groups \cite{Sch05a}, preprojective algebras \cite{RZ14},  singularity categories \cite{Che11,HM21}, weighted projective lines \cite{KLM13}, and valuated groups \cite{Arn00,RW79}.  
	
	More general systems of submodules, called monomorphism categories, have also been studied in parallel, including their representation type \cite{Pla76,Sim02} and their Auslander--Reiten quivers \cite{Moo09,XZZ14}. They have connections to cotorsion pairs \cite{EHHS13,HJ19a}, Gorenstein homological algebra \cite{DELO21,EEG09,LZ13,XZZ14,Zha11}, parabolic vector bundles \cite{Moz20}, weighted projective lines \cite{Sim18}, and topological data analysis \cite{BBOS20}.  
	
	In this paper we make progress towards describing the indecomposable objects for any monomorphism category. As far as we know, our first main result (Theorem \ref{Representation equivalence}) is new even for classical submodule categories. To state it, we first explain the general setup. Let $\mathcal{B}$ be an abelian category and $Q$ a finite acyclic quiver with vertices $Q_0$ and arrows $Q_1$. The category of representation of $Q$ in $\mathcal{B}$, denoted $\operatorname{rep}(Q,\mathcal{B})$, is the category of functors $Q\to \mathcal{B}$, where $Q$ is considered as a category in the natural way. Explicitly, the objects are tuples $(B_\mathtt{i}, B_\alpha)_{\mathtt{i}\in Q_0, \alpha\in Q_1}$ where $B_{\mathtt{i}}$ is an object of $\mathcal{B}$ for each vertex $\mathtt{i}$ and $B_\alpha\colon B_{\mathtt{i}}\to B_{\mathtt{j}}$ is a morphism in $\mathcal{B}$ for each arrow $\alpha\colon\mathtt{i}\to \mathtt{j}$. The \emphbf{(separated) monomorphism category}, denoted $\operatorname{mono}(Q,\mathcal{B})$, is a full subcategory of $\operatorname{rep}(Q,\mathcal{B})$. It consists of all representations for which
	\[
	B_{\mathtt{i},\arrowin}\colon \bigoplus_{\substack{\alpha\in Q_1\\t(\alpha)=\mathtt{i}}} B_{s(\alpha)}\xrightarrow{(B_\alpha)_{\alpha}} B_\mathtt{i}
	\]
	is a monomorphism for all vertices $\mathtt{i}$, where $s(\alpha)$ and $t(\alpha)$ denote the source and target of the arrow $\alpha$. If $Q=(\mathtt{1}\to \mathtt{2})$ then we recover the classical submodule categories studied by Birkhoff, Ringel and Schmidmeier. In general, $\operatorname{mono}(Q,\mathcal{B})$ is an exact category, and if $\mathcal{B}=\operatorname{mod}\Lambda$ is the category of finitely generated right $\Lambda$-modules for an Artin algebra $\Lambda$, then it has almost split sequences \cite{LZ13,RS08}. 
	
	To describe the indecomposables in $\operatorname{mono}(Q,\mathcal{B})$ we construct an epivalence, i.e. a full and dense functor which reflects isomorphisms. Here we follow the terminology of \cite{Kel91}. Such functors were first considered by Auslander in \cite{Aus71} \say{as a way of saying that the representation theories of two additive categories are essentially the same}. He called such functors representation equivalences. They are also sometimes called detecting, e.g. see \cite{Bau95}.
	
	To state our result, let $\overline{\mathcal{B}}$ and $\overline{\operatorname{mono}}(Q,\mathcal{B})$ be the quotients of $\mathcal{B}$ and $\operatorname{mono}(Q,\mathcal{B})$ by the ideal of morphisms factoring through injective objects, where we use the exact structure of $\operatorname{mono}(Q,\mathcal{B})$ to define its injectives. For a description of the injectives see Section \ref{Section: InjectiveObjects} or \eqref{DescriptionInjectives}.
	
	\begin{theorem}[Theorems~\ref{Theorem: Canonical functor representation equivalence} and \ref{Theorem:EquivalenceHereditary}]\label{Representation equivalence}\label{Theorem A}
		Let $Q$ be a finite acyclic quiver and $\mathcal{B}$ an abelian category with enough injectives. Then the canonical functor
		\[
		\overline{\operatorname{mono}}(Q,\mathcal{B})\to \operatorname{rep}(Q,\overline{\mathcal{B}}).
		\]
		is an epivalence. Furthermore, if $Q$ has at least one arrow, then the functor is an equivalence if and only if $\mathcal{B}$ is hereditary.
	\end{theorem}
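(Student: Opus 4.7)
The theorem bundles two statements: the canonical functor is an epivalence (i.e.\ dense, full, and reflects isomorphisms), and it is an equivalence precisely when $\mathcal{B}$ is hereditary. I would argue each property separately, propagating constructions along a topological order of the vertices of $Q$ and exploiting injective objects of $\mathcal{B}$ as a source of flexibility. The hereditary characterization then hinges on whether componentwise factorizations through injectives of $\mathcal{B}$ can be assembled into a global factorization through an injective of $\operatorname{mono}(Q,\mathcal{B})$, an obstruction that should live in $\operatorname{Ext}^2_{\mathcal{B}}$.

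\textbf{Density.} Given $(B_\mathtt{i}, \bar B_\alpha) \in \operatorname{rep}(Q, \overline{\mathcal{B}})$, first lift each $\bar B_\alpha$ to an honest morphism $B_\alpha$ in $\mathcal{B}$. Then process the vertices in a topological order: at vertex $\mathtt{i}$, the induced map $\bigoplus_{t(\alpha)=\mathtt{i}} C_{s(\alpha)} \to B_\mathtt{i}$ (assembled from the already-built $C_{\mathtt{k}}$ at predecessors of $\mathtt{i}$) need not be a monomorphism, but replacing the target by $B_\mathtt{i} \oplus I_\mathtt{i}$, where $I_\mathtt{i}$ is an injective envelope of the kernel, and extending the map by a chosen lift of the kernel inclusion into $I_\mathtt{i}$, produces a monomorphism. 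This is the generalized Mimo construction. Since the added summands are injective in $\mathcal{B}$, the canonical functor sends the resulting object of $\operatorname{mono}(Q,\mathcal{B})$ to a representation isomorphic to the given $(B_\mathtt{i}, \bar B_\alpha)$.

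\textbf{Fullness and reflection of isomorphisms.} For fullness, lift $\bar f \colon \bar X \to \bar Y$ to component morphisms $f_\mathtt{i} \colon X_\mathtt{i} \to Y_\mathtt{i}$ in $\mathcal{B}$. The commutator $\delta_\alpha := Y_\alpha f_{s(\alpha)} - f_{t(\alpha)} X_\alpha$ factors through an injective $J_\alpha$ of $\mathcal{B}$, say $\delta_\alpha = k_\alpha j_\alpha$. Working in topological order, I would correct $f_\mathtt{j}$ by adding a morphism $h \colon X_\mathtt{j} \to Y_\mathtt{j}$ with $h \circ X_{\mathtt{j},\arrowin} = (\delta_\alpha)_\alpha$: such an $h$ exists because $X_{\mathtt{j},\arrowin}$ is a monomorphism and $\bigoplus_\alpha J_\alpha$ is injective, so one first extends $(j_\alpha)_\alpha$ along $X_{\mathtt{j},\arrowin}$ to some $\tilde h$, and then sets $h = -(k_\alpha)_\alpha \tilde h$. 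Because $h$ itself factors through the injective $\bigoplus_\alpha J_\alpha$, the corrected $f_\mathtt{j}$ still represents the original $\bar f_\mathtt{j}$ in $\overline{\mathcal{B}}$; by construction, the incoming squares at $\mathtt{j}$ now commute on the nose, and squares with $\mathtt{j}$ as source are only addressed later. For reflection of isomorphisms, given $\bar f$ with $F(\bar f)$ invertible, lift the inverse via fullness to some $\bar g$; each $\bar f_\mathtt{i}$ is then an iso in $\overline{\mathcal{B}}$, which produces isomorphisms $X_\mathtt{i} \oplus I_\mathtt{i} \xrightarrow{\sim} Y_\mathtt{i} \oplus J_\mathtt{i}$ in $\mathcal{B}$ with $I_\mathtt{i}, J_\mathtt{i}$ injective, and these can be reassembled along the quiver, again using the monomorphism property, into an iso in $\overline{\operatorname{mono}}(Q, \mathcal{B})$.

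\textbf{Hereditary characterization and main obstacle.} Suppose $\mathcal{B}$ is hereditary and $f \colon X \to Y$ in $\operatorname{mono}(Q,\mathcal{B})$ has each $f_\mathtt{i}$ factoring through an injective of $\mathcal{B}$. Using the explicit form of the injective objects of $\operatorname{mono}(Q,\mathcal{B})$ and the vanishing of $\operatorname{Ext}^2_{\mathcal{B}}$, one can glue these componentwise factorizations into a global factorization through an injective of $\operatorname{mono}(Q,\mathcal{B})$, proving faithfulness and hence the equivalence. Conversely, if $Q$ has an arrow $\alpha \colon \mathtt{i} \to \mathtt{j}$ and $\mathcal{B}$ is not hereditary, then a nonzero class in $\operatorname{Ext}^2_{\mathcal{B}}(A,B)$ can be used to build explicit monomorphism representations supported on $\{\mathtt{i}, \mathtt{j}\}$ and a morphism between them that maps to zero in $\operatorname{rep}(Q, \overline{\mathcal{B}})$ but does not globally factor through an injective of $\operatorname{mono}(Q,\mathcal{B})$. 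The most delicate point is the fullness step, where the iterative correction must remain compatible with the monomorphism constraints at every previously processed vertex; this is also where the hereditary hypothesis enters in the equivalence claim, since the obstruction to globally assembling componentwise factorizations is precisely an $\operatorname{Ext}^2_{\mathcal{B}}$-class.
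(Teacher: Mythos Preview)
Your density and fullness arguments are correct and essentially match the paper's (which works in the language of free monads, so your topological-order induction becomes an iteration over powers of the endofunctor $X$, but the content is the same). The real gap is in reflection of isomorphisms. You assert that the componentwise isomorphisms $X_\mathtt{i}\oplus I_\mathtt{i}\cong Y_\mathtt{i}\oplus J_\mathtt{i}$ ``can be reassembled along the quiver, again using the monomorphism property, into an iso in $\overline{\operatorname{mono}}(Q,\mathcal{B})$,'' but this is exactly the nontrivial step and your sketch gives no mechanism for it. The injectives of $\operatorname{mono}(Q,\mathcal{B})$ are of the rigid form $f_!(I(\mathtt{i}))$, so an injective summand added at vertex $\mathtt{i}$ propagates to every successor; the componentwise isomorphisms then have to satisfy a growing web of compatibility constraints with these forced structure maps, and it is not at all clear one can thread them consistently. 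Worse, trying instead to argue that $gf$ and $fg$ are identities in $\overline{\operatorname{mono}}(Q,\mathcal{B})$ (with $g$ a fullness lift of the inverse) would amount to assuming faithfulness, which is precisely what fails when $\mathcal{B}$ is not hereditary.

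The paper bypasses this entirely. Every $\mathsf{M}\in\operatorname{mono}(Q,\mathcal{B})$ has a functorial resolution $0\to f_!X(M)\to f_!(M)\to\mathsf{M}\to 0$ by relative projectives, and since $f_!$ carries injectives of $\prod_\mathtt{i}\mathcal{B}$ to injectives of $\operatorname{mono}(Q,\mathcal{B})$, a componentwise stable isomorphism $g$ induces genuine isomorphisms $f_!Xf^*(g)$ and $f_!f^*(g)$ in $\overline{\operatorname{mono}}(Q,\mathcal{B})$. Viewing $\mathsf{M}$ and $\mathsf{N}$ as cones in the right triangulated structure on $\overline{\operatorname{mono}}(Q,\mathcal{B})$, the two-out-of-three property for morphisms of triangles forces $g$ itself to be an isomorphism there. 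This structural argument is the key idea your proposal is missing.

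For the hereditary characterization your outline is reasonable in spirit but vague; the paper's route is more concrete and somewhat different. For faithfulness under heredity, it shows directly that any morphism in $\operatorname{mono}(Q,\mathcal{B})$ factoring componentwise through injectives must factor through some $f_!(K)$ with $K$ a cosyzygy of an object in the image of $X$; heredity then makes $K$ injective. For the converse, rather than exhibiting a counterexample from a chosen $\operatorname{Ext}^2$-class, the paper starts from an arbitrary cosyzygy $K$ of $X(M)$, produces via a pushout a morphism $p\colon\mathsf{E}\to f_!(K)$ that vanishes in $\operatorname{rep}(Q,\overline{\mathcal{B}})$, and then uses the assumed equivalence together with the top functor $\Kopf_X$ to force $K$ to be a summand of an injective.
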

	This recovers the homology functor for perfect differential $kQ$-modules in \cite[Theorem 1.1 b)]{RZ17} and (the dual of) the equivalence in \cite[Theorem 1.5]{BBOS20} as special cases. It also gives a non-trivial characterization  of hereditary categories, and illustrates how constructions using such categories are often simpler, cf. the derived category \cite[Section 4]{Hap87}. 
	
	 The epivalence in Theorem \ref{Representation equivalence} induces a bijection between the isomorphism classes of indecomposable objects in $\overline{\operatorname{mono}}(Q,\mathcal{B})$ and in $\operatorname{rep}(Q,\overline{\mathcal{B}})$. If $\mathcal{B}$ is the module category of an Artin algebra, then these are further in bijection with the non-injective indecomposable objects in  $\operatorname{mono}(Q,\mathcal{B})$. From this we can deduce that a stable equivalence between two Artin algebras induces a bijection between the non-injective indecomposable objects in their corresponding monomorphism categories. We discuss consequences of this in more detail in Subsection \ref{Subsecition:StableEquivalences}. 
	
	Theorem \ref{Representation equivalence} is particularly useful for comparing local uniserial rings of finite Loewy length, such as $\mathbb{Z}/(p^n)$ and $\mathbbm{k}[x]/(x^n)$. On the one hand, there are several results on the representation theory of monomorphism categories over $\mathbbm{k}[x]/(x^n)$. For example,  the representation type of $\mono(\mathbb{A}_m,\operatorname{mod}\mathbbm{k}[x]/(x^n))$ for 
	\[
	\mathbb{A}_m=\mathtt{1}\to \mathtt{2}\to \dots \to \mathtt{m}
	\]
	was given in \cite{Sim02}, and the classification of indecomposables and  the Auslander--Reiten quiver  in the representation-finite and tame cases when $m\leq 3$ were given in \cite{Moo09,RS08b}. On the other hand, there haven't been many new results for $\Lambda=\mathbb{Z}/(p^n)$, which might be surprising because of the similarity between $\mathbb{Z}/(p^n)$ and $\mathbbm{k}[x]/(x^n)$, both being local uniserial rings of Loewy length $n$. However, it can be explained by the lack of certain tools like covering theory. As a prominent example, the Auslander--Reiten quiver of $\operatorname{mono}(\mathbb{A}_2,\operatorname{mod}\mathbbm{k}[x]/(x^6))$ was determined in \cite{RS08b} using covering theory. The analogous question for $\operatorname{mono}(\mathbb{A}_2,\operatorname{mod}\mathbb{Z}/(p^6))$ is the Birkhoff problem, which is still open. 
	
	In general there is a hope that the representation theory of monomorphism categories over $\mathbbm{k}[x]/(x^n)$ and $\mathbb{Z}/(p^n)$ are similar, see e.g. \cite{Sch08}. The following result goes a long way towards confirming this when $n\leq 3$. It is proved using Theorem \ref{Representation equivalence} and the fact that there is a stable equivalence 
	\[
	\overline{\operatorname{mod}}\, \mathbb{F}_p[x]/(x^n)\cong \overline{\operatorname{mod}}\,\mathbb{Z}/(p^n)
	\]
	in this case. Here $\mathbb{F}_p$ denotes the finite field with $p$ elements.
	
	\begin{theorem}[Theorem~\ref{Theorem:BijectionLoewyLength3}]\label{Introduction:BijectionLoewyLength3}
		Let $Q$ be a finite acyclic quiver and $n$ an integer less than or equal to $3$. Then, there exists a bijection which preserves partition vectors between indecomposable objects in $\mono(Q,\operatorname{mod}\mathbb{F}_p[x]/(x^n))$ and in $\mono(Q,\operatorname{mod}\mathbb{Z}/(p^n))$. 
	\end{theorem}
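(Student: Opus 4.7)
The plan is to chain three ingredients together: Theorem~\ref{Representation equivalence} applied to each of the two algebras, the stable equivalence $\overline{\modu}\,\mathbb{F}_p[x]/(x^n)\simeq\overline{\modu}\,\mathbb{Z}/(p^n)$ (available precisely for $n\le 3$, as noted just above the statement), and a direct matching of the injective indecomposables in the two monomorphism categories. Write $\Lambda_1=\mathbb{F}_p[x]/(x^n)$ and $\Lambda_2=\mathbb{Z}/(p^n)$, and fix a stable equivalence $F\colon\overline{\modu}\,\Lambda_1\xrightarrow{\sim}\overline{\modu}\,\Lambda_2$ sending the length-$k$ indecomposable $\mathbb{F}_p[x]/(x^k)$ to the length-$k$ indecomposable $\mathbb{Z}/(p^k)$ for $1\le k\le n-1$; that is, a stable equivalence that respects the natural labeling of non-injective indecomposables on the two sides. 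Post-composing with $F$ vertex-wise produces an equivalence
\[
F_\ast\colon\rep(Q,\overline{\modu}\,\Lambda_1)\xrightarrow{\sim}\rep(Q,\overline{\modu}\,\Lambda_2).
\]

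By Theorem~\ref{Representation equivalence}, for each $i=1,2$ the canonical functor induces a bijection between isomorphism classes of non-injective indecomposables in $\mono(Q,\modu\Lambda_i)$ and isomorphism classes of indecomposables in $\rep(Q,\overline{\modu}\,\Lambda_i)$, with inverse provided by the Mimo-construction. Concatenating these two bijections with $F_\ast$ yields a bijection between the non-injective indecomposables on the two sides. The injective indecomposables are matched separately using the explicit description recalled in Section~\ref{Section: InjectiveObjects}: both $\Lambda_1$ and $\Lambda_2$ are self-injective local uniserial rings of Loewy length $n$ with a unique indecomposable injective (the ring itself), so the indecomposable injective objects of $\mono(Q,\modu\Lambda_i)$ are parametrized by the same combinatorial data in $Q$ on both sides, producing a canonical bijection on the injective parts.

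It remains to verify the partition-vector condition. On the injective part this is immediate from the matching in the previous paragraph. On the non-injective part, it amounts to tracking how the Mimo-construction interacts with $F_\ast$: starting from $X\in\rep(Q,\overline{\modu}\,\Lambda_1)$, one must verify that the module at each vertex of $\Mimo(F_\ast X)$ has, under the identification $\mathbb{F}_p[x]/(x^k)\leftrightarrow\mathbb{Z}/(p^k)$, the same partition as the module at the corresponding vertex of $\Mimo(X)$. Since Mimo is built from injective envelopes of kernels of the structure maps of a representation, the key input is that $F$ matches these kernels and their injective hulls on the two sides; this is plausible for $n\le 3$ because the short exact sequences computing the low-degree syzygies of the residue field agree on the two sides after the chosen labeling. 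I expect this compatibility between the Mimo-construction and $F_\ast$ to be the main technical obstacle, and would concentrate the bulk of the proof on it.
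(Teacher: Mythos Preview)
Your overall architecture matches the paper's: reduce to a stable equivalence $\overline{\modu}\,\Lambda_1\simeq\overline{\modu}\,\Lambda_2$ that respects the labeling of non-injective indecomposables by length, push it through $\rep(Q,-)$, and use the epivalence/Mimo bijections from Theorem~\ref{Representation equivalence} and Theorem~\ref{MimoBijectionIndec} on each side, handling injectives separately. This is exactly how the paper proceeds (via Proposition~\ref{Proposition:StableEquivLocalUniserial} and Theorem~\ref{Theorem:StableEquivBijection}).

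The gap is precisely where you flag it: the partition-vector compatibility. Your sketch says ``the key input is that $F$ matches these kernels and their injective hulls on the two sides,'' but this is not well-posed as stated. The kernel $\ker\widehat{M}_{\mathtt{i},\arrowin}$ is computed in $\modu\Lambda_1$, not in the stable category, and $F$ only lives on $\overline{\modu}\,\Lambda_1$; there is no a~priori reason a stable equivalence should respect kernels of lifted maps. Moreover, the lift $\widehat{M}$ of an object in $\rep(Q,\overline{\modu}\,\Lambda_1)$ involves choices, so you need an argument that is insensitive to these choices.

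The paper's resolution is a genuine extra idea you are missing. One proves (Lemma~\ref{Lemma:SocleFunctorStable}) that the socle functor $\operatorname{soc}\colon\modu\Lambda\to\mathcal{S}_\Lambda$ (semisimple modules with no injective summand) descends to the stable category $\overline{\modu}\,\Lambda$ and is right adjoint to the inclusion $\mathcal{S}_\Lambda\hookrightarrow\overline{\modu}\,\Lambda$. Since your $F$ preserves simple non-injectives, adjointness forces the square
\[
\begin{tikzcd}
\overline{\modu}\,\Lambda_1\ar[r,"\operatorname{soc}"]\ar[d,"F"'] & \mathcal{S}_{\Lambda_1}\ar[d,"F"]\\
\overline{\modu}\,\Lambda_2\ar[r,"\operatorname{soc}"] & \mathcal{S}_{\Lambda_2}
\end{tikzcd}
\]
to commute up to isomorphism. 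Now the injective summand contributed by Mimo at vertex $\mathtt{k}$ is $\bigoplus_{p\colon t(p)=\mathtt{k}}J_{s(p)}$ with $J_\mathtt{i}$ the injective envelope of $\ker\widehat{M}_{\mathtt{i},\arrowin}$, and an injective envelope is determined by its socle. The commutative square above (applied vertexwise) shows that $\operatorname{soc}\ker\widehat{M}_{\mathtt{i},\arrowin}$ and $\operatorname{soc}\ker\widehat{N}_{\mathtt{i},\arrowin}$ correspond under $F$, which gives $\phi([J_\mathtt{i}])=[J'_\mathtt{i}]$ and hence the partition-vector claim. This socle argument is the missing technical step; once you have it, your outline becomes a complete proof.
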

	This gives a non-trivial connection between the work of  Birkhoff and Ringel--Schmidmeier. 
	Here, by partition vector we mean the following: For a representation $(M_\mathtt{i},M_\alpha)$ over $R=\mathbb{F}_p[x]/(x^n)$ or $R=\mathbb{Z}/(p^n)$ each $M_\mathtt{i}$ can be written as $M_{\mathtt{i}}=R/\mathfrak{m}^{n_1}\oplus R/\mathfrak{m}^{n_2} \oplus \dots \oplus R/\mathfrak{m}^{n_k}$ where $\mathfrak{m}$ is the maximal ideal of $R$ and $n_1\geq n_2\geq \dots \geq n_k$. The associated sequence of numbers $(n_1,n_2,\dots, n_k)$ is the partition of $M_\mathtt{i}$. Doing this for each vertex $\mathtt{i}$ gives rise to the partition vector of the representation $(M_\mathtt{i},M_\alpha)$.
	
	Recall that Gabriel's theorem \cite{G72} classifies the hereditary finite-dimensional algebras over an algebraically closed field of finite representation type in terms of ADE Dynkin quivers. It is a fundamental result in representation theory of algebras. An application of Theorem \ref{Representation equivalence} gives a Gabriel-style classification of representation-finite mono\-morphism categories over radical square zero Nakayama Artin algebras. Here $m$ is the number of simple $\Lambda$-modules and $t$ is the number of non-injective simple $\Lambda$-modules.
	
	\begin{theorem}[Theorem~\ref{Theorem:rad2Nakayama}]\label{Gabriel-styleResult}
		Let $Q$ be a finite connected acyclic quiver and let $\Lambda$ be a non-semisimple radical square zero Nakayama Artin algebra. Then $\mono(Q,\operatorname{mod}\Lambda)$ is of finite type if and only if $Q$ is Dynkin. In this case, the number of indecomposable objects is $m \cdot |Q_0|+t \cdot |\Phi^+|$, where $|\Phi^+|$ denotes the set of positive roots corresponding to the Dynkin type.
	\end{theorem}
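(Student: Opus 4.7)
The plan is to apply Theorem~\ref{Representation equivalence} to reduce the classification of non-injective indecomposables in $\mono(Q,\modu\,\Lambda)$ to that of indecomposables in $\rep(Q,\overline{\modu}\,\Lambda)$, to identify this latter category explicitly, and then to count the indecomposable injective objects of $\mono(Q,\modu\,\Lambda)$ separately. Combining these two counts will yield both the finite-type dichotomy and the stated formula.

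First I would analyse $\overline{\modu}\,\Lambda$ under the Nakayama and $\radoperator^2=0$ hypotheses. Every indecomposable $\Lambda$-module is uniserial of Loewy length at most $2$, hence either simple or a non-simple length-$2$ module. For a Nakayama algebra the injective envelope of a simple is uniserial, and under $\radoperator^2=0$ it has length at most $2$, so the non-simple indecomposable injectives coincide with the length-$2$ indecomposables. Consequently the non-injective indecomposables in $\modu\,\Lambda$ are exactly the $t$ non-injective simples $S_{i_1},\dots,S_{i_t}$. Because $\Hom_\Lambda(S_i,S_j)=0$ for $i\neq j$, and since the identity of a non-injective simple cannot factor through an injective object (otherwise that simple would be a direct summand of an injective, hence injective), the category $\overline{\modu}\,\Lambda$ is semisimple abelian and equivalent to $\prod_{k=1}^{t}\modu\, D_k^{\op}$, where $D_k=\End_\Lambda(S_{i_k})$ is a division ring.

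This product decomposition transports to representations, giving $\rep(Q,\overline{\modu}\,\Lambda)\simeq\prod_{k=1}^{t}\rep(Q,\modu\, D_k^{\op})$. Each factor is the module category of the hereditary path algebra $D_kQ$, and by Gabriel's theorem (valid over an arbitrary division ring as a special case of the Dlab--Ringel classification for species) it is of finite representation type if and only if $Q$ is Dynkin, with $|\Phi^+|$ indecomposables in that case. Since $\Lambda$ is non-semisimple one has $t\geq 1$, so by Theorem~\ref{Representation equivalence} the non-injective part of $\mono(Q,\modu\,\Lambda)$ is of finite type precisely when $Q$ is Dynkin, contributing $t\cdot|\Phi^+|$ indecomposables in that case.

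It remains to count the indecomposable injective objects of $\mono(Q,\modu\,\Lambda)$. Using the description of injectives from Section~\ref{Section: InjectiveObjects} (cf.~\eqref{DescriptionInjectives}), these are indexed by pairs $(\mathtt{i},J)$ with $\mathtt{i}\in Q_0$ and $J$ an indecomposable injective $\Lambda$-module; since a Nakayama algebra has exactly $m$ indecomposable injectives (one per simple), this contributes $m\cdot|Q_0|$ objects. Adding the two counts yields the claimed formula. The main obstacle is the clean identification of $\overline{\modu}\,\Lambda$ as a product of module categories over division rings, which relies crucially on $\radoperator^2=0$ and the Nakayama property to ensure that all length-$2$ indecomposables are injective and that the non-injective indecomposables form a semisimple subcategory; once this is in place, the dichotomy and the count follow cleanly from Theorem~\ref{Representation equivalence} and Gabriel's theorem.
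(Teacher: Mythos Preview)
The proposal is correct and follows essentially the same approach as the paper: identify $\overline{\modu}\,\Lambda$ as a product of module categories over skew fields (one per non-injective simple), use the epivalence to reduce the non-injective indecomposables of $\mono_Q(\Lambda)$ to those of $\rep(Q,\overline{\modu}\,\Lambda)$, apply the Dlab--Ringel version of Gabriel's theorem, and count injectives separately via \eqref{DescriptionInjectives}. The paper packages the first step as Lemma~\ref{lem:injectively-stable-rad2-Nakayama} and invokes Theorem~\ref{Theorem: Characterization of indecomposables} for the bijection on indecomposables rather than the epivalence directly, but the substance is identical.
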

	We do not assume our algebra to be linear over a field. In particular, Theorem \ref{Gabriel-styleResult} also applies to $\mathbb{Z}/(p^2)$.
	
	Assume $\mathcal{B}$ is module category of an Artin algebra. Then Theorem \ref{Representation equivalence} provides a bijection.
	\begin{align}
	\renewcommand{\arraystretch}{1.8}
	\begin{array}{ccc}
	\renewcommand{\arraystretch}{1.1}
	\begin{Bmatrix}
	\text{Isomorphism classes of} \\
	\text{indecomposable non-injective} \\
	\text{objects in $\mono(Q,\mathcal{B})$}
	\end{Bmatrix}
	&
	\xrightarrow{\cong}
	&
	\renewcommand{\arraystretch}{1.2}
	\begin{Bmatrix}
	\text{Isomorphism classes of} \\
	\text{indecomposable objects} \\
	\text{in $\rep (Q,\overline{\mathcal{B}})$}
	\end{Bmatrix}
	\end{array}\label{BijectionFromRepEquiv}
	\renewcommand{\arraystretch}{1}
	\end{align}
	given by the functor $\operatorname{mono}(Q,\mathcal{B})\to \overline{\operatorname{mono}}(Q,\mathcal{B})\to \operatorname{rep}(Q,\overline{\mathcal{B}})$. In many cases, $\rep (Q,\overline{\mathcal{B}})$ is easier to study than $\operatorname{mono}(Q,\mathcal{B})$. For example, if $\mathcal{B}$ is the module category of a radical square zero Nakayama Artin algebra, then $\overline{\mathcal{B}}$ is just the module category of a product of skew fields, and hence $\operatorname{rep}(Q,\overline{\mathcal{B}})$ can be computed using classical methods. This is how Theorem \ref{Gabriel-styleResult} is shown. 
	
	To make best use of the bijection \eqref{BijectionFromRepEquiv}, we would like to construct its inverse explicitly, so that we can obtain a description of the indecomposables in $\operatorname{mono}(Q,\mathcal{B})$ from the ones in $\operatorname{rep}(Q,\overline{\mathcal{B}})$. This is done in the following way. Let $(B_\mathtt{i},B_\alpha)$ be an object in $\operatorname{rep}(Q,\overline{\mathcal{B}})$. For each vertex $\mathtt{i}$, choose an object $\widehat{B}_{\mathtt{i}}$ in $\mathcal{B}$ with no nonzero injective summands and which is isomorphic to $B_\mathtt{i}$ in $\overline{\mathcal{B}}$. For each arrow $\alpha\colon \mathtt{i}\to \mathtt{j}$, choose a lift $\widehat{B}_\alpha\colon \widehat{B}_{\mathtt{i}}\to \widehat{B}_\mathtt{j}$ of $B_\alpha$ to $\mathcal{B}$. This gives a representation $(\widehat{B}_\mathtt{i},\widehat{B}_\alpha)$ in $\operatorname{rep}(Q,\mathcal{B})$, whose isomorphism class depends on the choice of lifts $\widehat{B}_\alpha$. Now, take the minimal right $\mono(Q,\mathcal{B})$-approximation of $(\widehat{B}_\mathtt{i},\widehat{B}_\alpha)$. This is denoted $\Mimo(\widehat{B}_\mathtt{i},\widehat{B}_\alpha)$ and called the Mimo-construction. For an explicit formula see Example \ref{Example:MimoFormulaQuiverRep} or \cite[Section 3a]{LZ13}. 
	\begin{theorem}[Theorems \ref{Theorem: Characterization of indecomposables} and \ref{Theorem: Characterization of indecomposablesArtin}]\label{MimoBijectionIndec}
		Let $Q$ be a finite acyclic quiver and $\mathcal{B}=\operatorname{mod}\Lambda$ for an Artin algebra $\Lambda$. The association $(B_\mathtt{i},B_\alpha)\mapsto \Mimo (\widehat{B}_\mathtt{i},\widehat{B}_\alpha)$ above gives an inverse to \eqref{BijectionFromRepEquiv}. In particular, it is independent of choice up to isomorphism.
	\end{theorem}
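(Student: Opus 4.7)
The plan is to reduce the theorem to a single identity and one structural property of the Mimo-construction. Write $F\colon \mono(Q,\mathcal{B}) \to \rep(Q,\overline{\mathcal{B}})$ for the composition of the canonical quotient $\mono(Q,\mathcal{B}) \to \overline{\mono}(Q,\mathcal{B})$ with the epivalence of Theorem \ref{Representation equivalence}; a preliminary observation is that $F$ acts on objects simply by applying the quotient $\mathcal{B} \to \overline{\mathcal{B}}$ vertex by vertex. The two facts to establish are: (i) $F(\Mimo(\widehat{B}_\mathtt{i}, \widehat{B}_\alpha)) \cong (B_\mathtt{i}, B_\alpha)$ in $\rep(Q,\overline{\mathcal{B}})$, and (ii) $\Mimo(\widehat{B}_\mathtt{i}, \widehat{B}_\alpha)$ has no nonzero injective summand in $\mono(Q,\mathcal{B})$. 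Granting these, the rest is formal: an epivalence has the property that any two objects with isomorphic image are isomorphic, so non-injective objects in $\mono(Q,\mathcal{B})$ are distinguished up to iso by their image under $F$. In particular, the class of $\Mimo(\widehat{B})$ depends only on $(B_\mathtt{i}, B_\alpha)$ and not on the lift, and the association is inverse to \eqref{BijectionFromRepEquiv}. Indecomposability of $\Mimo(\widehat{B})$ then also follows automatically, since any decomposition into non-injective summands $X_1 \oplus X_2$ would give $F(X_1) \oplus F(X_2) \cong (B_\mathtt{i}, B_\alpha)$, forcing one factor to vanish.

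For (i), I would unwind the explicit formula for Mimo recalled in Example \ref{Example:MimoFormulaQuiverRep}: the representation $\Mimo(\widehat{B})$ is obtained from $(\widehat{B}_\mathtt{i}, \widehat{B}_\alpha)$ by processing vertices in a topological order and adjoining at each vertex $\mathtt{i}$ an injective object $I_\mathtt{i}$ of $\mathcal{B}$ chosen so as to rectify failure of the monomorphism condition, with the structural morphism $\Mimo(\widehat{B}) \to (\widehat{B}_\mathtt{i}, \widehat{B}_\alpha)$ being componentwise the projection modulo $I_\mathtt{i}$. Since each $I_\mathtt{i}$ becomes zero in $\overline{\mathcal{B}}$, the functor $F$ identifies $F(\Mimo(\widehat{B}))$ with the componentwise image of $(\widehat{B}_\mathtt{i}, \widehat{B}_\alpha)$ in $\rep(Q,\overline{\mathcal{B}})$, which by the choice of lift is precisely $(B_\mathtt{i}, B_\alpha)$.

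The main obstacle is (ii). The vertex-by-vertex condition that each $\widehat{B}_\mathtt{i}$ have no nonzero injective summand in $\mathcal{B}$ does not suffice directly, because the injectives of the exact category $\mono(Q,\mathcal{B})$ are richer than representations with injective vertex components, cf.\ Section \ref{Section: InjectiveObjects} and \eqref{DescriptionInjectives}. My plan here is to exploit the defining minimality of the right approximation $\Mimo(\widehat{B}) \to (\widehat{B}_\mathtt{i}, \widehat{B}_\alpha)$: if $\Mimo(\widehat{B}) \cong M \oplus I$ with $I$ a nonzero injective of $\mono(Q,\mathcal{B})$, then combining the classification of such injectives with the hypothesis that $\widehat{B}$ has no injective summand at any vertex forces the restriction of the approximation to $I$ to be zero. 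Removing $I$ would then still yield a right $\mono(Q,\mathcal{B})$-approximation, contradicting minimality and forcing $I=0$. Once this bookkeeping is carried out, the theorem assembles from (i), (ii), and the epivalence argument of the first paragraph.
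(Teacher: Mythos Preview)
Your overall architecture is sound and close to the paper's: reduce to showing (i) $F(\Mimo(\widehat{B})) \cong (B_\mathtt{i},B_\alpha)$ and (ii) $\Mimo(\widehat{B})$ has no nonzero injective summand, then conclude formally from the epivalence. Part (i) is correctly sketched.

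The gap is in your argument for (ii). You claim that if $\Mimo(\widehat{B}) \cong M \oplus I$ with $I \cong f_!(J(\mathtt{i}))$ injective, then the restriction $p|_I$ of the approximation to $I$ vanishes, because ``$\widehat{B}$ has no injective summand at any vertex.'' But a morphism $f_!(J(\mathtt{i})) \to \widehat{B}$ corresponds under adjunction to a morphism $J \to \widehat{B}_\mathtt{i}$ in $\mathcal{B}$, and there is no reason such a map should be zero merely because $\widehat{B}_\mathtt{i}$ has no injective summand. (Take $\Lambda = \mathbbm{k}[x]/(x^3)$, $J = \Lambda$, $\widehat{B}_\mathtt{i}$ the simple: the quotient map is nonzero.) What \emph{does} follow is that such a map lies in the radical (Lemma~\ref{Lemma:RadicalProperties}\,\eqref{Lemma:RadicalProperties:2}), but that is not enough for your minimality argument, which needs $p|_I = 0$ (or at least $p|_I$ to factor through $p|_M$) in order to build a non-invertible endomorphism $\varphi$ with $p \circ \varphi = p$.

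The paper establishes (ii) by a different route, not via minimality of $p$. Lemma~\ref{No injective summands} shows that every morphism $g \colon f_!(I) \to \Mimo\widehat{\module{M}}$ with $I$ injective lies in the radical of $\Mono(X)$; this immediately rules out injective summands. The proof exploits the explicit pushout presentation of $\Mimo\widehat{\module{M}}$: one factors $g$ through $f_!(M) \oplus f_!(J)$ (possible because the structural map $(r,s)$ is objectwise split epi), and then checks that $\Kopf_X$ applied to any composite $f_!(I) \to f_!(M)\oplus f_!(J) \to f_!(I)$ lands in the radical, using both Lemma~\ref{Lemma:RadicalProperties}\,\eqref{Lemma:RadicalProperties:2} and the fact that $j$ is an injective \emph{envelope} (so its cokernel map is radical). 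Your attempt to leverage minimality instead is an appealing idea, but as it stands you would need an additional argument to produce the factorisation $p|_I = p|_M \circ h$, and it is not clear how to get this without already knowing something like Lemma~\ref{No injective summands}.
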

	This theorem holds more generally if $\mathcal{B}$ has injective envelopes and is noetherian or artinian or locally noetherian. 
 
 Note that the Mimo-construction was first introduced in \cite{RS08} for $\mathbb{A}_2$, where it played an important role in describing the Auslander--Reiten translation of the corresponding monomorphism category. In that case it can also be interpreted as a triangle functor \cite[Section 5]{Che12}. The formula for the Mimo-construction of an arbitrary finite acyclic quiver was given in \cite{LZ13}, where it was shown to give a right approximation. We are the first to show that it is a minimal right approximation in general. 
	
	To complete the picture, we give a description of the indecomposable injective objects in $\operatorname{mono}(Q,\mathcal{B})$. They are precisely given by 
	$f_!(J(\mathtt{i}))=(f_!(J(\mathtt{i}))_{\mathtt{j}},f_!(J(\mathtt{i}))_\alpha)$ where $J$ is indecomposable injective in $\mathcal{B}$ and
	\begin{equation}\label{DescriptionInjectives}
	f_!(J(\mathtt{i}))_\mathtt{k}=\bigoplus_{\substack{p\in Q_{\geq 0}\\ s(p)=\mathtt{i},t(p)=\mathtt{k}}}J \quad \text{and} \quad f_!(J(\mathtt{i}))_\alpha\colon \bigoplus_{\substack{p\in Q_{\geq 0}\\ s(p)=\mathtt{i},t(p)=\mathtt{k}}}J \to \bigoplus_{\substack{p\in Q_{\geq 0}\\ s(p)=\mathtt{i},t(p)=\mathtt{l}}}J
	\end{equation}
	for an arrow $\alpha\colon \mathtt{k}\to \mathtt{l}$, where $f_!(J(\mathtt{i}))_\alpha$ is induced by the identity map $J\xrightarrow{1} J$ between the components indexed by paths $p$ and $\alpha p$. We use this and Theorem \ref{MimoBijectionIndec} to give an explicit description of all indecomposable objects in monomorphisms categories for linearly oriented $\mathbb{A}_n$-quivers, for a non-linearly oriented $\mathbb{A}_4$-quiver,  and for the Kronecker quiver, see Subsections \ref{Subsection:Rad^2-zeroNakayama} and \ref{Subsection:Kronecker}.
	
	In most of the proofs we use the more abstract language of monads and Eilenberg--Moore categories, similar to  \cite{CL20} and  \cite{GKKP19}. This is to avoid the technical combinatorics arising from quiver representations, e.g. from the Mimo-construction (see Example \ref{Example:MimoFormulaQuiverRep}) and $f_!$ above. To see how they relate, consider the endofunctor
	\[
	X\colon \mathcal{C}\to \mathcal{C} \quad X(B_\mathtt{i})_{\mathtt{i}\in Q_0}=\left(\bigoplus_{\alpha\in Q_1,t(\alpha)=\mathtt{j}}B_{s(\alpha)}\right)_{\mathtt{j}\in Q_0}
	\]
	on $\mathcal{C}=\prod_{\mathtt{i}\in Q_0}\mathcal{B}$. The data of a representation of $Q$ in $\mathcal{B}$ is equivalent to an object $C\in \mathcal{C}$ and a morphism $X(C)\to C$. Furthermore, the representation lies in the monomorphism category if and only if $X(C)\to C$ is a monomorphism. Similarly, the data of a representation of $Q$ in $\overline{\mathcal{B}}$ is equivalent to an object $C\in \overline{\mathcal{C}}$ and a morphism $X(C)\to C$ in $\overline{\mathcal{C}}$. It follows from this that there are equivalences
	\[
	\operatorname{rep}(Q,\mathcal{B})\cong \mathcal{C}^{T(X)} \quad \text{and} \quad \Mono(X)\cong \operatorname{Mono}(Q,\mathcal{B}) \quad \text{and} \quad \operatorname{rep}(Q,\overline{\mathcal{B}})\cong \overline{\mathcal{C}}{}^{T(X)}
	\]
	where $\mathcal{C}^{T(X)}$ and $\overline{\mathcal{C}}{}^{T(X)}$ denote the Eilenberg--Moore categories of the free monad $T(X)$ on $\mathcal{C}$ and $\overline{\mathcal{C}}$, respectively, and $\Mono(X)$ is the full subcategory of $\mathcal{C}^{T(X)}$ where $X(C)\to C$ is a monomorphism. In this language several of the constructions become easier and more conceptual. For example, $f_!$ is the left adjoint of the forgetful functor $f^*\colon \mathcal{C}^{T(X)}\to \mathcal{C}$ from the Eilenberg--Moore category, and the Mimo-construction, given by the complicated formula in Example \ref{Example:MimoFormulaQuiverRep}, is just obtained by taking a particular pushout, see Definitions \ref{Definition: Q} and \ref{Definition: Mimo}.  Our results hold for any exact, locally nilpotent endofunctor $X$ which preserves injectives on an abelian category. The constructions and proofs use the Eilenberg--Moore category of its free monad as illustrated above. 
	
	Since we are working in a more general setting, our result also cover  generalizations of quiver representations, such as representations of modulations, see Example \ref{Example: Phyla}. They are known under several different names in the literature (and with varying hypothesis), such as representations of pro-species of algebras in \cite{Kul17}, representations of phyla in \cite{GKKP19}, representations over diagrams of abelian categories in \cite{DLLY22}, and twisted representations in \cite{GK05}. Monomorphism categories of modulations have connections to Gorenstein homological algebra \cite{Kul17,DLLY22}, and to cotorsion pairs and model structures  \cite{DLLY22}. 
 
 A well-studied class of modulations is given by prospecies over selfinjective rings. More explicitly, given a finite acyclic quiver $Q$, one associates to each vertex $\mathtt{i}$ a selfinjective algebra $\Lambda_\mathtt{i}$, and to each arrow $\alpha\colon \mathtt{i}\to \mathtt{j}$ a $\Lambda_\mathtt{i}$-$\Lambda_\mathtt{j}$-bimodule $M_\alpha$ which is projective as left $\Lambda_\mathtt{i}$-module and as right $\Lambda_\mathtt{j}$-module. In this case the monomorphism category is equal to the category of Gorenstein projectives modules over the tensor algebra $T(M)$ of $M=\bigoplus_{\alpha\in Q_1}M_\alpha$ by $\Lambda=\prod_{\mathtt{i}\in Q_0}\Lambda_{\mathtt{i}}$, and its stable category is equivalent to the singularity category of $T(M)$, see \cite{Kul17}. 
	
	As an important special case we have the prospecies in \cite{GLS16} associated to a tuple $(C,D,\Omega)$ where $C=(c_{\mathtt{i},\mathtt{j}})$ is a symmetrizable Cartan matrix with symmetrizer $D=\operatorname{diag}(d_\mathtt{i})$ and orientation $\Omega$. The associated tensor algebra is then isomorphic to the GLS-algebra $H=H(C,D,\Omega)$. We get the following result for these algebras. Here Cohen--Macaulay finite means having finitely many finitely generated indecomposable Gorenstein-projective modules up to isomorphism.
	
	\begin{theorem}\label{Theorem:GLSALgebras}
		Let $C=(c_{\mathtt{i},\mathtt{j}})_{\mathtt{i},\mathtt{j}\in I}$ be a symmetrizable Cartan matrix with symmetrizer $D=\operatorname{diag}(d_\mathtt{i}\mid \mathtt{i}\in I)$ and orientation $\Omega$. Assume $d_\mathtt{i}\leq 2$ for all $\mathtt{i}\in I$. Let $I'\subseteq I$ be the subset of all elements $\mathtt{i}$ for which $d_\mathtt{i}=2$, and let $C|_{I'\times I'}=(c_{\mathtt{i},\mathtt{j}})_{\mathtt{i},\mathtt{j}\in I'}$ be the corresponding submatrix of $C$. Then $H=H(C,D,\Omega)$ is Cohen--Macaulay finite if and only if  $C|_{I'\times I'}$ is Dynkin as a symmetric Cartan matrix. Furthermore, in this case there is a bijection between the positive roots of $C|_{I'\times I'}$ and the isomorphism classes of indecomposable objects in the singularity category of $H$. 
	\end{theorem}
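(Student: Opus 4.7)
The plan is to unwind the characterization of the singularity category of $H$ provided by Theorem~\ref{Representation equivalence} and then reduce to an ordinary quiver-representation problem on the subquiver supported on $I'$. First, I would recall from the prospecies discussion preceding the theorem that $\mono(Q,\operatorname{mod}\Lambda)$ with $\Lambda=\prod_{\mathtt{i}\in I}k[\varepsilon_\mathtt{i}]/(\varepsilon_\mathtt{i}^{d_\mathtt{i}})$ is the category of Gorenstein projective $H$-modules, and its injectively stable quotient $\overline{\mono}(Q,\operatorname{mod}\Lambda)$ is equivalent to the singularity category of $H$. Applying Theorem~\ref{Representation equivalence} produces an epivalence
\[
\overline{\mono}(Q,\operatorname{mod}\Lambda)\longrightarrow \rep(Q,\overline{\operatorname{mod}}\,\Lambda),
\]
which is full, dense, and reflects isomorphisms, hence induces a bijection between isomorphism classes of indecomposables on each side. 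Since $H$ has only finitely many indecomposable projective modules, Cohen--Macaulay finiteness of $H$ is equivalent to $\rep(Q,\overline{\operatorname{mod}}\,\Lambda)$ having only finitely many isomorphism classes of indecomposable objects.

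Second, I would compute the target category factor by factor. When $d_\mathtt{i}=1$, $\Lambda_\mathtt{i}=k$ is semisimple so $\overline{\operatorname{mod}}\,\Lambda_\mathtt{i}=0$; when $d_\mathtt{i}=2$, the simple module $k$ is the unique non-injective indecomposable, giving $\overline{\operatorname{mod}}\,\Lambda_\mathtt{i}\simeq\operatorname{mod} k$. Consequently, every object of $\rep(Q,\overline{\operatorname{mod}}\,\Lambda)$ vanishes on the vertices of $I\setminus I'$, and the structure maps attached to arrows incident with such a vertex must be zero. The category is therefore equivalent to $\rep(Q|_{I'},\mathcal{M}')$, where $\mathcal{M}'$ is the reduced sub-modulation obtained by base-changing each GLS-bimodule $M_\alpha$ along the projections $\operatorname{mod}\Lambda_\mathtt{i}\to\overline{\operatorname{mod}}\,\Lambda_\mathtt{i}\simeq\operatorname{mod} k$.

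To conclude, I would identify $\rep(Q|_{I'},\mathcal{M}')$ with $k$-linear representations of an ordinary acyclic quiver. For $\mathtt{i},\mathtt{j}\in I'$ one has $d_\mathtt{i}=d_\mathtt{j}=2$, so the symmetrizability condition $d_\mathtt{i} c_{\mathtt{i}\mathtt{j}}=d_\mathtt{j} c_{\mathtt{j}\mathtt{i}}$ forces $c_{\mathtt{i}\mathtt{j}}=c_{\mathtt{j}\mathtt{i}}$, making $C|_{I'\times I'}$ genuinely symmetric. Tracing through the explicit form of the GLS-bimodules and reducing modulo the radicals of $\Lambda_\mathtt{i}$ and $\Lambda_\mathtt{j}$, I would verify that the total $k$-dimension of the reduced bimodule between $\mathtt{i}$ and $\mathtt{j}$ equals $-c_{\mathtt{i}\mathtt{j}}$. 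Thus $\rep(Q|_{I'},\mathcal{M}')$ is equivalent to the category of $k$-representations of an acyclic quiver whose underlying valued graph has Cartan matrix $C|_{I'\times I'}$. Gabriel's theorem then yields representation finiteness precisely when $C|_{I'\times I'}$ is Dynkin of type $ADE$, together with a bijection between the indecomposables and the positive roots.

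The main obstacle lies in the third step: verifying that the stable reduction of the GLS-bimodules between vertices of $I'$ produces exactly multiplicity $-c_{\mathtt{i}\mathtt{j}}$. This requires unpacking the construction of $H=H(C,D,\Omega)$ from \cite{GLS16} and tracking the projectivity properties of $M_\alpha$ on both sides under the passage to the stable categories. The bound $d_\mathtt{i}\leq 2$ enters precisely here: it guarantees that each $\overline{\operatorname{mod}}\,\Lambda_\mathtt{i}$ is semisimple, so that no higher extension data survives the quotient and Gabriel's theorem applies directly to the reduced valued quiver.
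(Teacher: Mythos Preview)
Your strategy is exactly the paper's: identify Gorenstein projectives over $H$ with the monomorphism category of the GLS prospecies, apply the epivalence to pass to the stable modulation, kill the vertices with $d_\mathtt{i}=1$, compute the reduced bimodules between vertices of $I'$, and invoke Gabriel. The paper packages your second and third steps into Theorem~\ref{Theorem:MainThmModulations} and Proposition~\ref{Proposition:ApplicationToGLS}, where the bimodule count is done by checking that ${}_{\mathtt{j}}H_{\mathtt{i}}$ has no nonzero projective bimodule summand (it has Loewy length $1$) and then computing the corank of right multiplication by $x$ on ${}_{\mathtt{j}}H_{\mathtt{i}}\cong H_{\mathtt{i}}^{|c_{\mathtt{i},\mathtt{j}}|}$, which is $|c_{\mathtt{i},\mathtt{j}}|$.

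One point to clean up: your opening notation $\mono(Q,\operatorname{mod}\Lambda)$ and $\rep(Q,\overline{\operatorname{mod}}\,\Lambda)$ with $\Lambda=\prod_{\mathtt{i}}\mathbbm{k}[\varepsilon_\mathtt{i}]/(\varepsilon_\mathtt{i}^{d_\mathtt{i}})$ is not what you want. As written this means representations of $Q$ in the single category $\operatorname{mod}\Lambda$ with identity functors on arrows, which is not $\operatorname{mod}H$; the GLS data is a genuine modulation with nontrivial bimodules ${}_{\mathtt{j}}H_{\mathtt{i}}$ on the arrows. You need the monomorphism category of the modulation $\mathfrak{B}$ and the target $\rep\overline{\mathfrak{B}}$, i.e.\ the general epivalence $\overline{\Mono}(X)\to\overline{\mathcal{C}}{}^{T(X)}$ of Theorem~\ref{Theorem: Canonical functor representation equivalence} rather than the quiver-over-a-fixed-$\mathcal{B}$ version stated in Theorem~\ref{Representation equivalence}. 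You implicitly switch to this in your third paragraph when you write $\rep(Q|_{I'},\mathcal{M}')$, so the argument itself is fine once the notation is corrected.
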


	More generally, we show that there is a bijection between isomorphism classes of indecomposable objects in the singularity category of $H$, and finite-dimensional indecomposable representations over the quiver determined by the Cartan matrix $C|_{I'\times I'}$ with orientation $\Omega|_{I'\times I'}$. For this no finiteness assumptions are necessary. Note that for $D=\operatorname{diag}(2\mid i\in I)$ the  Gorenstein projective $H$-modules can be identified with monomorphic representations of $Q$ over $k[x]/(x^2)$, where $Q$ is the quiver associated to $C$ and $\Omega$. Hence, we recover Theorem \ref{Gabriel-styleResult} for $\Lambda=\mathbbm{k}[x]/(x^2)$.  
 
 Theorem \ref{Theorem:GLSALgebras} is a consequence of a more general result on modulations over radical square zero cyclic Nakayama algebras, see Theorem \ref{Theorem:MainThmModulations}. The bijection is obtained from an analogue of the epivalence in Theorem \ref{Theorem A} for modulations, and we get an explicit description of the indecomposable object in the singularity category associated to a given representation from an analogue of Theorem \ref{MimoBijectionIndec}.  The more general result also applies to modules over $\imath$quiver algebras studied in \cite{LW22,LW21a,LW21b}, see Subsection \ref{Subsection:ModulationsiQuiverGLS}. There the category of Gorenstein projective modules plays an important role, in particular since it is used to realize $\imath$quantum groups via Hall algebras \cite{LW22,LW21a}, and since for Dynkin quivers it is equivalent to the category of projectives over the regular Nakajima--Keller--Scherotzke categories \cite{LW21b}.
	
	The structure of the paper is as follows. Section \ref{Section:Preliminaries} contains the necessary background on monads and exact categories, respectively. In Section \ref{Section: The free monad and the monomorphism category} we study the free monad on an endofunctor. The section is divided into four parts: In Subsection \ref{subsection:The free monad}  we define the free monad of an endofunctor, and provide examples of it. In Subsection \ref{subsection:The free monad of an abelian category} we restrict to the abelian case.  Subsection \ref{Subsection: TopFunctor} introduces the top functor and recalls its basic properties. Finally, Subsection \ref{Subsection:The monomorphism category} introduces the key player of the paper, the monomorphism category. Section \ref{Section: InjectiveObjects} deals with injective objects and the existence of injective envelopes for the monomorphism category. In Section \ref{Section:A representation equivalence} we prove Theorem \ref{Representation equivalence}. It starts with a discussion of contravariantly finiteness of the monomorphism category in Subsection \ref{Subsection:Contravariantly finiteness}, proceeds with the proof of the existence of the epivalence in Subsection \ref{Subsection:The general case}, and finishes by discussing the hereditary case in Subsection \ref{Subsection:The hereditary case}. Section \ref{The Mimo construction} introduces the $\Mimo$-construction in our language. Subsection \ref{Subsection:Definition and properties} deals with the general case while Subsection \ref{Subsection:Mimo for modulations} makes the setup explicit in the case of modulations. Section \ref{subsection: A characterization of the indecomposable objects} contains the proof of Theorem \ref{MimoBijectionIndec}, with a short discussion on maximal injective summands in the beginning. 
 
 Section \ref{Section:Applications} discusses applications to representations of quivers over Artin algebras. Subsection \ref{Subsecition:StableEquivalences} is on stable equivalences and the induced bijections between indecomposables in monomorphism categories. In Subsection \ref{Subsection:Local uniserial rings of Loewy length 3} we prove Theorem \ref{Introduction:BijectionLoewyLength3} and discuss connections to other results in the literature. In Subsection \ref{Subsection:Rad^2-zeroNakayama} we prove Theorem \ref{Gabriel-styleResult}, and explicitly compute indecomposables for monomorphism categories over radical square zero Nakayama algebras, using Theorem \ref{MimoBijectionIndec}. In Subsection \ref{Subsection:Kronecker} we compute the indecomposables in the monomorphism category of the Kronecker quiver over $\mathbbm{k}[x]/(x^2)$. In Section \ref{Section:ApplicationToModulations} we apply our results to representations of modulations over radical square zero cyclic Nakayama algebras to obtain Theorem \ref{Theorem:MainThmModulations}. We then consider $\imath$quivers and GLS algebras in Subsection \ref{Subsection:ModulationsiQuiverGLS}, and prove Theorem \ref{Theorem:GLSALgebras} from the introduction.

	\section{Preliminaries}\label{Section:Preliminaries}
	
	\subsection{Notation}\label{Section: Notation}
	
	We ignore set-theoretic issues in this paper. 
	All categories are assume to be additive and idempotent complete, and all functors are assumed to be additive. For functors $F\colon \mathcal{C}\to \tilde{\mathcal{C}}$ and $G\colon \tilde{\mathcal{C}}\to \mathcal{C}$ we write $F\dashv G$
	to denote that $F$ is left adjoint to $G$.  For a ring $\Lambda$ we let $\Mod \Lambda$, respectively $\operatorname{mod}\Lambda$, denote the category of right $\Lambda$-modules, respectively finitely presented right $\Lambda$-modules. Throughout the paper $\mathbbm{k}$ is a commutative ring.

	\subsection{Monads and modules over monads}\label{Section:Monads and relative Nakayama functors}
	In this subsection we recall the definition of a monad and its Eilenberg--Moore category
 . Our main examples arise from modulations on quivers, see Example \ref{Example: Phyla}.

	\begin{defn}
		Let $\mathcal{C}$ be a category. A \emphbf{monad} on $\mathcal{C}$ is a tuple $(T,\eta,\mu)$ where $T\colon \mathcal{C}\to \mathcal{C}$ is a functor and $\eta\colon \Id\to T$ and $\mu\colon T^2\to T$ are natural transformations such that the diagrams
		\[\begin{tikzcd}
		T^3\arrow{r}[swap]{T(\mu)}\arrow{d}{\mu_T}&T^2\arrow{d}{\mu}\\
		T^2\arrow{r}[swap]{\mu}&T
		\end{tikzcd}
		\quad \text{ and }
		\begin{tikzcd}
		&T\arrow[equals]{rd}\arrow[equals]{ld}\\
		T\arrow{r}[swap]{T(\eta)}&T^2\arrow{u}{\mu}&T\arrow{l}{\eta_T}
		\end{tikzcd}\]
		commute.  By standard abuse of notation we sometimes denote a monad $(T,\eta,\mu)$ simply by $T$.
	\end{defn} 
	
	An important source of instances of monads are adjunctions: If $\eta$ and $\varepsilon$ denote the unit and counit of an adjunction $L\dashv R$, then the tuple $(R\circ L,\eta,R(\varepsilon_L))$
	defines a monad. We refer to \cite[Section VI.1, p.138]{McL98} for more details on this.
	
	Conversely, monads give rise to adjunctions via the Eilenberg--Moore category, which we recall next. This is sometimes called the category of algebras or the category of modules over the monad in the literature.
	
	\begin{defn}\label{Eilenberg--Moore category}
		Let $(T,\eta,\mu)$ be a monad on $\mathcal{C}$.
		\begin{enumerate}
			\item A $T$\emphbf{-module} is a pair $(M,h)$ where $M$ is an object in $\mathcal{C}$ and $h\colon T(M)\to M$ is a morphism  satisfying $h\circ \eta_M=1_M$ and $h\circ \mu_M = h\circ T(h)$.
			
			\item The \emphbf{Eilenberg--Moore category} $\mathcal{C}^{T}$ of $T$ is the category whose objects are $T$-modules, and where a morphism $\map\colon (M,h)\to (M',h')$ between $T$-modules is given by a morphism $\map\colon M\to M'$ in $\mathcal{C}$ satisfying $\map\circ h=h'\circ T(\map)$. 
			
			\item $f^*\colon \mathcal{C}^{T}\to \mathcal{C}$ denotes the forgetful functor given by $f^*(M,h)=M$ and $f^*(\map)=\map$.
			\item $f_!\colon \mathcal{C}\to \mathcal{C}^{T}$ denotes the functor given by $f_!(M)=(T(M),\mu_M)$ and $f_!(\map)=T(\map)$.
			
		\end{enumerate}
	\end{defn} 
	
	\begin{prop}
		Let $T$ be a monad on $\mathcal{C}$. Then $f^*\colon \mathcal{C}^T\to \mathcal{C}$ is right adjoint to $f_!\colon \mathcal{C}\to \mathcal{C}^T$.	
	\end{prop}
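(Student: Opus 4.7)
The plan is to give the standard proof by exhibiting the unit and counit of the adjunction explicitly and verifying the triangle identities; everything is more or less forced by the monad axioms and the definition of a $T$-module.

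First I would define the unit $\eta^{f_!\dashv f^*}\colon \Id_{\mathcal{C}}\to f^*f_!$ to be the monad unit $\eta$: this makes sense since, by definition, $f^*f_!(M)=f^*(T(M),\mu_M)=T(M)$, so $\eta_M\colon M\to T(M)$ is the required morphism, and naturality is inherited from $\eta$. For the counit $\counit{f_!}{f^*}\colon f_!f^*\to \Id_{\mathcal{C}^T}$ at a $T$-module $(M,h)$, I would take $h$ itself. The domain is $f_!f^*(M,h)=(T(M),\mu_M)$ and the codomain is $(M,h)$; that $h$ is actually a morphism in $\mathcal{C}^T$, i.e.\ that $h\circ\mu_M=h\circ T(h)$, is precisely the associativity axiom for a $T$-module. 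Naturality in $(M,h)$ is immediate from the definition of a morphism of $T$-modules.

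Next I would verify the two triangle identities. The first, $\counit{f_!}{f^*}_{f_!(M)}\circ f_!(\unit{f_!}{f^*}_M)=1_{f_!(M)}$, unfolds to $\mu_M\circ T(\eta_M)=1_{T(M)}$, which is the right unit axiom of the monad. The second, $f^*(\counit{f_!}{f^*}_{(M,h)})\circ\unit{f_!}{f^*}_{f^*(M,h)}=1_{f^*(M,h)}$, unfolds to $h\circ\eta_M=1_M$, which is the unit axiom for the $T$-module $(M,h)$.

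Alternatively, and equivalently, I would describe the adjunction via the hom-set bijection
\[
\Hom_{\mathcal{C}^T}\bigl(f_!(M),(N,h)\bigr)\;\cong\;\Hom_{\mathcal{C}}(M,N),
\]
sending $g\colon(T(M),\mu_M)\to(N,h)$ to $g\circ\eta_M$, with inverse $\phi\mapsto h\circ T(\phi)$. Checking that $h\circ T(\phi)$ is in fact a morphism of $T$-modules uses associativity of $(N,h)$ and naturality of $\mu$, and checking that the two assignments are mutually inverse again uses the monad unit axiom and the $T$-module unit axiom. I do not expect any genuine obstacle here: every verification is a one-line diagram chase using exactly one of the defining axioms of monads or of $T$-modules, and naturality in $M$ and $(N,h)$ is straightforward.
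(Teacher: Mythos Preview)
Your proof is correct and is precisely the standard argument; the paper simply cites Mac Lane \cite[Theorem VI.2.1]{McL98}, where exactly this unit/counit description and the triangle identities are established. There is nothing to add.
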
	
	
	\begin{proof}
		This follows from \cite[Theorem VI.2.1]{McL98}.
	\end{proof}

	Fix $\eta$ and $\varepsilon$ to be the unit and counit of the adjunction $f_!\dashv f^*$.
	We finish by giving a sufficient criterion for the Eilenberg--Moore category to be abelian.
	
	\begin{prop}{\cite[Proposition 5.3]{EM65}}\label{Proposition: exactness in Eilenberg--Moore}
		Let $\mathcal{C}$ be an abelian category, and let $(T,\eta,\mu)$ be a monad on $\mathcal{C}$. Assume that $T$ is a right exact functor. The following hold:
		\begin{enumerate}
			\item The Eilenberg--Moore category $\mathcal{C}^T$ is abelian.
			\item A sequence $(M,h)\to (M',h')\to (M'',h'')$ in $\mathcal{C}^{T}$ is exact if and only if the sequence $M\to M'\to M''$ in $\mathcal{C}$ is exact.
		\end{enumerate}
	\end{prop}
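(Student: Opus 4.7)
The plan is to construct kernels and cokernels in $\mathcal{C}^T$ by lifting them along the forgetful functor $f^*$, so that (2) becomes visible from the construction and (1) follows once the standard axioms of an abelian category are verified. The key input is right exactness of $T$, which is needed only for the cokernel side.

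First I would deal with the additive structure: since $T$ is additive, the zero object and biproducts in $\mathcal{C}$ carry canonical $T$-module structures (namely $0\to 0$ and $h\oplus h'$), and these are easily checked to be a zero object and biproducts in $\mathcal{C}^T$. Then I would construct kernels. Given $g\colon(M,h)\to(M',h')$ in $\mathcal{C}^T$, take the kernel $\iota\colon K\to M$ in $\mathcal{C}$. The composite $g\circ h\circ T(\iota)=h'\circ T(g\iota)$ vanishes, so there is a unique $h_K\colon T(K)\to K$ with $\iota\circ h_K=h\circ T(\iota)$. Using that $\iota$ is mono, the two monad-module axioms for $(K,h_K)$ are reduced to those for $(M,h)$. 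A routine diagram chase shows $(K,h_K)\xrightarrow{\iota}(M,h)$ is the kernel of $g$ in $\mathcal{C}^T$.

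Next I would handle cokernels, where the hypothesis that $T$ is right exact enters. Let $\pi\colon M'\to C$ be the cokernel of $g$ in $\mathcal{C}$. Right exactness makes $T(\pi)\colon T(M')\to T(C)$ the cokernel of $T(g)$. Since $\pi\circ h'\circ T(g)=\pi\circ g\circ h=0$, the universal property of this cokernel yields a unique $h_C\colon T(C)\to C$ with $h_C\circ T(\pi)=\pi\circ h'$. To verify the monad-module axioms for $(C,h_C)$ one uses that $T(\pi)$ (respectively $T^2(\pi)$, via iterating right exactness) is epi, which lets the required identities be pulled back through $\pi$ from the corresponding axioms for $(M',h')$. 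Then $(M',h')\xrightarrow{\pi}(C,h_C)$ is the cokernel of $g$ in $\mathcal{C}^T$.

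Part (2) now drops out of the constructions: $f^*$ preserves kernels and cokernels, and it reflects them because a lift of the module structure along a mono or epi in $\mathcal{C}$ is unique. Hence $f^*$ preserves and reflects exactness at the middle term of a three-term sequence, which is the claim of (2). For the remaining axiom in (1) — that the canonical morphism from the coimage to the image of every morphism $g$ in $\mathcal{C}^T$ is an isomorphism — it suffices to observe that this morphism is sent by $f^*$ to the corresponding canonical morphism in $\mathcal{C}$, which is an iso since $\mathcal{C}$ is abelian; an iso in $\mathcal{C}$ whose source and target carry compatible $T$-module structures is automatically an iso in $\mathcal{C}^T$. The main obstacle — though not a deep one — is the slightly finicky verification of the module axioms for $h_C$, where one must check two equalities of maps out of $T^2(C)$ and use that $T^2(\pi)$ is epi, which is precisely where one sees that right exactness (and nothing less) of $T$ is needed.
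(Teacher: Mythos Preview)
Your argument is correct and is essentially the classical proof due to Eilenberg and Moore; the paper does not give its own proof of this proposition but simply cites \cite[Proposition 5.3]{EM65}. Your explicit construction of kernels and cokernels in $\mathcal{C}^T$, with right exactness of $T$ used precisely to make $T(\pi)$ and $T^2(\pi)$ epimorphisms so that the $T$-module axioms on the cokernel can be verified, is exactly the standard route.
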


	\subsection{Exact categories}\label{Exact categories}
	
	Here we recall some basic properties of exact categories, in particular results on injective envelopes.
	
	An \emphbf{exact category} is an additive category $\mathcal{E}$ endowed with a class of kernel-cokernel pairs, called \emphbf{conflations}, satisfying certain properties, see \cite[Appendix A]{Kel90} or \cite{Bue10} for more details. If  $E_1\xrightarrow{i}E_2\xrightarrow{p}E_3$ is a conflation, then $i$ is called an \emphbf{inflation} and $p$ is called a \emphbf{deflation}. 
	Let $\mathcal{E}'$ be a full subcategory of an exact category $\mathcal{E}$. We say that $\mathcal{E}'$ is \emphbf{extension-closed} if for any conflation $E_1\to E_2\to E_3$ with $E_1$ and $E_3$ in $\mathcal{E}'$, the middle term $E_2$ must be in $\mathcal{E}'$. In this case $\mathcal{E}'$ inherits an exact structure whose conflations are the conflations in $\mathcal{E}$ where all the terms are in $\mathcal{E}'$ see \cite[Lemma 10.20]{Bue10}. 
	
	An object $I$ in an exact category $\mathcal{E}$ is called $\emphbf{injective}$ if for any inflation $E\to E'$ the morphism $\operatorname{Hom}_{\mathcal{E}}(E',I)\to \operatorname{Hom}_{\mathcal{E}}(E,I)$ is surjective. The exact category $\mathcal{E}$ is said to have \emphbf{enough injectives} if for any object $E$ in $\mathcal{E}$ there exists an inflation $E\to I$ with $I$ injective. In this case we let $\overline{\mathcal{E}}$ denote the quotient of $\mathcal{E}$ by the ideal of morphisms factoring through injective objects.
	
	Let $g\colon E_1\to E_2$ be a morphism in $\mathcal{E}$. We say that $g$ is \emphbf{left minimal} if any morphism $g'\colon E_2\to E_2$ satisfying $g'\circ g=g$ is an isomorphism. An \emphbf{injective envelope} of an object $E$ in $\mathcal{E}$ is a left minimal inflation $i\colon E\to I$ where $I$ is injective. Note that an injective envelope of an object is unique up to isomorphism. The \emphbf{radical} of $\mathcal{E}$ is the ideal defined by
	\[
	\operatorname{Rad}_{\mathcal{E}}(E,E')\coloneqq\{g\in \operatorname{Hom}_{\mathcal{E}}(E,E')\mid 1_{E'}-g\circ g'\text{ is invertible for all } g'\in \operatorname{Hom}_{\mathcal{E}}(E',E)\}
	\]
	for all $E,E'\in \mathcal{E}$. Recall that the radical is symmetric, and so we have the equality 
	\[
	\operatorname{Rad}_{\mathcal{E}}(E,E')\coloneqq\{g\in \operatorname{Hom}_{\mathcal{E}}(E,E')\mid 1_E-g'\circ g\text{ is invertible for all } g'\in \operatorname{Hom}_{\mathcal{E}}(E',E)\}.
	\]
	We refer to \cite[Section 2.1]{Kra22} for more information on injective envelopes, and to \cite[Section 2]{Kra15} and \cite{Kel64} for the radical. We only need the following results relating them. 
	
	\begin{lem}\label{Lemma:RadicalProperties}
		Let $\mathcal{E}$ be a exact category, and let $E\in \mathcal{E}$. The following hold:
		\begin{enumerate}
			\item\label{Lemma:RadicalProperties:1} If $i\colon E\to I$ is an injective envelope, then $p\colon I\rightarrow \operatorname{coker}i$ is in the radical of $\mathcal{E}$.
			\item\label{Lemma:RadicalProperties:2} Assume $\mathcal{E}$ is abelian with the natural exact structure. Furthermore, assume $\mathcal{E}$ has injective envelopes. Let $g\colon J\to E$ be a morphism in $\mathcal{E}$. If $J$ is injective and $E$ has no nonzero injective summands, then the inclusion $\operatorname{ker}g\to J$ is an injective envelope. In particular, $g$ is in the radical of $\mathcal{E}$.
		\end{enumerate}
	\end{lem}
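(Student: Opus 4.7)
For part (1), my approach is a direct computation using only the definition of the radical together with the left minimality built into an injective envelope. Fix any morphism $q\colon \operatorname{coker}(i)\to I$; since $p\circ i = 0$, one immediately gets $(1_I - q\circ p)\circ i = i$. Left minimality of the envelope $i$ then forces $1_I - q\circ p$ to be an isomorphism, which is precisely the condition for $p$ to lie in $\operatorname{Rad}_{\mathcal{E}}(I,\operatorname{coker}(i))$.

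For part (2), write $K = \operatorname{ker}g$ with inclusion $k\colon K\to J$. My plan is to first show that $k$ is an injective envelope of $K$ and only then deduce that $g$ lies in the radical. Since $J$ is already injective, the first goal reduces to showing that $k$ is essential. I would argue by contradiction: suppose $J_0\subseteq J$ is a nonzero subobject with $J_0\cap K = 0$, and choose an injective envelope $J_0 \hookrightarrow I_0$ using the assumption on $\mathcal{E}$. Using injectivity of $J$, extend $J_0\hookrightarrow J$ along $J_0\hookrightarrow I_0$ to a morphism $I_0 \to J$. Essentiality of $J_0\hookrightarrow I_0$ forces this extension to be mono (its kernel would otherwise meet $J_0$) and also forces $I_0 \cap K = 0$ inside $J$ (any nonzero intersection would meet $J_0$, contradicting $J_0\cap K = 0$). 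Now $I_0 \hookrightarrow J$ is the inclusion of an injective subobject, so it splits and exhibits $I_0$ as a direct summand of $J$. The restriction $g|_{I_0}$ has kernel $I_0 \cap K = 0$, giving a mono from the injective $I_0$ into $E$; injectivity of $I_0$ splits this mono, producing $I_0$ as a nonzero injective summand of $E$ and contradicting the hypothesis on $E$.

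For the \emph{in particular} clause I would reduce to part (1). Factor $g$ through its image as $g = \bar g\circ p$, where $p\colon J\twoheadrightarrow J/K$ is the cokernel of $k$ and $\bar g\colon J/K \hookrightarrow E$ is the induced mono. The first half of (2) identifies $k$ as an injective envelope, so part (1) gives $p \in \operatorname{Rad}_{\mathcal{E}}(J, J/K)$; since the radical of an additive category is a two-sided ideal, $g = \bar g \circ p$ lies in $\operatorname{Rad}_{\mathcal{E}}(J,E)$.

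The main obstacle I foresee is the geometric step in part (2): promoting a non-essential subobject $J_0\subseteq J$ to an injective direct summand of $J$ still disjoint from $K$. This requires invoking essentiality of the envelope $J_0\hookrightarrow I_0$ twice (for the monicness of the extension $I_0\to J$ and for the disjointness $I_0\cap K=0$) and the splitting of monomorphisms out of injective objects once. Once that summand is produced, the contradiction with the hypothesis on $E$ is immediate.
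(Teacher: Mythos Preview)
Your proof is correct. Part~(1) is identical to the paper's argument. For part~(2) the two proofs diverge in how they show that $k\colon K\hookrightarrow J$ is an injective envelope, though both end by producing an injective summand of $E$.

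The paper takes an injective envelope $u\colon K\to E(K)$, extends it to a split monomorphism $E(K)\hookrightarrow J$ (using injectivity of both $J$ and $E(K)$), and writes $J\cong E(K)\oplus J'$. A short diagram chase then shows that the induced map $E\to J'$ is a split epimorphism, so $J'$ is an injective summand of $E$, hence $J'=0$ and $E(K)\to J$ is an isomorphism. Your approach instead proves essentiality of $k$ directly by contradiction: a nonzero $J_0\subseteq J$ with $J_0\cap K=0$ is enlarged to an injective subobject $I_0\subseteq J$ still disjoint from $K$, and then $g|_{I_0}$ embeds $I_0$ as an injective summand of $E$. Your route is arguably more geometric (it isolates exactly the essentiality property) but relies on the standard equivalence, in an abelian category, between ``essential monomorphism into an injective'' and the paper's definition of injective envelope as a left minimal inflation; the paper's argument sidesteps this by comparing $J$ with $E(K)$ directly. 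The ``in particular'' clause is handled identically in both proofs: factor $g$ through $\operatorname{coker}k$ and invoke part~(1).
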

	\begin{proof}
		To prove statement \eqref{Lemma:RadicalProperties:1} let $k\colon \operatorname{coker}i\to I$ be an arbitrary morphism. Then the endomorphism $k'=1_I-k\circ p$ satisfies $k'\circ i=i$. Since $i$ is an injective envelope, we must have that $k'$ is an isomorphism. Since $k$ was arbitrary, it follows that $p$ is in the radical of $\mathcal{C}$.
		
		To prove \eqref{Lemma:RadicalProperties:2} let $u\colon \operatorname{ker}g\to E(\operatorname{ker}g)$ denote the injective envelope of $\operatorname{ker}g$. Then the monomorphism $i\colon\operatorname{ker}g\to J$ lifts to a split monomorphism  $E(\operatorname{ker}g)\to J$. Consider the commutative diagram where the lower row is a split exact sequence 
		\[
		\begin{tikzcd}
		0\arrow{r}&\operatorname{ker}g\arrow{r}{i}\arrow{d}{u}&J\arrow{d}{1_J}\arrow{r}{g}&E\arrow[dashed]{d}&\\
		0\arrow{r}&E(\operatorname{ker}g)\arrow{r}&J\arrow{r}&J'\arrow{r}&0.
		\end{tikzcd}
		\]
		Since the left square commutes, we get an induced morphism $\operatorname{coker}i\to J'$. Since $J'$ is injective, we can lift this to a morphism $E\to J'$ such that the rightmost square in the diagram commutes. Since $J\to J'$ is a split epimorphism, the morphism $E\to J'$ is a split epimorphism. Therefore $J'$ must be a summand of both $J$ and $E$. But being a summand of $J$ implies that $J'$ is injective, and $E$ has no nonzero injective summands. Therefore $J'=0$. Hence $i$ is an injective envelope. Since $g$ factors through the cokernel of $i$, it must be in the radical by part \eqref{Lemma:RadicalProperties:1}.
	\end{proof}
	
	We finish by proving a uniqueness result on the decomposition of an object by a maximal injective summand. It holds for abelian categories with injective envelopes by the previous lemma. It also holds for the monomorphism category by Lemma \ref{No injective summands}.
	
	\begin{lem}\label{Lemma:UniquenessMaxInjSummand}
		Let $\mathcal{E}$ be an exact category, and let $E_1$ and $E_2$ be objects in $\mathcal{E}$. Assume that any morphism from an injective object to $E_1$ or $E_2$ is in the radical of $\mathcal{E}$. Let  
		\[
		E_1\oplus I_1\xrightarrow{\cong} E_2\oplus I_2
		\]
		be an isomorphism where $I_1$ and $I_2$ are injective. Then the restrictions $E_1\to E_2$ and $I_1\to I_2$ are isomorphisms.
	\end{lem}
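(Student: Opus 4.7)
The plan is to represent the given isomorphism as a $2\times 2$ matrix, consider the analogous matrix for its inverse, and then extract the four composition identities that encode the fact these are mutually inverse. Concretely, write the isomorphism as
\[
\begin{pmatrix} a & b \\ c & d \end{pmatrix}\colon E_1 \oplus I_1 \xrightarrow{\cong} E_2 \oplus I_2,
\]
with $a\colon E_1\to E_2$, $b\colon I_1\to E_2$, $c\colon E_1\to I_2$, $d\colon I_1\to I_2$, and denote the entries of its inverse by $a',b',c',d'$. The goal is to show that the diagonal restrictions $a$ and $d$ are isomorphisms.

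The key observation is that the off-diagonal entries $b\colon I_1\to E_2$ and $b'\colon I_2\to E_1$ are morphisms from an injective object into $E_1$ or $E_2$, and hence belong to $\operatorname{Rad}_{\mathcal{E}}$ by hypothesis. Since the radical is an (two-sided) ideal, the composites $b'c$, $bc'$, $c'b$, and $cb'$ are in the radical as well. The matrix identities arising from the two compositions being the identity give
\[
a'a = 1_{E_1} - b'c, \qquad aa' = 1_{E_2} - bc', \qquad d'd = 1_{I_1} - c'b, \qquad dd' = 1_{I_2} - cb'.
\]
Since each of the subtracted terms is an endomorphism lying in the radical, each of the four right-hand sides is invertible (taking $g' = 1$ in the definition of the radical).

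From this, $a$ admits both a left and a right inverse — namely $(a'a)^{-1}a'$ and $a'(aa')^{-1}$ — and hence is an isomorphism; likewise $d$ is an isomorphism. No step is particularly delicate; the only point to get right is the direction of the radical relation, which is covered by the symmetric reformulation of the radical recalled just before the lemma statement. Thus the argument reduces to standard bookkeeping with the matrix entries of an isomorphism and its inverse, together with the ideal property of $\operatorname{Rad}_{\mathcal{E}}$.
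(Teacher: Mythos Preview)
Your proof is correct and follows essentially the same approach as the paper's: both write the isomorphism and its inverse as $2\times 2$ matrices, use the hypothesis to see that the off-diagonal entries $I_j\to E_k$ lie in the radical, and then invoke the defining property of the radical to conclude that the diagonal entries are isomorphisms. The paper handles $\phi_4$ first (your $d$) using one identity and says ``similarly'' for the other direction, whereas you spell out all four identities and pass through the ideal property of $\operatorname{Rad}_{\mathcal{E}}$ explicitly; these are cosmetic differences only.
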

	
	\begin{proof}
		Let 
		\[
		\phi=\begin{pmatrix}
		\phi_1&\phi_2\\
		\phi_3&\phi_4
		\end{pmatrix}\colon E_1\oplus I_1\to E_2\oplus I_2
		\quad \text{and} \quad \psi=\begin{pmatrix}
		\psi_1&\psi_2\\
		\psi_3&\psi_4
		\end{pmatrix}\colon E_2\oplus I_2\to E_1\oplus I_1
		\]
		denote the isomorphism and its inverse. Then we have that $\psi_3\circ \phi_2 + \psi_4\circ \phi_4=1_{I_1}$. By assumption, $\phi_2$ is in the radical of $\mathcal{E}$ . Hence, by definition of the radical the composite $\psi_4\circ \phi_4=1_{I_1}-\psi_3\circ \phi_2$ is an isomorphism. By a similar argument the composite $\phi_4\circ \psi_4$ is also an isomorphism. Hence $\phi_4\colon I_1\to I_2$ must itself be an isomorphism. The fact that $\phi_1$ is an isomorphism is proved in the same way.
	\end{proof}

	\section{The monomorphism category of the free monad}\label{Section: The free monad and the monomorphism category}
	
	Fix a $\mathbbm{k}$-linear additive category $\mathcal{C}$.  Throughout the section $X\colon \mathcal{C}\to \mathcal{C}$ is assumed to be a $\Bbbk$-linear functor which is \emphbf{locally nilpotent}, i.e. for any object $M\in \mathcal{C}$ there exists an $n\geq 0$ such that $X^n(M)=0$. From Subsection \ref{subsection:The free monad of an abelian category} we assume $\mathcal{C}$ is abelian and $X$ is exact, and from Section \ref{Section: InjectiveObjects} onwards we also assume that $X$ preserves injective objects. The assumptions on $X$ are introduced  to capture the essential properties of monads arising from representations of finite acyclic quivers in additive and abelian categories, see Example \ref{Example:Quiver Representations}. This is done using the free monad on $X$, which we define. In addition to this we define the monomorphism category and the analogue of the top functor. 
	
	\begin{rmk}\label{Remark:RelativeNakayama}
		The assumption we make on $X$ differs from the ones in \cite{GKKP19}. Several of the results in \cite{GKKP19} rely on the existence of a relative Nakayama functor on the Eilenberg--Moore category of the free monad on $X$, and this is not necessary to assume in this paper. On the other hand, many of the proofs in \cite{GKKP19} still go through. Another technical condition we use in \cite{GKKP19} is a relative version of Nakayama's lemma, which says that if $M$ is non-zero, then there are no epimorphisms $X(M)\twoheadrightarrow M$, see \cite[Lemma 6.25]{GKKP19}. Under the assumption that $X$ is locally nilpotent, this is automatically satisfied when $X$ preserves epimorphisms, and in particular if $X$ is an exact functor on an abelian category.
	\end{rmk}
	
	\subsection{The free monad}\label{subsection:The free monad}
	The definition of the free monad mimics the construction of the path algebra of a quiver, and more generally the tensor algebra of a bimodule over an algebra.
	
	\begin{defn}\label{Def: Free monad} The \emphbf{free monad} on $X$ is the monad $(T(X),\eta,\mu)$ where $T(X)\colon \mathcal{C}\to \mathcal{C}$ is given by
		\[
		T(X)(M)=\coprod_{i\geq 0}X^i(M)
		\]
		and where $\eta\colon \Id\to T(X)$ is the canonical inclusion and $\mu\colon T(X)\circ T(X)\to T(X)$ is given componentwise by the canonical identification $X^iX^j(M)\xrightarrow{\cong}X^{i+j}(M)$.
	\end{defn}
	
	Since $X$ is locally nilpotent, the coproduct $\coprod_{i\geq 0}X^i(M)$ is finite for each object $M\in \mathcal{C}$. We therefore identify it with the direct sum $\bigoplus_{i\geq 0}X^i(M)$. 
	
	\begin{rmk}
		In \cite{GKKP19} it is assumed that $X$ preserves countable coproducts when defining the free monad. However this is only used to conclude that the canonical map $X(\coprod_{i\geq 0}X^i(M))\to \coprod_{i\geq 1}X^i(M)$ is an isomorphism, which follows here from the fact that since $X$ is locally nilpotent, the coproduct is finite and is therefore preserved by the additive functor $X$.
	\end{rmk}
	
	The Eilenberg--Moore category of a free monad $T(X)$ has an alternative simpler description. Let $(X\Downarrow \Id_{\mathcal{C}})$ be the category whose objects are pairs $(M,h_1)$ where $M\in \mathcal{C}$ and $h_1\colon X(M)\to M$ is a morphism, and where a morphism $\map\colon (M,h_1)\to (M',h_1')$ in $(X\Downarrow \Id_{\mathcal{C}})$ is a morphism $\map\colon M\to M'$ in $\mathcal{C}$ satisfying $\map\circ h_1= h_1'\circ X(\map)$.  Note that we have a functor
	\begin{align*}
	\mathcal{C}^{T(X)}\to (X\Downarrow \Id_{\mathcal{C}}) \quad (\bigoplus_{i\geq 0}X^i(M)\xrightarrow{h}M)\mapsto (M,h_1) 
	\end{align*} 
	where $h_1\colon X(M)\to M$ is the restriction of $h$ to $X(M)$.
	
	\begin{lem}\label{Eilenberg--Moore category of free monad}
		The functor $\mathcal{C}^{T(X)}\to (X\Downarrow \Id_{\mathcal{C}})$ above is an isomorphism of categories.
	\end{lem}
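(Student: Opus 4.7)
The plan is to construct an explicit inverse functor and verify that it is functorial and a two-sided inverse. Given $(M,h_1) \in (X \Downarrow \Id_{\mathcal{C}})$, I will assemble a $T(X)$-module structure $h = \bigoplus_{i \geq 0} h_i \colon \bigoplus_{i \geq 0} X^i(M) \to M$ by setting $h_0 = 1_M$ and defining $h_i \colon X^i(M) \to M$ recursively by $h_{i+1} = h_1 \circ X(h_i)$. This is forced: the multiplication axiom $h \circ \mu_M = h \circ T(X)(h)$, read off componentwise along the decomposition $T(X)T(X)(M) = \bigoplus_{i,j\geq 0} X^{i+j}(M)$, becomes the identity $h_{i+j} = h_i \circ X^i(h_j)$, and the special case $i=1$ gives the recursion.

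Next I would verify that this $(M,h)$ actually defines a $T(X)$-module. The unit axiom $h \circ \eta_M = 1_M$ is immediate from $h_0 = 1_M$. For the multiplication axiom I would prove $h_{i+j} = h_i \circ X^i(h_j)$ by induction on $i$: the base case $i=0$ is trivial, and the inductive step is the chain
\[
h_{(i+1)+j} = h_1 \circ X(h_{i+j}) = h_1 \circ X(h_i) \circ X^{i+1}(h_j) = h_{i+1} \circ X^{i+1}(h_j),
\]
using the recursion and the functoriality of $X$. This gives an object assignment $(M,h_1) \mapsto (M,h)$ from $(X \Downarrow \Id_{\mathcal{C}})$ to $\mathcal{C}^{T(X)}$.

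For morphisms, given $\map \colon (M,h_1) \to (M',h'_1)$ in $(X \Downarrow \Id_{\mathcal{C}})$, I would prove by induction on $i$ that $\map \circ h_i = h'_i \circ X^i(\map)$, so that $\map$ is automatically a $T(X)$-module morphism $(M,h) \to (M',h')$. The induction step uses the recursion for $h_{i+1}$ and $h'_{i+1}$ together with the defining relation of $\map$, exactly as in the argument above. This shows the assignment is a functor.

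Finally I would check both compositions are identities. In one direction, starting from $(M,h_1)$, extracting $h_1$ from the reconstructed $h$ returns the same $h_1$ by construction. In the other direction, starting from $(M,h)$ and restricting to $h_1$, reconstruction recovers $h$: every component $h_i$ of the original $T(X)$-module structure already satisfies $h_{i+1} = h_1 \circ X(h_i)$, as this is an instance of the multiplication axiom, so by the same induction the reconstructed components match $h_i$. Since the functors act as the identity on underlying morphisms, they are mutually inverse. There is no serious obstacle here; the main subtlety is simply recognising that the recursion $h_{i+1} = h_1 \circ X(h_i)$ packaged together with $h_0 = 1_M$ is equivalent to the full associativity axiom for a $T(X)$-module, and the local nilpotence of $X$ ensures that the sums involved are finite so no convergence issues arise.
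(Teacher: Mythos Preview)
Your proof is correct. The paper's own proof is simply a citation to \cite[Lemma 5.18]{GKKP19}, so there is nothing to compare at the level of argument; what you have written is exactly the natural direct verification and is almost certainly what the cited proof does as well. Your key observation---that the full associativity identity $h_{i+j} = h_i \circ X^i(h_j)$ is equivalent, given $h_0 = 1_M$, to the single recursion $h_{i+1} = h_1 \circ X(h_i)$---is precisely the content of the lemma, and your inductions carry it through cleanly.
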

	
	\begin{proof}
		This follows from the proof of Lemma 5.18 in \cite{GKKP19}.
	\end{proof}
	
	Note that a similar description for the Eilenberg--Moore category is given in \cite[Remark 2.2]{CL20}. We will identify the categories $\mathcal{C}^{T(X)}$ and $(X\Downarrow \Id_{\mathcal{C}})$. We use sans serif typestyle $\module{M},\module{N},\dots$ to denote objects in $\mathcal{C}^{T(X)}$, so that the same letter without the sans serif typestyle denotes the underlying object in $\mathcal{C}$, i.e. $M=f^*(\module{M})$ and $N=f^*(\module{N})$. The induced morphism $X(M)\to M$ is then denoted by $h_{\module{M}}$ and called the \emphbf{structure map} of $\module{M}$.
	
	In Section \ref{Section:Monads and relative Nakayama functors} we saw that the forgetful functor $f^*\colon \mathcal{C}^{T(X)}\to \mathcal{C}$ has a left adjoint $f_!\colon \mathcal{C}\to \mathcal{C}^{T(X)}$.  It is given by $f_!(M)=(\bigoplus_{i\geq 0}X^i(M),\iota_M)$ where the structure map $\iota_M$ is the canonical inclusion
	\begin{equation}\label{StructureMapRelProj}
	\iota_M\colon \bigoplus_{i\geq 1}X^i(M)\to \bigoplus_{i\geq 0}X^i(M).
	\end{equation}
	Any summand of an object of the form $f_!(M)$ is called \emphbf{relative projective}. If $\mathcal{C}=\operatorname{Mod} \Lambda$ for a semisimple Artin algebra $\Lambda$, then the relative projectives coincide with the projectives in $\mathcal{C}^{T(X)}$, cf. Proposition \ref{Corollary:MonoAndInjectiveComponents}. The following lemma implies that the relative projectives behave as projectives for objectwise split epimorphisms in $\mathcal{C}^{T(X)}$.
	
	\begin{lem}\label{Lemma:RelativeProjective}
		Let $N$ be an object in $\mathcal{C}$ and let $g\colon \module{M}\to \module{M}'$ be a morphism in $\mathcal{C}^{T(X)}$. Assume $f^*(g)$ is a split epimorphism. Then any morphism $f_!(N)\to \module{M}'$ factors through $g$.
	\end{lem}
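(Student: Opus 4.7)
The plan is to exploit the adjunction $f_!\dashv f^*$ recalled in Section~\ref{Section:Monads and relative Nakayama functors}. The point is that relative projectives of the form $f_!(N)$ are characterized, via the adjunction, by the property that morphisms out of them are determined by their "underlying" morphism in $\mathcal{C}$, so lifting in $\mathcal{C}^{T(X)}$ reduces to lifting in $\mathcal{C}$, where a splitting of $f^*(g)$ is available by hypothesis.

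More concretely, let $\phi\colon f_!(N)\to \module{M}'$ be the given morphism. Under the adjunction bijection
\[
\Hom_{\mathcal{C}^{T(X)}}(f_!(N),\module{M}')\xrightarrow{\cong}\Hom_{\mathcal{C}}(N,f^*(\module{M}')),
\]
it corresponds to $\widetilde{\phi}=f^*(\phi)\circ \eta_N\colon N\to f^*(\module{M}')$. Since $f^*(g)$ is a split epimorphism in $\mathcal{C}$, pick a section $s\colon f^*(\module{M}')\to f^*(\module{M})$ with $f^*(g)\circ s=1_{f^*(\module{M}')}$. Then $s\circ \widetilde{\phi}\colon N\to f^*(\module{M})$ corresponds under the adjunction to some $\psi\colon f_!(N)\to \module{M}$ in $\mathcal{C}^{T(X)}$.

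The final step is to check that $g\circ \psi=\phi$. By naturality of the adjunction, $g\circ \psi$ corresponds to $f^*(g)\circ f^*(\psi)\circ \eta_N=f^*(g)\circ s\circ \widetilde{\phi}=\widetilde{\phi}$, which is precisely the image of $\phi$ under the bijection. Injectivity of the adjunction bijection then forces $g\circ \psi=\phi$, giving the desired factorization. There is no real obstacle here; the only subtle point is making sure the section $s$, which lives only in $\mathcal{C}$ and not in $\mathcal{C}^{T(X)}$, is allowed, and this is exactly what the adjunction turns into a legitimate morphism $\psi$ in $\mathcal{C}^{T(X)}$ after transposing.
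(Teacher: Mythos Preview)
Your proof is correct and follows essentially the same approach as the paper: both use the adjunction $f_!\dashv f^*$ and its naturality to reduce the lifting problem in $\mathcal{C}^{T(X)}$ to the corresponding one in $\mathcal{C}$, where the split epimorphism hypothesis applies. The paper phrases this more tersely by observing directly that $\Hom_{\mathcal{C}^{T(X)}}(f_!(N),g)$ is isomorphic (as a map) to $\Hom_{\mathcal{C}}(N,f^*(g))$, which is surjective; your version simply unrolls this into explicit element-level computations with the section $s$.
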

	
	\begin{proof}
		By the adjunction $f_!\dashv f^*$, the map 
		\[
		\operatorname{Hom}_{\mathcal{C}^{T(X)}}(f_!(N),g)\colon \operatorname{Hom}_{\mathcal{C}^{T(X)}}(f_!(N),\module{M})\to \operatorname{Hom}_{\mathcal{C}^{T(X)}}(f_!(N),\module{M}')
		\]
		is isomorphic to the map
		\[
		\operatorname{Hom}_{\mathcal{C}}(N,f^*(g))\colon \operatorname{Hom}_{\mathcal{C}}(N,f^*(\module{M}))\to \operatorname{Hom}_{\mathcal{C}}(N,f^*(\module{M}')).
		\]
		The latter must be an epimorphism since $f^*(g)$ is a split epimorphism. This proves the claim.
	\end{proof}
	
	A sequence $\module{M}_1\to \module{M_2}\to \module{M}_3$ in $\mathcal{C}^{T(X)}$ is called \emphbf{objectwise split exact} if the sequence 
	\[
	f^*(\module{M}_1)\to f^*(\module{M_2})\to f^*(\module{M}_3)
	\]
	is split exact in $\mathcal{C}$. We show that such a sequence is an exact sequence in $\mathcal{C}^{T(X)}$, i.e. a kernel-cokernel pair. Note that this holds even though $\mathcal{C}^{T(X)}$ is not assumed to be abelian. We also show that the free monad $T(X)$ has relative global dimension one, i.e. any object in $\mathcal{C}^{T(X)}$ has a objectwise split resolution of length one by relative projective objects. Here the structure morphism of $f_!(M)$ is considered as a morphism $\iota_M\colon f_!X(M)\to f_!(M)$ in $\mathcal{C}^{T(X)}$, and $\varepsilon_M$ denotes the counit of the adjuntion $f_!\dashv f^*$ at $M$. 
	
	\begin{lem}\label{relative global dimension 1}
		The following hold:
		\begin{enumerate}
			\item\label{relative global dimension 1:1} Any objectwise split exact sequence is an exact sequence.
			\item\label{relative global dimension 1:2} For each $\module{M}\in \mathcal{C}^{T(X)}$ the sequence
			\[
			0\to f_!X(M)\xrightarrow{\iota_M-f_!(h_\module{M})}f_!(M)\xrightarrow{\varepsilon_{\module{M}}}\module{M}\to 0
			\]
			is objectwise split exact.
		\end{enumerate}
		
	\end{lem}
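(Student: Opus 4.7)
The plan is to work throughout with the concrete description of $\mathcal{C}^{T(X)}$ as $(X \Downarrow \Id_{\mathcal{C}})$ from Lemma~\ref{Eilenberg--Moore category of free monad}. The two parts are essentially independent and I would handle them in order.

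For part~\ref{relative global dimension 1:1}, the idea is to lift the kernel-cokernel pair property from $\mathcal{C}$ to $\mathcal{C}^{T(X)}$. Suppose $\module{M}_1 \xrightarrow{i} \module{M}_2 \xrightarrow{p} \module{M}_3$ becomes split exact after applying $f^*$; then $(f^*(i), f^*(p))$ is automatically a kernel-cokernel pair in the additive category $\mathcal{C}$. To check that $i$ is the kernel of $p$ in $\mathcal{C}^{T(X)}$, take a morphism $g\colon \module{N} \to \module{M}_2$ with $p \circ g = 0$. The kernel property in $\mathcal{C}$ produces a unique $g'\colon N \to M_1$ with $f^*(i) \circ g' = f^*(g)$, and I would verify that $g'$ already respects structure maps: the required identity $g' \circ h_{\module{N}} = h_{\module{M}_1} \circ X(g')$ follows after postcomposing with $f^*(i)$ and using that $i$ and $g$ are morphisms of $T(X)$-modules plus naturality of $X$, together with cancellation of the (split) monomorphism $f^*(i)$. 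The cokernel half is dual.

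For part~\ref{relative global dimension 1:2}, the plan is to write down explicit splittings at the level of $\mathcal{C}$. Unwinding definitions, the underlying complex reads
\[
\bigoplus_{j \geq 1} X^j(M) \xrightarrow{\iota_M - T(X)(h_{\module{M}})} \bigoplus_{i \geq 0} X^i(M) \xrightarrow{\varepsilon_{\module{M}}} M,
\]
where the restriction of $\iota_M - T(X)(h_{\module{M}})$ to the summand $X^j(M)$ has components $1$ on $X^j(M)$ and $-X^{j-1}(h_{\module{M}})$ on $X^{j-1}(M)$, and $\varepsilon_{\module{M}}$ on $X^i(M)$ is the iterated structure map $h_{\module{M}} \circ X(h_{\module{M}}) \circ \cdots \circ X^{i-1}(h_{\module{M}})$. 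I would take $s\colon M \to \bigoplus_{i \geq 0} X^i(M)$ to be the inclusion of the $i = 0$ summand and define $r\colon \bigoplus_{i \geq 0} X^i(M) \to \bigoplus_{j \geq 1} X^j(M)$ by the triangular formula whose $X^k(M) \to X^j(M)$-component is $X^j(h_{\module{M}}) \circ X^{j+1}(h_{\module{M}}) \circ \cdots \circ X^{k-1}(h_{\module{M}})$ for $1 \leq j \leq k$ and zero otherwise. Morally, $r$ inverts $1 - N$ for the shift $N$ induced by $T(X)(h_{\module{M}})$, and local nilpotence of $X$ is precisely what guarantees that this \say{geometric series} defines a genuine morphism, since only finitely many terms are nonzero at each object.

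The main obstacle is the bookkeeping in checking $\varepsilon_{\module{M}} \circ s = 1_M$, $r \circ (\iota_M - T(X)(h_{\module{M}})) = 1$, and $s \circ \varepsilon_{\module{M}} + (\iota_M - T(X)(h_{\module{M}})) \circ r = 1$; but these all reduce to telescoping cancellations along the composition $h_{\module{M}} \circ X(h_{\module{M}}) \circ \cdots$, following the standard contracting-homotopy pattern for a bar-type resolution. Combined with part~\ref{relative global dimension 1:1}, the conclusion of part~\ref{relative global dimension 1:2} is immediate, and no conceptual input beyond local nilpotence is needed.
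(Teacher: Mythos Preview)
Your proposal is correct and follows essentially the same strategy as the paper. For part~\ref{relative global dimension 1:1} the paper invokes the general creation-of-(co)limits results for monadic forgetful functors (Borceux, Propositions~4.3.1 and~4.3.2), whereas you verify the same lifting directly via the description $\mathcal{C}^{T(X)}\cong (X\Downarrow\Id_{\mathcal{C}})$; for part~\ref{relative global dimension 1:2} the paper defers the splitting computation to \cite[Lemma~6.17]{GKKP19}, while you write out the contracting homotopy explicitly. One small presentational point: in part~\ref{relative global dimension 1:1} the cokernel half is not literally dual to the kernel half, since cancelling on the right requires that $X(f^*(p))$ be epi --- this is exactly where the paper remarks that $T(X)$ preserves split exact sequences, and in your setup it follows because $X$ is additive and $f^*(p)$ is a split epimorphism; you may want to say this rather than simply \say{dual}.
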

	
	\begin{proof}
		Let $\module{M}_1\to \module{M}_2\to \module{M}_3$ be a objectwise split exact sequence. By \cite[Proposition 4.3.1]{Bor94a} the map $\module{M}_1\to \module{M_2}$ is a kernel of $\module{M}_2\to \module{M}_3$ since $f^*(\module{M}_1)\to f^*(\module{M}_2)$ is a kernel of $f^*(\module{M}_2)\to f^*(\module{M}_3)$. Since $T(X)$ preserves split exact sequences, \cite[Proposition 4.3.2]{Bor94a} implies that $\module{M}_2\to \module{M}_3$ is a cokernel of $\module{M}_1\to \module{M}_2$ since  $f^*(\module{M}_1)\to f^*(\module{M}_2)\to f^*(\module{M}_3)$ is split exact. This proves \eqref{relative global dimension 1:1}.
		
		The fact that 
		\[
		0\to f^*f_!X(M)\xrightarrow{f^*(\iota_M-f_!(h_\module{M}))}f^*f_!(M)\xrightarrow{f^*(\varepsilon_{\module{M}})}f^*(\module{M})\to 0
		\]
		is a split exact sequence can be shown in a similar way as in the proof of Lemma 6.17 in \cite{GKKP19}. Note that the infinite sums in that proof are finite in our case, since $X$ is locally nilpotent. 
	\end{proof}
	
	Next we show that the relative projectives are always isomorphic to objects of the form $f_!(M)$. They can also be characterized by their structure morphism being a split monomorphism.
	
	\begin{prop}\label{Proposition: direct summands of f_!}
		Let $\module{M}\in \mathcal{C}^{T(X)}$. The following are equivalent:
		\begin{enumerate}
			\item\label{Proposition: direct summands of f_!:1} $\module{M}$ is relative projective.
			\item\label{Proposition: direct summands of f_!:2} $h_{\module{M}}\colon X(M)\to M$ is a split monomorphism in $\mathcal{C}$.
			\item\label{Proposition: direct summands of f_!:3} $\module{M}$ is isomorphic to an object of the form $f_!(N')$ for $N'\in \mathcal{C}$.
		\end{enumerate}	
	\end{prop}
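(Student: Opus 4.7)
The plan is to prove the cyclic chain of implications $(3) \Rightarrow (1) \Rightarrow (2) \Rightarrow (3)$, where only the last step requires real work.

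First, $(3) \Rightarrow (1)$ is immediate, since by definition $f_!(N')$ is itself relative projective. For $(1) \Rightarrow (2)$, the key observation is that the structure map of $f_!(M)$, namely $\iota_M\colon \bigoplus_{i\geq 1}X^i(M)\to \bigoplus_{i\geq 0}X^i(M)$ from \eqref{StructureMapRelProj}, is the canonical inclusion into a direct sum and hence a split monomorphism. Via the isomorphism $\mathcal{C}^{T(X)} \cong (X\Downarrow \Id_{\mathcal{C}})$ of Lemma \ref{Eilenberg--Moore category of free monad}, a direct sum decomposition $f_!(M) \cong \module{M}\oplus \module{M}'$ in $\mathcal{C}^{T(X)}$ corresponds to a direct sum decomposition of the arrow $\iota_M$ in the arrow category as $h_{\module{M}}\oplus h_{\module{M}'}$. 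Since a direct summand of a split monomorphism is a split monomorphism, $h_{\module{M}}$ is split mono.

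The heart of the proof is $(2) \Rightarrow (3)$, where local nilpotency of $X$ is essential. Assume $h_{\module{M}}\colon X(M)\to M$ is a split monomorphism, and set $N' = \coker(h_{\module{M}})$, so that $M \cong X(M)\oplus N'$ with $h_{\module{M}}$ identified with the canonical inclusion. Applying the additive functor $X$ and using the decomposition recursively yields, for every $n\geq 0$, an isomorphism
\[
M \;\cong\; X^n(M) \oplus \bigoplus_{i=0}^{n-1}X^i(N')
\]
under which $h_{\module{M}}$ is identified with the canonical inclusion arising from $h_{X^{n-1}(\module{M})}$ on the first summand together with the shift on the remaining summands. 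Since $X$ is locally nilpotent, some iterate $X^n(M)$ vanishes, which collapses the above decomposition to $M \cong \bigoplus_{i\geq 0}X^i(N') = T(X)(N')$, and the structure map $h_{\module{M}}$ is thereby identified with $\iota_{N'}$. This exhibits the required isomorphism $\module{M}\cong f_!(N')$ in $\mathcal{C}^{T(X)}$.

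The main obstacle is making the iterative unfolding in $(2)\Rightarrow (3)$ precise: one must check that the splittings at each stage can be chosen compatibly so that the resulting isomorphism $M \cong \bigoplus_{i\geq 0}X^i(N')$ actually intertwines $h_{\module{M}}$ with $\iota_{N'}$, rather than merely giving an abstract isomorphism of underlying objects in $\mathcal{C}$. This is where local nilpotency does the real work, since it both terminates the induction and guarantees that the finite and infinite direct sums in question coincide.
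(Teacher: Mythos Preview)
Your proposal is correct and follows essentially the same route as the paper: the same cycle of implications, the same observation that summands of split monomorphisms are split monomorphisms for $(1)\Rightarrow(2)$, and the same iterative unfolding of $M\cong X(M)\oplus N'$ via local nilpotency for $(2)\Rightarrow(3)$. The paper resolves the compatibility worry you flag at the end in exactly the way your phrase ``applying the additive functor $X$'' already suggests: fix one section $i\colon N'\to M$ and use $X^m(i)$ as the section at the $m$-th stage, so all splittings are obtained functorially from a single choice and automatically intertwine $h_{\module{M}}$ with $\iota_{N'}$.
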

	
	\begin{proof}
		Clearly the class of objects $\module{M}\in \mathcal{C}^{T(X)}$ for which $h_{\module{M}}$ is a split monomorphism is closed under direct summands. Since $\iota_N$ is a split monomorphism for all $N\in \mathcal{C}$, this shows \eqref{Proposition: direct summands of f_!:1}$\Rightarrow$\eqref{Proposition: direct summands of f_!:2}. 
		
		For \eqref{Proposition: direct summands of f_!:2}$\Rightarrow$\eqref{Proposition: direct summands of f_!:3}, consider the split exact sequence
		\[
		0\to X(M)\xrightarrow{h_{\module{M}}}M\to N'\to 0
		\] 
		in $\mathcal{C}$ and choose a section $i\colon N'\to M$. Then for each $m\geq 0$ the morphism $X^m(i)\colon X^m(N')\to X^m(M)$ is a section to the split exact sequence
		\[
		0\to X^{m+1}(M)\xrightarrow{X^m(h_{\module{M}})}X^m(M)\to X^m(N')\to 0.
		\]
		Using the $X^m(i)$'s, we get isomorphisms 
		\[
		M\cong N'\oplus X(M)\cong N'\oplus X(N')\oplus X^2(M)\cong \dots \cong N'\oplus X(N')\oplus \dots \oplus X^n(N')
		\]
		where $n$ is some integer with $X^{n+1}(M)=0$. Since these isomorphisms commute with the structure morphisms of $\module{M}$ and $f_!(N')$, we get an isomorphism $\module{M}\cong f_!(N')$.  The remaining direction \eqref{Proposition: direct summands of f_!:3}$\Rightarrow$\eqref{Proposition: direct summands of f_!:1} is obvious.
	\end{proof}	
	
	\begin{cor}\label{cor:f_!Indecomposable}
		An object $M\in\mathcal{C}$ is indecomposable if and only if $f_!(M)$ is indecomposable.
	\end{cor}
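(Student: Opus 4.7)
The plan is to prove the two directions separately, with the forward direction being the substantive one.

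For the easy direction ($f_!(M)$ indecomposable $\Rightarrow$ $M$ indecomposable): since $f_!$ is additive, a nontrivial decomposition $M=M_1\oplus M_2$ with both $M_i\ne 0$ yields $f_!(M)=f_!(M_1)\oplus f_!(M_2)$ with both summands nonzero. Nonvanishing of $f_!(M_i)$ follows because $M_i$ is a retract of $f^*f_!(M_i)=\bigoplus_{k\geq 0}X^k(M_i)$ via the unit $\eta_{M_i}$ (the inclusion of the $k=0$ summand) split by the projection onto the $k=0$ summand; hence $f^*f_!(M_i)\ne 0$, so $f_!(M_i)\ne 0$.

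For the forward direction, suppose $M$ is indecomposable and $f_!(M)\cong \module{N}_1\oplus \module{N}_2$. Since the characterization of relative projectives in Proposition \ref{Proposition: direct summands of f_!} (namely, that the structure map be a split monomorphism) is clearly preserved under direct summands, each $\module{N}_i$ is relative projective, hence isomorphic to $f_!(N_i')$ for some $N_i'\in \mathcal{C}$. By additivity of $f_!$ this gives $f_!(M)\cong f_!(N_1'\oplus N_2')$, so it suffices to show that $f_!$ reflects isomorphisms: then $M\cong N_1'\oplus N_2'$ and indecomposability of $M$ forces one of the $N_i'$, and therefore one of the $\module{N}_i$, to be zero.

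The key claim that $f_!$ reflects isomorphisms is the technical heart of the proof. Let $\phi\colon f_!(M)\xrightarrow{\sim} f_!(K)$ be an isomorphism. Using the identification from Lemma \ref{Eilenberg--Moore category of free monad}, $f^*\phi\colon\bigoplus_{i\geq 0}X^i(M)\to\bigoplus_{j\geq 0}X^j(K)$ is an isomorphism in $\mathcal{C}$ satisfying $f^*\phi\circ \iota_M=\iota_K\circ X(f^*\phi)$, where $\iota_M$ and $\iota_K$ are the canonical inclusions \eqref{StructureMapRelProj}. The compatibility forces $f^*\phi$ to carry the subobject $\bigoplus_{i\geq 1}X^i(M)$ into $\bigoplus_{j\geq 1}X^j(K)$, so it descends to a morphism between the cokernels of $\iota_M$ and $\iota_K$. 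These cokernels are canonically identified with $M$ and $K$ via the projections onto the $i=0$ summands (and these cokernels exist in the additive category $\mathcal{C}$ because $\iota_M$ and $\iota_K$ are split monomorphisms). Running the same argument on $\phi^{-1}$ produces an inverse, so the induced morphism $M\to K$ is an isomorphism.

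The only mild obstacle is bookkeeping around the identification of the cokernel of the structure map of $f_!(M)$ with $M$, together with verifying functoriality of this passage on morphisms between relative projectives. Both follow directly from the concrete formula $f_!(M)=(\bigoplus_{i\geq 0}X^i(M),\iota_M)$ and the fact that cokernels of split monomorphisms are well-defined in any additive category; no abelian or Krull--Schmidt hypothesis on $\mathcal{C}$ is required.
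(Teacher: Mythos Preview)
Your proof is correct and follows essentially the same route as the paper's: both use Proposition~\ref{Proposition: direct summands of f_!} to write each summand of $f_!(M)$ as $f_!(N')$, and then pass to the cokernel of the structure map to recover $N'$ from $f_!(N')$. The paper compresses your entire ``$f_!$ reflects isomorphisms'' paragraph into the single phrase ``taking the cokernel of their structure morphisms''; one small terminological note is that what you actually prove (and need) is that $f_!(M)\cong f_!(K)$ implies $M\cong K$, which is a statement about isomorphism classes rather than reflection of a given morphism $f_!(\psi)$, but your argument makes this clear.
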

	
	\begin{proof}
		By Proposition \ref{Proposition: direct summands of f_!} any summand of $f_!(M)$ is of the form $f_!(N)$. Taking the cokernel of their structure morphisms, we see that this is equivalent to $N$ being a summand of $M$. Hence $f_!(M)$ is indecomposable if and only if $M$ is indecomposable.
	\end{proof}

	\begin{ex}\label{Example: Phyla}
		Let $Q=(Q_0,Q_1)$ be a quiver, where $Q_0$ and $Q_1$ denote the set of vertices and arrows of $Q$, respectively. A $\mathbbm{k}$-\emphbf{modulation} $\mathfrak{B}$ of $Q$ is an assignment of a $\mathbbm{k}$-linear additive category $\mathcal{B}_\mathtt{i}$ to each vertex $\mathtt{i}\in Q_0$ and a $\mathbbm{k}$-linear functor $F_\alpha\colon \mathcal{B}_\mathtt{i}\to \mathcal{B}_\mathtt{j}$ to each arrow $\alpha\colon \mathtt{i}\to \mathtt{j}$. Associated to a modulation $\mathfrak{B}$ we have the category $\rep \mathfrak{B}$ of \emphbf{$\mathfrak{B}$-representations}. Explicitly, its objects are collections $(B_\mathtt{i},B_\alpha)_{\mathtt{i}\in Q_0,\alpha\in Q_1}$ where $B_\mathtt{i}$ is an object of $\mathcal{B}_\mathtt{i}$ for each vertex $\mathtt{i}$ and $B_\alpha\colon F_\alpha(B_\mathtt{i})\to B_\mathtt{j}$ is a morphism in $\mathcal{B}_\mathtt{j}$ for each arrow $\alpha\colon \mathtt{i}\to \mathtt{j}$. A morphism of $\mathfrak{B}$-representations $(B_\mathtt{i},B_\alpha)\to (B'_\mathtt{i},B'_\alpha)$ is a collection of morphism $(\varphi_\mathtt{i}\colon B_\mathtt{i}\to B'_\mathtt{i})_{\mathtt{i}\in Q_0}$ such that the following diagram commutes for every arrow $\alpha\colon \mathtt{i}\to \mathtt{j}$:
		\[
		\begin{tikzcd}
		F_\alpha(B_\mathtt{i})\arrow{r}{B_\alpha}\arrow{d}{F_\alpha(\varphi_\mathtt{i})} &B_\mathtt{j}\arrow{d}{\varphi_\mathtt{j}}\\
		F_\alpha(B'_\mathtt{i})\arrow{r}{B'_\alpha} &B'_\mathtt{j}
		\end{tikzcd}
		\] 
		Note that the category of representations can be identified with the sections of the Grothendieck construction of a certain functor obtained from the $\mathbbm{k}$-modulation, see Remark 4.6 in \cite{GKKP19}.
		
		Assume $Q$ is finite and acyclic and set $\mathcal{C}=\prod_{\mathtt{i}\in Q_0}\mathcal{B}_\mathtt{i}$. Define the functor
		\begin{align}\label{DefXModulation}
		& X\colon \mathcal{C}\to \mathcal{C}, \quad  \quad (B_\mathtt{i})_{\mathtt{i}\in Q_0}\mapsto (\bigoplus_{\substack{\alpha\in Q_1\\t(\alpha)=\mathtt{i}}}F_{\alpha}(B_{s(\alpha)}))_{\mathtt{i}\in Q_0}.
		\end{align} 
		By the assumptions on $Q$ it follows that $X$ is nilpotent, i.e. $X^n=0$ for some $n\gg 0$. The category $\rep \mathfrak{B}$ of $\mathfrak{B}$-representations can be identified with Eilenberg--Moore category $\mathcal{C}^{T(X)}$ of the free monad $T(X)$ on $\mathcal{C}$. We  describe the Eilenberg--Moore adjunction $f_!\dashv f^*$ explicitly, following \cite[Meta-Example 6.20]{GKKP19}. The forgetful functor is given by
		\[
		f^*\colon \rep \mathfrak{B}\to \mathcal{C}, \quad \quad f^*(B_\mathtt{i},B_\alpha)_{\mathtt{i}\in Q_0,\alpha\in Q_1}=(B_\mathtt{i})_{\mathtt{i}\in Q_0}.
		\] 
		For $f_!$, we need some notation. Let $Q_{\geq 0}$ denote the collection of all paths in $Q$, and for $p\in Q_{\geq 0}$ let $s(p)$ and $t(p)$ denote its source and target, respectively. If $p=\alpha_n\alpha_{n-1}\dots \alpha_1$ set
		\begin{align*}
		F_p&\coloneqq F_{\alpha_n}\circ F_{\alpha_{n-1}}\circ \dots\circ F_{\alpha_1}\colon \mathcal{B}_{s(p)}\to \mathcal{B}_{t(p)}.
		\end{align*} 
		The functor $f_!$ applied to $B=(B_\mathtt{i})_{\mathtt{i}\in Q_0}$ is then given by
		\begin{align}\label{Formula:f_!}
		f_!(B)&=\left(\bigoplus_{\substack{p\in Q_{\geq 0}\\t(p)=\mathtt{i}}}F_p(B_{s(p)}), f_!(B)_\alpha\right)_{\substack{\mathtt{i}\in Q_0\\\alpha\in Q_1}}.
		\end{align}
		Here
		\begin{align*}
		f_!(B)_\alpha&\colon \bigoplus_{\substack{p\in Q_{\geq 0}\\t(p)=s(\alpha)}} F_\alpha F_p(B_{s(p)})\to \bigoplus_{\substack{q\in Q_{\geq 0}
				\\t(q)=t(\alpha)}}F_q(B_{s(q)})
		\end{align*}
		is induced by the identity map $F_\alpha F_p(B_{s(p)})\xrightarrow{1}F_q(B_{s(q)})$ for $q=\alpha p$. For more details on this construction see \cite[Section 5]{GKKP19}. 
	\end{ex}

	\begin{ex}\label{Example:LcaollynilpotentAnyQuiver}
		Representations of infinite quivers and their monomorphism categories have also been considered in for example \cite{EOT04,EE05,EEG09}. Example \ref{Example: Phyla} can be extended to a modulation $\mathfrak{B}$ on any quiver $Q$ (not necessarily finite or acylic). 
   However, to ensure that powers of the functor $X$ given by \eqref{DefXModulation} are well-defined, we need to restrict to objects $(B_{\mathtt{i}})_{\mathtt{i}\in Q_0}$ of $\prod_{\mathtt{i} \in Q_0}\mathcal{B}_\mathtt{i}$ satisfying:
  \begin{enumerate}
			\item[(A1)] For any $n\geq 1$ and $\mathtt{j}\in Q_0$, if $\kappa_n$ is the cardinality of the set 
   \[
   \{\alpha\in Q_1\mid t(\alpha)=\mathtt{j} \text{ and there exists a path }p\text{ of length }n-1 \text{ with } t(p)=s(\alpha) \text{ and }B_{s(p)}\neq 0 \}
   \]
   then $\mathcal{B}_\mathtt{j}$ admits $\kappa_n$-coproducts. 
   \end{enumerate}
   In particular, this holds for a vertex $\mathtt{j}$ if there are only finitely many arrows with target $\mathtt{j}$, or if $\mathcal{B}_\mathtt{j}$ admits coproducts of any cardinality. 
   
   To ensure that $X$ is locally nilpotent, we must restrict to objects $(B_{\mathtt{i}})_{\mathtt{i}\in Q_0}$ satisfying: 
		\begin{enumerate}
			\item[(B)] There exists an integer $n\geq 0$ (depending on $(B_{\mathtt{i}})_{\mathtt{i}\in Q_0}$) such that $B_{s(p)}= 0$ for any path $p$ of length greater than $n$. 
		\end{enumerate}
  Let $\mathcal{C}$ denote the subcategory of $\prod_{\mathtt{i} \in Q_0}\mathcal{B}_\mathtt{i}$ consisting of objects satisfying (A1) and (B). Clearly, $X$ restricts to a locally nilpotent endofunctor on $\mathcal{C}$. Since $\mathcal{C}=\prod_{\mathtt{i} \in Q_0}\mathcal{B}_\mathtt{i}$ if $Q$ is finite and acyclic, this generalizes Example \ref{Example: Phyla}. Note that $\mathcal{C}$ is nonzero if and only if $Q$ has a sink vertex.  
	\end{ex}
	
	\begin{ex}\label{Example:Quiver Representations}
		Let $\mathcal{B}$ be a $\mathbbm{k}$-linear additive category. As a special case of the previous examples we can consider the modulation $\mathfrak{B}$ where $\mathcal{B}_\mathtt{i}=\mathcal{B}$ for all $\mathtt{i}\in Q_0$ and $F_\alpha=\operatorname{Id}$ for all $\alpha\in Q_1$. In this case, the category of representation of $\mathfrak{B}$ is just the category $\rep (Q,\mathcal{B})$ of representations of $Q$ in $\mathcal{B}$, whose objects are collections $(B_\mathtt{i},B_{\alpha})_{\mathtt{i}\in Q_0,\alpha\in Q_1}$ of objects $B_\mathtt{i}$ and morphisms $B_{\alpha}\colon B_{s(\alpha)}\to B_{t(\alpha)}$ in $\mathcal{B}$. Note that $\rep (Q,\mathcal{B})$ is equivalent to the category $\mathcal{B}^{\mathbbm{k} Q}$ of $\mathbbm{k}$-linear functors $\mathbbm{k} Q\to \mathcal{B}$, where $Q$ is considered as a category with objects $Q_0$ and with morphisms given by $Q_{\geq 0}$, and where $\mathbbm{k} Q$ denotes the $\mathbbm{k}$-linearization of the category $Q$. 
	\end{ex}

	\subsection{The free monad on an abelian category.}\label{subsection:The free monad of an abelian category}
	
	In this subsection we assume in addition that $\mathcal{C}$ is abelian and $X\colon \mathcal{C}\to \mathcal{C}$ is exact. We then have the following basic results for $T(X)$ and its Eilenberg--Moore category.
	
	\begin{lem}\label{Lemma: BasicPropFreeMonad}
		The following hold:
		\begin{enumerate}
			\item\label{Lemma: BasicPropFreeMonad:1} The functor $T(X)\colon \mathcal{C}\to \mathcal{C}$ is exact
			\item\label{Lemma: BasicPropFreeMonad:2} The category $\mathcal{C}^{T(X)}$ is abelian.
			\item\label{Lemma: BasicPropFreeMonad:3} The functors $f_!\colon \mathcal{C}\to \mathcal{C}^{T(X)}$ and $f^*\colon \mathcal{C}^{T(X)}\to \mathcal{C}$ are exact.
			\item\label{Lemma: BasicPropFreeMonad:4} If $X$ preserves injectives, then $T(X)$ preserves injectives.
		\end{enumerate}
	\end{lem}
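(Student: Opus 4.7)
The plan is to prove the four parts in the order given, leveraging local nilpotence at each step to reduce to finite direct sums.

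For \eqref{Lemma: BasicPropFreeMonad:1}, I start by fixing a short exact sequence $0\to M_1\to M_2\to M_3\to 0$ in $\mathcal{C}$ and choosing $n\geq 0$ with $X^n(M_2)=0$. Since $X$ is exact, $X^n$ carries the sequence to an exact sequence whose middle term vanishes, forcing $X^n(M_1)=X^n(M_3)=0$. Thus on this particular sequence the sum defining $T(X)$ has at most $n$ nonzero terms simultaneously on all three objects, so restricted to this sequence $T(X)$ is the finite direct sum $\bigoplus_{i=0}^{n-1}X^i$, which is exact as a finite direct sum of exact functors. Since the short exact sequence was arbitrary, $T(X)$ is exact.

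Parts \eqref{Lemma: BasicPropFreeMonad:2} and \eqref{Lemma: BasicPropFreeMonad:3} then fall out formally. Having shown $T(X)$ is exact (hence in particular right exact), Proposition~\ref{Proposition: exactness in Eilenberg--Moore} gives that $\mathcal{C}^{T(X)}$ is abelian with exactness detected by $f^*$, which proves \eqref{Lemma: BasicPropFreeMonad:2} and simultaneously shows $f^*$ is exact. For $f_!$, I observe that a sequence $\module{M}_1\to \module{M}_2\to \module{M}_3$ obtained by applying $f_!$ to an exact sequence in $\mathcal{C}$ becomes, after applying $f^*$, the sequence $T(X)(M_1)\to T(X)(M_2)\to T(X)(M_3)$, which is exact by \eqref{Lemma: BasicPropFreeMonad:1}. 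Invoking the exactness criterion from Proposition~\ref{Proposition: exactness in Eilenberg--Moore} again, $f_!$ is exact.

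For \eqref{Lemma: BasicPropFreeMonad:4}, I fix an injective object $I$ in $\mathcal{C}$ and pick $n$ with $X^n(I)=0$, so that $T(X)(I)=\bigoplus_{i=0}^{n-1}X^i(I)$ is a finite direct sum. Each summand $X^i(I)$ is injective because $X$ preserves injectives (by iteration), and a finite direct sum of injectives is injective. Hence $T(X)(I)$ is injective.

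The only mildly delicate point I expect is in \eqref{Lemma: BasicPropFreeMonad:1}, namely the passage from ``$X$ is locally nilpotent on individual objects'' to the statement that on a fixed short exact sequence one can choose a uniform $n$ killing all three terms — here the choice of $n$ via $M_2$ combined with the exactness of $X^n$ is the key. Everything else is either a direct appeal to Proposition~\ref{Proposition: exactness in Eilenberg--Moore} or a routine reduction to finite direct sums.
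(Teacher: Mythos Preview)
Your proof is correct and essentially matches the paper's approach: parts \eqref{Lemma: BasicPropFreeMonad:2} and \eqref{Lemma: BasicPropFreeMonad:3} are argued identically, and parts \eqref{Lemma: BasicPropFreeMonad:1} and \eqref{Lemma: BasicPropFreeMonad:4} both rest on local nilpotence making the sum finite. The only presentational difference is that for \eqref{Lemma: BasicPropFreeMonad:1} the paper argues globally that $T(X)$ is right exact as a coproduct of exact functors and left exact via the identification $T(X)\cong\prod_{i\geq 0}X^i$ (and similarly uses closure of injectives under products for \eqref{Lemma: BasicPropFreeMonad:4}), whereas you work pointwise by picking a uniform $n$ on each short exact sequence or object; your observation that exactness of $X^n$ forces $X^n(M_1)=X^n(M_3)=0$ once $X^n(M_2)=0$ is exactly what makes the pointwise reduction go through.
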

	
	\begin{proof}
		Since $X$ is exact and taking coproducts is a right exact functor, $T(X)$ must be right exact. Since $X$ is locally nilpotent, we have an isomorphism $T(X)\cong \prod_{i\geq 0}X^i$. Since taking products is a left exact functor, $T(X)$ must also be left exact. Hence, $T(X)$ is exact. The fact that $\mathcal{C}^{T(X)}$ is abelian and $f^*$ is exact follows from Proposition \ref{Proposition: exactness in Eilenberg--Moore}. Now the exactness of $f_!$ follows from the exactness of $T(X)$ and the description of exact sequences in $\mathcal{C}^{T(X)}$ from Proposition \ref{Proposition: exactness in Eilenberg--Moore}. Finally, if $I\in \mathcal{C}$ is injective and $X$ preserves injectives, then $X^i(I)$ is injective for all $i\geq 0$. Since injective objects are closed under products, $T(X)(I)\cong \prod_{i\geq 0}X^i(I)$ must be injective, so $T(X)$ preserves injective objects. 
	\end{proof}
	
	If $X$ preserves injective objects, then $X$ descends to an endofunctor on the stable injective category $\overline{\mathcal{C}}$. By Lemma \ref{Lemma: BasicPropFreeMonad} \eqref{Lemma: BasicPropFreeMonad:4} the functor $T(X)$ also preserves injective objects, and therefore descends to a monad on $ \overline{\mathcal{C}}$, denoted in the same way.  
	
	\begin{lem}\label{Eilenberg--Moore category of stable free monad}
		Assume $\mathcal{C}$ has enough injectives and $X$ preserves injectives. Then $T(X)$, considered as a monad on $\overline{\mathcal{C}}$, is equal to the free monad of $X$ on $\overline{\mathcal{C}}$. In particular, we have an isomorphism
		$
		\overline{\mathcal{C}}{}^{T(X)}\xrightarrow{\cong} (X\Downarrow \Id_{\overline{\mathcal{C}}}).
		$
	\end{lem}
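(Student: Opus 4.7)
The plan is to check that the whole free-monad construction descends to $\overline{\mathcal{C}}$ essentially on the nose, after which the second assertion becomes a direct application of Lemma~\ref{Eilenberg--Moore category of free monad}. First I would observe that the hypotheses guarantee that $X$ is a well-defined $\Bbbk$-linear endofunctor on $\overline{\mathcal{C}}$: it preserves injectives by assumption, and if $g\colon M\to N$ factors through an injective $I$, then $X(g)$ factors through $X(I)$, which is again injective. Local nilpotence is inherited as well, because $X^n(M)=0$ in $\mathcal{C}$ implies $X^n$ of the image of $M$ in $\overline{\mathcal{C}}$ is zero. Hence the free monad on $X$ exists in $\overline{\mathcal{C}}$, and its underlying endofunctor is $\bigoplus_{i\ge 0}X^i$, a sum which is finite on every object by local nilpotence.

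Next I would compare this free monad with the descent of $T(X)$. Writing $\pi\colon\mathcal{C}\to\overline{\mathcal{C}}$ for the quotient functor, additivity of $\pi$ means it preserves all finite biproducts. Because the coproduct $\bigoplus_{i\ge 0}X^i(M)$ is in fact finite for each $M$, we obtain a canonical isomorphism
\[
\pi\bigl(T(X)(M)\bigr)=\pi\Bigl(\bigoplus_{i\ge 0}X^i(M)\Bigr)\xrightarrow{\cong}\bigoplus_{i\ge 0}X^i\bigl(\pi M\bigr),
\]
natural in $M$, and this gives the required identification of underlying endofunctors on $\overline{\mathcal{C}}$. The unit $\eta$ of $T(X)$ descends to the canonical inclusion into degree zero, and the multiplication $\mu$, defined componentwise by the canonical identifications $X^iX^j\xrightarrow{\cong}X^{i+j}$, descends to exactly the analogous natural transformation in $\overline{\mathcal{C}}$. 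These are precisely the structure maps of the free monad on $X$ over $\overline{\mathcal{C}}$, so $T(X)$ on $\overline{\mathcal{C}}$ agrees with the free monad there.

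Given this identification, the second statement follows by applying Lemma~\ref{Eilenberg--Moore category of free monad} to the endofunctor $X$ on the $\Bbbk$-linear additive category $\overline{\mathcal{C}}$, yielding the isomorphism $\overline{\mathcal{C}}{}^{T(X)}\xrightarrow{\cong}(X\Downarrow\Id_{\overline{\mathcal{C}}})$. The only mildly delicate point in the argument is ensuring that the relevant coproducts and the componentwise definition of $\mu$ really are preserved by the additive quotient functor $\pi$, which is automatic once one uses local nilpotence to rewrite the a priori infinite coproduct as a finite biproduct; beyond this bookkeeping, no genuine obstacle arises.
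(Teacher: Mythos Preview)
Your proof is correct and follows essentially the same approach as the paper's: the key point in both is that local nilpotence makes $\bigoplus_{i\ge 0}X^i$ a finite biproduct on each object, hence preserved by the additive quotient functor $\mathcal{C}\to\overline{\mathcal{C}}$, so that $T(X)$ descends to the free monad of $X$ on $\overline{\mathcal{C}}$. The paper's proof is more terse, omitting the explicit verification that $X$, $\eta$, and $\mu$ descend to $\overline{\mathcal{C}}$, but your added bookkeeping is entirely in the same spirit.
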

	
	\begin{proof}
		Since $X$ is locally nilpotent,  the sum $\bigoplus_{i\geq 0}X^i$ is finite when evaluated at any object in $\mathcal{C}$.  It is therefore preserved by the functor $\mathcal{C}\to \overline{\mathcal{C}}$. Hence, $T(X)$ must be the free monad of $X$ on $\overline{\mathcal{C}}$, which proves the claim. 
	\end{proof}
\begin{ex}\label{Example:ModulationStableCats}
		Let $Q$ be a finite and acyclic quiver and $\mathfrak{B}$ a modulation on $Q$ as in Example \ref{Example: Phyla}, and assume the categories $\mathcal{B}_\mathtt{i}$ are abelian. Then $X$ is exact if and only if if the functors $F_\alpha\colon \mathcal{B}_i\to \mathcal{B}_j$ are exact, and $X$ preserves injectives if and only if the functors $F_\alpha$ preserve injectives. Assume these two conditions hold and the categories $\mathcal{B}_\mathtt{i}$ have enough injectives for all vertices $\mathtt{i}$. Then we have another modulation $\overline{\mathfrak{B}}$ on $Q$, given by the injectively stable category $\overline{\mathcal{B}_\mathtt{i}}$ at vertex $\mathtt{i}$, and by the functor $G_\alpha\colon \overline{\mathcal{B}_\mathtt{i}}\to \overline{\mathcal{B}_\mathtt{j}}$ induced from $F_\alpha$ at an arrow $\alpha\colon \mathtt{i}\to \mathtt{j}$. Setting $\mathcal{C}=\prod_{\mathtt{i}\in Q_0}\mathcal{B}_\mathtt{i}$, we get equivalences $\overline{\mathcal{C}}\cong\prod_{\mathtt{i}\in Q_0}\overline{\mathcal{B}_\mathtt{i}}$ and $\operatorname{rep}\overline{\mathfrak{B}}\cong \overline{\mathcal{C}}{}^{T(X)}$. If $\mathcal{B}_\mathtt{i}=\mathcal{B}$ for all $\mathtt{i}\in Q_0$ and $F_\alpha=\operatorname{Id}$ for all $\alpha\in Q_1$ as in Example \ref{Example:Quiver Representations}, then  this gives $\operatorname{rep}(Q,\overline{\mathcal{B}})\cong \overline{\mathcal{C}}{}^{T(X)}$.
	\end{ex}

	\begin{ex}
		Let $Q$ be any quiver, not necessarily finite or acyclic. Assume the categories $\mathcal{B}_\mathtt{i}$ are abelian with enough injectives. Then the abelian category $\prod_{\mathtt{i} \in Q_0}\mathcal{B}_\mathtt{i}$ has enough injectives, and they are given componentwise, i.e. $(B_{\mathtt{i}})_{\mathtt{i}\in Q_0}$ is injective if and only if $B_\mathtt{i}$ is injective for all $\mathtt{i}\in Q_0$. Assume the functors $F_\alpha$ are exact and preserve injectives.  To ensure that powers of the functor $X$ given by \eqref{DefXModulation} 
		are exact and preserve injectives, we need to restrict to objects $(B_{\mathtt{i}})_{\mathtt{i}\in Q_0}$ of $\prod_{\mathtt{i} \in Q_0}\mathcal{B}_\mathtt{i}$ satisfying: 
		\begin{enumerate}
			\item[(A2)]  For each vertex $\mathtt{j}\in Q_0$ one of the following conditions hold:
			\begin{itemize}
				\item For any $n\geq 1$, the cardinality of the set 
   \[
   \{\alpha\in Q_1\mid t(\alpha)=\mathtt{j} \text{ and there exists a path }p\text{ of length }n-1 \text{ with } t(p)=s(\alpha) \text{ and }B_{s(p)}\neq 0 \}
   \]
    is finite, or 	
    \item $\mathcal{B}_{\mathtt{j}}$ is a locally noetherian Grothendieck category. 
			\end{itemize}
		\end{enumerate}
		Indeed, $X^n$ is exact and preserves injectives in the first case since finite direct sums are exact and preserve injectivity. It holds in the second case since infinite coproducts are exact in Grothendieck categories, and since infinite coproducts of injectives are injective in locally noetherian categories. Note that condition (A2) implies condition (A1) in Example \ref{Example:LcaollynilpotentAnyQuiver}. 
		
		Now let $\mathcal{C}$ be the subcategory $\prod_{\mathtt{i} \in Q_0}\mathcal{B}_\mathtt{i}$ consisting of all objects satisfying (B) in Example \ref{Example:LcaollynilpotentAnyQuiver} and (A2). Since $\mathcal{C}$ is closed under subobjects, extensions, and quotients, it is a Serre subcategory of $\prod_{\mathtt{i} \in Q_0}\mathcal{B}_\mathtt{i}$. In particular, it is abelian. It also has enough injectives, since for any $(B_{\mathtt{i}})_{\mathtt{i}\in Q_0}$ in $\mathcal{C}$ we can find a monomorphism $(B_{\mathtt{i}})_{\mathtt{i}\in Q_0}\to (J_{\mathtt{i}})_{\mathtt{i}\in Q_0}$ with $J_\mathtt{i}$ injective for all $\mathtt{i}\in Q_0$ and $J_\mathtt{i}\neq 0$ if and only if $B_\mathtt{i}\neq 0$. Now $X$ restricts to an exact, locally nilpotent endofunctor on $\mathcal{C}$ which preserves injective objects. Hence, it satisfies the standing assumptions in this section. Note that $\mathcal{C}$ is nonzero if and only if $Q$ has a sink vertex, and $\mathcal{C}=\prod_{\mathtt{i} \in Q_0}\mathcal{B}_\mathtt{i}$ if $Q$ is finite and acyclic.  
	\end{ex}
	
	\begin{ex}\label{Example:ModulationofModuleCats}
		Let $\mathfrak{B}$ be a modulation on a quiver $Q$ such that $\mathcal{B}_{s(\alpha)}=\operatorname{Mod}R$ and $\mathcal{B}_{t(\alpha)}=\operatorname{Mod}S$ for two rings $R$ and $S$ and an arrow $\alpha$. We investigate common situations in which the functor $F_\alpha\colon \mathcal{B}_{s(\alpha)}\to \mathcal{B}_{t(\alpha)}$ is exact and preserves injectives, analogous to \cite[Meta-Example 4.2]{GKKP19}.
		
		Assume $F_{\alpha}=\operatorname{Hom}_R(M,-)$ for an $S$-$R$-bimodule $M$. We claim that $F_\alpha$ is exact and preserves injectives if and only if $M$ is projective as a right $R$-module and flat as a left $S$-module. Indeed, exactness of $F_\alpha$ is clearly equivalent to $M$ being projective as a right $R$-module. Now $F_\alpha(I)=\operatorname{Hom}_R(M,I)$ is an injective $S$-module if and only if 
		\[
		\operatorname{Hom}_S(-,\operatorname{Hom}_R(M,I))\cong \operatorname{Hom}_R(-\otimes_S M,I)
		\]
		is an exact functor. This holds for every injective $R$-module if and only if $-\otimes_SM$ is exact, i.e. $M$ is flat as a left $S$-module.
		
		Now assume $F_{\alpha}=-\otimes_RN$ for an $R$-$S$-bimodule $N$, and that $N$ is finitely presented as a left $R$-module. We claim that $F_\alpha$ is exact and preserves injectives if and only if $N$ is projective as a left $R$-module and $\operatorname{Hom}_{R^{\operatorname{op}}}(N,R)$ is flat as a left $S$-module. Indeed, $F_\alpha$ being exact is equivalent to $N$ being flat as an $R$-module, and since $N$ is finitely presented this is again equivalent to $N$ being projective as an $R$-module. If $N$ is finitely presented and projective, then 
		\[
		F_\alpha=-\otimes_R N\cong\operatorname{Hom}_R(\operatorname{Hom}_{R^{\operatorname{op}}}(N,R),-).
		\]
		Hence $F_\alpha$ preserves injectives if and only if $\operatorname{Hom}_{R^{\operatorname{op}}}(N,R)$ is flat as a left $S$-module by the argument above. 
	\end{ex}
	
	\begin{ex}
		As a special case of Example \ref{Example:ModulationofModuleCats}, let $\mathfrak{B}$ be a modulation on $Q=(\mathtt{1}\to \mathtt{2})$ given by a tensor product $-\otimes_R N\colon \operatorname{Mod}R\to \operatorname{Mod}S$ by an $R$-$S$-bimodule $N$. Then the category of $\mathfrak{B}$-representations is equivalent to $\operatorname{Mod}\Lambda$, where $\Lambda$ is the triangular matrix ring
		\[
		\Lambda\coloneqq\begin{pmatrix}R&0\\ N&S
		\end{pmatrix}
		\] 
		Such rings have for example been studied in \cite{FGR75} and \cite[Section III.2]{ARS95}.  Their monomorphism categories (defined below, see Example \ref{Example: Monomorphism category}) occur in \cite{LZ10,XZ12,Zha13} when describing Gorenstein projective $\Lambda$-modules. 
	\end{ex}

	\subsection{The top functor}\label{Subsection: TopFunctor}
	We consider the cokernel functor $\Kopf_X$ and its right adjoint $S$:
	\begin{align*}
	\Kopf_{X}\colon\mathcal{C}^{T(X)}\to \mathcal{C} \quad \module{M}\mapsto \coker h_\module{M}  \\
	S\colon \mathcal{C}\to \mathcal{C}^{T(X)} \quad M\mapsto (X(M)\xrightarrow{0}M). 
	\end{align*}
	Applying $\Kopf_{X}$ is analogous to taking the top of a module, while $S(M)$ can be thought of as the analogue of the semisimple module concentrated at $M$.  Here we collect their most important properties. Lemma \ref{top} \eqref{top:7} can be considered as an analogue of Nakayama's Lemma. 
	
	\begin{lem}\label{top}
		The following hold:
		\begin{enumerate}
			\item\label{top:1} We have an adjunction $\Kopf_X\dashv S$.
			\item\label{top:2} $\Kopf_Xf_!(M)\cong M$ naturally in $M\in \mathcal{C}$.
			\item\label{top:3} $\Kopf_X(\iota_M)=0$ for all $M\in \mathcal{C}$.
			\item\label{top:7} If $\Kopf_X(\module{M})=0$, then $\module{M}=0$.
		\end{enumerate}
	\end{lem}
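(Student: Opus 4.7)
The statement collects four facts about $\Kopf_X$, and I would prove them in the order they are listed, exploiting at each step only the definition of the structure map together with the fact that $f^*\colon \mathcal{C}^{T(X)}\to \mathcal{C}$ reflects cokernels (Proposition \ref{Proposition: exactness in Eilenberg--Moore}).

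For part \eqref{top:1}, I would just unfold the definitions. A morphism $g\colon \module{M}\to S(N)$ in $\mathcal{C}^{T(X)}$ is a morphism $g\colon M\to N$ in $\mathcal{C}$ with $g\circ h_{\module{M}} = 0\circ X(g) = 0$, and by the universal property of the cokernel such $g$ correspond bijectively to morphisms $\coker h_{\module{M}}\to N$, i.e.\ to morphisms $\Kopf_X(\module{M})\to N$ in $\mathcal{C}$. Naturality in both variables is immediate, giving the adjunction.

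For part \eqref{top:2}, I would use the explicit description of $f_!(M)$ from \eqref{StructureMapRelProj}: its structure map is the split monomorphism $\iota_M\colon \bigoplus_{i\geq 1} X^i(M)\hookrightarrow \bigoplus_{i\geq 0} X^i(M)$. The cokernel of this inclusion is visibly the $i=0$ summand $M$, and the resulting isomorphism $\Kopf_X f_!(M)\xrightarrow{\cong} M$ is natural in $M$ because $\iota_M$ is natural. Part \eqref{top:3} is then almost a tautology: the underlying morphism $f^*(\iota_M)\colon f^*f_!X(M)\to f^*f_!(M)$ equals the structure map $h_{f_!(M)}$, so it factors through the kernel of the cokernel map $f_!(M)\to \Kopf_X f_!(M)$. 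Hence the induced morphism $\Kopf_X(\iota_M)\colon \Kopf_X f_!X(M)\to \Kopf_X f_!(M)$ vanishes.

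The only part that requires a small argument is \eqref{top:7}. Assume $\Kopf_X(\module{M})=\coker h_{\module{M}}=0$, so that $h_{\module{M}}\colon X(M)\to M$ is an epimorphism in $\mathcal{C}$. Since $X$ is exact, applying $X$ iteratively shows that each composite
\[
X^n(M)\xrightarrow{X^{n-1}(h_{\module{M}})} X^{n-1}(M)\to \cdots\to X(M)\xrightarrow{h_{\module{M}}} M
\]
is an epimorphism. Local nilpotence of $X$ gives an $n$ with $X^n(M)=0$, and hence $M=0$, so $\module{M}=0$. The potential pitfall here is the reliance on exactness of $X$ (needed so that $X$ preserves epimorphisms), which is exactly the standing assumption made from Subsection \ref{subsection:The free monad of an abelian category} onward as recorded in Remark \ref{Remark:RelativeNakayama}; this is the step where the Nakayama-type conclusion genuinely uses more than the monad axioms.
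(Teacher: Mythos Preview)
Your proof is correct and follows essentially the same approach as the paper. The paper's own proof is extremely terse---it cites \cite[Lemma 5.26]{GKKP19} for part \eqref{top:1}, declares parts \eqref{top:2} and \eqref{top:3} ``obvious'', and for part \eqref{top:7} invokes Remark \ref{Remark:RelativeNakayama}---so you have simply unpacked what the paper leaves implicit, including the explicit local-nilpotence argument for the Nakayama-type statement.
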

	
	\begin{proof}
		Part (\ref{top:1}) follows from \cite[Lemma 5.26]{GKKP19}. Part (\ref{top:2}) and (\ref{top:3}) are obvious.  Part (\ref{top:7}) follows since there are no epimorphisms $X(M)\twoheadrightarrow M$ if $M\neq 0$, see Remark \ref{Remark:RelativeNakayama}.
	\end{proof}
	
	We also need a result on the existence of the left derived functors of $\Kopf_X$.
	
	\begin{lem}\label{Lemma:DescriptionLeftDerivedKopf}
		The left derived functors $L_j\Kopf_X$ exist for all $j>0$. Furthermore, they vanish on relative projective objects.
	\end{lem}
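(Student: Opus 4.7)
The plan is to exploit the canonical length-one relative projective resolution produced by Lemma \ref{relative global dimension 1}(\ref{relative global dimension 1:2}): for every $\module{M} \in \mathcal{C}^{T(X)}$,
\[
0 \to f_!X(M) \xrightarrow{\iota_M - f_!(h_\module{M})} f_!(M) \xrightarrow{\varepsilon_\module{M}} \module{M} \to 0
\]
is an objectwise split exact, hence exact, sequence whose first two terms are relative projective. First, I would set up the relative derived functors of $\Kopf_X$ with respect to the class of relative projectives and the class of objectwise split short exact sequences. The relative projectives form a projective class because Lemma \ref{Lemma:RelativeProjective} says every morphism $f_!(N) \to \module{M}'$ lifts through any $\module{M}\to\module{M}'$ whose underlying map in $\mathcal{C}$ is a split epimorphism. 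Combined with the existence of a functorial resolution of length one, this gives the usual comparison theorem (morphisms lift, two lifts are homotopic through a homotopy landing in a relative projective), so $L_j \Kopf_X$ is well-defined and functorial for all $j\geq 0$.

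Next, I would compute these derived functors explicitly. Applying $\Kopf_X$ to the resolution and using Lemma \ref{top}(\ref{top:2}) to identify $\Kopf_X f_!(M)\cong M$ and $\Kopf_X f_!X(M)\cong X(M)$, together with Lemma \ref{top}(\ref{top:3}) which gives $\Kopf_X(\iota_M)=0$, the complex reduces (naturally in $\module{M}$) to
\[
X(M) \xrightarrow{\, -h_\module{M}\,} M.
\]
Its cokernel is $\Kopf_X(\module{M})$ as expected, its kernel is $L_1\Kopf_X(\module{M}) \cong \ker h_\module{M}$, and $L_j\Kopf_X(\module{M})=0$ for $j\geq 2$ since the resolution has length one.

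For the vanishing on relative projectives, suppose $\module{P}$ is relative projective. By Proposition \ref{Proposition: direct summands of f_!} the structure map $h_\module{P}\colon X(P)\to P$ is a split monomorphism, so in particular $\ker h_\module{P} = 0$. The explicit formula from the previous paragraph therefore yields $L_1\Kopf_X(\module{P}) = 0$, and the vanishing for $j\geq 2$ is automatic.

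The only real obstacle is the initial setup: confirming that the relative projective class genuinely supports a theory of left derived functors on the abelian category $\mathcal{C}^{T(X)}$. This is standard relative homological algebra, but one must be careful that our resolutions are relative projective resolutions in the appropriate sense (objectwise split exact), and that Lemma \ref{Lemma:RelativeProjective} supplies both the lifting of maps between resolutions and the homotopy uniqueness. Once this bookkeeping is done, the explicit formula $L_1\Kopf_X(\module{M}) \cong \ker h_\module{M}$ drops out at no additional cost and makes the vanishing statement immediate.
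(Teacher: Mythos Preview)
Your argument is correct. The route differs from the paper's: instead of invoking the general existence criterion \cite[Proposition 3.1.4]{Kva20b}, which deduces existence from the exactness of $f_!f^*$ and $\Kopf_X\circ f_!f^*$, you build the relative projective class by hand from Lemma \ref{Lemma:RelativeProjective} and the functorial length-one resolution of Lemma \ref{relative global dimension 1}(\ref{relative global dimension 1:2}), and then run the comparison theorem directly. Your approach is more self-contained and, as a dividend, yields the explicit formula $L_1\Kopf_X(\module{M})\cong\ker h_\module{M}$ and $L_j\Kopf_X=0$ for $j\geq 2$, which the paper proves separately in Lemma \ref{DescriptionL_1Kopf}. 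The paper's approach is shorter because it outsources the relative homological algebra; yours trades brevity for transparency and avoids the external citation.
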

	
	\begin{proof}
		By \cite[Proposition 3.1.4]{Kva20b} it suffices to show that the functors $f_!f^*$ and $\Kopf_X\circ f_!f^*$ are exact. But this follows immediately from the fact that $f_!$ and $f^*$ are exact and the isomorphism $\Kopf_X\circ f_!\cong \operatorname{Id}_{\mathcal{C}}$ in Lemma \ref{top} \eqref{top:2}.
	\end{proof}
	
	We finish with the following description of the left derived functors of $\Kopf_X$.
	
	\begin{lem}\label{DescriptionL_1Kopf}
		For $\module{M}\in \mathcal{C}^{T(X)}$ we have  
		\[
		L_1\Kopf_X(\module{M})=\ker h_\module{M} \quad \text{and} \quad L_i\Kopf_X(\module{M})=0 \quad i\geq 2.
		\]
	\end{lem}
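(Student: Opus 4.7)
The plan is to exploit the short objectwise split exact sequence provided by Lemma~\ref{relative global dimension 1} \eqref{relative global dimension 1:2}, namely
\[
0\to f_!X(M)\xrightarrow{\iota_M-f_!(h_\module{M})}f_!(M)\xrightarrow{\varepsilon_{\module{M}}}\module{M}\to 0,
\]
which is a genuine short exact sequence in $\mathcal{C}^{T(X)}$ by Lemma~\ref{relative global dimension 1} \eqref{relative global dimension 1:1}. Since $L_j\Kopf_X$ exists for all $j>0$ by Lemma~\ref{Lemma:DescriptionLeftDerivedKopf}, this sequence yields a long exact sequence of derived functors. The key point is that both outer terms $f_!X(M)$ and $f_!(M)$ are relative projective, so Lemma~\ref{Lemma:DescriptionLeftDerivedKopf} gives $L_j\Kopf_X(f_!X(M))=0=L_j\Kopf_X(f_!(M))$ for every $j\geq 1$.

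First, I would handle the vanishing part. For $i\geq 2$ the relevant piece of the long exact sequence is
\[
L_i\Kopf_X(f_!(M))\to L_i\Kopf_X(\module{M})\to L_{i-1}\Kopf_X(f_!X(M)),
\]
and both outer terms vanish (using $i\geq 2$ so that $i-1\geq 1$). Hence $L_i\Kopf_X(\module{M})=0$ for all $i\geq 2$.

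Next, for the $i=1$ case, the long exact sequence collapses to
\[
0\to L_1\Kopf_X(\module{M})\to \Kopf_X f_!X(M)\xrightarrow{\Kopf_X(\iota_M-f_!(h_\module{M}))}\Kopf_X f_!(M)\to \Kopf_X(\module{M})\to 0.
\]
By Lemma~\ref{top} \eqref{top:3} the map $\Kopf_X(\iota_M)$ is zero, so the middle arrow equals $-\Kopf_X(f_!(h_\module{M}))$. Using the natural isomorphism $\Kopf_X\circ f_!\cong \mathrm{Id}_{\mathcal{C}}$ of Lemma~\ref{top} \eqref{top:2}, this map is identified (up to sign) with $h_{\module{M}}\colon X(M)\to M$ itself, whose kernel is $\ker h_{\module{M}}$. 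Taking the kernel of this morphism therefore yields $L_1\Kopf_X(\module{M})\cong \ker h_{\module{M}}$, completing the proof.

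I do not expect any serious obstacle here: the argument is a routine application of the long exact sequence for derived functors together with Lemmas~\ref{relative global dimension 1}, \ref{top}, and \ref{Lemma:DescriptionLeftDerivedKopf}. The only mild subtlety is keeping track of the sign and the identification of $\Kopf_X(f_!(h_\module{M}))$ with $h_\module{M}$ via the natural isomorphism $\Kopf_X\circ f_!\cong \mathrm{Id}$, but this is straightforward from naturality.
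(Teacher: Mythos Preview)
Your proposal is correct and follows the standard argument: the paper itself merely cites \cite[Lemma 6.24]{GKKP19} for the proof, but later (at the start of Subsection~\ref{Subsection:Contravariantly finiteness}) it spells out exactly the same computation you give, applying $\Kopf_X$ to the resolution of Lemma~\ref{relative global dimension 1} and using $L_1\Kopf_X f_!(M)=0$ to obtain the exact sequence $0\to L_1\Kopf_X\module{M}\to X(M)\xrightarrow{h_\module{M}} M\to \Kopf_X\module{M}\to 0$. So your approach coincides with the paper's.
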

	
	\begin{proof}
		This follows by the same proof as in \cite[Lemma 6.24]{GKKP19}.
	\end{proof}

	\subsection{The monomorphism category}\label{Subsection:The monomorphism category} 
	\begin{defn}
		The \emphbf{monomorphism category} $\Mono(X)$ is the full subcategory of $\mathcal{C}^{T(X)}$ consisting of objects $\module{M}$ where the structure map $h_\module{M}$ is a monomorphism.
	\end{defn}
	
	Note that the objects of the monomorphism category can equivalently be characterized using the first left derived functor of $\Kopf_X$.
	
	\begin{lem}\label{Lemma:MonomorphismLeftDerived}
		Let $\module{M}\in \mathcal{C}^{T(X)}$. Then $\module{M}\in \Mono(X)$ if and only if $L_1\Kopf_X(\module{M})=0$
	\end{lem}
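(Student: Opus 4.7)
The plan is to derive this lemma directly from Lemma \ref{DescriptionL_1Kopf}, which already computes $L_1\Kopf_X(\module{M})$ as the kernel of the structure map $h_\module{M}\colon X(M)\to M$. Given this identification, the only remaining content is the basic fact that, in an abelian category, a morphism is a monomorphism if and only if its kernel vanishes. So the proof is essentially a two-line chain of equivalences.

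Concretely, I would argue as follows. By Lemma \ref{DescriptionL_1Kopf} we have the natural isomorphism
\[
L_1\Kopf_X(\module{M})\cong \ker h_\module{M}
\]
in $\mathcal{C}$. Since $\mathcal{C}$ is an abelian category (this is one of the standing assumptions of Subsection \ref{subsection:The free monad of an abelian category}), the morphism $h_\module{M}$ is a monomorphism if and only if $\ker h_\module{M}=0$. Combining these two statements gives $L_1\Kopf_X(\module{M})=0$ if and only if $h_\module{M}$ is a monomorphism, which by definition of $\Mono(X)$ is exactly the condition $\module{M}\in \Mono(X)$.

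Since both inputs (the description of $L_1\Kopf_X$ and the kernel criterion for monomorphisms) are already in hand, there is no real obstacle here; the lemma is essentially a reformulation of Lemma \ref{DescriptionL_1Kopf} tailored to record the homological characterization of $\Mono(X)$ for later use.
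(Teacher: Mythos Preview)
Your proof is correct and is essentially the same as the paper's own proof, which simply states that the result follows immediately from the description of $L_1\Kopf_X$ in Lemma \ref{DescriptionL_1Kopf}. You have merely spelled out the one-line observation that $\ker h_{\module{M}}=0$ is equivalent to $h_{\module{M}}$ being a monomorphism.
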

	
	\begin{proof}
		This follows immediately from the description of $L_1\Kopf_X$ in Lemma \ref{DescriptionL_1Kopf}.
	\end{proof}
	
	Recall that a subcategory of an abelian category is called $\emphbf{resolving}$ if it is generating and closed under extensions, kernels of epimorphisms, and direct summands.
	
	\begin{lem}\label{Mono:resolving}
		The category $\Mono(X)$ is resolving and closed under subobjects in $\mathcal{C}^{T(X)}$. In particular, it is an exact category whose conflations are epimorphisms and whose inflations are monomorphisms with cokernel in $\Mono(X)$.
	\end{lem}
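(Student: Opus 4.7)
The plan is to exploit the characterization in Lemma \ref{Lemma:MonomorphismLeftDerived} of $\Mono(X)$ as the vanishing locus of $L_1\Kopf_X$, together with the fact from Lemma \ref{DescriptionL_1Kopf} that $L_i\Kopf_X = 0$ for all $i \geq 2$. Once these are combined, every closure property of $\Mono(X)$ follows by inspecting a single long exact sequence of derived functors, making the proof essentially uniform.

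Concretely, I would take an arbitrary short exact sequence $0 \to \module{M}' \to \module{M} \to \module{M}'' \to 0$ in $\mathcal{C}^{T(X)}$ and read off the fragment
\[
0 = L_2\Kopf_X(\module{M}'') \to L_1\Kopf_X(\module{M}') \to L_1\Kopf_X(\module{M}) \to L_1\Kopf_X(\module{M}'').
\]
Applying this with $\module{M}''$ the cokernel of an arbitrary monomorphism into some $\module{M}\in \Mono(X)$, the leftmost zero forces $L_1\Kopf_X(\module{M}') = 0$, giving closure of $\Mono(X)$ under subobjects, and in particular under kernels of epimorphisms. If instead both $\module{M}'$ and $\module{M}''$ are in $\Mono(X)$, then $L_1\Kopf_X(\module{M})$ is squeezed between two zeros, yielding closure under extensions. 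Closure under direct summands is immediate from additivity of $L_1\Kopf_X$.

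For the generating property, each $\module{M}\in \mathcal{C}^{T(X)}$ admits an epimorphism $\varepsilon_\module{M}\colon f_!(M) \twoheadrightarrow \module{M}$ by Lemma \ref{relative global dimension 1}\eqref{relative global dimension 1:2}, and $f_!(M)$ is relative projective, so its structure map is a split monomorphism by Proposition \ref{Proposition: direct summands of f_!}; in particular $f_!(M)\in \Mono(X)$. At this point $\Mono(X)$ is an extension-closed subcategory of the abelian category $\mathcal{C}^{T(X)}$, so by \cite[Lemma 10.20]{Bue10} it inherits an exact structure whose conflations are exactly the short exact sequences in $\mathcal{C}^{T(X)}$ with all three terms in $\Mono(X)$. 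Combined with closure under subobjects, this means deflations are precisely the epimorphisms between objects of $\Mono(X)$, while inflations are the monomorphisms whose cokernel (formed in $\mathcal{C}^{T(X)}$) happens to lie in $\Mono(X)$.

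I do not anticipate a serious obstacle here: the whole argument packages the relative global dimension one property of the free monad, which is precisely the vanishing $L_{\geq 2}\Kopf_X = 0$, into homological closure conditions. The only point that requires a moment of care is noting that the long exact sequence for $L_\bullet \Kopf_X$ is available in the generality of \cite[Proposition 3.1.4]{Kva20b}, which is already invoked in Lemma \ref{Lemma:DescriptionLeftDerivedKopf}.
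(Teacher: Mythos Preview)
Your proof is correct, but it takes a more homological route than the paper. The paper argues directly from the definition: since $\module{M}\in\Mono(X)$ means precisely that the structure map $h_{\module{M}}$ is a monomorphism, closure under extensions and subobjects follows immediately from the fact that in any abelian category the class of monomorphisms (viewed as objects in the arrow category) is closed under extensions and subobjects. No derived functors, no long exact sequence. Your approach via $L_1\Kopf_X$ and its long exact sequence is equivalent in content---since $L_1\Kopf_X(\module{M})=\ker h_{\module{M}}$ by Lemma~\ref{DescriptionL_1Kopf}, the fragment you write down is exactly the kernel portion of the snake lemma applied to the map of short exact sequences $0\to X(M')\to X(M)\to X(M'')\to 0$ over $0\to M'\to M\to M''\to 0$---so your caveat about needing \cite{Kva20b} for the long exact sequence is unnecessary: the snake lemma suffices. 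What your packaging buys is a uniform template that would generalize to subcategories defined by vanishing of higher derived functors; what the paper's direct argument buys is brevity and no dependence on the derived-functor machinery of Subsection~\ref{Subsection: TopFunctor}. Both handle the generating property identically via the counit $\varepsilon_{\module{M}}$.
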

	
	\begin{proof}
		For any object $\module{M}\in \mathcal{C}^{T(X)}$ the counit $\varepsilon_{\module{M}}\colon f_!f^*(\module{M})\twoheadrightarrow \module{M}$ is an epimorphism. Since any object in the image of $f_!$ is contained in $\Mono(X)$, this shows that $\Mono(X)$ is generating. The fact that $\Mono(X)$ is closed under extensions and subobjects follows from the fact that monomorphisms are closed under extensions and subobjects. Since kernels of epimorphisms and direct summands are special cases of subobjects, this shows that $\Mono(X)$ is resolving. 
		
		Finally, since $\Mono(X)$ is closed under extensions, it inherits an exact structure from $\mathcal{C}^{T(X)}$ where the inflations and the deflations are the monomorphism and the epimorphisms whose cokernel and kernel lies in $\Mono(X)$, respectively. Since $\Mono(X)$ is closed under kernels of epimorphisms, the deflations coincide with the epimorphisms in $\Mono(X)$.
	\end{proof}

	Next we show how $\Kopf_X$ detects inflations and isomorphisms in $\Mono(X)$.
	
	\begin{lem}\label{Lemma:TopDetecingMono}
		Let $g\colon \module{M}\to \module{N}$ be a morphism in $\Mono(X)$. The following hold:
		\begin{enumerate}
			\item\label{Lemma:TopDetecingMono:1} $g$ is an inflation in $\Mono(X)$ if and only if $\Kopf_X(g)$ is a monomorphism.
			\item\label{Lemma:TopDetecingMono:2} $g$ is an isomorphism if and only if $\Kopf_X(g)$ is an isomorphism.
		\end{enumerate}
	\end{lem}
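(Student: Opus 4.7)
The plan is to apply the snake lemma to the commutative square of structure morphisms associated with $g$ (equivalently, to use the long exact sequence of the derived functors $L_\ast \Kopf_X$ from Lemma \ref{DescriptionL_1Kopf}), and to combine it with two key facts: the characterization $\module{M}\in \Mono(X)\Leftrightarrow \ker h_\module{M}=0$ from Lemma \ref{Lemma:MonomorphismLeftDerived}, and the Nakayama-type principle of Lemma \ref{top} \eqref{top:7} that $\Kopf_X(\module{M})=0$ forces $\module{M}=0$.

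For the forward direction of (1), suppose $g$ is an inflation in $\Mono(X)$, so that $0\to \module{M}\xrightarrow{g}\module{N}\to \coker g\to 0$ is a short exact sequence with all three terms in $\Mono(X)$. The snake lemma applied to
\begin{equation*}
\begin{tikzcd}
0\arrow{r}&X(M)\arrow{d}{h_\module{M}}\arrow{r}&X(N)\arrow{d}{h_\module{N}}\arrow{r}&X(\coker g)\arrow{d}{h_{\coker g}}\arrow{r}&0\\
0\arrow{r}&M\arrow{r}&N\arrow{r}&\coker g\arrow{r}&0
\end{tikzcd}
\end{equation*}
(whose rows are exact since both $f^\ast$ and $X$ are exact) yields the six-term exact sequence $0\to \ker h_\module{M}\to \ker h_\module{N}\to \ker h_{\coker g}\to \Kopf_X(\module{M})\xrightarrow{\Kopf_X(g)}\Kopf_X(\module{N})\to \Kopf_X(\coker g)\to 0$. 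Since all three kernels vanish, $\Kopf_X(g)$ is monic.

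For the converse in (1), I cannot initially assume $g$ is monic, so I first factor it through its image $\image g$ in $\mathcal{C}^{T(X)}$. As $\image g$ is a subobject of $\module{N}\in \Mono(X)$, Lemma \ref{Mono:resolving} places it in $\Mono(X)$. The analogous snake-lemma argument applied to $0\to \ker g\to \module{M}\to \image g\to 0$, using $\ker h_\module{M}=\ker h_{\image g}=0$, produces a short exact sequence $0\to \Kopf_X(\ker g)\to \Kopf_X(\module{M})\twoheadrightarrow \Kopf_X(\image g)\to 0$. Factoring $\Kopf_X(g)=\Kopf_X(\image g\hookrightarrow \module{N})\circ (\Kopf_X(\module{M})\twoheadrightarrow \Kopf_X(\image g))$, the hypothesis that $\Kopf_X(g)$ is monic forces the surjective factor to be monic as well, hence an isomorphism, so $\Kopf_X(\ker g)=0$. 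Lemma \ref{top} \eqref{top:7} then yields $\ker g=0$, i.e.\ $g$ is a monomorphism. With this established, the six-term sequence from the first paragraph applies verbatim and produces $\ker h_{\coker g}=0$, so $\coker g\in \Mono(X)$ and $g$ is an inflation.

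For (2), the forward direction is immediate functoriality. Conversely, if $\Kopf_X(g)$ is an isomorphism, part (1) already supplies that $g$ is an inflation, and the six-term sequence now additionally gives $\Kopf_X(\coker g)=0$ since $\Kopf_X(g)$ is also epic. Lemma \ref{top} \eqref{top:7} yields $\coker g=0$, so $g$ is simultaneously a monomorphism and an epimorphism in the abelian category $\mathcal{C}^{T(X)}$, hence an isomorphism. The main delicate step throughout is the reduction to the case where $g$ is monic, which is precisely what forces the image-factorization detour together with the use of Lemma \ref{Mono:resolving} and the Nakayama-type Lemma \ref{top} \eqref{top:7}.
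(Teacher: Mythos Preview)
Your proof is correct and follows essentially the same approach as the paper's. The only difference is cosmetic: you phrase everything via the snake lemma applied to the structure-morphism square, whereas the paper uses the equivalent language of the long exact sequence in $L_\ast\Kopf_X$ (noting $L_1\Kopf_X(\module{M})=\ker h_\module{M}$ from Lemma~\ref{DescriptionL_1Kopf}); the image-factorization step, the appeal to Lemma~\ref{Mono:resolving} for $\image g\in\Mono(X)$, and the use of Lemma~\ref{top}\,\eqref{top:7} to kill $\ker g$ and $\coker g$ are identical in both arguments.
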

	
	\begin{proof}
		To prove part \eqref{Lemma:TopDetecingMono:1} assume that $g$ is a monomorphism with cokernel in $\Mono(X)$. Applying $\Kopf_X$ to the exact sequence
		\[
		0\to \module{M}\xrightarrow{g} \module{N}\to \operatorname{coker}g\to 0
		\]
		and using that $L_1\Kopf_X(\operatorname{coker}g)=0$, we get that $\Kopf_X(g)$ is a monomorphism.
		
		Conversely, assume $\Kopf_X(g)$ is a monomorphism, and consider the exact sequence
		\[
		0\to \ker g\to \module{M}\to \operatorname{Im}g\to 0.
		\]
		Since $\operatorname{Im}g$ is a subobject of $\module{N}$, it is contained in $\Mono(X)$. Therefore $L_1\Kopf_X(\operatorname{Im}g)=0$, so applying $\Kopf_X$ to the inclusion $\ker g\to \module{M}$ gives a monomorphism $\Kopf_X(\ker g)\to \Kopf_X(\module{M})$. Furthermore, since the composite $\Kopf_X(\ker g)\to \Kopf_X(\module{M})\xrightarrow{\Kopf_X(g)} \Kopf_X(\module{N})$ is $0$ and $\Kopf_X(g)$ is a monomorphism, it follows that $\Kopf_X(\ker g)=0$. Hence $\ker g=0$ by Lemma \ref{top} \eqref{top:7}, so $g$ must be a monomorphism. Finally, applying $\Kopf_X$ to the short exact sequence
		\[
		0\to \module{M}\xrightarrow{g} \module{N}\to \coker g\to 0
		\]
		and using that $\Kopf_X(g)$ is a monomorphism and $L_1\Kopf_X(\module{N})=0$, we get that $L_1\Kopf_X(\coker g)=0$. Hence $\coker g\in \Mono(X)$ by Lemma \ref{Lemma:MonomorphismLeftDerived}. This proves \eqref{Lemma:TopDetecingMono:1}. 
		
		To prove part \eqref{Lemma:TopDetecingMono:2} observe that if $g$ is an isomorphism, then $\Kopf_X(g)$ must be an isomorphism. Conversely, assume that $\Kopf_X(g)$ is an isomorphism. Then $g$ must be a monomorphism by part \eqref{Lemma:TopDetecingMono:1}. Hence we have a right exact sequence
		\[
		\Kopf_X(\module{M})\xrightarrow{\Kopf_X(g)} \Kopf_X(\module{N})\to \Kopf_X(\coker g)\to 0
		\]
		Since $\Kopf_X(g)$ is an isomorphism, we must have that $\Kopf_X(\coker g)=0$. Therefore $\coker g=0$ by Lemma \ref{top} \eqref{top:7}, so $g$ must be an isomorphism.  
	\end{proof}
	
	As a consequence of this we get a criterion for a morphism to be in the radical.
	
	\begin{lem}\label{Lemma:TopReflectRadical}
		Let $g\colon \module{M}\to \module{N}$ be a morphism in $\Mono(X)$, and assume $\Kopf_X(g)$ is in the radical of $\mathcal{C}$. Then $g$ is in the radical of $\Mono(X)$.
	\end{lem}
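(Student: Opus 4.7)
The plan is to unpack the definition of the radical directly and reduce the question to Lemma~\ref{Lemma:TopDetecingMono}~\eqref{Lemma:TopDetecingMono:2}, which says that a morphism in $\Mono(X)$ is an isomorphism if and only if its image under $\Kopf_X$ is an isomorphism.

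Concretely, I would proceed as follows. Let $g'\colon \module{N}\to \module{M}$ be an arbitrary morphism in $\Mono(X)$; to show that $g$ lies in the radical it suffices to prove that $1_{\module{N}} - g\circ g'$ is invertible in $\Mono(X)$. Applying the additive functor $\Kopf_X$ gives
\[
\Kopf_X(1_{\module{N}} - g\circ g') = 1_{\Kopf_X(\module{N})} - \Kopf_X(g)\circ \Kopf_X(g').
\]
Since by assumption $\Kopf_X(g)$ lies in the radical of $\mathcal{C}$, this morphism is invertible in $\mathcal{C}$ by the very definition of the radical. Finally, because $1_{\module{N}} - g\circ g'$ is a morphism between objects of $\Mono(X)$ and its image under $\Kopf_X$ is an isomorphism, Lemma~\ref{Lemma:TopDetecingMono}~\eqref{Lemma:TopDetecingMono:2} yields that $1_{\module{N}} - g\circ g'$ is itself an isomorphism in $\Mono(X)$. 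Since $g'$ was arbitrary, $g$ belongs to $\operatorname{Rad}_{\Mono(X)}(\module{M},\module{N})$.

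There is no real obstacle here: the argument is a two-line formal consequence of Lemma~\ref{Lemma:TopDetecingMono}~\eqref{Lemma:TopDetecingMono:2}, combined with the observation that the radical is defined via invertibility of $1-gg'$ and that $\Kopf_X$ is additive. The only point worth double-checking is that one uses the correct variant of the definition of the radical (testing with $g'\colon \module{N}\to \module{M}$ and forming $1_{\module{N}}-g\circ g'$), but both variants are available since $\operatorname{Rad}_{\Mono(X)}$ is symmetric, as recalled in Subsection~\ref{Exact categories}.
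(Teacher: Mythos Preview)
Your proof is correct and follows precisely the approach indicated in the paper: the paper's proof simply states that the result follows from the definition of the radical together with the fact that $\Kopf_X$ reflects isomorphisms (Lemma~\ref{Lemma:TopDetecingMono}~\eqref{Lemma:TopDetecingMono:2}), and your argument is exactly the unpacking of this.
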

	
	\begin{proof}
		This follows immediately from the definition of the radical in Subsection \ref{Exact categories} and the fact that $\Kopf_X$ reflects isomorphisms by Lemma \ref{Lemma:TopDetecingMono} \eqref{Lemma:TopDetecingMono:2}.
	\end{proof}

	\begin{ex}\label{Example: Monomorphism category}
		Let $Q$ be a finite and acyclic quiver and $\mathfrak{B}$ a modulation on $Q$ by abelian categories $\mathcal{B}_\mathtt{i}$ and exact functors $F_\alpha$ as in Example \ref{Example: Phyla}. Let $(B_\mathtt{i},B_\alpha)_{\mathtt{i}\in Q_0,\alpha\in Q_1}$ be a $\mathfrak{B}$-representation. For each vertex $\mathtt{i}\in Q_0$ consider the map \[B_{\mathtt{i},\operatorname{in}}\colon \bigoplus_{\substack{\alpha\in Q_1\\t(\alpha)=\mathtt{i}}}F_{\alpha}(B_{s(\alpha)})\xrightarrow{(B_\alpha)} B_\mathtt{i}.\] Then $\Kopf_X$ is given by
		\[
		\Kopf_X(B_\mathtt{i},B_\alpha)_{\mathtt{i}\in Q_0,\alpha\in Q_1}=(\coker B_{\mathtt{i},\operatorname{in}})_{\mathtt{i}\in Q_0}.
		\] 
		Furthermore, by Lemma \ref{DescriptionL_1Kopf} it follows that 
		\[
		L_1\Kopf_X(B_\mathtt{i},B_\alpha)_{\mathtt{i}\in Q_0,\alpha\in Q_1}=(\ker B_{\mathtt{i},\operatorname{in}})_{\mathtt{i}\in Q_0}.
		\] 
		Hence, $(B_\mathtt{i},B_\alpha)_{\mathtt{i}\in Q_0,\alpha\in Q_1}\in \Mono (X)$ if and only if $B_{\mathtt{i},\operatorname{in}}$ is a monomorphism for all $\mathtt{i}\in Q_0$.
	\end{ex}

	\section{Injective objects}\label{Section: InjectiveObjects}
	Recall that $\Mono(X)$ is an exact category by Proposition \ref{Mono:resolving}. In this section we investigate the injective objects in $\Mono(X)$ under the exact structure. We show that up to isomorphism they are the objects of the form $f_!(I)$, where $I$ is injective in $\mathcal{C}$, see Proposition \ref{Proposition:f_!(I)Injective} and Corollary \ref{Corollary:InjectivesinMono}. This improves on Lemma 6.5 in \cite{GKKP19} for the free monad, since that result only implies that injective objects are summands of objects of the form $f_!(I)$. Furthermore the proof in \cite{GKKP19} relies on the existence of a relative Nakayama functor, and this assumption is not necessary in the proofs below. We finish this section by characterizing when $\Mono(X)$ has enough injectives and when it has injective envelopes. Throughout $\mathcal{C}$ is a $\mathbbm{k}$-linear abelian category, and $X\colon \mathcal{C}\to \mathcal{C}$ is an exact functor which is locally nilpotent and preserves injective objects.
	
	We first show that the functor $\Kopf_X$ induces a surjective map on morphisms spaces when the codomain is of the form $f_!(I)$ with $I$ injective.
	
	\begin{lem}\label{Lemma:Kopf_XFullCodomainInj}
		Let $g\colon \Kopf_X \module{M}\to I$ be a morphism in $\mathcal{C}$ with $I$ injective and $\module{M}\in \Mono(X)$. Then there exists a morphism $k\colon \module{M}\to f_!(I)$ in $\mathcal{C}^{T(X)}$ with $\Kopf_X(k)=g$.
	\end{lem}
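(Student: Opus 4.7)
The plan is to lift $g$ component by component, exploiting that $f_!(I)=\bigoplus_{i\geq 0}X^i(I)$ is a \emph{finite} direct sum (since $X$ is locally nilpotent), that $X$ preserves injectives, and that $h_{\module{M}}$ is a monomorphism by assumption.

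First, I unravel what needs to be constructed. Let $\pi\colon M\to \Kopf_X(\module{M})=\coker h_{\module{M}}$ denote the canonical projection and set $g_0\coloneqq g\circ \pi\colon M\to I$. Note that $g_0\circ h_{\module{M}}=0$. Choose $N$ with $X^N(I)=0$, so that $f_!(I)=\bigoplus_{i=0}^{N-1}X^i(I)$. A morphism $k_0\colon M\to f_!(I)$ is determined by its components $k_0^{(j)}\colon M\to X^j(I)$, and the condition that $k_0$ defines a map $k\colon \module{M}\to f_!(I)$ in $\mathcal{C}^{T(X)}$, namely $k_0\circ h_{\module{M}}=\iota_I\circ X(k_0)$, translates component-wise to
\begin{equation*}
k_0^{(0)}\circ h_{\module{M}}=0, \qquad k_0^{(j)}\circ h_{\module{M}}=X(k_0^{(j-1)}) \text{ for } j\geq 1.
\end{equation*}

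Next, I construct the $k_0^{(j)}$ inductively. Take $k_0^{(0)}\coloneqq g_0$, which satisfies the $j=0$ equation. Assume $k_0^{(j-1)}$ has been constructed. Since $X$ preserves injective objects, $X^j(I)$ is injective in $\mathcal{C}$; since $\module{M}\in \Mono(X)$, the morphism $h_{\module{M}}\colon X(M)\to M$ is a monomorphism. Hence the morphism $X(k_0^{(j-1)})\colon X(M)\to X^j(I)$ extends along $h_{\module{M}}$ to some $k_0^{(j)}\colon M\to X^j(I)$ with $k_0^{(j)}\circ h_{\module{M}}=X(k_0^{(j-1)})$. For $j=N$ the target is zero, so $k_0^{(N)}=0$ is forced and the process terminates; moreover the required identity $X(k_0^{(N-1)})=k_0^{(N)}\circ h_{\module{M}}$ at the step after the last component holds because both sides land in $X^N(I)=0$.

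Finally, assemble $k_0=(k_0^{(0)},\dots,k_0^{(N-1)})\colon M\to f_!(I)$; by construction the structure-map identity above holds, so $k_0$ defines a morphism $k\colon \module{M}\to f_!(I)$ in $\mathcal{C}^{T(X)}$. It remains to identify $\Kopf_X(k)$ with $g$. Under the natural isomorphism $\Kopf_X f_!(I)\cong I$ of Lemma~\ref{top}\eqref{top:2}, the functor $\Kopf_X$ takes $k$ to the map $\coker h_{\module{M}}\to I$ induced by the composite of $k_0$ with the projection $f_!(I)\twoheadrightarrow I$ onto the $i=0$ component; this composite is exactly $k_0^{(0)}=g\circ \pi$, which factors through $\pi$ as $g$. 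Thus $\Kopf_X(k)=g$.

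The only subtle point is ensuring the inductive construction is well defined and terminates, which is handled by local nilpotence; the rest is a straightforward bookkeeping exercise using the two hypotheses (injectivity preservation and $h_{\module{M}}$ being monic) precisely where each is needed.
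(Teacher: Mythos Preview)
Your proof is correct and follows essentially the same approach as the paper: both construct the components $M\to X^j(I)$ inductively by extending $X(k_0^{(j-1)})$ along the monomorphism $h_{\module{M}}$ using injectivity of $X^j(I)$, then assemble them into a morphism in $\mathcal{C}^{T(X)}$. You are a bit more explicit about the termination via local nilpotence and about identifying $\Kopf_X(k)$ with $g$, but the underlying argument is identical.
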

	
	\begin{proof}
		Let $k_0$ denote the composite $M\xrightarrow{} \operatorname{coker}h_{\module{M}}\xrightarrow{g} I$. Applying $X$, we get a morphism $X(k_0)\colon X(M)\to X(I)$, and since $X(I)$ is injective and $h_\module{M}\colon X(M)\to M$ is a monomorphism, we can find a morphism $k_1\colon M\to X(I)$ satisfying $k_1\circ h_\module{M}=X(k_0)$. Repeating this procedure, we get morphisms $k_i\colon M\to X^i(I)$ satisfying $k_i\circ h_{\module{M}}=X(k_{i-1})$ for each $i\geq 1$. These induce a morphism $M\to \bigoplus_{i\geq 0}X^i(I)$, and since $k_0\circ h_{\module{M}}=0$ this lifts to a morphism $k\colon \module{M}\to f_!(I)$ in $\mathcal{C}^{T(X)}$. By construction we have that $\Kopf_X(k)=g$, so we are done.
	\end{proof}
	
	We can now show that $f_!(I)$ is injective in $\Mono(X)$ when $I$ is injective in $\mathcal{C}$.
	
	\begin{prop}\label{Proposition:f_!(I)Injective}
		Let $I\in \mathcal{C}$ be injective and $\module{M}\in \Mono(X)$. Then 
		\[
		\Ext^i_{\mathcal{C}^{T(X)}}(\module{M},f_!(I))=0 \quad \text{for all $i>0$.}
		\]
	\end{prop}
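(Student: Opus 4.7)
The plan is to argue by induction on $i$, with the base case $i = 1$ exploiting the machinery just developed for $\Kopf_X$ and the inductive step using dimension-shifting along the two-term resolution from Lemma \ref{relative global dimension 1}(\ref{relative global dimension 1:2}).

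For $i = 1$, let $0 \to f_!(I) \to \module{E} \to \module{M} \to 0$ be a short exact sequence in $\mathcal{C}^{T(X)}$. First I would observe that $f_!(I) \in \Mono(X)$ (its structure morphism $\iota_I$ is a split monomorphism by Proposition \ref{Proposition: direct summands of f_!}), so by the extension-closedness in Lemma \ref{Mono:resolving} we get $\module{E} \in \Mono(X)$. Applying $\Kopf_X$ and using that $L_1\Kopf_X$ vanishes on $\Mono(X)$ (Lemma \ref{Lemma:MonomorphismLeftDerived}), the result is a short exact sequence $0 \to I \to \Kopf_X\module{E} \to \Kopf_X\module{M} \to 0$ in $\mathcal{C}$, which splits by injectivity of $I$. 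Picking a retraction $\rho\colon \Kopf_X\module{E} \to I$, Lemma \ref{Lemma:Kopf_XFullCodomainInj} lifts it to $k\colon \module{E} \to f_!(I)$. The composite of the inclusion $f_!(I) \hookrightarrow \module{E}$ with $k$ becomes the identity after $\Kopf_X$, hence is itself an isomorphism by Lemma \ref{Lemma:TopDetecingMono}(\ref{Lemma:TopDetecingMono:2}), and the sequence splits.

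For the inductive step $i \geq 2$, I would apply $\Hom_{\mathcal{C}^{T(X)}}(-, f_!(I))$ to the short exact sequence
\[
0 \to f_!X(M) \xrightarrow{\iota_M - f_!(h_\module{M})} f_!(M) \xrightarrow{\varepsilon_\module{M}} \module{M} \to 0
\]
and read off the long exact sequence of $\Ext$. Since $f_!X(M)$ is relative projective (and therefore in $\Mono(X)$), the inductive hypothesis yields $\Ext^{i-1}(f_!X(M), f_!(I)) = 0$, so $\Ext^i(\module{M}, f_!(I))$ injects into $\Ext^i(f_!(M), f_!(I))$. To close the induction, I would prove separately the auxiliary statement that $\Ext^i(f_!(N), f_!(I)) = 0$ for all $N \in \mathcal{C}$ and $i \geq 1$. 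The key input is that the underlying object $f^*f_!(I) = \bigoplus_{j\geq 0} X^j(I)$ is a \emph{finite} direct sum (by local nilpotency of $X$) of injectives in $\mathcal{C}$ (since $X$ preserves injectives), hence injective in $\mathcal{C}$. Consequently, any extension $0 \to f_!(I) \to \module{E} \to f_!(N) \to 0$ becomes split after applying $f^*$, making $f^*(\module{E} \to f_!(N))$ a split epimorphism, and Lemma \ref{Lemma:RelativeProjective} produces a section of $\module{E} \to f_!(N)$ in $\mathcal{C}^{T(X)}$ from the identity on $f_!(N)$. This handles $i = 1$ for the auxiliary statement; for $i \geq 2$ I would iterate, using that the canonical sequence for $f_!(N)$ splits (its counit admits $\eta_N$ as a section by the triangle identity) and that the syzygy $f_!X(f^*f_!(N))$ has strictly smaller nilpotency degree than $f_!(N)$ itself, enabling a secondary induction on $d(N) = \min\{n : X^n(N) = 0\}$.

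The main obstacle is precisely in the auxiliary vanishing $\Ext^i(f_!(N), f_!(I)) = 0$ for $i \geq 2$: a naive induction on $i$ alone loops back on itself, since the long exact sequence only bounds $\Ext^i(\module{M}, f_!(I))$ by $\Ext^i(f_!(M), f_!(I))$, which is a statement of the same form. The route around this circularity is the secondary induction on nilpotency degree, using that passing from $N$ to $X(N)$ strictly decreases $d$ and that the splitting of the SES for relative projectives allows the Ext-decomposition to isolate a summand of lower degree. The terminating base case is $d(N) = 0$, i.e.\ $f_!(N) = 0$, where the vanishing is trivial.
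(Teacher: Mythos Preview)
Your $i=1$ argument is correct and coincides with the paper's reasoning for that case. The problem lies in the auxiliary vanishing $\Ext^j(f_!(N),f_!(I))=0$ for $j\geq 2$: the proposed secondary induction on $d(N)$ does not close. Once you note that the canonical sequence for $f_!(N)$ \emph{splits}, the long exact sequence degenerates to the direct-sum decomposition
\[
\Ext^j\bigl(f_!(T(X)N),f_!(I)\bigr)\;\cong\;\Ext^j\bigl(f_!(N),f_!(I)\bigr)\ \oplus\ \Ext^j\bigl(f_!X(T(X)N),f_!(I)\bigr),
\]
and while the second summand indeed vanishes by the inductive hypothesis (its degree is $<d(N)$), the middle term $f_!(T(X)N)=f_!(N)\oplus f_!X(N)\oplus\cdots$ has the \emph{same} degree as $N$, so after cancelling lower-degree pieces you are left with the tautology $\Ext^j(f_!(N),f_!(I))\cong\Ext^j(f_!(N),f_!(I))$. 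You correctly diagnosed the circularity in the primary induction, but the fix you propose reproduces it one level down.

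Your strategy is salvageable by a different argument for the auxiliary claim: since both $f_!$ and $f^*$ are exact, the adjunction extends to Yoneda Ext (apply $f^*$ and pull back along $\eta_N$, respectively apply $f_!$ and push out along $\varepsilon_{\module{K}}$; the triangle identities make these inverse), giving $\Ext^j_{\mathcal{C}^{T(X)}}(f_!(N),f_!(I))\cong\Ext^j_{\mathcal{C}}(N,T(X)I)=0$ because $T(X)I$ is injective. The paper avoids all of this: rather than dimension-shift, it invokes the resolving property of $\Mono(X)$ via Keller's theorem to represent an arbitrary Yoneda $i$-extension entirely inside $\Mono(X)$, and then runs your $i=1$ splitting argument on the leftmost map in one stroke. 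This handles all $i$ uniformly without any auxiliary statement or induction.
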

	
	\begin{proof}
		We prove the result using Yoneda-$\Ext$. Let $\xi\in \Ext^i_{\mathcal{C}^{T(X)}}(\module{M},f_!(I))$. We want to find a representative of $\xi$ whose leftmost map is a split monomorphism. To this end, note that since $\Mono(X)$ is
		resolving, it satisfies the dual of condition (C2) in \cite[Section 12]{Kel96}. Therefore, the dual of \cite[Theorem 12.1]{Kel96} implies that the induced functor
		\[
		D^-(\Mono(X))\to D^-(\mathcal{C}^{T(X)})
		\]
		between the derived categories is fully faithful, where we consider $\Mono(X)$ as an exact category. Hence, we can find a representative of $\xi$ of the form
		\[
		0\to f_!(I)\to \module{N}_1\to \dots \to \module{N}_i\to \module{M}\to 0
		\]
		where all the terms are in $\Mono(X)$. Since $\Mono(X)$ is closed under subobjects, also all the intermediate kernels are in $\Mono(X)$. Let $i\colon f_!(I)\to \module{N}_1$ denote the leftmost morphism. By Lemma \ref{Lemma:TopDetecingMono} the morphism $\Kopf_X (i)\colon I\to \Kopf_X\module{N}_1$ is monic. Since $I$ is injective, $\Kopf_X (i)$ is a split monomorphism, so we can choose a left inverse $g\colon \Kopf_X\module{N}_1\to I$ of it. By Lemma \ref{Lemma:Kopf_XFullCodomainInj} we can find a morphism $k\colon \module{N}_1\to f_!(I)$ satisfying $\Kopf_X(k)=g$. By construction, if we apply $\Kopf_X$ to the composite $k\circ i\colon f_!(I)\to f_!(I)$ we get the identity morphism on $I$. Hence, by Lemma \ref{Lemma:TopDetecingMono} \eqref{Lemma:TopDetecingMono:2} the morphism $k\circ i$ is an isomorphism, so $i$ must be a split monomorphism. This proves the claim.
	\end{proof}
	
	To show the converse of Proposition \ref{Proposition:f_!(I)Injective} we use  the following result.
	
	\begin{prop}\label{Corollary:MonoAndInjectiveComponents}
		Let $\module{M}\in \Mono(X)$, and assume $M$ is injective in $\mathcal{C}$. Then $\module{M}\cong f_!(J)$ for some injective object $J$ in $\mathcal{C}$.
	\end{prop}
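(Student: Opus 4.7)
The plan is to apply Proposition~\ref{Proposition: direct summands of f_!} directly, which is set up exactly for this purpose. The only work will be to verify its hypothesis and to check that the resulting object $J\in \mathcal{C}$ is injective.

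First, I would observe that since $X$ preserves injectives (a standing assumption of this section) and $M$ is injective in $\mathcal{C}$, the object $X(M)$ is injective. Now $\module{M}\in \Mono(X)$ means that the structure map $h_{\module{M}}\colon X(M)\to M$ is a monomorphism in $\mathcal{C}$, and any monomorphism out of an injective object splits. Thus $h_{\module{M}}$ is a split monomorphism.

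Next I would invoke the equivalence \eqref{Proposition: direct summands of f_!:2}$\Leftrightarrow$\eqref{Proposition: direct summands of f_!:3} of Proposition~\ref{Proposition: direct summands of f_!} to conclude $\module{M}\cong f_!(J)$, where one may take $J=\coker h_{\module{M}}=\Kopf_X(\module{M})$ as in the proof of that proposition.

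Finally, to see that $J$ is injective in $\mathcal{C}$, I would read off from the same proof the isomorphism
\[
M\cong J\oplus X(J)\oplus X^2(J)\oplus\dots\oplus X^n(J)
\]
in $\mathcal{C}$ (which exists because $X$ is locally nilpotent). Since $M$ is injective and $J$ is a direct summand of $M$, and direct summands of injective objects in an abelian category are injective, we conclude that $J$ is injective. This completes the proof. The argument is quite short and I do not expect any serious obstacle; the main thing is simply to notice that injectivity of $M$ together with $X$ preserving injectives forces $h_{\module{M}}$ to split, reducing the statement to Proposition~\ref{Proposition: direct summands of f_!}.
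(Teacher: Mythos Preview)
Your proof is correct and follows essentially the same approach as the paper: show that $X(M)$ is injective (using that $X$ preserves injectives), conclude that the monomorphism $h_{\module{M}}$ splits, and then apply Proposition~\ref{Proposition: direct summands of f_!}. The paper's proof is more terse and leaves the injectivity of $J$ implicit, whereas you spell out that $J$ is a direct summand of the injective object $M$ via the decomposition in the proof of Proposition~\ref{Proposition: direct summands of f_!}; this extra detail is accurate and harmless.
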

	
	\begin{proof}
		Since $\module{M}\in \Mono(X)$, the map $h_\module{M}\colon X(M)\to M$ is a monomorphism. Since $M$ is injective, $X(M)$ must be injective, so $h_\module{M}$ is a split monomorphism. By Proposition \ref{Proposition: direct summands of f_!} the claim follows.
	\end{proof}
	
	\begin{cor}\label{Corollary:InjectivesinMono}
		Let $\module{M}$ be an injective object in $\Mono(X)$ considered as an exact category. Then $\module{M}\cong f_!(J)$ for some injective object $J$ in $\mathcal{C}$.
	\end{cor}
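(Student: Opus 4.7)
The plan is to reduce to Proposition \ref{Corollary:MonoAndInjectiveComponents} by showing that if $\module{M}$ is injective in $\Mono(X)$, then the underlying object $M=f^*(\module{M})$ is injective in $\mathcal{C}$. Once this is established, Proposition \ref{Corollary:MonoAndInjectiveComponents} immediately gives $\module{M}\cong f_!(J)$ for some injective $J\in\mathcal{C}$.

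To prove injectivity of $M$ in $\mathcal{C}$, I would use the adjunction $f_!\dashv f^*$ to transport extension problems in $\mathcal{C}$ to extension problems in $\Mono(X)$. Given a monomorphism $\iota\colon A\hookrightarrow B$ in $\mathcal{C}$ and a morphism $g\colon A\to M$, I apply $f_!$ to obtain a short exact sequence
\[
0\to f_!(A)\xrightarrow{f_!(\iota)} f_!(B)\to f_!(B/A)\to 0
\]
in $\mathcal{C}^{T(X)}$, using that $f_!$ is exact (Lemma \ref{Lemma: BasicPropFreeMonad} \eqref{Lemma: BasicPropFreeMonad:3}). All three terms lie in $\Mono(X)$ since objects in the image of $f_!$ are relative projective and hence have split monic structure maps (Proposition \ref{Proposition: direct summands of f_!}). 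Because $\Mono(X)$ inherits its exact structure from $\mathcal{C}^{T(X)}$ with cokernel of the inflation required to lie in $\Mono(X)$ (Lemma \ref{Mono:resolving}), the morphism $f_!(\iota)$ is an inflation in $\Mono(X)$.

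Next, using the adjunction isomorphism $\Hom_{\mathcal{C}^{T(X)}}(f_!(A),\module{M})\cong \Hom_{\mathcal{C}}(A,M)$, the morphism $g$ corresponds to a morphism $\tilde{g}\colon f_!(A)\to \module{M}$. Since $\module{M}$ is injective in $\Mono(X)$, there exists $\tilde{h}\colon f_!(B)\to \module{M}$ with $\tilde{h}\circ f_!(\iota)=\tilde{g}$. Setting $h=f^*(\tilde{h})\circ\eta_B\colon B\to M$, naturality of the unit yields $f^*(f_!(\iota))\circ \eta_A=\eta_B\circ \iota$, so
\[
h\circ \iota=f^*(\tilde{h})\circ \eta_B\circ \iota=f^*(\tilde{h})\circ f^*(f_!(\iota))\circ\eta_A=f^*(\tilde{g})\circ \eta_A=g.
\]
Thus $h$ extends $g$, proving that $M$ is injective in $\mathcal{C}$.

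The argument is largely formal once one sets up the correct exactness properties, so I expect no serious obstacle; the only point requiring care is verifying that $f_!(\iota)$ is genuinely an inflation in the exact category $\Mono(X)$ (not merely a monomorphism in $\mathcal{C}^{T(X)}$), which reduces to noting that $f_!(B/A)\in\Mono(X)$.
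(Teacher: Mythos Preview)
Your proof is correct and follows the same overall strategy as the paper: show that $M=f^*(\module{M})$ is injective in $\mathcal{C}$, then invoke Proposition~\ref{Corollary:MonoAndInjectiveComponents}. The difference is in how injectivity of $M$ is established. The paper argues via $\Ext$: since $\module{M}$ is injective in $\Mono(X)$ and $\Mono(X)$ is extension-closed, $\Ext^1_{\mathcal{C}^{T(X)}}(f_!(N),\module{M})=0$ for all $N$, and then an adjunction isomorphism $\Ext^1_{\mathcal{C}^{T(X)}}(f_!(N),\module{M})\cong\Ext^1_{\mathcal{C}}(N,M)$ (cited from \cite{LO17}) yields injectivity of $M$. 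You instead verify the lifting property directly, pushing a monomorphism $A\hookrightarrow B$ through $f_!$ to obtain an inflation in $\Mono(X)$ and using the unit of $f_!\dashv f^*$ to transport the extension back. Your argument is the elementary unpacking of the $\Ext^1$ isomorphism in degree zero of the adjunction; it is more self-contained (no external citation needed) at the cost of a short explicit computation, while the paper's version is terser but relies on a quoted result.
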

	
	\begin{proof}
		It follows from the assumptions that $\operatorname{Ext}^1_{\mathcal{C}^{T(X)}}(f_!(N),\module{M})=0$ for any $N\in \mathcal{C}$. Now the adjunction $f_!\dashv f^*$ induces an isomorphism
		\[
		\operatorname{Ext}^1_{\mathcal{C}^{T(X)}}(f_!(N),\module{M})\cong \operatorname{Ext}^1_{\mathcal{C}}(N,f^*(\module{M}))
		\]
		see \cite[Lemma 3.2]{LO17}. Therefore $\operatorname{Ext}^1_{\mathcal{C}}(N,f^*(\module{M}))=0$ for all $N\in \mathcal{C}$. This implies that $f^*(\module{M})$ is injective in $\mathcal{C}$. Hence by Proposition \ref{Corollary:MonoAndInjectiveComponents} we get that $\module{M}\cong f_!(J)$ for some injective object $J\in \mathcal{C}$.
	\end{proof}
	
	We use our results to investigate when $\Mono(X)$ has enough injectives.
	
	\begin{prop}
		Assume $\mathcal{C}$ has enough injectives. Then $\Mono(X)$ has enough injectives.
	\end{prop}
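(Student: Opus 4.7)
The strategy is to use the top functor $\Kopf_X$ to reduce the construction of an injective inflation in $\Mono(X)$ to the construction of a monomorphism into an injective in $\mathcal{C}$, where we already have enough injectives by hypothesis.

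Given $\module{M}\in \Mono(X)$, I would first use the assumption that $\mathcal{C}$ has enough injectives to choose a monomorphism $\iota\colon \Kopf_X(\module{M})\to I$ with $I$ injective in $\mathcal{C}$. The key lifting tool is Lemma \ref{Lemma:Kopf_XFullCodomainInj}, which produces a morphism $k\colon \module{M}\to f_!(I)$ in $\mathcal{C}^{T(X)}$ with $\Kopf_X(k)=\iota$.

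Next, I would check that $k$ is an inflation in $\Mono(X)$. This follows from Lemma \ref{Lemma:TopDetecingMono}\eqref{Lemma:TopDetecingMono:1}: since $\Kopf_X(k)=\iota$ is a monomorphism in $\mathcal{C}$, the map $k$ is automatically a monomorphism in $\mathcal{C}^{T(X)}$ whose cokernel lies in $\Mono(X)$, i.e. an inflation. Finally, the target $f_!(I)$ is injective in $\Mono(X)$ by Proposition \ref{Proposition:f_!(I)Injective}, so we have produced an inflation from an arbitrary $\module{M}\in \Mono(X)$ into an injective object of $\Mono(X)$.

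There is essentially no obstacle here — all the technical work has been done in Lemma \ref{Lemma:Kopf_XFullCodomainInj} and Lemma \ref{Lemma:TopDetecingMono}. The only mild subtlety is that $k$ could a priori fail to be monic even though $\Kopf_X(k)$ is monic, but Lemma \ref{Lemma:TopDetecingMono}\eqref{Lemma:TopDetecingMono:1} addresses exactly this point by using that $\module{M}\in \Mono(X)$ implies $L_1\Kopf_X(\module{M})=0$, so the long exact sequence argument goes through.
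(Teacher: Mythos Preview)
Your proposal is correct and follows exactly the same approach as the paper: choose a monomorphism $\Kopf_X(\module{M})\to I$ with $I$ injective, lift via Lemma~\ref{Lemma:Kopf_XFullCodomainInj} to a morphism $\module{M}\to f_!(I)$, and conclude it is an inflation into an injective using Lemma~\ref{Lemma:TopDetecingMono}\eqref{Lemma:TopDetecingMono:1} and Proposition~\ref{Proposition:f_!(I)Injective}.
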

	
	\begin{proof}
		Let $\module{M}\in \Mono(X)$ be arbitrary, and choose a monomorphism $i\colon \Kopf_X(\module{M})\rightarrow I$ in $\mathcal{C}$ with $I$ injective. Note that $f_!(I)$ is injective by Proposition \ref{Proposition:f_!(I)Injective}. By Lemma \ref{Lemma:Kopf_XFullCodomainInj} we can find a morphism $j\colon \module{M}\to f_!(I)$ satisfying $\Kopf_X(j)=i$. Now by Lemma \ref{Lemma:TopDetecingMono} \eqref{Lemma:TopDetecingMono:1} we have that $j$ is an inflation in $\Mono(X)$. This proves the claim.
	\end{proof}
	
	We finish by showing the existence of injective envelopes in $\Mono(X)$.
	
	\begin{prop}\label{Theorem:InjectiveEnvelopesMono}
		The following hold:
		\begin{enumerate}
			\item\label{Proposition:InjectiveEnvelopesMono:1} Let $g\colon \module{M}\to f_!(I)$ be a morphism in $\Mono(X)$ with $I\in \mathcal{C}$ injective. Then $g$ is an injective envelope in $\Mono(X)$ if and only if $\Kopf_X(g)\colon \Kopf_X(M)\to I$ is an injective envelope in $\mathcal{C}$.
			\item\label{Proposition:InjectiveEnvelopesMono:2} If $\mathcal{C}$ has injective envelopes, then $\Mono(X)$ has injective envelopes.
		\end{enumerate}
	\end{prop}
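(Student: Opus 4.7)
My plan is to handle the two parts in order, reducing (\ref{Proposition:InjectiveEnvelopesMono:2}) to (\ref{Proposition:InjectiveEnvelopesMono:1}), and within (\ref{Proposition:InjectiveEnvelopesMono:1}) treating the two implications separately.

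The ($\Leftarrow$) direction of (\ref{Proposition:InjectiveEnvelopesMono:1}) is straightforward: if $\Kopf_X(g)$ is an injective envelope, then in particular $\Kopf_X(g)$ is a monomorphism, so Lemma \ref{Lemma:TopDetecingMono}\eqref{Lemma:TopDetecingMono:1} gives that $g$ is an inflation in $\Mono(X)$, while the target $f_!(I)$ is injective by Proposition \ref{Proposition:f_!(I)Injective}. For left minimality, any endomorphism $k\colon f_!(I)\to f_!(I)$ with $k\circ g=g$ satisfies $\Kopf_X(k)\circ \Kopf_X(g)=\Kopf_X(g)$, forcing $\Kopf_X(k)$ to be an isomorphism by left minimality of $\Kopf_X(g)$; then Lemma \ref{Lemma:TopDetecingMono}\eqref{Lemma:TopDetecingMono:2} upgrades this to $k$ being an isomorphism.

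For the ($\Rightarrow$) direction I would argue via the radical characterization in Lemma \ref{Lemma:RadicalProperties}\eqref{Lemma:RadicalProperties:1}. Assume $g$ is an injective envelope in $\Mono(X)$. Since $g$ is an inflation, its cokernel lies in $\Mono(X)$, so $L_1\Kopf_X(\coker g)=0$ by Lemma \ref{Lemma:MonomorphismLeftDerived}; applying the right exact functor $\Kopf_X$ to the conflation $0\to \module{M}\xrightarrow{g} f_!(I)\xrightarrow{p}\coker g\to 0$ produces a short exact sequence in $\mathcal{C}$, so $\Kopf_X(g)$ is monic with cokernel $\Kopf_X(p)$. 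By Lemma \ref{Lemma:RadicalProperties}\eqref{Lemma:RadicalProperties:1} applied in $\Mono(X)$, the map $p$ lies in the radical of $\Mono(X)$. I would next show that this forces $\Kopf_X(p)$ to lie in $\operatorname{Rad}_{\mathcal{C}}(I,\Kopf_X(\coker g))$: for any test morphism $q'\colon \Kopf_X(\coker g)\to I$, use Lemma \ref{Lemma:Kopf_XFullCodomainInj} to lift $q'$ to $q\colon \coker g\to f_!(I)$ with $\Kopf_X(q)=q'$, so that $1_I-\Kopf_X(p)\circ q'=\Kopf_X(1_{f_!(I)}-p\circ q)$ is an isomorphism because $p$ lies in the radical. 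Finally, a short elementary check (converse to Lemma \ref{Lemma:RadicalProperties}\eqref{Lemma:RadicalProperties:1} in the abelian setting) shows that a monomorphism $j\colon E\to J$ into an injective object $J$ whose cokernel projection lies in the radical is automatically left minimal: if $\phi\circ j=j$, then $1_J-\phi$ factors as $\psi\circ(J\to \coker j)$, and radical-ness forces $\phi=1_J-\psi\circ(J\to \coker j)$ to be invertible. Applied with $j=\Kopf_X(g)$, this yields left minimality of $\Kopf_X(g)$ and hence that it is an injective envelope.

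Part (\ref{Proposition:InjectiveEnvelopesMono:2}) is then immediate: given $\module{M}\in \Mono(X)$, take an injective envelope $i\colon \Kopf_X(M)\to I$ in $\mathcal{C}$, lift it via Lemma \ref{Lemma:Kopf_XFullCodomainInj} to a morphism $g\colon \module{M}\to f_!(I)$ with $\Kopf_X(g)=i$, and invoke part (\ref{Proposition:InjectiveEnvelopesMono:1}). The main obstacle is the transfer of the radical property under $\Kopf_X$ in the ($\Rightarrow$) direction of (\ref{Proposition:InjectiveEnvelopesMono:1}); the point that makes it work is the surjectivity of $\Kopf_X$ on morphisms into $f_!(I)$ (Lemma \ref{Lemma:Kopf_XFullCodomainInj}), which ensures that test morphisms in $\mathcal{C}$ always lift to test morphisms in $\Mono(X)$.
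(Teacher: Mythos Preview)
Your proof is correct, modulo a composition-order slip in the $(\Rightarrow)$ step: you write $1_I-\Kopf_X(p)\circ q'=\Kopf_X(1_{f_!(I)}-p\circ q)$, but $\Kopf_X(p)\circ q'$ and $p\circ q$ are endomorphisms of $\Kopf_X(\coker g)$ and $\coker g$ respectively, not of $I$ and $f_!(I)$. You want $1_I-q'\circ\Kopf_X(p)=\Kopf_X(1_{f_!(I)}-q\circ p)$; with this the argument goes through.

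Your $(\Leftarrow)$ direction and part~\eqref{Proposition:InjectiveEnvelopesMono:2} match the paper's proof exactly. For $(\Rightarrow)$ the paper takes a different route: given $k\colon I\to I$ with $k\circ\Kopf_X(g)=\Kopf_X(g)$, it notes that $k':=g-f_!(k)\circ g$ satisfies $\Kopf_X(k')=0$, hence factors through the structure map $\iota_I\colon f_!X(I)\to f_!(I)$. Since $X(I)$ is injective, $f_!X(I)$ is injective in $\Mono(X)$, so this factorisation extends along the inflation $g$; one obtains an endomorphism $\iota_I\circ s+f_!(k)$ of $f_!(I)$ fixing $g$, which is an isomorphism by left minimality of $g$, and applying $\Kopf_X$ shows $k$ is an isomorphism. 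Your approach instead encodes left minimality via the radical condition on the cokernel projection $p$ (Lemma~\ref{Lemma:RadicalProperties}\eqref{Lemma:RadicalProperties:1}), transfers it along $\Kopf_X$ using the lifting surjectivity of Lemma~\ref{Lemma:Kopf_XFullCodomainInj}, and then reverses the radical-to-minimality step in $\mathcal{C}$. This is arguably more conceptual---it isolates a single nontrivial transfer step---and avoids the explicit use of $f_!X(I)$; the paper's argument is more hands-on but does not rely on the (easy) converse to Lemma~\ref{Lemma:RadicalProperties}\eqref{Lemma:RadicalProperties:1}.
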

	
	\begin{proof}
		To prove part \eqref{Proposition:InjectiveEnvelopesMono:1} we first assume $\Kopf_X(g)$ is an injective envelope. Then $g$ must be a monomorphism with cokernel in $\Mono(X)$ by Lemma \ref{Lemma:TopDetecingMono}. It remains to show that $g$ is left minimal. Assume $k\colon f_!(I)\to f_!(I)$ is a morphism satisfying $k\circ g=g$. Applying $\Kopf_X$ we get that $\Kopf_X(k)\circ \Kopf_X(g)=\Kopf_X(g)$. Since $\Kopf_X(g)$ is an injective envelope, it is left minimal, so $\Kopf_X(k)$ must be an isomorphism. Hence $k$ must be an isomorphism by Lemma \ref{Lemma:TopDetecingMono} \eqref{Lemma:TopDetecingMono:2}, so $g$ is left minimal.
		
		Conversely, assume $g$ is an injective envelope. Since $g$ is an inflation, $\Kopf_X(g)\colon \Kopf_X(\module{M})\to I$ must be a monomorphism. Hence, it only remains to show that $\Kopf_X(g)$ is left minimal, so let $k\colon I\to I$ be a morphism satisfying $k\circ \Kopf_X(g)=\Kopf_X(g)$. Consider  $k'\coloneqq g-f_!(k)\circ g\colon \module{M}\to f_!(I)$. Since $\Kopf_X(k')=0$, there exists a morphism $r\colon \module{M}\to f_!X(I)$ such that the left triangle in
		\[
		\begin{tikzcd}
		f_!(I)&\module{M}\arrow{l}[swap]{k'}\arrow[dashed]{d}{r}\arrow{r}{g}&f_!(I)\arrow[dashed]{ld}{s}\\
		&f_!X(I)\arrow{lu}{\iota}&
		\end{tikzcd}
		\]
		is commutative. Since $f_!X(I)$ is injective in $\Mono(X)$ and $g\colon \module{M}\to f_!(I)$ is a monomorphism, the morphism $r$ extends to a morphism $s\colon f_!(I)\to f_!X(I)$ via $g$. Then clearly 
		\[
		g=(\iota \circ s+f_!(k))\circ g
		\]
		and hence $\iota \circ s+f_!(k)$ is an isomorphism since $g$ is an injective envelope. Finally, since $\Kopf_X(\iota)=0$, it follows that $\Kopf_X(\iota\circ s+f_!(k))=k$, which must therefore also be an isomorphism. This shows that $\Kopf_X(g)$ is left minimal.
		
		To prove part \eqref{Proposition:InjectiveEnvelopesMono:2} let $\module{M}\in \Mono(X)$ be arbitrary, and let $i\colon \Kopf_X\module{M}\to I$ be an injective envelope in $\mathcal{C}$. By Lemma \ref{Lemma:Kopf_XFullCodomainInj} we can find a morphism $j\colon \module{M}\to f_!(I)$ satisfying $\Kopf_X(j)=i$. Then $j$ must be an injective envelope by the first part of this lemma.
	\end{proof}
	
	\section{The epivalence}\label{Section:A representation equivalence}
	Throughout this section we let $\mathcal{C}$ be a $\mathbbm{k}$-linear abelian category with enough injectives and $X\colon \mathcal{C}\to \mathcal{C}$ an exact functor which is locally nilpotent and preserves injectives. Our goal is to show that the canonical functor
	\[
	\overline{\Mono}(X)\to \overline{\mathcal{C}}{}^{T(X)}
	\]
	is an epivalence, and an actual equivalence if $\mathcal{C}$ is hereditary. The proof that the functor is dense is using a particular construction of right $\Mono(X)$-approximations, which we investigate first.
	\subsection{Contravariantly finiteness}\label{Subsection:Contravariantly finiteness}
	
	Recall from Lemma \ref{relative global dimension 1} that we have an exact sequence 
	\[
	0\to f_!X(M)\xrightarrow{\iota_M-f_!(h_\module{M})} f_!(M)\xrightarrow{\varepsilon_{\module{M}}} \module{M}\to 0
	\]
	for all $\module{M}\in \mathcal{C}^{T(X)}$. Applying $\Kopf_{X}$ to this gives an exact sequence  
	\[
	0\to L_1\Kopf_{X} \module{M}\to X(M)\xrightarrow{h_\module{M}} M\to \Kopf_{X} \module{M}\to 0
	\]
	since $L_1\Kopf_{X} f_!(M)=0$. 
	\begin{defn}\label{Definition: Q}
		Fix a monomorphism $j\colon L_1\Kopf_{X}\module{M}\hookrightarrow J$ into an injective module $J$, and fix a lift $e\colon X(M)\to J$ of $j$. Define the object $\Mo\module{M}\in \mathcal{C}^{T(X)}$ and the morphism $p_{\module{M}}\colon \Mo\module{M}\to \module{M}$ to be such that the lower sequence in the following diagram is a pushout of the upper sequence:
		\begin{equation}\label{Defining Mo}
		\begin{tikzcd}[column sep=1.5cm]
		0\arrow{r} &f_!X(M)\arrow{r}{\iota_M-f_!(h_\module{M})}\arrow{d}{f_!(e)}&f_!(M)\arrow{d}{r}\arrow{r}{\varepsilon_{\module{M}}}&\module{M}\arrow[equals]{d}\arrow{r}&0\\
		0\arrow{r} &f_!(J)\arrow{r}{s}&\Mo\module{M}\arrow{r}{p_{\module{M}}}&\module{M}\arrow{r}&0
		\end{tikzcd}
		\end{equation}
	\end{defn}
	
	Note that $\Mo\module{M}$ is not well-defined up to isomorphism, since it depends on the choice of $J$, the monomorphism $j$, and the lift $e$.  In Proposition \ref{Universal property} we show that it is well-defined in a quotient category of $\mathcal{C}^{T(X)}$.
	
	We first show that $\Mo \module{M}$ gives a right $\Mono (X)$-approximation.
	
	\begin{thm}\label{Contravariantly finite}
		The morphism $p_{\module{M}}\colon \Mo\module{M}\to \module{M}$ is a right $\Mono(X)$-approximation for $\module{M}\in \mathcal{C}^{T(X)}$.  
	\end{thm}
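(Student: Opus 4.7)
The plan is to verify two things: first, that $\Mo\module{M}$ actually lies in $\Mono(X)$, and second, that any map $g\colon \module{N}\to \module{M}$ with $\module{N}\in \Mono(X)$ factors through $p_{\module{M}}$.

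For membership in $\Mono(X)$, I would apply $\Kopf_X$ and its left derived functor (which exist by Lemma \ref{Lemma:DescriptionLeftDerivedKopf} with $L_{\geq 2}\Kopf_X=0$ by Lemma \ref{DescriptionL_1Kopf}) to the lower short exact sequence in \eqref{Defining Mo} and use that $L_1\Kopf_X f_!(J)=0$. This gives a long exact sequence
\[
0\to L_1\Kopf_X(\Mo\module{M})\to L_1\Kopf_X(\module{M})\xrightarrow{\delta}\Kopf_X(f_!(J))=J\to \Kopf_X(\Mo\module{M})\to \Kopf_X(\module{M})\to 0,
\]
and by Lemma \ref{Lemma:MonomorphismLeftDerived} it suffices to show that $\delta$ is a monomorphism. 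I would identify $\delta$ using the naturality of the connecting morphism applied to the vertical maps in \eqref{Defining Mo}. The connecting morphism for the upper row can be computed as the canonical inclusion $\ker h_{\module{M}}\hookrightarrow X(M)$ (using Lemma \ref{DescriptionL_1Kopf} and the explicit form of $\iota_M-f_!(h_\module{M})$ together with $\Kopf_X(\iota_M)=0$ from Lemma \ref{top}\eqref{top:3}). Naturality then gives $\delta=e\circ (\ker h_{\module{M}}\hookrightarrow X(M))$, which equals $j$ by choice of $e$ as a lift of $j$. Since $j$ is a monomorphism by construction, $L_1\Kopf_X(\Mo\module{M})=0$ and hence $\Mo\module{M}\in \Mono(X)$.

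For the approximation property, let $g\colon \module{N}\to \module{M}$ with $\module{N}\in \Mono(X)$. I would pull back the lower short exact sequence of \eqref{Defining Mo} along $g$ to obtain a short exact sequence
\[
0\to f_!(J)\to \module{E}\to \module{N}\to 0
\]
in $\mathcal{C}^{T(X)}$, together with a morphism $\module{E}\to \Mo\module{M}$ covering $g$. By Proposition \ref{Proposition:f_!(I)Injective} we have $\operatorname{Ext}^1_{\mathcal{C}^{T(X)}}(\module{N},f_!(J))=0$ because $\module{N}\in \Mono(X)$ and $J$ is injective in $\mathcal{C}$. Therefore the pulled back sequence splits, and a section $\module{N}\to \module{E}$ composed with $\module{E}\to \Mo\module{M}$ yields the desired lift of $g$ through $p_{\module{M}}$.

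The main obstacle is really the first step: one must carefully identify the connecting morphism $\delta$ with $j$, since otherwise we have no handle on why $L_1\Kopf_X(\Mo\module{M})$ should vanish. Once that identification is made via naturality, the rest is a direct application of Proposition \ref{Proposition:f_!(I)Injective}, which does the heavy lifting for the approximation property.
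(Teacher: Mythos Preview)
Your proposal is correct and essentially identical to the paper's proof. The paper also verifies $\Mo\module{M}\in\Mono(X)$ by applying $\Kopf_X$ to the diagram \eqref{Defining Mo} and identifying the resulting map $L_1\Kopf_X\module{M}\to J$ with $j$ (your naturality argument is exactly the commutativity of the leftmost square in their diagram), and then deduces the approximation property directly from $\Ext^1_{\mathcal{C}^{T(X)}}(\module{N},f_!(J))=0$ via Proposition~\ref{Proposition:f_!(I)Injective}; your pullback-and-split argument is just a concrete rephrasing of the surjectivity of $\Hom_{\mathcal{C}^{T(X)}}(\module{N},p_{\module{M}})$.
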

	\begin{proof}
		We first show that $\Mo \module{M}$ is contained in $\Mono(X)$. To this end, we apply $\Kopf_{X}$ to the diagram \eqref{Defining Mo}. This yields the diagram
		\[
		\begin{tikzcd}
		L_1\Kopf_{X} \module{M}\arrow{r}\arrow[equals]{d}&X(M)\arrow{d}{e}\arrow{r}{h_\module{M}} &M\arrow{d}\arrow{r} &\Kopf_{X}\module{M}\arrow[equals]{d}\\
		L_1\Kopf_{X}\module{M}\arrow{r}&J\arrow{r}&\Kopf_{X}\Mo\module{M}\arrow{r}&\Kopf_{X}\module{M}
		\end{tikzcd}
		\]
		with exact rows. By commutativity of the leftmost square the map $L_1\Kopf_{X}\module{M}\to J$ is equal to $j$, whence is a monomorphism. Furthermore, the lower row can be extended to an exact sequence
		\[
		0\to L_1\Kopf_{X}\Mo\module{M}\to L_1\Kopf_{X}\module{M}\xrightarrow{j} J 
		\] 
		since $L_1\Kopf_X(f_!(J))=0$. Hence, it follows that $L_1\Kopf_{X}\Mo\module{M}=0$. Thus, $\Mo\module{M}\in \Mono(X)$.	
		
		To see that $p_\module{M}$ is a right approximation, apply $\Hom_{\mathcal{C}^{T(X)}}(\module{N},-)$ with $\module{N}\in \Mono(X)$ to the exact sequence
		\[
		0\to f_!(J)\to \Mo\module{M}\xrightarrow{p_{\module{M}}} \module{M}\to 0.
		\]
		This gives an epimorphism
		\[
		\Hom_{\mathcal{C}^{T(X)}}(\module{N},p_{\module{M}})\colon \Hom_{\mathcal{C}^{T(X)}}(\module{N},\Mo\module{M})\xrightarrow{} \Hom_{\mathcal{C}^{T(X)}}(\module{N},\module{M}).
		\]
		since $\Ext^1_{\mathcal{C}^{T(X)}}(\module{N},f_!(J))=0$ by Proposition \ref{Proposition:f_!(I)Injective}. This proves the claim.
	\end{proof}

	Next we show that $\Mo\module{M}$ satisfies a universal property. Here $\frac{\mathcal{C}^{T(X)}}{f_!(\inj \mathcal{C})}$ denotes the quotient of the category $\mathcal{C}^{T(X)}$ by the ideal of morphisms factoring through an object of the form $f_!(J)$ where $J\in \inj \mathcal{C}$.

	\begin{prop}\label{Universal property}
		Let $g\colon\module{N}\to \module{M}$ be a morphism in $\frac{\mathcal{C}^{T(X)}}{f_!(\inj \mathcal{C})}$ with $\module{N}\in\Mono(X)$. Then there exists a unique morphism 
		\[
		g'\colon \module{N}\to \Mo\module{M}
		\]
		in $\frac{\mathcal{C}^{T(X)}}{f_!(\inj \mathcal{C})}$ such that the equality $p_{\module{M}}\circ g' = g$ holds.
	\end{prop}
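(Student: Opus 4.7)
The plan is to verify existence and uniqueness of $g'$ separately, relying on two tools: the right $\Mono(X)$-approximation property of $p_{\module{M}}$ established in Theorem \ref{Contravariantly finite}, together with the short exact sequence
\[
0\to f_!(J)\xrightarrow{s} \Mo\module{M}\xrightarrow{p_{\module{M}}} \module{M}\to 0
\]
from Definition \ref{Definition: Q}. I freely pass between morphisms in $\mathcal{C}^{T(X)}$ and their images in the quotient $\frac{\mathcal{C}^{T(X)}}{f_!(\inj\mathcal{C})}$.

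For existence, pick a representative $\tilde{g}\colon \module{N}\to \module{M}$ of $g$ in $\mathcal{C}^{T(X)}$. Since $\module{N}\in\Mono(X)$, Theorem \ref{Contravariantly finite} yields a lift $g'\colon \module{N}\to \Mo\module{M}$ in $\mathcal{C}^{T(X)}$ with $p_{\module{M}}\circ g'=\tilde{g}$, and this passes to the required equality in the quotient category.

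For uniqueness, suppose $g'_1$ and $g'_2$ are two lifts in the quotient, choose representatives of them in $\mathcal{C}^{T(X)}$, and set $h\coloneqq g'_1-g'_2$. By hypothesis $p_{\module{M}}\circ h$ factors through $f_!(\inj\mathcal{C})$, so we can write $p_{\module{M}}\circ h=b\circ a$ for some $J'\in \inj\mathcal{C}$ and morphisms $a\colon\module{N}\to f_!(J')$, $b\colon f_!(J')\to \module{M}$. Now $f_!(J')\in\Mono(X)$ since its structure morphism $\iota_{J'}$ is a split monomorphism (Proposition \ref{Proposition: direct summands of f_!}), so applying the approximation property a second time gives $c\colon f_!(J')\to \Mo\module{M}$ with $p_{\module{M}}\circ c=b$. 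Then $p_{\module{M}}\circ(h-c\circ a)=0$, so by exactness there is $d\colon\module{N}\to f_!(J)$ with $h-c\circ a=s\circ d$. Therefore $h=s\circ d+c\circ a$ factors through $f_!(J)\oplus f_!(J')\cong f_!(J\oplus J')$, which lies in $f_!(\inj\mathcal{C})$ since the direct sum of two injectives is injective. Hence $g'_1=g'_2$ in the quotient. The only subtle step is the second invocation of the approximation property, which is what lets us absorb $b$ into $\Mo\module{M}$ and reduce the problem to the kernel $s$.
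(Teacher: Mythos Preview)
Your proof is correct and follows essentially the same approach as the paper: existence via the right $\Mono(X)$-approximation property of $p_{\module{M}}$, and uniqueness by lifting the factorization of $p_{\module{M}}\circ h$ through $f_!(J')$ back to $\Mo\module{M}$ and then using exactness of the defining sequence to see that $h$ factors through $f_!(J\oplus J')$. The only cosmetic difference is notation.
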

	
	\begin{proof}
		The existence of $g'$ follows from $p_{\module{M}}$ being a right $\Mono(X)$-approximation. We show uniqueness: Assume $g'\colon \module{N}\to \Mo\module{M}$ and $g''\colon \module{N}\to \Mo\module{M}$ are two morphisms in $\mathcal{C}^{T(X)}$ such that $p_{\module{M}}\circ g'$ and $p_{\module{M}}\circ g''$ are equal as morphisms in $\frac{\mathcal{C}^{T(X)}}{f_!(\inj \mathcal{C})}$.  Then there exists $J'\in \inj \mathcal{C}$ and morphisms $u\colon \module{N}\to f_!(J')$ and $v\colon f_!(J')\to \module{M}$ such that 
		\[
		v\circ u = p_{\module{M}}\circ (g'-g'')
		\]
		in $\mathcal{C}^{T(X)}$. Furthermore, since $p_{\module{M}}$ is a right $\Mono(X)$-approximation and $f_!(J')\in \Mono(X)$, there exists a morphism $w\colon f_!(J')\to \Mo\module{M}$ satisfying 
		\[
		v=p_{\module{M}}\circ w.
		\]
		Hence, $p_{\module{M}}\circ (g'-g''-w\circ u)=0$ which implies that $g'-g''-w\circ u$ factors through $\ker p_{\module{M}}=f_!(J)$. Therefore, $g'-g''=(g'-g''-w\circ u)+w\circ u$ factors through $f_!(J)\oplus f_!(J')\cong f_!(J\oplus J')$. This shows that $g'$ and $g''$ are equal in $\frac{\mathcal{C}^{T(X)}}{f_!(\inj \mathcal{C})}$. 
	\end{proof}
	
	It follows from Proposition \ref{Universal property} that $\Mo\module{M}$ and $p_{\module{M}}$ are unique up to isomorphism in $\frac{\mathcal{C}^{T(X)}}{f_!(\inj \mathcal{C})}$, independently of the choice of $j\colon L_1\Kopf_{X}\module{M}\to J$ and the lift $e\colon X(M)\to J$. In fact, the universal property of $\Mo\module{M}$ implies that the assignment $\module{M}\mapsto \Mo\module{M}$ can be made into a functor, which the following result shows. Here we write 
	\[
	\overline{\Mono}(X)=\tfrac{\Mono(X)}{f_!(\inj \mathcal{C})}
	\] 
	since by Proposition \ref{Proposition:f_!(I)Injective} and Proposition \ref{Corollary:MonoAndInjectiveComponents} the subcategory of injectives in $\Mono(X)$ is $f_!(\inj \mathcal{C})$.
	
	\begin{cor}\label{Mo is a functor}
		The assignment $\module{M}\mapsto \Mo\module{M}$ induces a functor
		\[
		\Mo\colon \tfrac{\mathcal{C}^{T(X)}}{f_!(\inj \mathcal{C})} \to \overline{\Mono}(X)
		\]
		which is right adjoint to the inclusion functor $i \colon \overline{\Mono}(X)\to \frac{\mathcal{C}^{T(X)}}{f_!(\inj \mathcal{C})}$. Furthermore, the counit of the adjunction $i\dashv \Mo$ at $\module{M}$ is $p_{\module{M}}$.
	\end{cor}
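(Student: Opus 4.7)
The plan is to invoke the standard principle from category theory that a choice of universal arrow at each object gives rise to a right adjoint functor. The universal arrows are already in hand: by Proposition~\ref{Universal property}, for each $\module{M} \in \mathcal{C}^{T(X)}$ the morphism $p_{\module{M}}\colon \Mo\module{M}\to \module{M}$ is a couniversal arrow from $i$ to $\module{M}$ in the quotient category $\tfrac{\mathcal{C}^{T(X)}}{f_!(\inj \mathcal{C})}$. Indeed, Proposition~\ref{Universal property} asserts precisely that the map
\[
\operatorname{Hom}_{\overline{\Mono}(X)}(\module{N},\Mo\module{M}) \longrightarrow \operatorname{Hom}_{\tfrac{\mathcal{C}^{T(X)}}{f_!(\inj \mathcal{C})}}(i(\module{N}),\module{M}), \qquad g'\mapsto p_{\module{M}}\circ g',
\]
is a bijection for every $\module{N}\in \overline{\Mono}(X)$.

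First, I would define $\Mo$ on morphisms. Given $g\colon \module{M}\to \module{M}'$ in $\tfrac{\mathcal{C}^{T(X)}}{f_!(\inj \mathcal{C})}$, apply Proposition~\ref{Universal property} to the composite $g\circ p_{\module{M}}\colon \Mo\module{M}\to \module{M}'$, noting that $\Mo\module{M}\in \Mono(X)$ by Theorem~\ref{Contravariantly finite}. This yields a unique morphism $\Mo(g)\colon \Mo\module{M}\to \Mo\module{M}'$ in $\overline{\Mono}(X)$ characterized by the equality $p_{\module{M}'}\circ \Mo(g)=g\circ p_{\module{M}}$ in the quotient category.

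Next, I would verify functoriality. For composable $g,h$, both $\Mo(h)\circ \Mo(g)$ and $\Mo(h\circ g)$ satisfy $p_{\module{M}''}\circ (-) = h\circ g\circ p_{\module{M}}$; the uniqueness clause of Proposition~\ref{Universal property} then forces them to agree. Likewise $\Mo(\operatorname{id}_{\module{M}})=\operatorname{id}_{\Mo\module{M}}$ by uniqueness applied to $p_{\module{M}}$ itself. Thus $\Mo$ is a well-defined functor.

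Finally, I would establish the adjunction $i\dashv \Mo$. The candidate isomorphism is the bijection displayed above, with inverse sending $g$ to the unique lift $g'$. Naturality in $\module{M}$ amounts to the identity $\Mo(g)\circ (-)'=(g\circ (-))'$ for $g\colon \module{M}\to \module{M}'$, which again follows by composing with $p_{\module{M}'}$ and invoking uniqueness. Naturality in $\module{N}$ is immediate since both horizontal maps are given by postcomposition. Once the adjunction is established, the counit at $\module{M}$ is by construction the image of $\operatorname{id}_{\Mo\module{M}}$, namely $p_{\module{M}}$. There is no real obstacle here beyond the bookkeeping of applying the uniqueness part of Proposition~\ref{Universal property} consistently; the essential content of the corollary has already been absorbed into that proposition.
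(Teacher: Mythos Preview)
Your proposal is correct and follows essentially the same approach as the paper: both use Proposition~\ref{Universal property} to exhibit $p_{\module{M}}$ as a couniversal arrow from $i$ to $\module{M}$, and then deduce the adjunction. The only cosmetic difference is that the paper packages the construction of $\Mo$ on morphisms and the naturality check into a single appeal to Yoneda's lemma, whereas you spell out the standard argument (define $\Mo(g)$ via uniqueness, verify functoriality and naturality by uniqueness) explicitly.
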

	
	\begin{proof}
		Composing with $p_{\module{M}}$ gives an isomorphism
		\[
		\Hom_{\overline{\Mono}(X)}(\module{N},\Mo\module{M})\xrightarrow{\cong}\Hom_{\frac{\mathcal{C}^{T(X)}}{f_!(\inj \mathcal{C})}}(\module{N},\module{M}) 
		\]
		for $\module{N}\in \Mono(X)$ by Proposition \ref{Universal property}. Since $\Mo\module{M}\in \Mono(X)$, it follows from Yoneda's lemma that the assignment $\module{M}\mapsto \Mo\module{M}$ defines a functor 
		$\Mo\colon \frac{\mathcal{C}^{T(X)}}{f_!(\inj \mathcal{C})} \to \overline{\Mono}(X)$ which makes the isomorphism
		\[
		\Hom_{\frac{\mathcal{C}^{T(X)}}{f_!(\inj \mathcal{C})}}(i(-),\module{M})\cong \Hom_{\overline{\Mono}(X)}(-,\Mo\module{M}) 
		\]
		natural in $\module{M}$. If $\module{N}=\Mo\module{M}$, then the image of the identity $1_{\Mo\module{M}}$ is $p_{\module{M}}$, which is therefore the counit at $\module{M}$. This proves the claim.
	\end{proof}

 \begin{rmk}
     In \cite[Example 5.3]{Che12} they explain how the association $\module{M}\mapsto \Mo\module{M}$ can be interpreted as a triangle functors when $\mathcal{C}^{T(X)}=\operatorname{rep}(\mathtt{1}\to \mathtt{2},\mathcal{B})$ where $\mathcal{B}$ is Frobenius exact.
 \end{rmk}
	\subsection{The general case}\label{Subsection:The general case}
	
	In this subsection, we show that the functor from the monomorphism category to the Eilenberg--Moore category of the stable category induces an epivalence.
	
	\begin{defn}
		A functor is called an \emphbf{epivalence} if it is full, dense and reflects isomorphisms.
	\end{defn}
	
	The remainder of the subsection is concerned with proving the following result.
	
	\begin{thm}\label{Theorem: Canonical functor representation equivalence}
		The canonical functor $\overline{\Mono}(X)\to \overline{\mathcal{C}}{}^{T(X)}$ is an epivalence.
	\end{thm}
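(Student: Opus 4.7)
The plan is to verify the three defining properties of an epivalence — fullness, density, and reflection of isomorphisms — in turn. The functor $F$ sends $\module{M}$ to $(\overline{M},\overline{h}_\module{M})$, and descends to $\overline{\Mono}(X)$ because any morphism factoring through $f_!(J)$ with $J\in \inj\mathcal{C}$ has $f^*$-image factoring through $T(X)(J)$, which is injective in $\mathcal{C}$ by Lemma~\ref{Lemma: BasicPropFreeMonad}~\eqref{Lemma: BasicPropFreeMonad:4} and hence zero in $\overline{\mathcal{C}}$.

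For density, identify $\overline{\mathcal{C}}{}^{T(X)}$ with $(X\Downarrow \Id_{\overline{\mathcal{C}}})$ via Lemma~\ref{Eilenberg--Moore category of stable free monad}. Given $(\overline{N},h_1)$ in this category, I lift $h_1$ arbitrarily to $\widetilde{h}\colon X(N)\to N$ in $\mathcal{C}$ and apply Definition~\ref{Definition: Q} to $\module{N}=(N,\widetilde{h})\in\mathcal{C}^{T(X)}$, obtaining a conflation $0\to f_!(J)\to \Mo\module{N}\xrightarrow{p_\module{N}}\module{N}\to 0$ with $\Mo\module{N}\in \Mono(X)$. Applying $f^*$ yields a short exact sequence in $\mathcal{C}$ whose kernel $T(X)(J)$ is injective, so $f^*(p_\module{N})$ becomes invertible in $\overline{\mathcal{C}}$; this gives $F(\Mo\module{N})\cong (\overline{N},h_1)$ in $\overline{\mathcal{C}}{}^{T(X)}$.

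For fullness, I lift a morphism $\overline{g}\colon F\module{M}\to F\module{M}'$ to $\widetilde{g}\colon M\to M'$ in $\mathcal{C}$ and study the obstruction $d(\widetilde{g})=\widetilde{g}\circ h_\module{M}-h_{\module{M}'}\circ X(\widetilde{g})$. This vanishes in $\overline{\mathcal{C}}$ and so factors as $X(M)\xrightarrow{a_0}I_0\xrightarrow{b_0}M'$ with $I_0\in \inj\mathcal{C}$. Since $h_\module{M}$ is mono (as $\module{M}\in \Mono(X)$) and $I_0$ is injective, $a_0$ extends to $\widetilde{a}_0\colon M\to I_0$; replacing $\widetilde{g}$ by $\widetilde{g}-b_0\widetilde{a}_0$ yields a new lift whose obstruction is $h_{\module{M}'}\circ X(b_0\widetilde{a}_0)$, factoring through $X(I_0)\in \inj\mathcal{C}$ since $X$ preserves injectives. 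Iterating, the $n$-th obstruction factors through $X^n(I_0)$; local nilpotency guarantees $X^n(I_0)=0$ for $n$ large, terminating the process with $\widehat{g}\colon \module{M}\to \module{M}'$ in $\Mono(X)$. Since the accumulated corrections factor through the injective $T(X)(I_0)$, we have $F(\widehat{g})=\overline{g}$.

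Reflecting isomorphisms is the main obstacle. If $F(g)$ is invertible, fullness yields $h\colon \module{M}'\to \module{M}$ in $\Mono(X)$ lifting $F(g)^{-1}$, so the endomorphisms $hg-1_\module{M}$ and $gh-1_{\module{M}'}$ have $f^*$-image zero in $\overline{\mathcal{C}}$. To conclude $g$ is invertible in $\overline{\Mono}(X)$, I must show that any endomorphism $\alpha$ of $\module{M}\in \Mono(X)$ with $f^*(\alpha)$ factoring through an injective of $\mathcal{C}$ also factors through $f_!(I)$ in $\mathcal{C}^{T(X)}$. Given $f^*(\alpha)=v_0u_0\colon M\to I\to M$, the approach is to subtract from $u_0$ an extension of $u_0\circ h_\module{M}$ along $h_\module{M}$ so that the adjusted map descends along $M\twoheadrightarrow \Kopf_X\module{M}$, then apply Lemma~\ref{Lemma:Kopf_XFullCodomainInj} to lift it to $u\colon \module{M}\to f_!(I)$; composing with the morphism $v\colon f_!(I)\to \module{M}$ adjoint to $v_0$ under $f_!\dashv f^*$ gives a candidate factorization. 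The residual $\alpha-vu$ is controlled by higher injective components $X^i(I)$, and the iterative correction — in combination with the uniqueness of maximal injective summands (Lemma~\ref{Lemma:UniquenessMaxInjSummand}) — is what I expect will push the factorization through after finitely many steps by local nilpotency. The subtle interaction between these iterative corrections and the structure maps across degrees is the core difficulty.
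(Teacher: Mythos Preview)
Your arguments for density and fullness are essentially the paper's: both lift the structure map and apply $\Mo$ for density, and both correct a lift of a morphism by maps through $X^i(J)$ until the obstruction vanishes by local nilpotency (the paper assembles the corrections into a single map $M\to \bigoplus_{i\geq 0}X^i(J)$, you iterate, but this is cosmetic).

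The gap is in reflecting isomorphisms. You reduce to showing that every endomorphism $\alpha$ of $\module{M}\in\Mono(X)$ with $f^*(\alpha)$ factoring through an injective of $\mathcal{C}$ already factors through some $f_!(I)$ with $I$ injective. But this statement is exactly faithfulness of $\overline{\Mono}(X)\to\overline{\mathcal{C}}{}^{T(X)}$, and the paper proves in Theorem~\ref{Theorem:EquivalenceHereditary} that faithfulness holds if and only if every object in the image of $X$ has injective dimension at most $1$. So your iterative correction cannot terminate in a factorization through $f_!(\inj\mathcal{C})$ in general; what Proposition~\ref{Proposition:FactorThroughRelProj} gives is only a factorization through $f_!(K)$ with $K$ a cosyzygy of $X(M)$, and $K$ is injective precisely under the hereditary hypothesis. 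Your appeal to Lemma~\ref{Lemma:UniquenessMaxInjSummand} does not help here: that lemma controls decompositions, not whether a given map lands in the injective ideal.

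The paper avoids this by a different mechanism. It uses the right triangulated structure on $\overline{\Mono}(X)$: for $\module{M}\in\Mono(X)$ the canonical resolution $0\to f_!X(M)\to f_!(M)\to\module{M}\to 0$ exhibits $\module{M}$ as the cone of $\iota_M-f_!(h_\module{M})$ in $\overline{\Mono}(X)$. If $g\colon\module{M}\to\module{N}$ becomes an isomorphism in $\overline{\mathcal{C}}{}^{T(X)}$, then $f^*(g)$ is an isomorphism in $\overline{\mathcal{C}}$, hence $f_!f^*(g)$ and $f_!Xf^*(g)$ are isomorphisms in $\overline{\Mono}(X)$ (since $f_!$ sends injectives of $\mathcal{C}$ to injectives of $\Mono(X)$), and the map of cones $g$ is then an isomorphism by the standard triangulated argument. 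This sidesteps faithfulness entirely.
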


	\begin{rmk}
		It was observed in \cite[Chapter II]{Aus71} that epivalences preserve and reflect several important representation-theoretic concepts. In particular, Theorem \ref{Theorem: Canonical functor representation equivalence} has the following consequences:
		\begin{enumerate}
			\item[(i)] An object in $\overline{\Mono}(X)$ is indecomposable if and only if its image in $\overline{\mathcal{C}}{}^{T(X)}$ is indecomposable.
			\item[(ii)] There is a bijection between isomorphism classes of objects in $\overline{\Mono}(X)$ and isomorphism classes of objects in $\overline{\mathcal{C}}{}^{T(X)}$, which restricts to a bijection between the indecomposables.
			\item[(iii)] $\overline{\Mono}(X)$ is Krull--Remak--Schmidt if and only if $\overline{\mathcal{C}}{}^{T(X)}$ is
			Krull--Remak--Schmidt. 
		\end{enumerate}
		Under mild assumptions on $\mathcal{C}$ there is a bijection between the indecomposable non-injective objects in $\Mono(X)$ and the indecomposable objects in $\overline{\Mono}(X)$. Therefore, given an indecomposable object in $\overline{\mathcal{C}}{}^{T(X)}$, we have an associated unique (up to isomorphism) indecomposable non-injective object in $\Mono(X)$. In Section \ref{subsection: A characterization of the indecomposable objects} we show that the Mimo-construction, a refinement of $\Mo$ as defined in the previous subsection,  gives an explicit way to describe it. In particular, Theorem \ref{Theorem: Characterization of indecomposables} reduces the study of indecomposable objects in $\Mono(X)$ to the study of indecomposable objects in $\overline{\mathcal{C}}{}^{T(X)}$, which is often much simpler.
	\end{rmk}

	\begin{lem}\label{Lemma:FunctorDense}
		The canonical functor $\Mono(X)\to \overline{\mathcal{C}}{}^{T(X)}$ is dense. 
	\end{lem}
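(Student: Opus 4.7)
The plan is to exhibit, for each object in $\overline{\mathcal{C}}{}^{T(X)}$, a preimage in $\Mono(X)$ by first lifting the object to $\mathcal{C}^{T(X)}$ and then applying the construction $\Mo$ from Definition~\ref{Definition: Q}. By Lemma~\ref{Eilenberg--Moore category of stable free monad}, I can identify $\overline{\mathcal{C}}{}^{T(X)}$ with $(X\Downarrow \Id_{\overline{\mathcal{C}}})$, so an arbitrary object is represented by a pair $(M,\bar h)$ with $\bar h\colon X(M)\to M$ a morphism in $\overline{\mathcal{C}}$. Picking a representative $h\colon X(M)\to M$ in $\mathcal{C}$ and using Lemma~\ref{Eilenberg--Moore category of free monad}, this defines an object $\module{M}\in\mathcal{C}^{T(X)}$ whose image in $\overline{\mathcal{C}}{}^{T(X)}$ is (isomorphic to) the chosen object.

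Next, I would form $\Mo\module{M}$ as in Definition~\ref{Definition: Q}. By Theorem~\ref{Contravariantly finite}, $\Mo\module{M}$ lies in $\Mono(X)$, and comes with a short exact sequence
\[
0\to f_!(J)\to \Mo\module{M}\xrightarrow{p_{\module{M}}} \module{M}\to 0
\]
in $\mathcal{C}^{T(X)}$, where $J\in\mathcal{C}$ is injective. Since $X$ preserves injectives and $X$ is locally nilpotent, the object $f^*f_!(J)=\bigoplus_{i\geq 0}X^i(J)$ is a finite direct sum of injectives in $\mathcal{C}$, hence injective. Therefore applying $f^*$ to the short exact sequence above yields a split short exact sequence in $\mathcal{C}$, so the underlying morphism $f^*(p_{\module{M}})$ becomes an isomorphism in $\overline{\mathcal{C}}$.

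Finally, I would invoke the fact that the forgetful functor $\overline{\mathcal{C}}{}^{T(X)}\to \overline{\mathcal{C}}$ reflects isomorphisms (one simply checks that the inverse in $\overline{\mathcal{C}}$ commutes with the structure morphisms, which is automatic). This promotes $p_{\module{M}}$ to an isomorphism in $\overline{\mathcal{C}}{}^{T(X)}$, so the canonical functor sends $\Mo\module{M}\in\Mono(X)$ to an object isomorphic to the one we started with, yielding density. The only delicate point is keeping track of the two different ways of describing $\overline{\mathcal{C}}{}^{T(X)}$, namely as the Eilenberg--Moore category of the descended monad and as $(X\Downarrow\Id_{\overline{\mathcal{C}}})$; once the identification from Lemma~\ref{Eilenberg--Moore category of stable free monad} is invoked, the argument is immediate from the already established properties of $\Mo$ and $f_!$.
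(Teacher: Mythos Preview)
Your proposal is correct and follows essentially the same approach as the paper: lift to $\mathcal{C}^{T(X)}$ via Lemma~\ref{Eilenberg--Moore category of stable free monad}, apply $\Mo$, and invoke Theorem~\ref{Contravariantly finite}. The paper's proof simply asserts that $\Mo\module{M}'$ is isomorphic to $\module{M}$ in $\overline{\mathcal{C}}{}^{T(X)}$ ``by construction'', whereas you spell out why: the kernel $f^*f_!(J)$ is injective in $\mathcal{C}$, so $f^*(p_{\module{M}})$ becomes an isomorphism in $\overline{\mathcal{C}}$, and the forgetful functor $\overline{\mathcal{C}}{}^{T(X)}\to\overline{\mathcal{C}}$ reflects isomorphisms---this extra detail is welcome and fills in what the paper leaves implicit.
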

	
	\begin{proof}
		An object $\module{M}$ in $\overline{\mathcal{C}}{}^{T(X)}$ is given by a morphism $h_\module{M}\colon X(M)\to M$ in $\overline{\mathcal{C}}$ by Lemma \ref{Eilenberg--Moore category of stable free monad}. Choose a lift $h'\colon X(M)\to M$ to $\mathcal{C}$ of $h_{\module{M}}$, and let $\module{M}'$ be the  object in $\mathcal{C}^{T(X)}$ corresponding to it. By Theorem \ref{Contravariantly finite} the object $\Mo \module{M}'$ is in $\Mono(X)$, and by construction it must be isomorphic to $\module{M}$ in $\overline{\mathcal{C}}{}^{T(X)}$. This proves the claim.
	\end{proof}
	Next we show that the functor $\Mono(X)\to \overline{\mathcal{C}}{}^{T(X)}$ is full.
	
	\begin{lem}\label{Lemma:FunctorFull}
		Let $\module{M}\in \Mono(X)$ and $\module{N}\in \mathcal{C}^{T(X)}$. Then any morphism $\module{M}\to \module{N}$ in $\overline{\mathcal{C}}{}^{T(X)}$ can be lifted to a morphism in $\mathcal{C}^{T(X)}$. In particular, the functor $\Mono(X)\to \overline{\mathcal{C}}{}^{T(X)}$ is full.
	\end{lem}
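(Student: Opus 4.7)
The plan is a one-variable correction argument iterated finitely many times, using injectivity of $I_0$, the fact that $X$ preserves injectives, and the local nilpotence of $X$. Given a morphism $\bar{g}\colon \module{M}\to \module{N}$ in $\overline{\mathcal{C}}{}^{T(X)}$, Lemma~\ref{Eilenberg--Moore category of stable free monad} lets us view this as a morphism $\bar{g}\colon M\to N$ in $\overline{\mathcal{C}}$ satisfying $\bar{g}\circ h_\module{M}=h_\module{N}\circ X(\bar{g})$ in $\overline{\mathcal{C}}$. First, I would pick any lift $g\colon M\to N$ of $\bar{g}$ in $\mathcal{C}$. The defect $d\coloneqq g\circ h_\module{M}-h_\module{N}\circ X(g)\colon X(M)\to N$ vanishes in $\overline{\mathcal{C}}$, hence factors through some injective object as $d=q\circ i$, with $i\colon X(M)\to I_0$ and $q\colon I_0\to N$. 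The goal is to modify $g$ by a morphism factoring through an injective so that the resulting map commutes strictly with the structure maps.

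Since $\module{M}\in \Mono(X)$, the morphism $h_\module{M}$ is a monomorphism, so injectivity of $I_0$ yields an extension $\tilde{i}\colon M\to I_0$ of $i$ along $h_\module{M}$. Replacing $g$ by $g_1=g-q\circ \tilde{i}$, a direct calculation gives
\[
d_1\coloneqq g_1\circ h_\module{M}-h_\module{N}\circ X(g_1) = h_\module{N}\circ X(q)\circ X(\tilde{i}),
\]
which factors through $X(I_0)$. Since $X$ preserves injective objects, $X(I_0)$ is again injective, so the same procedure applies to $g_1$. Iterating, one produces a sequence of lifts $g_n\colon M\to N$ whose defects $d_n$ factor through $X^n(I_0)$, with each correction factoring through the injective object $X^{n-1}(I_0)$.

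Local nilpotence of $X$ forces $X^N(I_0)=0$ for some $N\geq 1$, so the defect vanishes at stage $N$ and $g_N$ is a bona fide morphism $\module{M}\to \module{N}$ in $\mathcal{C}^{T(X)}$. The total correction $g-g_N$ factors through the injective object $\bigoplus_{n=0}^{N-1}X^n(I_0)$, so $g_N$ still represents $\bar{g}$ in $\overline{\mathcal{C}}{}^{T(X)}$; this is the desired lift. Fullness of the functor $\Mono(X)\to \overline{\mathcal{C}}{}^{T(X)}$ then follows at once, since when $\module{N}\in \Mono(X)$ the lifted morphism lies in $\Mono(X)$ because it is a full subcategory of $\mathcal{C}^{T(X)}$. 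The only delicate point is the bookkeeping for the iteration, in particular the explicit identification of $d_{n+1}$ in terms of the chosen extension of the factorization of $d_n$; once this identity is recorded, termination is automatic from local nilpotence.
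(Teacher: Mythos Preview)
Your argument is correct and follows essentially the same approach as the paper's proof. The only difference is organizational: the paper builds all the extensions $u_i\colon M\to X^i(J)$ at once (satisfying $u_i\circ h_{\module{M}}=X(u_{i-1})$) and makes a single correction $g=g'-v\circ u$ through $\bigoplus_{i\geq 0}X^i(J)$, whereas you phrase this as an iterative procedure correcting the defect one stage at a time; unwinding your recursion, your maps $\tilde{i}_n$ and $q_n$ coincide exactly with the paper's $u_n$ and the components of $v$.
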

	
	\begin{proof}
		Let $\overline{g}\colon \module{M}\to \module{N}$ be a morphism in $\overline{\mathcal{C}}{}^{T(X)}$, and let $g'\colon M\to N$ be an arbitrary lift of $f^*(\overline{g})$ to $\mathcal{C}$. Since $f^*(\overline{g})\circ h_{\module{M}}$ and $h_{\module{N}}\circ X(f^*(\overline{g}))$ are equal in $\overline{\mathcal{C}}$, the difference 
		\[
		g'\circ h_{\module{M}}-h_{\module{N}}\circ X(g')\colon X(M)\to N
		\]
		is equal to a composite $X(M)\xrightarrow{u'} J\xrightarrow{v'}N$ where $J$ is injective. Furthermore, since $h_\module{M}$ is a monomorphism, the map $u'$ can be lifted to a morphism $u_0\colon M\to J$. Similarly, since $X(J)$ is injective, the map $X(u_0)\colon X(M)\to X(J)$ can be lifted to a morphism $u_1\colon M\to X(J)$. Repeating this argument, we get maps $u_i\colon M\to X^i(J)$ for each $i\geq 0$ such that $u_i\circ h_\module{M}=X(u_{i-1})$ for all $i>0$. Now let $I\coloneqq\bigoplus_{i\geq 0}X^i(J)$, and note that $I$ is injective since $X$ is locally nilpotent and therefore $\bigoplus_{i\geq 0}X^i(J)$ is a finite sum. Let $v\colon I\to N$ be the unique map given on component $X^i(J)$ as the composite 
		\[
		X^i(J)\xrightarrow{X^i(v')} X^i(N)\xrightarrow{X^{i-1}(h_\module{N})}X^{i-1}(N)\xrightarrow{X^{i-2}(h_\module{N})}\dots \xrightarrow{h_\module{N}}N.
		\]
		Furthermore,  let $u\colon M\to I$ be the unique map given on component $X^i(J)$ as $u_i\colon M\to X^i(J)$. If we let $g=g'-v\circ u$, then a short computation shows that $g\circ h_{\module{M}}=h_\module{N}\circ X(g)$, so $g$ is a morphism $\module{M}\to \module{N}$ in $\mathcal{C}^{T(X)}$. Since $g$ is equal to $\overline{g}$ in $\overline{\mathcal{C}}{}^{T(X)}$ and $\overline{g}$ was arbitrary, this proves the claim.
	\end{proof}

	\begin{lem}\label{Lemma:FunctorReflectisos}
		The canonical functor $\overline{\Mono}(X)\to \overline{\mathcal{C}}{}^{T(X)}$ reflects isomorphisms.    
	\end{lem}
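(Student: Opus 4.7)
The plan is to combine the fullness established in Lemma \ref{Lemma:FunctorFull} with a Jacobson-radical argument. Let $g\colon \module{M}\to \module{N}$ in $\Mono(X)$ have image an isomorphism in $\overline{\mathcal{C}}^{T(X)}$. By Lemma \ref{Lemma:FunctorFull}, the inverse lifts to a morphism $h\colon \module{N}\to \module{M}$ in $\Mono(X)$, and setting $\phi\coloneqq 1_\module{M}-hg$ and $\psi\coloneqq 1_\module{N}-gh$, both become zero in $\overline{\mathcal{C}}^{T(X)}$, so their underlying morphisms $f^*(\phi), f^*(\psi)$ factor through injective objects of $\mathcal{C}$.

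It then suffices to prove the following key claim: any endomorphism $\phi\colon \module{M}\to \module{M}$ in $\Mono(X)$ with $f^*(\phi)$ factoring through an injective of $\mathcal{C}$ represents a morphism in the Jacobson radical of $\End_{\overline{\Mono}(X)}(\module{M})$. Granting the claim, both $hg=1_\module{M}-\phi$ and $gh=1_\module{N}-\psi$ are of the form $1-(\text{radical element})$ and hence invertible in $\overline{\Mono}(X)$, making $g$ both left- and right-invertible there, hence an isomorphism.

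To prove the claim I would invoke Lemma \ref{Lemma:TopReflectRadical}, reducing the task to showing that $\Kopf_X(\phi)$ lies in the radical of $\End_\mathcal{C}(\Kopf_X\module{M})$. Write $f^*(\phi)=v\circ u$ with $u\colon M\to J$, $v\colon J\to M$ and $J$ injective in $\mathcal{C}$. Since $\phi$ is a morphism in $\mathcal{C}^{T(X)}$, we have the identity $\phi\circ h_\module{M}=h_\module{M}\circ X(\phi)$; combining this with the injectivity of $J$ and the monicity of $h_\module{M}$, the map $u\circ h_\module{M}\colon X(M)\to J$ admits an extension $w\colon M\to J$ along $h_\module{M}$, and then $u-w$ factors through $q_\module{M}\colon M\to \Kopf_X\module{M}$. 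This yields a decomposition of $\Kopf_X(\phi)$ as the sum of a morphism factoring through the injective $J$---which can be controlled via Lemma \ref{Lemma:RadicalProperties}(2) after reducing to the case where $\Kopf_X\module{M}$ has no nonzero injective summand---and a correction term, to be handled using the identity $v\circ w\circ h_\module{M}=h_\module{M}\circ X(\phi)$.

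The main obstacle is the precise analysis of the correction term in this decomposition, since a naive computation only reproduces $\Kopf_X(\phi)$ itself. If the direct analysis proves too delicate, a viable alternative would be to descend $\Kopf_X$ to a functor $\overline{\Mono}(X)\to \overline{\mathcal{C}}$ (which is well-defined since $\Kopf_X(f_!(I))=I$ is injective in $\mathcal{C}$) and establish a stable-category analogue of Lemma \ref{Lemma:TopDetecingMono}(2) stating that this descended functor reflects isomorphisms, combined with the observation that an isomorphism in $\overline{\mathcal{C}}^{T(X)}$ induces an isomorphism between the corresponding stable cokernels.
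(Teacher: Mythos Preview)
Your reduction to the ``key claim'' is fine but tautological: the claim that every endomorphism $\phi$ of $\module{M}$ with $f^*(\phi)$ factoring through an injective lies in the radical of $\End_{\overline{\Mono}(X)}(\module{M})$ is \emph{equivalent} to the lemma (apply it to $1-\phi$ in one direction, and use your opening paragraph in the other). So all the work lies in proving the claim, and here your argument has a genuine gap.

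Your proposed route through Lemma~\ref{Lemma:TopReflectRadical} requires $\Kopf_X(\phi)\in\operatorname{Rad}_{\mathcal{C}}(\Kopf_X\module{M},\Kopf_X\module{M})$, but this is false in general: take $\module{M}=f_!(J)$ for a nonzero injective $J$ and $\phi=1_{\module{M}}$; then $f^*(\phi)$ factors through the injective $T(X)(J)$ while $\Kopf_X(\phi)=1_J$ is certainly not in the radical. The suggested fix ``reduce to $\Kopf_X\module{M}$ having no injective summand'' is not carried out, and your own computation shows why it stalls: the correction term $\theta$ induced by $v\circ w$ is again a map whose underlying morphism factors through an injective, so you are back where you started, with no visible termination mechanism. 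Your alternative at the end needs the ``observation'' that an isomorphism in $\overline{\mathcal{C}}^{T(X)}$ induces an isomorphism in $\overline{\mathcal{C}}$ after applying $\Kopf_X$; but $\Kopf_X$ does not descend to a functor out of $\overline{\mathcal{C}}^{T(X)}$ (the structure map there is only defined in $\overline{\mathcal{C}}$, so its cokernel is not well-defined), and for objects of $\Mono(X)$ this observation is precisely what you are trying to prove.

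The paper's proof avoids $\Kopf_X$ entirely and instead exploits the functorial resolution of Lemma~\ref{relative global dimension 1}\eqref{relative global dimension 1:2},
\[
0\to f_!X(M)\xrightarrow{\iota_M-f_!(h_{\module{M}})} f_!(M)\xrightarrow{\varepsilon_{\module{M}}} \module{M}\to 0,
\]
together with the right-triangulated structure on $\overline{\Mono}(X)$. The point is that $f_!$ (and $f_!X$) descend to functors $\overline{\mathcal{C}}\to\overline{\Mono}(X)$ because $f_!$ sends injectives to injectives; hence an isomorphism $f^*(g)$ in $\overline{\mathcal{C}}$ forces $f_!f^*(g)$ and $f_!Xf^*(g)$ to be isomorphisms in $\overline{\Mono}(X)$, and then $g$, being the induced map on cones, is an isomorphism as well. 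This bypasses the radical analysis completely.
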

	
	\begin{proof}
		Let $g\colon \module{M}\rightarrow\module{N}$ be a morphism in $\mathcal{C}^{T(X)}$ which becomes an isomorphism in $\overline{\mathcal{C}}{}^{T(X)}$. Consider the commutative diagram with exact rows
		\[
		\begin{tikzcd}[column sep=2cm]
		0 \arrow{r} & f_!X(M)\arrow{d}{f_!Xf^*(g)}\arrow{r}{\iota_M-f_!(h_\module{M})}&f_!(M)\arrow{d}{f_!f^*(g)}\arrow{r}{\varepsilon_{\module{M}}}& \module{M} \arrow{d}{g}\arrow{r} & 0 \\
		0 \arrow{r}& f_!X(N)\arrow{r}{\iota_{N}-f_!(h_\module{N})}&f_!(N) \arrow{r}{\varepsilon_{\module{N}}} & \module{N} \arrow{r} & 0.
		\end{tikzcd}
		\]
		Since $f^*(g)\colon M\to N$ is an isomorphism in $\overline{\mathcal{C}}$, the maps $f_!f^*(g)$ and $f_!Xf^*(g)$ are isomorphisms in $\overline{\Mono}(X)$. Now $\module{M}$ is isomorphic to the cone of $\iota_M-f_!(h_\module{M})$ and $\module{N}$ is isomorphic to the cone of $\iota_N-f_!(h_\module{N})$ when we consider $\overline{\Mono}(X)$ as a right triangulated category as in \cite{KV87}, see also \cite{BM94} and \cite{ABM98}. Since the two leftmost vertical maps $f_!Xf^*(g)$ and $f_!f^*(g)$ are isomorphisms in $\overline{\Mono}(X)$, the map $g$ between the cones must also be an isomorphism in $\overline{\Mono}(X)$, see for example the proof of \cite[Corollary 1.5]{ABM98}. This proves the claim.   
	\end{proof}

	\begin{proof}[Proof of Theorem \ref{Theorem: Canonical functor representation equivalence}]
		This follows from Lemma \ref{Lemma:FunctorDense}, Lemma \ref{Lemma:FunctorFull} and Lemma \ref{Lemma:FunctorReflectisos}.
	\end{proof}
	
	\begin{rmk}\label{Remark:DualNumbers}
		Let $\mathbbm{k}$ be a field and $Q$ a finite acyclic quiver. Consider  $\operatorname{rep}(Q,\operatorname{mod}\mathbbm{k}[x]/(x^2))$ as in Example \ref{Example:Quiver Representations}. Then the category $\Mono(X)$ is equivalent to the category of perfect differential $kQ$-modules considered in \cite{RZ17}. Moreover, since $\overline{\operatorname{mod}}\, \mathbbm{k}[x](x^2)\cong \operatorname{mod}\mathbbm{k}$, it follows that
		\[
		\operatorname{rep}(Q,\overline{\operatorname{mod}}\, \mathbbm{k}[x](x^2))\cong \operatorname{rep}(Q,\operatorname{mod}\mathbbm{k}).
		\]
		Hence we have an epivalence $\overline{\Mono(X)}\to \operatorname{rep}(Q,\operatorname{mod}\mathbbm{k})$. The composite 
		\[
		\Mono(X)\to \overline{\Mono(X)}\to \operatorname{rep}(Q,\operatorname{mod}\mathbbm{k})
		\]
		can be identified with the homology functor in \cite[Theorem 1.1 b)]{RZ17}. 
	\end{rmk}
	
	\subsection{The hereditary case}\label{Subsection:The hereditary case}

	Recall that an object $M\in \mathcal{C}$ is called a \emphbf{cosyzygy} (of an object $N$) if there exists an exact sequence $0\to N\to I\to M\to 0$ in $\mathcal{C}$ with $I$ injective. The category $\mathcal{C}$ is called \emphbf{hereditary} if any cosyzygy in $\mathcal{C}$ is injective. Our goal is to show that the functor in Theorem \ref{Theorem: Canonical functor representation equivalence} becomes an actual equivalence when $\mathcal{C}$ is hereditary. We start by proving that relative projective objects are closed under cosyzygies in $\Mono(X)$. 
	
	\begin{prop}\label{Proposition: cosyzygies of f_!}
		Assume we have an exact sequence 
		\[
		0\to f_!(M)\xrightarrow{g} f_!(J)\to \module{N}\to 0
		\]
		in $\mathcal{C}^{T(X)}$ with $\module{N}\in \Mono(X)$ and $J\in \mathcal{C}$ injective. Then $\module{N}\cong f_!(N')$ for some $N'\in \mathcal{C}$.
	\end{prop}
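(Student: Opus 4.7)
The plan is to show that $g$ itself is a split monomorphism in $\mathcal{C}^{T(X)}$, which immediately yields $\module{N}$ as a direct summand of $f_!(J)$ and hence of the form $f_!(N')$ by Proposition~\ref{Proposition: direct summands of f_!}.

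First I would apply the derived functor long exact sequence for $\Kopf_X$ to the given short exact sequence. By Lemma~\ref{Lemma:DescriptionLeftDerivedKopf}, the groups $L_i\Kopf_X$ vanish on the relative projectives $f_!(M)$ and $f_!(J)$, and by Lemma~\ref{Lemma:MonomorphismLeftDerived} together with $\module{N}\in\Mono(X)$, we have $L_1\Kopf_X(\module{N})=0$. Combined with Lemma~\ref{top}~\eqref{top:2}, this collapses the long exact sequence to a short exact sequence
\[
0\to M\xrightarrow{g_0}J\to \Kopf_X(\module{N})\to 0
\]
in $\mathcal{C}$, where $g_0=\Kopf_X(g)$. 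Since $J$ is injective, this sequence splits; fix a retraction $\pi\colon J\to M$ with $\pi\circ g_0=1_M$.

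Next I would transport $\pi$ back to $\mathcal{C}^{T(X)}$ by forming $f_!(\pi)\colon f_!(J)\to f_!(M)$ and considering the composite $\phi\coloneqq f_!(\pi)\circ g\colon f_!(M)\to f_!(M)$. Since $\Kopf_X$ is functorial and $\Kopf_X f_!\cong \Id_{\mathcal{C}}$ naturally (Lemma~\ref{top}~\eqref{top:2}), we have
\[
\Kopf_X(\phi)=\Kopf_X(f_!(\pi))\circ\Kopf_X(g)=\pi\circ g_0=1_M.
\]
As $f_!(M)\in\Mono(X)$ (its structure map $\iota_M$ is a split monomorphism), Lemma~\ref{Lemma:TopDetecingMono}~\eqref{Lemma:TopDetecingMono:2} applies and forces $\phi$ to be an isomorphism in $\Mono(X)$.

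Finally, the isomorphism $\phi=f_!(\pi)\circ g$ exhibits $\phi^{-1}\circ f_!(\pi)$ as a left inverse of $g$ in $\mathcal{C}^{T(X)}$. Hence the original short exact sequence splits, $\module{N}$ is a direct summand of $f_!(J)$, and Proposition~\ref{Proposition: direct summands of f_!} (the equivalence (1)$\Leftrightarrow$(3) there) yields $\module{N}\cong f_!(N')$ for some $N'\in\mathcal{C}$. The only mild subtlety I anticipate is making the identification $\Kopf_X(g)=g_0$ cleanly; this is straightforward once one unwinds that a morphism $f_!(M)\to f_!(J)$ corresponds under the adjunction $f_!\dashv f^*$ to a tuple $(g_i)_{i\geq 0}$ with $\Kopf_X$ picking out the leading component.
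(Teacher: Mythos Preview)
Your argument contains a genuine error at the step ``Since $J$ is injective, this sequence splits.'' Injectivity of the \emph{middle} term of a short exact sequence does not force splitting: what you would need is injectivity of $M$ (to split the mono $M\hookrightarrow J$) or projectivity of $\Kopf_X(\module{N})$ (to split the epi). A concrete counterexample within the paper's hypotheses is $X=0$ on $\mathcal{C}=\operatorname{mod}\mathbbm{k}[x]/(x^2)$, so that $\mathcal{C}^{T(X)}=\mathcal{C}$, $f_!=\Id$, and $\Mono(X)=\mathcal{C}$. The sequence $0\to \mathbbm{k}\to \mathbbm{k}[x]/(x^2)\to \mathbbm{k}\to 0$ has injective middle term but does not split; the proposition holds here ($\module{N}=\mathbbm{k}=f_!(\mathbbm{k})$) while your strategy of producing a retraction $\pi$ fails outright. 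In fact this example shows that your overall plan---proving that $g$ is a split monomorphism in $\mathcal{C}^{T(X)}$---cannot succeed in general, not merely that this particular justification is missing.

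The paper's proof avoids this by not attempting to split the given sequence. Instead it builds a second cosyzygy of $f_!(M)$ by choosing any exact sequence $0\to M\to I\to M'\to 0$ with $I$ injective and applying the exact functor $f_!$. Both $\module{N}$ and $f_!(M')$ are then cosyzygies of $f_!(M)$ in $\Mono(X)$, and since cosyzygies are well-defined up to injective summands in $\overline{\Mono}(X)$, one gets $\module{N}\oplus f_!(J_1)\cong f_!(M')\oplus f_!(J_2)$ for suitable injectives $J_1,J_2$. Now Proposition~\ref{Proposition: direct summands of f_!} finishes the job. The key idea you are missing is this Schanuel-type comparison of two cosyzygies, which sidesteps the (false) splitting claim entirely.
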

	
	\begin{proof}
		Choose an exact sequence
		\[
		0\to M\xrightarrow{i}I\xrightarrow{q} M'\to 0
		\]
		in $\mathcal{C}$ with $I$ injective. Applying $f_!(-)$, we get an exact sequence
		\[
		0\to f_!(M)\xrightarrow{f_!(i)}f_!(I)\xrightarrow{f_!(q)} f_!(M')\to 0
		\]
		in $\mathcal{C}^{T(X)}$. This implies that both $\module{N}$ and $f_!(M')$ are cosyzygies of $\module{M}$ in $\Mono(X)$. But the cosyzygy of an object is well-defined in $\overline{\Mono}(X)$, see \cite{Hel60}. Hence, there exist injective objects $J_1,J_2\in \mathcal{C}$ and an isomorphism
		\[
		\module{N}\oplus f_!(J_1)\cong f_!(M')\oplus f_!(J_2).
		\]
		In particular, $\module{N}$ is a direct summand of $f_!(M'\oplus J_2)$, and therefore by Proposition \ref{Proposition: direct summands of f_!} we have an isomorphism $\module{N}\cong f_!(N')$ for some $N'\in \mathcal{C}$.
	\end{proof}
	
	Next we show that if a morphism in $\Mono(X)$ factors componentwise through an injective, then it must factor through a relative projective.
	
	\begin{prop}\label{Proposition:FactorThroughRelProj}
		Let $g\colon \module{M}\to \module{N}$ be a morphism in $\Mono(X)$, and assume $f^*(g)\colon M\to N$ factors through an injective object in $\mathcal{C}$. Then $g$ factors through an object of the form $f_!(K)$ where $K$ is a cosyzygy of $X(M)$. 
	\end{prop}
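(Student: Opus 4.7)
The plan is to build the factorization $g = g_2 \circ g_1$ explicitly, taking $K = \coker(u \circ h_{\module{M}}\colon X(M) \to J)$ as the cosyzygy. First, by replacing $J$ with $J \oplus J''$ for any injective $J''$ equipped with a monomorphism $X(M) \hookrightarrow J''$ (available since $\mathcal{C}$ has enough injectives), extending $u$ using injectivity of $J''$ to lift $X(M) \hookrightarrow J''$ along $h_{\module{M}}$, and padding $v$ with zero on the new summand, one reduces to the case where $u \circ h_{\module{M}}\colon X(M) \hookrightarrow J$ is a monomorphism while preserving $v \circ u = f^*(g)$. Then $0 \to X(M) \to J \xrightarrow{\pi} K \to 0$ is exact, exhibiting $K$ as a cosyzygy of $X(M)$.

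The crucial auxiliary datum is a single morphism $\tilde r\colon J \to X(J)$ extending $X(u)$ along $u \circ h_{\module{M}}$; this exists since $X(J)$ is injective (as $X$ preserves injectives). Inductively set $\tilde u_i = X^{i-1}(\tilde r) \circ \cdots \circ \tilde r \circ u\colon M \to X^i(J)$, which vanish for $i$ large by local nilpotency and satisfy $\tilde u_i \circ h_{\module{M}} = X(\tilde u_{i-1})$. Combined with $\pi \circ u \circ h_{\module{M}} = 0$, this shows that $(X^i(\pi) \circ \tilde u_i)_{i \geq 0}$ assembles into a morphism $g_1\colon \module{M} \to f_!(K)$ in $\mathcal{C}^{T(X)}$. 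On the other side, set $r = h_{\module{N}} \circ X(v) \circ \tilde r\colon J \to N$. Since $g$ is a morphism in $\mathcal{C}^{T(X)}$,
\[
r \circ u \circ h_{\module{M}} = h_{\module{N}} \circ X(v) \circ X(u) = h_{\module{N}} \circ X(f^*(g)) = f^*(g) \circ h_{\module{M}} = v \circ u \circ h_{\module{M}},
\]
so $v - r$ vanishes on $\ker \pi$ and descends to $w\colon K \to N$. By the adjunction $f_! \dashv f^*$, the morphism $w$ defines $g_2\colon f_!(K) \to \module{N}$.

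The main obstacle is verifying $g_2 \circ g_1 = g$. Since $f^*$ is faithful, this reduces to checking $f^*(g_2 \circ g_1) = v \circ u$. Expanding via the formula $f^*(g_2)|_{X^i(K)} = h_{\module{N}}^{(i)} \circ X^i(w)$ (where $h_{\module{N}}^{(i)} = h_{\module{N}} \circ X(h_{\module{N}}) \circ \cdots \circ X^{i-1}(h_{\module{N}})$) and using $\tilde u_i = \tilde r^{(i)} \circ u$ with $\tilde r^{(i)} = X^{i-1}(\tilde r) \circ \cdots \circ \tilde r$, substituting the definition of $r$ into the terms $h_{\module{N}}^{(i)} \circ X^i(r) \circ \tilde u_i$ and re-indexing through the identity $h_{\module{N}}^{(i)} \circ X^i(h_{\module{N}}) = h_{\module{N}}^{(i+1)}$, one finds that the resulting two sums telescope. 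Only the $i = 0$ contribution $v \circ u$ survives, as required.
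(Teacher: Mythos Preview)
Your proof is correct, but it takes a different route from the paper's. The paper proceeds conceptually: it chooses a monomorphism $r\colon M\hookrightarrow J$ with $J$ injective (so that automatically $r\circ h_{\module{M}}$ is monic), factors $f^*(g)=s\circ r$, and forms the pushout $\module{K}'$ of the counit $\varepsilon_{\module{M}}\colon f_!(M)\to\module{M}$ along $f_!(r)\colon f_!(M)\to f_!(J)$. The universal property of the pushout immediately gives the factorisation $\module{M}\to\module{K}'\to\module{N}$, and the pushout--pullback argument yields an exact sequence $0\to f_!X(M)\to f_!(J)\to\module{K}'\to 0$. The identification $\module{K}'\cong f_!(K)$ then comes for free from the earlier Proposition~\ref{Proposition: cosyzygies of f_!}, and applying $\Kopf_X$ shows $K$ is a cosyzygy of $X(M)$.

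By contrast, you bypass Proposition~\ref{Proposition: cosyzygies of f_!} entirely and build the two morphisms into and out of $f_!(K)$ by hand, using the single lift $\tilde r\colon J\to X(J)$ to generate the whole tower $(\tilde u_i)$ and then verifying $g_2\circ g_1=g$ via a telescoping sum. Your argument is more self-contained and makes the factorisation completely explicit, at the price of being more computational; the paper's pushout argument is shorter and more conceptual but leans on the prior structure result on cosyzygies of relative projectives. Both approaches make essential use of the same ingredients (monicity of $h_{\module M}$, injectivity of $X(J)$, local nilpotence), just organised differently.
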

	
	\begin{proof}
		Let $r\colon M\to J$ be a monomorphism with $J$ injective. Since $f^*(g)$ factors through an injective object, it must also factor through $r$, so we can write $f^*(g)=s\circ r$ for some map $s\colon J\to N$. Let $\module{K}'$ be the pushout of the counit $\varepsilon_{\module{M}}\colon f_!(M)\to \module{M}$ along $f_!(r)\colon f_!(M)\to f_!(J)$. We then get the following commutative diagram
		\[
		\begin{tikzcd}
		f_!(M)\arrow{d}{\varepsilon_{\module{M}}}\arrow{r}{f_!(r)}&f_!(J)\arrow{d}\arrow{r}{f_!(s)}& f_!(N) \arrow{d}{\varepsilon_{\module{N}}} \\
		\module{M}\arrow{r}{}&\module{K}' \arrow{r}{} & \module{N},
		\end{tikzcd}
		\]
		where the left hand square is the pushout square and the bottom right morphism is uniquely defined such that the right hand square commutative and the composite $\module{M}\to \module{K}'\to \module{N}$ is equal to $g$. Note that the left hand square is also a pullback square since $f_!(r)$ is a monomorphism. Therefore, since $\varepsilon_{\module{M}}$ is an epimorphism with kernel $f_!X(M)$, the same must hold for $f_!(J)\to \module{K}'$, so we get an exact sequence
		\[
		0\to f_!X(M)\to f_!(J)\to \module{K}'\to 0.
		\]
		By Proposition \ref{Proposition: cosyzygies of f_!} it follows that $\module{K}'\cong f_!(K)$ for some object $K\in \mathcal{C}$. Applying $\Kopf_X$ to the exact sequence, we get the exact sequence
		\[
		0\to X(M)\to J\to K\to 0.
		\]
		This shows that $K$ is a cosyzygy of $X(M)$, which proves the claim.
	\end{proof}
	
	We can now prove the main result of this subsection.
	
	\begin{thm}\label{Theorem:EquivalenceHereditary}
		The following are equivalent:
		\begin{enumerate}
			\item\label{Theorem:EquivalenceHereditary:1} Any object in the image of $X$ has injective dimension at most $1$.
			\item\label{Theorem:EquivalenceHereditary:2} $\overline{\Mono}(X)\to \overline{\mathcal{C}}{}^{T(X)}$ is an equivalence.
		\end{enumerate}
		In particular, this holds if $\mathcal{C}$ is hereditary.
	\end{thm}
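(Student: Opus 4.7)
The plan is to prove the equivalence (1) $\Leftrightarrow$ (2); the final sentence is immediate since a hereditary $\mathcal{C}$ trivially satisfies (1).

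For (1) $\Rightarrow$ (2), I would invoke Theorem \ref{Theorem: Canonical functor representation equivalence}, which already shows that the canonical functor $\overline{\Mono}(X)\to\overline{\mathcal{C}}{}^{T(X)}$ is full, dense, and reflects isomorphisms, so only faithfulness remains. Let $g\colon\module{M}\to\module{N}$ be a morphism in $\Mono(X)$ that becomes zero in $\overline{\mathcal{C}}{}^{T(X)}$; this means $f^*(g)$ factors through an injective object of $\mathcal{C}$. By Proposition \ref{Proposition:FactorThroughRelProj}, $g$ factors as $\module{M}\to f_!(K)\to\module{N}$, where $K$ is a cosyzygy of $X(f^*\module{M})$ in $\mathcal{C}$. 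Since $X(f^*\module{M})$ lies in the image of $X$, hypothesis (1) forces its cosyzygy $K$ to be injective. Corollary \ref{Corollary:InjectivesinMono} then identifies $f_!(K)$ as an injective object of $\Mono(X)$, so $g$ factors through an injective in $\Mono(X)$ and hence vanishes in $\overline{\Mono}(X)$.

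For (2) $\Rightarrow$ (1), I would argue by contrapositive: assuming some $M\in\mathcal{C}$ admits a monomorphism $X(M)\hookrightarrow J$ into an injective with non-injective cokernel $K$, I would exhibit a morphism in $\Mono(X)$ that witnesses the failure of faithfulness. The construction is to choose $\module{M}\in\Mono(X)$ whose structure map $h_{\module{M}}$ is a non-split monomorphism refining the embedding $X(M)\hookrightarrow J$ (in the quiver setting of Example \ref{Example:Quiver Representations}, this is carried out by placing $M$ at the source vertex and $J$ at the target vertex, with the arrow being the given embedding; in general, one takes any $\module{M}$ that is not relative projective and for which the cosyzygy produced by the pushout of Definition \ref{Definition: Q} coincides with $f_!(K)$). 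The pushout construction from Proposition \ref{Proposition:FactorThroughRelProj} then yields a canonical morphism $g\colon\module{M}\to f_!(K)$ satisfying $f^*(g)=0$ in $\overline{\mathcal{C}}$, so $g$ is zero in $\overline{\mathcal{C}}{}^{T(X)}$. The crux is to show $g$ is nonzero in $\overline{\Mono}(X)$: analyzing a hypothetical factorization $\module{M}\to f_!(I)\to f_!(K)$ through an injective $f_!(I)$ via the adjunction $f_!\dashv f^*$ would produce, at the level of $\mathcal{C}$, a splitting of the surjection $J\twoheadrightarrow K$, making $K$ a direct summand of $J$ and hence injective, contradicting the hypothesis.

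The main obstacle is ensuring that $\module{M}$ is not relative projective, which is where Proposition \ref{Proposition:FactorThroughRelProj} alone is insufficient. Indeed, if one took $\module{M}=f_!(N)$, the adjunction $f_!\dashv f^*$ would automatically produce a factorization of $g$ through an injective $f_!(I)$ regardless of whether $K$ is injective, preventing any contradiction. The construction must therefore reflect the non-injectivity of $K$ into the non-split monomorphism $h_{\module{M}}$, so that $\Kopf_X(\module{M})$ sees the non-injective part of $K$ and obstructs factorization through relative projectives.
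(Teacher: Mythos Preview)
Your argument for (1) $\Rightarrow$ (2) is correct and matches the paper's proof exactly.

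For (2) $\Rightarrow$ (1), however, there is a genuine gap. You never specify the object $\module{M}$ precisely, and the phrase ``refining the embedding $X(M)\hookrightarrow J$'' has no clear meaning in the general monadic setting: the structure map $h_{\module{M}}$ is a morphism $X(f^*\module{M})\to f^*\module{M}$, and there is no obvious way to arrange this to coincide with the given $X(M)\hookrightarrow J$. Your quiver example conflates $M\in\mathcal{C}$ with an object of a single factor $\mathcal{B}$, and your fallback description (``any $\module{M}$ that is not relative projective and for which the cosyzygy \ldots coincides with $f_!(K)$'') asserts the existence of something you have not constructed. You correctly identify the obstacle in your final paragraph but do not overcome it. Moreover, your proposed endgame via the adjunction $f_!\dashv f^*$ would not yield a splitting of $J\twoheadrightarrow K$: a map $f_!(I)\to f_!(K)$ corresponds to a map $I\to\bigoplus_i X^i(K)$, and there is no evident reason the degree-zero component should split the given surjection.

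The paper's argument is direct rather than contrapositive and avoids all of this. Given arbitrary $M\in\mathcal{C}$ and an exact sequence $0\to X(M)\to J\to K\to 0$, it applies $f_!$ and pushes out along $\iota_M\colon f_!X(M)\to f_!(M)$ to obtain an object $\module{E}\in\Mono(X)$ fitting in $0\to f_!(M)\to\module{E}\xrightarrow{p}f_!(K)\to 0$. Since $f^*(\iota_M)$ is split monic, $f^*(p)$ factors through the injective $f^*f_!(J)$, so $p$ vanishes in $\overline{\mathcal{C}}{}^{T(X)}$. Assuming (2), $p$ factors through some $f_!(I)$ with $I$ injective, hence $\Kopf_X(p)\colon\Kopf_X\module{E}\to K$ factors through $I$. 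The key point is that applying $\Kopf_X$ to the pushout square and using $\Kopf_X(\iota_M)=0$ shows $\Kopf_X(p)$ is a split epimorphism; combined with the factorization through $I$, this makes $K$ a summand of $I$, hence injective. The argument via $\Kopf_X$ is what makes the injectivity conclusion go through, and it works regardless of whether $\module{E}$ is relative projective.
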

	
	\begin{proof}
		We already know that the functor in \eqref{Theorem:EquivalenceHereditary:2} is full and dense, so the statement is equivalent to the functor being faithful. By Proposition \ref{Proposition:FactorThroughRelProj} any morphism $g$ in $\Mono(X)$ which is $0$ in $\overline{\mathcal{C}}{}^{T(X)}$ must factor through an object of the form $f_!(K)$ where $K$ is a cosyzygy in $\mathcal{C}$ of an object of the form $X(M)$. If \eqref{Theorem:EquivalenceHereditary:1} holds, then $K$ must be injective, so $g$ must be $0$ in $\overline{\Mono}(X)$. 
		
		Next assume \eqref{Theorem:EquivalenceHereditary:2} holds, and let $M\in \mathcal{C}$ be arbitrary. Choose an exact sequence
		\[
		0\to X(M)\to J\to K\to 0
		\]
		with $J$ injective. Our goal is to show that $K$ is injective. Applying $f_!$ and taking the pushout along $\iota_M$ gives a commutative diagram
		\[
		\begin{tikzcd}
		0\arrow{r}&f_!(X(M))\arrow{r}\arrow{d}{\iota_M}&f_!(J)\arrow{d}{g}\arrow{r}{}&f_!(K)\arrow[equals]{d}\arrow{r}&0\\
		0\arrow{r}&f_!(M)\arrow{r}&\module{E}\arrow{r}{p}&f_!(K)\arrow{r}&0
		\end{tikzcd}
		\]
		with exact rows. Note that $\module{E}\in \Mono(X)$ since it is an extension of two objects in $\Mono(X)$. Since $f^*(\iota_M)$ is a split monomorphism, $f^*(g)$ must be a split monomorphism and $f^*(p)$ must factor through $f^*f_!(J)$. Hence, $p$ is $0$ in $\overline{\mathcal{C}}{}^{T(X)}$. It follows from  assumption \eqref{Theorem:EquivalenceHereditary:2} that $p$ factors through an object of the form $f_!(I)$ with $I$ injective in $\mathcal{C}$. Hence $\Kopf_X(p)\colon \Kopf_X\module{E}\to K$ factors through $I$. We claim that $\Kopf_X(p)$ is also a split epimorphism. Indeed, applying $\Kopf_X$ to the diagram above gives a commutative diagram with exact rows
		\[
		\begin{tikzcd}
		0\arrow{r}&X(M)\arrow{r}\arrow{d}{\Kopf_X(\iota_M)}&J\arrow{d}{\Kopf_X(g)}\arrow{r}{}&K\arrow[equals]{d}\arrow{r}&0\\
		0\arrow{r}&M\arrow{r}&\Kopf_X\module{E}\arrow{r}{\Kopf_X(p)}&K\arrow{r}&0.
		\end{tikzcd}
		\]
		Since $\Kopf_X(\iota_M)=0$ and the left hand square is a pushout, the map $M\to \Kopf_X\module{E}$ is a split monomorphism. Therefore $\Kopf_X(p)$ must be a split epimorphism. Since $\Kopf_X(p)$ factors through $I$, it follows that the induced map $I\to K$ is also a split epimorphism. Hence $K$ is a summand of an injective object, and must therefore be injective. 
	\end{proof}
	
	\begin{rmk}
		Consider the category $\operatorname{rep}(Q,\mathcal{B})$ of representations of a finite acyclic quiver $Q$ in $\mathcal{B}$ as in Example \ref{Example:Quiver Representations}, and assume $\mathcal{B}$ has enough injectives. Let $\Mono(Q,\mathcal{B})$ be the monomorphism category, and consider the functor 
		\[
		\overline{\Mono}(Q,\mathcal{B})\to\operatorname{rep}(Q,\overline{\mathcal{B}})
		\]
		in Theorem \ref{Theorem:EquivalenceHereditary}.  If $Q$ has at least one arrow, then the image of $X$ contains at least one copy of $\mathcal{B}$. Then it follows from Theorem \ref{Theorem:EquivalenceHereditary} that the functor above is an equivalence if and only if $\mathcal{B}$ is hereditary. In contrast, if $Q$ has no arrows, than $X$ is the zero functor, so \eqref{Theorem:EquivalenceHereditary:1} is automatically satisfied and in \eqref{Theorem:EquivalenceHereditary:2}, both categories coincide with $\prod_{\mathtt{i}\in Q_0} \overline{\mathcal{C}}$.
	\end{rmk}
	
	We are particularly interested in situations where $\mathcal{C}$ is hereditary and $\overline{\mathcal{C}}$ is abelian. This implies that $\overline{\mathcal{C}}{}^{T(X)}$ is abelian, and hence $\overline{\Mono}(X)$ is abelian by Theorem \ref{Theorem:EquivalenceHereditary}. It follows that the indecomposable non-injective objects in $\Mono(X)$ can be obtained by studying the indecomposable objects in an abelian category.
	
	\begin{rmk}\label{Remark:HereditaryStableAbelian}
		Assume $\mathcal{C}$ is the category of left $\Lambda$-modules for an Artin algebra $\Lambda$. Then $\mathcal{C}$ is hereditary if and only if $\Lambda$ is left hereditary. It was shown in \cite[Theorem 9.5]{MZ15} that for such rings the stable category of $\mathcal{C}$ modulo projectives, denoted $\underline{\mathcal{C}}$, is abelian if and only if the injective envelope of $\Lambda$ is projective. Since $\overline{\mathcal{C}}\cong \underline{\mathcal{C}}$ by \cite[Proposition 5.5]{Kra97}, this is also equivalent to $\overline{\mathcal{C}}$ being abelian. Now the injective envelope of $\Lambda$ being projective holds precisely if $\Lambda$ is isomorphic to a finite direct products of complete blocked triangular matrix algebras over division rings, see \cite[Remark 7.6]{MZ15}. 
	\end{rmk}
	
	\begin{rmk}
		Consider the category $\operatorname{rep}(Q,\mathcal{B})$ of representations of a finite acyclic quiver $Q$ in $\mathcal{B}$ as in Example \ref{Example:Quiver Representations}. Assume furthermore that $\mathcal{B}=\operatorname{Mod}\mathbbm{k}\mathbb{A}_n$ where $\mathbbm{k}\mathbb{A}_n$ is the path algebra of a linearly oriented $\mathbb{A}_n$-quiver over a field $\mathbbm{k}$. Since $\overline{\operatorname{Mod}}\,\mathbbm{k}\mathbb{A}_n\cong \operatorname{Mod}\mathbbm{k}\mathbb{A}_{n-1}$ we get that
		\[
		\mathcal{C}=\prod_{\mathtt{i}\in Q_0}\operatorname{Mod}\mathbbm{k}\mathbb{A}_n \quad \text{and} \quad \overline{\mathcal{C}}\cong \prod_{\mathtt{i}\in Q_0}\operatorname{Mod}\mathbbm{k}\mathbb{A}_{n-1}.
		\]
		Hence by Theorem \ref{Theorem:EquivalenceHereditary}  we get an equivalence
		\[
		\overline{\Mono}(Q,\operatorname{Mod}\mathbbm{k}\mathbb{A}_{n})\cong \rep (Q,\operatorname{Mod}\mathbbm{k}\mathbb{A}_{n-1}).
		\]
		By letting $Q$ be the linearly oriented $\mathbb{A}_m$-quiver, we recover (the dual of) \cite[Theorem 1.5]{BBOS20}. Note that 
		\[
		\prod_{\mathtt{i}\in Q_0}\operatorname{Mod}\mathbbm{k}\mathbb{A}_n\cong \operatorname{Mod}\Lambda
		\]
		where $\Lambda=\prod_{\mathtt{i}\in Q_0}\mathbbm{k}\mathbb{A}_n$ satisfies the condition in Remark \ref{Remark:HereditaryStableAbelian}. 
	\end{rmk}
	
	\begin{rmk}\label{Remark:DedekindDomain}
		There are other examples of hereditary categories $\mathcal{C}$ such that $\overline{\mathcal{C}}$ is abelian. Indeed, let $\mathcal{C}=\operatorname{art}\Lambda$ be the category of artinian modules over a Dedekind domain $\Lambda$. This is abelian, closed under injective envelopes \cite[Theorem 2]{Vam68}, and hereditary. We claim that any artinian module must be a finite sum of indecomposable injective modules and modules of finite length. Indeed, since any indecomposable finite length module over $\Lambda$ is of the form $\Lambda/\mathfrak{m}^n$ where $\mathfrak{m}$ is a maximal ideal, the finite length modules form a uniserial category in the sense of \cite{Kra22}. By \cite[Proposition 2.4.20]{Kra22} any artinian module is a filtered colimit of finite length modules, so the claim follows from \cite[Theorem 13.1.28]{Kra22}.
		
		Consider the stable category $\overline{\mathcal{C}}$. Since any injective module over a Dedekind domain has infinite length \cite[Corollary 2]{Har69}, the objects in $\overline{\mathcal{C}}$ are up to isomorphism precisely the modules of finite length. Furthermore, since $\Lambda$ is hereditary there are no nonzero morphisms from injective modules to modules of finite length, so $\overline{\mathcal{C}}$ must be equivalent to the category $\operatorname{fl}\Lambda$ of finite length modules over $\Lambda$, which is abelian. 
	\end{rmk}
	
	\section{The Mimo-construction}\label{The Mimo construction}
	We fix a $\mathbbm{k}$-linear abelian category $\mathcal{C}$ with enough injectives, and an exact functor $X\colon \mathcal{C}\to \mathcal{C}$ which is locally nilpotent and preserves injectives. The goal of this section is to extend the Ringel--Schmidmeier's $\Mimo$-construction \cite{RS08} to our setting. In particular, we show that it is a minimal right $\Mono(X)$--approximation.

	\subsection{Definition and properties}\label{Subsection:Definition and properties}
	
	We start with the definition of the Mimo-construction. 
	
	\begin{defn}\label{Definition: Mimo}
		Let $\module{M}$ be an object in $\mathcal{C}^{T(X)}$, and consider the construction $\Mo \module{M}$ in Definition \ref{Definition: Q}. If $J$ is an injective envelope of $L_1\Kopf_{X}\module{M}$, then we write $\Mimo \module{M}\coloneqq \Mo \module{M}$ and call this the \emphbf{Mimo-construction} of $\module{M}$.
	\end{defn}
	
	Note that $\Mimo\module{M}$ might not exist for all $\module{M}$, unless $\mathcal{C}$ has injective envelopes. We show that if $\Mimo\module{M}$ exists, then it is a minimal right approximation and therefore unique up to isomorphism. 
	
	\begin{thm}\label{Mimo=Minimal}
		Let $\module{M}\in \mathcal{C}^{T(X)}$, and assume $\Mimo \module{M}$ exists. Then the canonical morphism $p_{\module{M}}\colon \Mimo\module{M}\to \module{M}$ is a minimal right $\Mono(X)$-approximation.
	\end{thm}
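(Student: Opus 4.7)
The plan is to show that any endomorphism $\phi\colon \Mimo\module{M}\to\Mimo\module{M}$ satisfying $p_{\module{M}}\circ\phi = p_{\module{M}}$ is an isomorphism, since Theorem \ref{Contravariantly finite} already gives that $p_{\module{M}}$ is a right $\Mono(X)$-approximation. First I would use the defining sequence
\[
0\to f_!(J)\xrightarrow{s}\Mimo\module{M}\xrightarrow{p_{\module{M}}}\module{M}\to 0
\]
to extract two auxiliary maps: on the one hand, since $p_{\module{M}}\circ(\phi-1)=0$ and $s=\ker p_{\module{M}}$, we obtain a morphism $k\colon\Mimo\module{M}\to f_!(J)$ with $\phi-1 = s\circ k$; on the other hand, since $p_{\module{M}}\circ\phi\circ s=0$, there is a unique $\psi\colon f_!(J)\to f_!(J)$ with $\phi\circ s = s\circ\psi$. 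Combining these and using that $s$ is monic yields the explicit formula $\psi = 1_{f_!(J)} + k\circ s$.

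Next I would apply the cokernel functor $\Kopf_X$ to pass to a computation in $\mathcal{C}$ where the injective envelope property of $j$ can be invoked. Applying $\Kopf_X$ to the defining sequence and using Lemma~\ref{Lemma:DescriptionLeftDerivedKopf} and Lemma~\ref{Lemma:MonomorphismLeftDerived} (the latter applied to $\Mimo\module{M}\in\Mono(X)$, which was established in Theorem~\ref{Contravariantly finite}) gives the four-term exact sequence
\[
0\to L_1\Kopf_X\module{M}\xrightarrow{j}J\xrightarrow{t}\Kopf_X\Mimo\module{M}\xrightarrow{\Kopf_X p_{\module{M}}}\Kopf_X\module{M}\to 0,
\]
with $t = \Kopf_X s$ and with the first map being $j$ itself, by the same computation as in the proof of Theorem~\ref{Contravariantly finite}. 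Applying $\Kopf_X$ to the formula $\psi = 1+k\circ s$ produces $\Kopf_X\psi = 1_J + \Kopf_X(k)\circ t$, and since exactness forces $t\circ j = 0$, we obtain the crucial identity $\Kopf_X\psi\circ j = j$.

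The main point now is to leverage that $j$ is an injective envelope, hence a left minimal inflation in $\mathcal{C}$. The identity $\Kopf_X\psi\circ j = j$ therefore forces $\Kopf_X\psi$ to be an automorphism of $J$. By naturality, the pair $(\phi,1_{\module{M}})$ of morphisms of the short exact sequence induces a commutative ladder between two copies of the above four-term exact sequence whose vertical maps are $1_{L_1\Kopf_X\module{M}}$, $\Kopf_X\psi$, $\Kopf_X\phi$, $1_{\Kopf_X\module{M}}$; commutativity of the leftmost square being exactly the identity $\Kopf_X\psi\circ j=j$. The five lemma then yields that $\Kopf_X\phi$ is an isomorphism, and Lemma~\ref{Lemma:TopDetecingMono}\eqref{Lemma:TopDetecingMono:2} lifts this back to $\phi$ being an isomorphism. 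I expect the main obstacle to be purely bookkeeping, namely identifying the explicit formula $\psi = 1+k\circ s$ cleanly and checking that the boundary map produced by $\Kopf_X$ on the bottom row genuinely coincides with the injective envelope $j$ so that its left minimality can be used.
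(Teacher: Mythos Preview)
Your proof is correct and follows essentially the same route as the paper: obtain the induced endomorphism $\psi$ of $f_!(J)$, apply $\Kopf_X$, use the left minimality of the injective envelope $j$ to deduce that $\Kopf_X\psi$ is an isomorphism, and finish with the five lemma together with Lemma~\ref{Lemma:TopDetecingMono}\eqref{Lemma:TopDetecingMono:2}. The only cosmetic differences are that the paper gets $\Kopf_X\psi\circ j=j$ directly from naturality of the connecting homomorphism (rather than via your explicit formula $\psi=1+k\circ s$), and it swaps the order of the last two steps, first lifting $\Kopf_X\psi$ to an isomorphism $\psi$ via Lemma~\ref{Lemma:TopDetecingMono}\eqref{Lemma:TopDetecingMono:2} and then applying the five lemma in $\mathcal{C}^{T(X)}$.
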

	
	\begin{proof}
		By Theorem \ref{Contravariantly finite} we only need to show $p_\module{M}$ is right minimal. Let $\varphi\colon \Mimo\module{M}\to \Mimo\module{M}$ be a morphism satisfying $p_{\module{M}}\circ \varphi = p_{\module{M}}$. Consider the following commutative diagram with exact rows
		\[
		\begin{tikzcd}
		0\arrow{r}&f_!(J)\arrow{r}\arrow[dashed]{d}{\psi}&\Mimo\module{M}\arrow{d}{\varphi}\arrow{r}{p_{\module{M}}}&\module{M}\arrow{r}\arrow[equals]{d}&0\\
		0\arrow{r}&f_!(J)\arrow{r}&\Mimo\module{M}\arrow{r}{p_{\module{M}}}&\module{M}\arrow{r}&0
		\end{tikzcd}
		\]
		where $\psi$ is induced from the commutativity of the right hand square. Applying $\Kopf_{X}$ yields the commutative diagram:
		\[
		\begin{tikzcd}
		0\arrow{r} &L_1\Kopf_{X}\module{M}\arrow[equals]{d}\arrow{r}{j}&J\arrow{d}{\Kopf_{X}(\psi)}\arrow{r}&\Kopf_{X}(\Mimo\module{M})\arrow{d}{\Kopf_{X}(\varphi)}\arrow{r}&\Kopf_{X}\module{M}\arrow{r}\arrow[equals]{d}&0\\
		0\arrow{r} &L_1\Kopf_{X}\module{M}\arrow{r}{j}&J\arrow{r}&\Kopf_{X}(\Mimo\module{M})\arrow{r}&\Kopf_{X}\module{M}\arrow{r}&0 .
		\end{tikzcd}
		\]
		As $j$ is a minimal left approximation, it follows that $\Kopf_{X}(\psi)$ is an isomorphism. Therefore $\psi$ is an isomorphism by Lemma \ref{Lemma:TopDetecingMono} part \eqref{Lemma:TopDetecingMono:2}. The $5$-Lemma then implies that $\varphi$ is an isomorphism.
	\end{proof}
	
	Next we show that the isomorphism class of $\Mimo \module{M}$ only depends on the isomorphism class of $\module{M}$ in  $\overline{\mathcal{C}}{}^{T(X)}$. This plays an important role in Section \ref{subsection: A characterization of the indecomposable objects}. First we prove an analogue for $\Mo \module{M}$.
	
	\begin{lem}\label{Independent of a morphism factoring through an injective}
		Let $\module{M},\module{N}\in \mathcal{C}^{T(X)}$. If $\module{M}\cong \module{N}$ in $\overline{\mathcal{C}}{}^{T(X)}$, then $\Mo\module{M}\cong \Mo\module{N}$ in $\overline{\Mono}(X)$.
	\end{lem}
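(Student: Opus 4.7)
The plan is to exploit two facts that are already available: (a) the image of the canonical morphism $p_\module{M}\colon\Mo\module{M}\to\module{M}$ in $\overline{\mathcal{C}}{}^{T(X)}$ is an isomorphism, and (b) the canonical functor $\overline{\Mono}(X)\to\overline{\mathcal{C}}{}^{T(X)}$ reflects isomorphisms (Theorem \ref{Theorem: Canonical functor representation equivalence}). Together they reduce the lemma to a routine transport-of-isomorphisms argument.

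For (a), I would start from the defining short exact sequence
$$0\to f_!(J)\to\Mo\module{M}\xrightarrow{p_\module{M}}\module{M}\to 0$$
from Definition \ref{Definition: Q}. Applying the exact forgetful functor $f^*$ yields the exact sequence $0\to\bigoplus_{i\geq 0}X^i(J)\to f^*\Mo\module{M}\xrightarrow{f^*(p_\module{M})}M\to 0$ in $\mathcal{C}$. Because $X$ is locally nilpotent and preserves injectives, the left-hand term is a finite direct sum of injectives in $\mathcal{C}$, hence injective. Therefore the sequence splits, and in $\overline{\mathcal{C}}$ the morphism $f^*(p_\module{M})$ becomes an isomorphism. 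A direct check from the description of $T(X)$-modules in Lemma \ref{Eilenberg--Moore category of stable free monad} shows that the forgetful functor $\overline{\mathcal{C}}{}^{T(X)}\to\overline{\mathcal{C}}$ reflects isomorphisms, so $p_\module{M}$ itself becomes an isomorphism in $\overline{\mathcal{C}}{}^{T(X)}$. The same applies to $p_\module{N}$.

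To conclude, compose: the hypothesized isomorphism $\module{M}\cong\module{N}$ in $\overline{\mathcal{C}}{}^{T(X)}$ together with $\Mo\module{M}\cong\module{M}$ and $\Mo\module{N}\cong\module{N}$ produces an isomorphism $\Mo\module{M}\cong\Mo\module{N}$ in $\overline{\mathcal{C}}{}^{T(X)}$. Since $\Mo\module{M}$ and $\Mo\module{N}$ both lie in $\Mono(X)$ by Theorem \ref{Contravariantly finite}, and Theorem \ref{Theorem: Canonical functor representation equivalence} asserts that the canonical functor $\overline{\Mono}(X)\to\overline{\mathcal{C}}{}^{T(X)}$ is an epivalence, this isomorphism is reflected to an isomorphism $\Mo\module{M}\cong\Mo\module{N}$ in $\overline{\Mono}(X)$.

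I do not expect a genuine obstacle. The only subtlety is verifying that $f^*f_!(J)$ really is injective, which is precisely where the standing hypotheses on $X$ (locally nilpotent and injective-preserving) are used; everything else is a bookkeeping assembly of results established earlier in Sections \ref{Section: The free monad and the monomorphism category}--\ref{Section:A representation equivalence}.
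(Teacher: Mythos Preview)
Your proposal is correct and follows essentially the same route as the paper's own proof: both establish that $p_{\module{M}}$ and $p_{\module{N}}$ become isomorphisms in $\overline{\mathcal{C}}{}^{T(X)}$, compose with the hypothesis to obtain $\Mo\module{M}\cong\Mo\module{N}$ there, and then use the epivalence of Theorem~\ref{Theorem: Canonical functor representation equivalence} to pull the isomorphism back to $\overline{\Mono}(X)$. The paper is terser on the first point (it simply writes $\Mo\module{M}\xrightarrow{\cong}\module{M}$ without justification) and slightly more explicit on the last (it separately invokes fullness, Lemma~\ref{Lemma:FunctorFull}, to lift the isomorphism to a morphism in $\overline{\Mono}(X)$, and then reflection of isomorphisms, Lemma~\ref{Lemma:FunctorReflectisos}); your phrasing ``reflected to an isomorphism'' compresses these two steps into the standard fact that an epivalence induces a bijection on isomorphism classes.
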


	\begin{proof}
		Consider the composite of the isomorphisms $\Mo \module{M}\xrightarrow{\cong} \module{M}\xrightarrow{\cong} \module{N}\xrightarrow{\cong} \Mo\module{N}$ in $\overline{\mathcal{C}}{}^{T(X)}$. By Lemma \ref{Lemma:FunctorFull} the functor $\overline{\Mono}(X)\to \overline{\mathcal{C}}{}^{T(X)}$ is full, and hence this composite can be lifted to a morphism $\Mo \module{M}\to \Mo\module{N}$ in $\overline{\Mono}(X)$. By Lemma \ref{Lemma:FunctorReflectisos} the functor $\overline{\Mono}(X)\to \overline{\mathcal{C}}{}^{T(X)}$ reflects isomorphisms, so the lift has to be an isomorphism in $\overline{\Mono}(X)$. This proves the claim.  
	\end{proof}
	\begin{rmk}
		We are not claiming that $\Mo$ is a functor on $\overline{\mathcal{C}}{}^{T(X)}$ in Lemma \ref{Independent of a morphism factoring through an injective}. As one can see from the proof, the choice of the isomorphism $\Mo\module{M}\cong \Mo\module{N}$ is not unique.
	\end{rmk}
	
	To obtain a similar result for $\Mimo$, we first need to prove a result on the non-existence of nonzero injective summands.
	
	\begin{lem}\label{No injective summands}
		Assume $\mathcal{C}$ has injective envelopes. Let $\module{M}\in \mathcal{C}^{T(X)}$, and assume $M$ has no nonzero injective summands. Let $g\colon f_!(I)\to \Mimo\module{M}$ be a morphism in $\Mono(X)$. If $I$ is injective, then $g$ must be in the radical of $\Mono(X)$. In particular, $\Mimo\module{M}$ has no nonzero summands in $\operatorname{add} f_!(\inj \mathcal{C})$.
	\end{lem}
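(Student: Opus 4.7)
The plan is to reduce, via Lemma \ref{Lemma:TopReflectRadical}, to showing that $\Kopf_X(g)\colon I \to \Kopf_X \Mimo \module{M}$ lies in the radical of $\mathcal{C}$, and then to invoke Lemma \ref{Lemma:RadicalProperties}\eqref{Lemma:RadicalProperties:2} so that it suffices to prove that $Y \coloneqq \Kopf_X \Mimo \module{M}$ has no nonzero injective summand. To access $Y$, I would apply $\Kopf_X$ to the pushout square \eqref{Defining Mo} defining $\Mimo\module{M}$; since $\Kopf_X$ is a left adjoint it preserves pushouts, and combined with Lemma \ref{Lemma:DescriptionLeftDerivedKopf} and $L_1\Kopf_X\Mimo\module{M}=0$ (Lemma \ref{Lemma:MonomorphismLeftDerived}) this yields the short exact sequence
\[
0 \to X(M) \xrightarrow{(h_\module{M},\,e)} M \oplus J \xrightarrow{\psi} Y \to 0,
\]
in which the induced morphism $K = L_1\Kopf_X\module{M} \to J$ is the chosen injective envelope $j$.

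Assume for contradiction that $Y = A \oplus I'$ with $I'$ a nonzero injective summand, and let $\pi\colon Y\twoheadrightarrow I'$ be the projection. Write $\psi$ as a $2\times 2$ matrix with components $\psi_{I'M}\colon M\to I'$ and $\psi_{I'J}\colon J\to I'$ in its $I'$-row. Restricting the relation $\psi\circ(h_\module{M},e)=0$ to $K\subseteq X(M)$ gives $\psi_{I'J}\circ j = 0$ (as $h_\module{M}|_K = 0$ and $e|_K = j$), so $\psi_{I'J}$ factors through $\coker j$. Because $j$ is an injective envelope, Lemma \ref{Lemma:RadicalProperties}\eqref{Lemma:RadicalProperties:1} places the cokernel projection $J\to \coker j$ in $\operatorname{Rad}_{\mathcal{C}}$, and since the radical is a two-sided ideal this forces $\psi_{I'J}\in \operatorname{Rad}_{\mathcal{C}}(J,I')$.

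To finish, I would exploit that $\pi\psi\colon M\oplus J\to I'$ is an epimorphism onto an injective object, hence admits a section $(\sigma_M,\sigma_J)\colon I'\to M\oplus J$, giving $\psi_{I'M}\sigma_M + \psi_{I'J}\sigma_J = \operatorname{id}_{I'}$. The composite $\psi_{I'J}\sigma_J$ lies in $\operatorname{Rad}_{\mathcal{C}}(I',I')$ by the ideal property, so $\psi_{I'M}\sigma_M = \operatorname{id}_{I'} - \psi_{I'J}\sigma_J$ is invertible by the very definition of the radical. This exhibits $I'$ as a direct summand of $M$, contradicting the hypothesis on $M$, and therefore $Y$ has no nonzero injective summands. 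The ``in particular'' assertion is then formal: a nonzero summand $f_!(I_1)\subseteq\Mimo\module{M}$ with $I_1$ injective would provide a section to its own inclusion $f_!(I_1)\hookrightarrow\Mimo\module{M}$, but by the first part this inclusion lies in $\operatorname{Rad}_{\Mono(X)}$, so such a section can only exist when $f_!(I_1)=0$. The principal subtlety is locating where the injective-envelope choice of $j$ enters, namely in placing $\psi_{I'J}$ in the radical; everything else is diagram chasing.
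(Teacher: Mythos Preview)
Your reduction via Lemma~\ref{Lemma:TopReflectRadical} and Lemma~\ref{Lemma:RadicalProperties}\eqref{Lemma:RadicalProperties:2} to the claim that $Y=\Kopf_X\Mimo\module{M}$ has no nonzero injective summand is natural, but this intermediate claim is \emph{false} in general. Take $\mathcal{B}=\operatorname{mod}\mathbbm{k}\mathbb{A}_3$ with quiver $1\to 2\to 3$, set $Q=(\mathtt{a}\to\mathtt{b})$ so that $\mathcal{C}=\mathcal{B}^2$ and $X(A,B)=(0,A)$, and let $\module{M}$ be the representation $P_2\twoheadrightarrow S_2$. Then $M=(P_2,S_2)$ has no injective summand, $\ker f=S_3$ has injective envelope $I_3$, and the formula in Lemma~\ref{Lemma:Mimo explicit} gives $Y=(P_2,\coker((f,e_2)\colon P_2\hookrightarrow S_2\oplus I_3))\cong(P_2,I_2)$, which has the injective summand $(0,I_2)$. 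So the sufficient condition you aim for is strictly stronger than the lemma. Your argument for the false claim also contains an independent gap: you assert that the epimorphism $\pi\psi\colon M\oplus J\to I'$ splits because $I'$ is injective, but this uses the wrong variance---injectivity gives $\operatorname{Ext}^1(-,I')=0$, not $\operatorname{Ext}^1(I',-)=0$. In the example above the sequence $0\to P_2\to S_2\oplus I_3\to I_2\to 0$ is genuinely non-split, since $\operatorname{Ext}^1_{\mathbbm{k}\mathbb{A}_3}(I_2,P_2)\cong\mathbbm{k}$.

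The paper avoids both issues by never descending to $\mathcal{C}$: it shows directly that $g'\circ g$ lies in the radical for every $g'\colon\Mimo\module{M}\to f_!(I)$. The key step is that $g$ factors through $(r,s)\colon f_!(M)\oplus f_!(J)\to\Mimo\module{M}$ \emph{in $\mathcal{C}^{T(X)}$}, because $f^*(r,s)$ is a split epimorphism and $f_!(I)$ is relatively projective (Lemma~\ref{Lemma:RelativeProjective}). One then checks that $\Kopf_X$ applied to $(g'\circ r,\,g'\circ s)$ is radical---using exactly the two ingredients you correctly identified, namely that $M$ has no injective summand and that $j$ is an injective envelope---and concludes via Lemma~\ref{Lemma:TopReflectRadical}. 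The essential point is that the lift exists upstairs by relative projectivity of $f_!(I)$; after applying $\Kopf_X$ this property is lost, and the section you need in $\mathcal{C}$ may fail to exist.
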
 
	
	\begin{proof}
		By definition, $g$ is in the radical if for any morphism $g'\colon \Mimo \module{M}\to f_!(I)$ the difference $1_{f_!(I)}-g'\circ g$ is an isomorphism. For this, it suffices to show that $g'\circ g$ is in the radical, since then by definition $1_{f_!(I)}-1_{f_!(I)}\circ (g'\circ g)=1_{f_!(I)}-g'\circ g$ is an isomorphism. Let $r\colon f_!(M)\to \Mimo\module{M}$ and $s\colon f_!(J)\to \Mimo\module{M}$ be as in diagram \eqref{Defining Mo} (where $\Mo\module{M}$ is replaced by $\Mimo\module{M}$). Consider the diagram 
		\[
		\begin{tikzcd}
		f_!(I)\arrow{r}{g}\arrow[dashed]{rd}{k}&\Mimo\module{M}\arrow{r}{g'}&f_!(I)\\
		&f_!(M)\oplus f_!(J)\arrow{u}{(r,s)}\arrow{ru}[swap]{(g'\circ r,g'\circ s)}&
		\end{tikzcd}
		\]
		We want to show the existence a morphism $k$ making the left triangle commutative. To this end, note that the triangle identities for the adjunction $f_!\dashv f^*$ imply that $f^*(\varepsilon_\module{M})$ is a split epimorphism. Thus, the sequence
		\[
		0\to f^*f_!X(M)\xrightarrow{f^*(\iota_M)-f^*f_!(h_\module{M})} f^*f_!(M)\xrightarrow{f^*(\varepsilon_{\module{M}})} M\to 0
		\]
		is split exact. Hence, $f^*(\iota_M)-f^*f_!(h_\module{M})$ is a split monomorphism. Therefore the short exact sequence (obtained by the definition of $\Mimo\module{M}$ as a pushout)
		\begin{equation*}
		\resizebox{\textwidth}{!}{$ 0\to f^*f_!X(M)\xrightarrow{\begin{pmatrix} f^*f_!(e) \\ f^*(\iota_M)-f^*f_!(h_\module{M}) \end{pmatrix}} f^*f_!(J)\oplus f^*f_!(M)\xrightarrow{\begin{pmatrix} f^*(s) & f^*(r) \end{pmatrix}} f^*(\Mimo\module{M})\to 0$ }
		\end{equation*}
		is split exact. It follows that $(f^*(s), f^*(r))$ is a split epimorphism. By Lemma \ref{Lemma:RelativeProjective} this implies the existence of the morphism $k$ above. 
		
		Next we claim that the morphism $(\Kopf_X(g'\circ r),\Kopf_X(g'\circ s))$ is in the radical. Indeed, the morphism $\Kopf_X(g'\circ r)\colon M\to I$ is in the radical by Lemma \ref{Lemma:RadicalProperties} \eqref{Lemma:RadicalProperties:2} since $M$ has no nonzero injective summands. Also, the composite 
		\[
		L_1\Kopf_{X}\module{M}\xrightarrow{j}J\xrightarrow{\Kopf_{X}(g'\circ s)}I   
		\]
		must be $0$, by construction of $\Mimo$. Hence $\Kopf_{X}(g'\circ s)$ factors through the cokernel of $j$, which is a radical morphism by Lemma \ref{Lemma:RadicalProperties} \eqref{Lemma:RadicalProperties:1}, since $j$ is an injective envelope. Hence $\Kopf_{X}(g'\circ s)$ must itself be a radical morphism. This shows that $(\Kopf_X(g'\circ r),\Kopf_X(g'\circ s))$ is in the radical. By Lemma \ref{Lemma:TopReflectRadical} the morphism  $(g'\circ r,g'\circ s)$ is in the radical of $\Mono(X)$. Since $g'\circ g$ factors through $(g'\circ r,g'\circ s)$, this proves the claim.
	\end{proof}
	We can now prove a similar result to Lemma \ref{Independent of a morphism factoring through an injective} for the Mimo-construction. This is an analogue of \cite[Proposition 4.1]{RS08}.
	
	\begin{cor}\label{Mimo well-defined up to factoring through injective}
		Assume $\mathcal{C}$ has injective envelopes. Let $\module{M}$ and $\module{N}$ be objects in $\mathcal{C}^{T(X)}$. Assume that $M$ and $N$ have no nonzero injective summands, and that $\module{M}\cong \module{N}$
		in $\overline{\mathcal{C}}{}^{T(X)}$. Then $\Mimo\module{M}\cong \Mimo\module{N}$ in $\Mono(X)$.
	\end{cor}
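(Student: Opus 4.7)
[Proof plan for Corollary~\ref{Mimo well-defined up to factoring through injective}]
The plan is to combine Lemma~\ref{Independent of a morphism factoring through an injective} with the radical characterization in Lemma~\ref{No injective summands}, upgrading an isomorphism in $\overline{\Mono}(X)$ to a genuine isomorphism in $\Mono(X)$. First, I will invoke Lemma~\ref{Independent of a morphism factoring through an injective} applied to $\Mimo\module{M} = \Mo\module{M}$ and $\Mimo\module{N} = \Mo\module{N}$: since $\module{M}\cong \module{N}$ in $\overline{\mathcal{C}}{}^{T(X)}$, there is an isomorphism $\overline{g}\colon \Mimo\module{M}\xrightarrow{\cong}\Mimo\module{N}$ in $\overline{\Mono}(X)=\Mono(X)/f_!(\inj\mathcal{C})$. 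Lift $\overline{g}$ and its inverse $\overline{g}^{-1}$ to morphisms $g\colon \Mimo\module{M}\to \Mimo\module{N}$ and $h\colon\Mimo\module{N}\to \Mimo\module{M}$ in $\Mono(X)$. By construction, both differences
\[
gh-1_{\Mimo\module{N}} \quad\text{and}\quad hg-1_{\Mimo\module{M}}
\]
factor through objects of the form $f_!(I)$ with $I\in \inj\mathcal{C}$.

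Next I would argue that these differences lie in the radical of $\Mono(X)$. For the first one, write $gh-1_{\Mimo\module{N}}=v\circ u$ with $u\colon \Mimo\module{N}\to f_!(I)$ and $v\colon f_!(I)\to \Mimo\module{N}$ for some injective $I\in\mathcal{C}$. Since $N$ has no nonzero injective summands, Lemma~\ref{No injective summands} applies to $\Mimo\module{N}$ and shows that $v$ is in the radical of $\Mono(X)$; because the radical is a two-sided ideal, so is $v\circ u$. Hence $gh-1_{\Mimo\module{N}}\in\operatorname{Rad}_{\Mono(X)}(\Mimo\module{N},\Mimo\module{N})$. The same argument using that $M$ has no nonzero injective summands gives $hg-1_{\Mimo\module{M}}\in\operatorname{Rad}_{\Mono(X)}(\Mimo\module{M},\Mimo\module{M})$.

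Finally, I would use the defining property of the radical from Subsection~\ref{Exact categories}: applied to $r=gh-1_{\Mimo\module{N}}$ with test morphism $g'=-1_{\Mimo\module{N}}$, it yields that $1_{\Mimo\module{N}}-r\cdot(-1_{\Mimo\module{N}})=gh$ is invertible in $\operatorname{End}_{\Mono(X)}(\Mimo\module{N})$, and symmetrically $hg$ is invertible in $\operatorname{End}_{\Mono(X)}(\Mimo\module{M})$. Having a left inverse to $g$ (coming from $(gh)^{-1}\cdot$) and a right inverse to $g$ (coming from $\cdot\,(hg)^{-1}$) forces $g\colon\Mimo\module{M}\to\Mimo\module{N}$ to be an isomorphism in $\Mono(X)$, as required.

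The conceptual obstacle is the step promoting an isomorphism in the quotient $\overline{\Mono}(X)$ to a genuine isomorphism in $\Mono(X)$; this is exactly where the no-nonzero-injective-summand hypothesis is used, via Lemma~\ref{No injective summands}, to ensure that any correction term factoring through $f_!(I)$ is radical and therefore harmless.
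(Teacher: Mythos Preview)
Your argument is correct. The paper's proof takes a slightly different, more modular route: after invoking Lemma~\ref{Independent of a morphism factoring through an injective} to get $\Mimo\module{M}\cong\Mimo\module{N}$ in $\overline{\Mono}(X)$, it passes to a genuine isomorphism $\Mimo\module{M}\oplus f_!(I_1)\cong \Mimo\module{N}\oplus f_!(I_2)$ in $\Mono(X)$ (using that stable isomorphism implies stably isomorphic, as in \cite{Hel60}), and then applies the cancellation Lemma~\ref{Lemma:UniquenessMaxInjSummand} together with Lemma~\ref{No injective summands}. Your approach instead lifts the stable isomorphism and its inverse directly and shows the lifted maps are invertible via the radical; this is essentially an inlining of the proof of Lemma~\ref{Lemma:UniquenessMaxInjSummand} adapted to the situation, and it has the minor advantage of not needing the intermediate ``stably isomorphic'' step. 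One small cosmetic point: in your last paragraph the left/right inverses are swapped---the right inverse of $g$ is $h(gh)^{-1}$ and the left inverse is $(hg)^{-1}h$---but the conclusion that $g$ is an isomorphism is of course unaffected.
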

	
	\begin{proof}
		By Lemma \ref{Independent of a morphism factoring through an injective} it follows that $\Mimo\module{M}\cong \Mimo\module{N}$ in $\overline{\Mono}(X)$. Hence, there exist injective objects $I_1,I_2\in \mathcal{C}$ such that $ \Mimo\module{M}\oplus f_!(I_1)$ and $\Mimo\module{N}\oplus f_!(I_2)$ are isomorphic. The claim now follows from Lemma \ref{No injective summands} and Lemma \ref{Lemma:UniquenessMaxInjSummand}.
	\end{proof}

	\subsection{Mimo for modulations}\label{Subsection:Mimo for modulations}
	
	To describe $\Mimo \module{M}$ for a modulation, we first give a description of it as an object in $(X\Downarrow \operatorname{Id}_{\mathcal{C}})$ and of $r,s$ and $p_{\module{M}}$ in \eqref{Defining Mo} as morphisms in $\mathcal{C}$. Recall that $f^*f_!(J)=\bigoplus_{i\geq 0}X^i(J)$ and $f^*f_!(M)=\bigoplus_{i\geq 0}X^i(M)$. 
	
	\begin{lem}\label{Lemma:Mimo explicit}
		Fix the notation as in Definition \ref{Definition: Q}. Then there exists an isomorphism $$f^*(\Mo \module{M})\cong M\oplus \bigoplus_{i\geq 0}X^i(J)$$ such that:
		\begin{enumerate}
			\item\label{Lemma:Mimo explicit:1} $f^*(s)=\begin{pmatrix}0\\1\end{pmatrix}\colon \bigoplus_{i\geq 0}X^i(J)\to M\oplus \bigoplus_{i\geq 0}X^i(J)$.
			\item\label{Lemma:Mimo explicit:2} $f^*(r)\colon  \bigoplus_{i\geq 0}X^i(M)\to M\oplus \bigoplus_{i\geq 0}X^i(J)$ is induced by the maps 
			\begin{itemize}
				\item $X^k(M)\xrightarrow{h_\module{M}\circ \dots \circ X^{k-1}(h_{\module{M}})}M$ for all $k\geq 0$. (where $k=0$ gives the identity map).
				\item $X^{j+k+1}(M)\xrightarrow{X^{j}(e)\circ X^{j+1}(h_{\module{M}})\circ \dots \circ X^{j+k}(h_{\module{M}})} X^j(J)$ for all $j,k\geq 0$.
			\end{itemize}
			\item\label{Lemma:Mimo explicit:3} $f^*(p_{\module{M}})=\begin{pmatrix}1&0\end{pmatrix}\colon  M\oplus \bigoplus_{i\geq 0}X^i(J)\to M$.
			\item\label{Lemma:Mimo explicit:4}  $h_{\Mo\module{M}}\colon X(M)\oplus \bigoplus_{i\geq 0}X^{i+1}(J)\to M\oplus \bigoplus_{i\geq 0}X^i(J)$ is induced by the maps 
			\begin{itemize}
				\item $X^{i+1}(J)\xrightarrow{1} X^{i+1}(J)$ for all $i\geq 0$.
				\item $X(M)\xrightarrow{h_{\module{M}}}M$.
				\item $X(M)\xrightarrow{e}J$.
			\end{itemize}  
		\end{enumerate}
	\end{lem}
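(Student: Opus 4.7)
The plan is to apply the exact functor $f^*$ to the defining pushout square \eqref{Defining Mo} and compute the resulting pushout in $\mathcal{C}$ explicitly. After applying $f^*$, the top row becomes the short exact sequence
\[
0 \to \bigoplus_{k \geq 1}X^k(M) \xrightarrow{f^*(\iota_M - f_!(h_{\module{M}}))} \bigoplus_{k \geq 0}X^k(M) \xrightarrow{f^*(\varepsilon_{\module{M}})} M \to 0,
\]
which is split by the unit $\eta_M \colon M \to f^*f_!(M)$ (inclusion into the $0$-th summand), thanks to the triangle identity $f^*(\varepsilon_{\module{M}}) \circ \eta_M = 1_M$ for the adjunction $f_! \dashv f^*$. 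The complementary retraction $\rho$ of $f^*(\iota_M - f_!(h_{\module{M}}))$, characterized by $\rho \circ \eta_M = 0$ together with the defining equation $\rho(y) - \rho(X^{k-1}(h_{\module{M}})(y)) = y$ for $y \in X^k(M)$ ($k \geq 1$), is computed by induction on $k$ to have matrix entries $\rho_{k', k} = X^{k'}(h_{\module{M}}) \circ \cdots \circ X^{k-1}(h_{\module{M}})$ for $1 \leq k' < k$, $\rho_{k, k} = 1$, and $\rho_{k', k} = 0$ for $k' > k$.

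Since pushing out a split short exact sequence along any morphism yields another split short exact sequence, the bottom row of \eqref{Defining Mo} also splits after applying $f^*$, producing an isomorphism $f^*(\Mo\module{M}) \cong M \oplus \bigoplus_{i \geq 0} X^i(J)$ under which $f^*(s)$ becomes the canonical inclusion and $f^*(p_{\module{M}})$ becomes the canonical projection. This proves parts \eqref{Lemma:Mimo explicit:1} and \eqref{Lemma:Mimo explicit:3}. For part \eqref{Lemma:Mimo explicit:2}, the pushout universal property, together with the decomposition $f^*f_!(M) \cong \bigoplus_{k \geq 1}X^k(M) \oplus M$ induced by $\rho$ and $\eta_M$, forces the $M$-component of $f^*(r)$ to equal $f^*(\varepsilon_{\module{M}})$ and the $\bigoplus_{i \geq 0} X^i(J)$-component of $f^*(r)$ to equal $f^*f_!(e) \circ \rho$. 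Unwinding these in terms of the explicit formula for $\rho$ yields exactly the iterated composites stated in the two bullets of \eqref{Lemma:Mimo explicit:2}.

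For part \eqref{Lemma:Mimo explicit:4}, I would exploit that $s$ and $r$ are morphisms in $\mathcal{C}^{T(X)}$, so their underlying maps in $\mathcal{C}$ intertwine the structure maps: $h_{\Mo\module{M}} \circ X(f^*(s)) = f^*(s) \circ \iota_J$ and $h_{\Mo\module{M}} \circ X(f^*(r)) = f^*(r) \circ \iota_M$. Combined with part \eqref{Lemma:Mimo explicit:1}, the first equation identifies the restriction of $h_{\Mo\module{M}}$ to the summand $\bigoplus_{i \geq 0} X^{i+1}(J) \subset X(f^*(\Mo\module{M}))$ with the canonical inclusion into $\bigoplus_{i \geq 0} X^i(J)$. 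The second equation, restricted to the $X(M)$-summand of $X(\bigoplus_{k \geq 0}X^k(M))$ (arising from the $k = 0$ summand) and using that $\iota_M$ shifts this into the $k = 1$ summand of $\bigoplus_k X^k(M)$, combines with the formulas from part \eqref{Lemma:Mimo explicit:2} evaluated on $X^1(M)$ to produce $h_{\Mo\module{M}}|_{X(M)} = (h_{\module{M}}, e, 0, 0, \ldots) \colon X(M) \to M \oplus J \oplus X(J) \oplus \cdots$, which is the desired description.

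The main technical obstacle is deriving the explicit form of the retraction $\rho$ of the split monomorphism $f^*(\iota_M - f_!(h_{\module{M}}))$; once this is in hand, the identification of $f^*(\Mo\module{M})$, the formulas for $f^*(r), f^*(s), f^*(p_{\module{M}})$, and the structure map $h_{\Mo\module{M}}$ all follow by transparent bookkeeping with iterated composites of $h_{\module{M}}$ and $e$.
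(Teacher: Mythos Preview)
Your approach is correct and takes a genuinely different route from the paper. The paper proceeds by \emph{verification}: it defines a candidate object $\module{M}'$ with the underlying object and structure map prescribed in \eqref{Lemma:Mimo explicit:4}, defines candidate maps $r',s'$ by the formulas in \eqref{Lemma:Mimo explicit:1}--\eqref{Lemma:Mimo explicit:2}, checks directly that these commute with the structure maps (so lift to $\mathcal{C}^{T(X)}$), and then verifies that the sequence $0\to f_!X(M)\to f_!(M)\oplus f_!(J)\to \module{M}'\to 0$ is exact after applying $f^*$, hence is a pushout, hence $\module{M}'\cong\Mo\module{M}$.

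You instead compute the pushout \emph{forward}: you apply $f^*$, use the triangle identity to split the top row via $\eta_M$, work out the complementary retraction $\rho$ explicitly, and then read off $f^*(r),f^*(s),f^*(p_{\module{M}})$ from the general recipe for pushing out a split short exact sequence; finally you recover $h_{\Mo\module{M}}$ from the $T(X)$-equivariance of $r$ and $s$. What your approach buys is a derivation of the formulas rather than a post-hoc check --- in particular it explains \emph{why} the iterated composites $X^j(e)\circ X^{j+1}(h_{\module{M}})\circ\cdots$ appear (they arise as $f^*f_!(e)\circ\rho$). The paper's approach is shorter, since it bypasses the computation of $\rho$ entirely, but it requires already knowing the answer. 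Both the exactness check in the paper and your inductive computation of $\rho$ are straightforward, so neither route is substantially harder than the other.
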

	
	\begin{proof}
		Let $\module{M}'\in (X\Downarrow \operatorname{Id}_{\mathcal{C}})$ be the object defined by $f^*(\module{M}')=M\oplus \bigoplus_{i\geq 0}X^i(J)$ and by a map $h_{\module{M}'}\colon Xf^*(\module{M}')\to f^*(\module{M'})$ as in \eqref{Lemma:Mimo explicit:4}. Recall that $h_{f_!(M)}\colon \bigoplus_{i\geq 0}X^{i+1}(M)\to \bigoplus_{i\geq 0}X^{i}(M)$ and $h_{f_!(J)}\colon \bigoplus_{i\geq 0}X^{i+1}(J)\to \bigoplus_{i\geq 0}X^{i}(J)$ are induced by the maps $X^{i+1}(M)\xrightarrow{1}X^{i+1}(M)$ and $X^{i+1}(J)\xrightarrow{1}X^{i+1}(J)$ for $i\geq 0$, respectively. A straightforward computation then gives that 
		\begin{align*}
		h_{\module{M}'}\circ X(r')=r'\circ h_{f_!(M)} \quad \text{and} \quad  h_{\module{M}'}\circ X(s')=s'\circ h_{f_!(J)}
		\end{align*}
		where $r'$ and $s'$ are defined using the formulas in \eqref{Lemma:Mimo explicit:2} and \eqref{Lemma:Mimo explicit:1}, respectively. Hence there exist well-defined  morphisms $r\colon f_!(M)\to \module{M}'$ and $s\colon f_!(J)\to \module{M}'$ in $\mathcal{C}^{T(X)}\cong (X\Downarrow \operatorname{Id}_{\mathcal{C}})$ such that $f^*(r)=r'$ and $f^*(s)=s'$. Now observe that the sequence 
		\[
		0\to f_!X(M)\xrightarrow{\begin{pmatrix}\iota_M-f_!(h_{\module{M}})\\f_!(e)\end{pmatrix}}f_!(M)\oplus f_!(J)\xrightarrow{\begin{pmatrix}r&-s\end{pmatrix}} \module{M}'\to 0
		\]
		is exact, since it is exact when applying $f^*$. This implies that the left square in
		\begin{equation*}
		\begin{tikzcd}[column sep=2cm]
		0\arrow{r} &f_!X(M)\arrow{r}{\iota_M-f_!(h_\module{M})}\arrow{d}{f_!(e)}&f_!(M)\arrow{d}{r}\arrow{r}{\varepsilon_{\module{M}}}&\module{M}\arrow[equals]{d}\arrow{r}&0\\
		0\arrow{r} &f_!(J)\arrow{r}{s}&\module{M'}\arrow[dashed]{r}{p_{\module{M}}}&\module{M}\arrow{r}&0
		\end{tikzcd}
		\end{equation*}
		is a pushout square. But since $\Mo \module{M}$ is also defined by this pushout square, we get an isomorphism $\module{M'}\cong \Mo \module{M}$. Finally, since $f^*(p_{\module{M}})$ as defined in \eqref{Lemma:Mimo explicit:3} is the cokernel of $f^*(s)$, it must lift to a map $p_{\module{M}}\colon \module{M'}\to \module{M}$ such that the lower sequences in the diagram is exact. Since $f^*(p_{\module{M}})\circ f^*(r)=f^*(\varepsilon_{\module{M}})$, the right square must be commutative. This proves the claim.
	\end{proof}
	
	\begin{ex}\label{Example: Mimo construction}
		Let $Q$ be a finite and acyclic quiver and $\mathfrak{B}$ a modulation by abelian categories with enough injectives and exact functors preserving injective objects, see Example \ref{Example: Phyla}. Let $(B_\mathtt{i},B_\alpha)_{\mathtt{i}\in Q_0,\alpha\in Q_1}$ be a $\mathfrak{B}$-representation. We want to compute $\Mo (B_\mathtt{i},B_\alpha)$ as in Definition \ref{Definition: Q}. To this end, for each $\mathtt{k}\in Q_0$ choose an injective object $J_\mathtt{k}$ in $\mathcal{B}_{\mathtt{k}}$ and a map $$e_{\mathtt{k}}\colon \bigoplus_{\substack{\alpha\in Q_1, t(\alpha)=\mathtt{k}}} F_{\alpha}(B_{s(\alpha)})\to J_{\mathtt{k}}$$ whose restriction to $L_1\Kopf_X(B_\mathtt{i},B_\alpha)_{\mathtt{k}}=\ker B_{\mathtt{k},\operatorname{in}}$ is a monomorphism, see Example \ref{Example: Monomorphism category}. From Lemma \ref{Lemma:Mimo explicit} it follows that
		\[
		\Mo (B_\mathtt{i},B_\alpha)_{\mathtt{k}}= B_{\mathtt{k}}\oplus \bigoplus_{p\in Q_{\geq 0},t(p)=\mathtt{k}}F_p(J_{s(p)})
		\] 
		By Lemma \ref{Lemma:Mimo explicit} \eqref{Lemma:Mimo explicit:4} the morphism $\Mo (B_\mathtt{i},B_\alpha)_{\beta}\colon \Mo (B_\mathtt{i},B_\alpha)_{\mathtt{j}}\to \Mo (B_\mathtt{i},B_\alpha)_{\mathtt{k}}$ associated to an arrow $\beta\colon \mathtt{j}\to \mathtt{k}$ is the  map
		\[
		F_{\beta}(B_{\mathtt{j}})\oplus \bigoplus_{p\in Q_{\geq 0},t(p)=\mathtt{j}}F_\beta F_p(J_{s(p)})\to B_{\mathtt{k}}\oplus \bigoplus_{q\in Q_{\geq 0},t(q)=\mathtt{k}}F_q(J_{s(q)})
		\]
		which is induced by the identity $F_\beta F_p(J_{s(p)})\xrightarrow{1}F_q(J_{s(q)})$ for $q=\beta p$, the map $B_\beta\colon F_{\beta}(B_\mathtt{j})\to B_\mathtt{k}$, and the composite $F_{\beta}(B_\mathtt{j})\xrightarrow{} \bigoplus_{\substack{\alpha\in Q_1, t(\alpha)=\mathtt{k}}} F_{\alpha}(B_{s(\alpha)})\xrightarrow{e_{\mathtt{k}}} J_{\mathtt{k}}$ where the first map is the inclusion. If the restriction of $e_\mathtt{k}$ to $\ker B_{\mathtt{k},\operatorname{in}}$ is an injective envelope for all $\mathtt{k}\in Q_0$, then $\Mo(B_\mathtt{i},B_\alpha)=\Mimo (B_\mathtt{i},B_\alpha)$ and we get a formula for the Mimo-construction.
	\end{ex}
	
	\begin{ex}\label{Example:MimoFormulaQuiverRep}
		Consider the category of representations $\operatorname{rep}(Q,\mathcal{B})$ as in Example \ref{Example:Quiver Representations}. In this case the Mimo-construction $\operatorname{Mimo}(B_\mathtt{i},B_\alpha)=(B'_\mathtt{i},B'_\alpha)$ of an object $(B_\mathtt{i},B_\alpha)$ is given as follows: Choose an injective envelope $j_\mathtt{i}\colon K_\mathtt{i}\to J_\mathtt{i}$ for each $\mathtt{i}\in Q_0$, where $K_\mathtt{i}$ is the kernel of the morphism
		\[
		B_{\mathtt{i},\arrowin}\colon \bigoplus_{\substack{\alpha\in Q_1\\t(\alpha)=\mathtt{i}}} B_{s(\alpha)}\xrightarrow{(B_\alpha)_\alpha} B_\mathtt{i}.
		\]
		Let $e_{\mathtt{i}}\colon \bigoplus_{\alpha\in Q_1, t(\alpha)=\mathtt{i}} B_{s(\alpha)}\to J_\mathtt{i}$ be a lift of $j_i$ via the inclusion $K_\mathtt{i}\to \bigoplus_{\alpha\in Q_1, t(\alpha)=\mathtt{i}} B_{s(\alpha)}$. Then 
		\[
		B'_{\mathtt{i}}= B_{\mathtt{i}}\oplus \bigoplus_{p\in Q_{\geq 0},t(p)=\mathtt{i}}J_{s(p)}
		\] 
		where $Q_{\geq 0}$ is the set of paths in $Q$, and $s(p)$ and $t(p)$ denotes the source and target of $p$, respectively. For an arrow $\beta\colon \mathtt{i}\to \mathtt{k}$, the morphism
		\[
		B'_\beta\colon B_{\mathtt{i}}\oplus \bigoplus_{p\in Q_{\geq 0},t(p)=\mathtt{i}}J_{s(p)}\to B_{\mathtt{k}}\oplus \bigoplus_{q\in Q_{\geq 0},t(q)=\mathtt{k}}J_{s(q)}
		\]
		is induced by the identity $J_{s(p)}\xrightarrow{1}J_{s(q)}$ for $q=\beta p$, the structure map $B_\beta\colon B_\mathtt{i}\to B_\mathtt{k}$, and the composite $B_\mathtt{i}\xrightarrow{} \bigoplus_{\substack{\alpha\in Q_1, t(\alpha)=\mathtt{k}}} B_{s(\alpha)}\xrightarrow{e_{\mathtt{k}}} J_{\mathtt{k}}$ where the first map is the canonical inclusion. 
		
		Note that this formula has already been obtained in \cite[Section 3a]{LZ13}. In \cite[Lemma 3.2 and Proposition 3.3]{LZ13} they show that it gives a right $\Mono(X)$-approximation. In Theorem \ref{Contravariantly finite} we prove the same results. We also show that it is a minimal right approximation, which was not known before. Note that the proofs are shorter and more transparent in our language. 
	\end{ex}

	\section{A characterization of the indecomposable objects in $\Mono(X)$}\label{subsection: A characterization of the indecomposable objects}
	By Theorem \ref{Theorem: Canonical functor representation equivalence} there is a bijection between isomorphism classes of indecomposable objects in $\overline{\Mono}(X)$ and $\overline{\mathcal{C}}{}^{T(X)}$, induced  by the epivalence
	\[
	\overline{\Mono}(X)\to \overline{\mathcal{C}}{}^{T(X)}.
	\]
	Under some mild additional assumptions this is also in bijection with isomorphism classes of non-injective indecomposable objects in $\Mono(X)$. The goal in this section is to provide an explicit formula for this latter bijection. More precisely, we show that the Mimo-construction extends to objects in $\overline{\mathcal{C}}{}^{T(X)}$, and that it gives a bijection between indecomposable objects in $\overline{\mathcal{C}}{}^{T(X)}$ and non-injective indecomposable objects in $\Mono(X)$. We illustrate the usefulness of this result on examples in Section \ref{Section:Applications}.  For the result to hold $\mathcal{C}$ must admit maximal injective summands, see Definition \ref{Definition:MaximalInjSummand}. We therefore start by showing that any noetherian, artinian, or locally noetherian category admits maximal injective summands.
	\subsection{Maximal injective summand}\label{subsection:LocallyNoetherian}
	
	\begin{defn}\label{Definition:MaximalInjSummand}
		Let $\mathcal{C}$ be an abelian category with injective envelopes. We say that $\mathcal{C}$ \emphbf{admits maximal injective summands} if for any $M\in \mathcal{C}$ there exists an isomorphism 
		\[
		M\cong M'\oplus I
		\]
		where $I$ is injective and $M'$ has no nonzero injective summands.
	\end{defn}
	Note that $M'$ and $I$ are unique up to isomorphism by Lemma \ref{Lemma:RadicalProperties} \eqref{Lemma:RadicalProperties:2} and Lemma \ref{Lemma:UniquenessMaxInjSummand}.
	
	Recall that an object $M\in \mathcal{C}$ is \emphbf{artinian} if any decreasing sequence
	\[
	\dots\subseteq M_1\subseteq M_0 \subseteq M
	\]
	of subobjects of $M$ stabilizes, and \emphbf{noetherian} if any increasing sequence
	\[
	M_0\subseteq M_1\subseteq M_2\subseteq \dots \subseteq M
	\]
	of subobjects of $M$ stabilizes.  The category $\mathcal{C}$ is \emphbf{artinian} if each object in $\mathcal{C}$ is artinian, and \emphbf{noetherian} if each object in $\mathcal{C}$ is noetherian. Finally, $\mathcal{C}$ is called \emphbf{locally noetherian} if it is a Grothendieck category with a generating set of noetherian objects. Note that any locally noetherian category has injective envelopes, see \cite[Corollary 2.5.4]{Kra22}.
	
	\begin{prop}\label{Proposition:AbelianCatWithMaxInjSummand}
		Let $\mathcal{C}$ be an abelian category with injective envelopes. Then $\mathcal{C}$ admits maximal injective summands if one of the following conditions hold:
		\begin{enumerate}
			\item\label{Proposition:AbelianCatWithMaxInjSummand:1} Any injective object in $\mathcal{C}$ can be written as a finite direct sum of indecomposables. 
			\item\label{Proposition:AbelianCatWithMaxInjSummand:2} $\mathcal{C}$ is artinian.
			\item\label{Proposition:AbelianCatWithMaxInjSummand:3} $\mathcal{C}$ is noetherian.
			\item\label{Proposition:AbelianCatWithMaxInjSummand:4} $\mathcal{C}$ is locally noetherian.
		\end{enumerate}
	\end{prop}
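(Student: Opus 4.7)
The plan is to prove case \eqref{Proposition:AbelianCatWithMaxInjSummand:1} directly, reduce cases \eqref{Proposition:AbelianCatWithMaxInjSummand:2} and \eqref{Proposition:AbelianCatWithMaxInjSummand:3} to it by showing that the chain conditions force every object (in particular every injective) to be a finite direct sum of indecomposables, and handle case \eqref{Proposition:AbelianCatWithMaxInjSummand:4} separately via Zorn's lemma.

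For case \eqref{Proposition:AbelianCatWithMaxInjSummand:1}, the key input is the classical fact that in any abelian category with injective envelopes, every indecomposable injective $E$ has local endomorphism ring. The argument I would give: first show $E$ is uniform. If $U_1,U_2\subseteq E$ are nonzero with $U_1\cap U_2=0$, then the injective envelope $E(U_1)$ embeds into $E$ by extending $U_1\hookrightarrow E$ along $U_1\hookrightarrow E(U_1)$ (the extension is mono by essentiality of $U_1$ in $E(U_1)$), so $E(U_1)$ is a nonzero injective summand of the indecomposable $E$, forcing $E(U_1)=E$ and thus $U_1$ essential in $E$, contradicting $U_1\cap U_2=0$. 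Locality of $\End(E)$ then follows: if $f$ is mono, then $f(E)$ is an injective summand of $E$, which by indecomposability equals $E$, so $f$ is an isomorphism; if $f$ is not mono, then $\ker f$ is essential by uniformity, and any $0\neq V\subseteq \ker(1-f)$ would give $\ker f\cap V=0$, a contradiction, so $1-f$ is mono, hence iso. Given this, apply Azumaya's theorem to $E(M)=E_1\oplus\dots\oplus E_n$ to conclude that any summand of $E(M)$ decomposes as a direct sum of some of the $E_i$. Since every injective summand of $M$ embeds into $E(M)$ as a direct summand, its number of indecomposable summands is bounded by $n$. Pick an injective summand $I$ of $M$ maximizing this count and write $M=I\oplus M'$; if $M'$ had a nonzero injective summand $J$, then $I\oplus J$ would exceed the maximum, a contradiction.

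For cases \eqref{Proposition:AbelianCatWithMaxInjSummand:2} and \eqref{Proposition:AbelianCatWithMaxInjSummand:3}, the standard iterative argument gives the finite decomposition: if $X$ does not admit a finite decomposition into indecomposables, iteratively extract indecomposable summands $X=X_1\oplus Y_1$, $Y_1=X_2\oplus Y_2,\dots$ with all $X_i$ nonzero. Then $Y_1\supsetneq Y_2\supsetneq\dots$ (resp.\ $X_1\subsetneq X_1\oplus X_2\subsetneq\dots$) violates the artinian (resp.\ noetherian) condition. Hence every injective in $\mathcal{C}$ decomposes finitely into indecomposables, and case \eqref{Proposition:AbelianCatWithMaxInjSummand:1} applies.

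For case \eqref{Proposition:AbelianCatWithMaxInjSummand:4}, I would apply Zorn's lemma to the poset of injective summands of $M$ under inclusion. For a chain $\{I_\lambda\}$, the directed union $\bigcup I_\lambda$ exists as a filtered colimit of monomorphisms in the Grothendieck category (AB5), giving a subobject of $M$. By the Matlis--Papp characterization of locally noetherian Grothendieck categories, filtered colimits of injectives remain injective, so $\bigcup I_\lambda$ is an injective subobject of $M$ and hence a direct summand, providing an upper bound. Zorn then yields a maximal injective summand $I$; writing $M=I\oplus M'$, a nonzero injective summand $J$ of $M'$ would give $I\subsetneq I\oplus J$, contradicting maximality. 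The main obstacle is really only in case \eqref{Proposition:AbelianCatWithMaxInjSummand:1}, where merely having a finite decomposition of $E(M)$ is not enough: one genuinely needs Krull--Schmidt-type uniqueness (via Azumaya) to bound the indecomposable-summand count, which is why verifying local endomorphism rings for indecomposable injectives is the crucial step.
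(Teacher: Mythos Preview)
Your proposal is correct and follows essentially the same route as the paper. For \eqref{Proposition:AbelianCatWithMaxInjSummand:1} the paper also uses that indecomposable injectives have local endomorphism rings (citing a reference where you give the argument) to get Krull--Remak--Schmidt on injectives and then bounds the number of indecomposable summands of any injective summand of $M$ via $E(M)$; for \eqref{Proposition:AbelianCatWithMaxInjSummand:2} and \eqref{Proposition:AbelianCatWithMaxInjSummand:3} the paper likewise reduces to \eqref{Proposition:AbelianCatWithMaxInjSummand:1} via the same chain argument (splitting off arbitrary nonzero summands rather than indecomposable ones, which avoids the small extra step of justifying that an indecomposable summand exists, but this is immaterial); and for \eqref{Proposition:AbelianCatWithMaxInjSummand:4} the paper applies Zorn to the poset of injective \emph{subobjects} of $M$, which coincides with your poset of injective summands since injective subobjects always split off.
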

	
	\begin{proof}
		Assume property \eqref{Proposition:AbelianCatWithMaxInjSummand:1}. Since $\mathcal{C}$ has injective envelopes, the endomorphism ring of any indecomposable injective object in $\mathcal{C}$ has local endomorphism ring by the proof of \cite[Lemma 2.5.7]{Kra22}. Hence, the injective objects in $\mathcal{C}$ form a Krull--Remak--Schmidt category, so any injective object can be written uniquely as a sum of indecomposable objects up to permutation and isomorphism.
		
		Now let $M\in \mathcal{C}$ be arbitrary, and let $M\to E(M)$ be its injective envelope. Since any injective summand of $M$ must be an injective summand of $E(M)$, it follows that the number of indecomposable summands of $M$ must be bounded by the number of indecomposable summands of $E(M)$, which is finite by hypothesis. So we can choose an injective summand $I$ of $M$ with the maximal amount of indecomposable summands. Writing $M\cong I\oplus M'$ we see that $M'$ has no injective summands, since otherwise there would exist an injective summand of $M$ with more indecomposable summands than $I$. This shows that $\mathcal{C}$ admits maximal injective summands.
		
		Next we show that $\mathcal{C}$ being noetherian or artinian implies condition \eqref{Proposition:AbelianCatWithMaxInjSummand:1}. Indeed, assume an injective object $I$ cannot be written as a finite direct sum of indecomposable injectives. Then there exist nonzero injective objects $I_n,I_n'$ with $I=I_0'\oplus I_0$ and $I_n=I'_{n+1}\oplus I_{n+1}$ for $n\geq 0$. We then have a strictly decreasing and a strictly increasing sequence of subobjects of $I$
		\begin{align*}
		\dots \subset I_2\subset I_1\subset I_0\subset I \quad \text{and} \quad 	I_0'\subset I_0'\oplus I_1'\subset I_0'\oplus I_1'\oplus I_2'\subset \dots \subset I. 
		\end{align*}
		Hence, $\mathcal{C}$ can't be noetherian or artinian.
		
		Finally, assume $\mathcal{C}$ is a locally noetherian category. Then injective objects are closed under filtered colimits, see \cite[Theorem 11.2.12]{Kra22}.  Therefore, Zorn's lemma implies that $M$ has a maximal injective subobject $I$. Furthermore, the inclusion $I\to M$ must be split, since $I$ is injective. Hence, we have an isomorphism $M\cong M'\oplus I$ for some object $M'$. If $M'$ has a nonzero injective summand $J$, then $I\oplus J$ is an injective subobject of $M$ which strictly contains $I$. This contradicts the maximality of $I$. Hence $M'$ has no nonzero injective summands.
	\end{proof}
	
	\begin{ex}
		If $\mathcal{C}$ is the category of quasi-coherent sheaves over a noetherian scheme, or the category of all modules over a noetherian ring, then $\mathcal{C}$ is locally noetherian. The existence of maximal injective summands in the latter case was first shown in \cite{Mat58}.
	\end{ex}
	
	\begin{ex}
		Following \cite{Jan69} a ring $\Lambda$ is called \emphbf{right co-noetherian} if injective envelopes of simple right $\Lambda$-modules are artinian. By \cite[Proposition 2*]{Vam68} this is equivalent to injective envelopes of artinian right modules being artinian. Hence, the category $\mathcal{C}=\operatorname{art}\Lambda$ of artinian right $\Lambda$-modules is an artinian abelian category with injective envelopes, and therefore admits maximal injective summands. Examples of co-noetherian rings are commutative noetherian rings \cite[Proposition 3]{Mat60}, Quasi-Frobenius rings \cite[Proposition 1]{Fai66}, Noetherian PI rings \cite[Theorem 2]{Jat76}, module finite algebras over commutative noetherian rings \cite[Corollary 2.3]{Hir00}, finite normalizing extensions of a right co-noetherian ring \cite[Theorem 2.2]{Hir00}, and the first Weyl algebra of a commutative ring finitely generated as an algebra over the integers \cite[Corollary 2.7]{Hir00}. A commutative ring is co-noetherian if and only if its localizations at any maximal ideal is noetherian \cite[Theorem 2]{Vam68}.
	\end{ex} 
	
	\begin{rmk}
		There exist abelian categories with injective envelopes which do not satisfy Definition \ref{Definition:MaximalInjSummand}. Indeed, let $\mathcal{C}$ be any Grothendieck category which is not locally noetherian, e.g. the category of all modules over a non-noetherian ring. Since $\mathcal{C}$ is Grothendieck, it has injective envelopes, see \cite[Corollary 2.5.4]{Kra22}. Also, since $\mathcal{C}$ is not locally noetherian, there exists a set $\mathcal{J}$ of injective objects such that the sum $M=\bigoplus_{J\in \mathcal{J}}J$ is not injective, see \cite[Theorem 11.2.12]{Kra22}. We claim that $M$ has no maximal injective summand. Assume otherwise, i.e. that $M\cong M'\oplus I$ where $I$ is injective and $M'$ has no nonzero injective summands. Let $\mathcal{J}'\subset \mathcal{J}$ be a finite subset, and let $I'=\bigoplus_{J\in \mathcal{J'}}J$ be the corresponding injective object. Choose a left inverse $M\to I'$ to the inclusion $I'\to M$. Via the isomorphism $M\cong M'\oplus I$ we get morphisms
		\[
		\begin{pmatrix}g_1\\g_2\end{pmatrix}\colon I'\to M'\oplus I \quad \text{and} \quad \begin{pmatrix}g'_1&g'_2\end{pmatrix}\colon M'\oplus I\to I'
		\] 
		such that $g'_1\circ g_1+g'_2\circ g_2=1_{I'}$. By Lemma \ref{Lemma:RadicalProperties} \eqref{Lemma:RadicalProperties:2} the morphism $g_1$ is in the radical of $\mathcal{C}$, so $g_2'\circ g_2=1_{I'}-g_1'\circ g_1$ must be an isomorphism. In particular, $g_2\colon I'\to I$ is a monomorphism. Now consider the morphism $\bigoplus_{J\in \mathcal{J}}J=M\cong M'\oplus I\to I$. We have shown that this is a monomorphism when restricted to the direct sum of any finite subset of $\mathcal{J}$. Since $\bigoplus_{J\in \mathcal{J}}J$ is the filtered colimit of such finite sums, and filtered colimits in Grothendieck categories are exact, the morphism $\bigoplus_{J\in \mathcal{J}}J\to I$ must itself be a monomorphism. Since it is clearly an epimorphism, it must be an isomorphism. But this implies that $M=\bigoplus_{J\in \mathcal{J}}J$ is injective, which is a contradiction.
	\end{rmk}

	\subsection{Construction and main result}	
 
	Let $\mathcal{C}$ be an abelian category with injective envelopes and maximal injective summands, and let $X\colon \mathcal{C}\to \mathcal{C}$ be an exact functor which is locally nilpotent and preserves injective objects. Our goal is to define the Mimo-construction directly on objects in the Eilenberg--Moore category of the stable category $\overline{\mathcal{C}}$. To do this, we need the following lemma.
	
	\begin{lem}\label{Lemma: Lift from stable cat}
		Let $\module{M}\in \overline{\mathcal{C}}{}^{T(X)}$. Then there exists an object $\widehat{\module{M}}\in \mathcal{C}^{T(X)}$ which is isomorphic to $\module{M}$ in $\overline{\mathcal{C}}{}^{T(X)}$ and for which $f^*(\widehat{\module{M}})=\widehat{M}$ has no injective summands.
	\end{lem}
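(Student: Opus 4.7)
The plan is to produce $\widehat{\module{M}}$ by first picking any lift of $\module{M}$ to $\mathcal{C}^{T(X)}$ and then modifying its underlying object via the maximal injective summand decomposition supplied by the hypothesis on $\mathcal{C}$.

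More precisely, by Lemma \ref{Eilenberg--Moore category of stable free monad}, $\module{M}$ is given by a pair $(M,\bar h)$ with $M\in\overline{\mathcal{C}}$ and $\bar h\colon X(M)\to M$ a morphism in $\overline{\mathcal{C}}$. Since $\overline{\mathcal{C}}$ has the same objects as $\mathcal{C}$, we may regard $M$ as an object of $\mathcal{C}$ and choose any lift $h\colon X(M)\to M$ of $\bar h$ to $\mathcal{C}$. This gives an object $\module{M}_0=(M,h)\in\mathcal{C}^{T(X)}$ whose image in $\overline{\mathcal{C}}{}^{T(X)}$ is $\module{M}$. Now apply the assumption that $\mathcal{C}$ admits maximal injective summands to decompose $M\cong \widehat{M}\oplus I$ in $\mathcal{C}$, with $I$ injective and $\widehat{M}$ having no nonzero injective summands. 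Let $p\colon M\to \widehat{M}$ and $i\colon \widehat{M}\to M$ denote the projection and inclusion from this decomposition, so $p\circ i = 1_{\widehat{M}}$, and $i\circ p = 1_M$ in $\overline{\mathcal{C}}$ because $1_M - i\circ p$ factors through the injective object $I$.

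Define $\widehat{h}\coloneqq p\circ h\circ X(i)\colon X(\widehat{M})\to\widehat{M}$ in $\mathcal{C}$ and set $\widehat{\module{M}}\coloneqq(\widehat{M},\widehat{h})$, viewed as an object of $\mathcal{C}^{T(X)}\cong (X\Downarrow\operatorname{Id}_{\mathcal{C}})$. By construction $f^*(\widehat{\module{M}})=\widehat{M}$ has no nonzero injective summands. It remains to check that $i\colon \widehat{M}\to M$ lifts to an isomorphism $\widehat{\module{M}}\to\module{M}$ in $\overline{\mathcal{C}}{}^{T(X)}$. Computing in $\overline{\mathcal{C}}$ and using $i\circ p=1_M$ there, we have
\[
i\circ \widehat{h} \;=\; i\circ p\circ h\circ X(i) \;=\; h\circ X(i),
\]
so $i$ is a morphism $\widehat{\module{M}}\to \module{M}$ in $\overline{\mathcal{C}}{}^{T(X)}$. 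Since $i$ is an isomorphism in $\overline{\mathcal{C}}$ (with inverse $p$), it is also an isomorphism in $\overline{\mathcal{C}}{}^{T(X)}$, as the forgetful functor $\overline{\mathcal{C}}{}^{T(X)}\to\overline{\mathcal{C}}$ reflects isomorphisms.

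There is essentially no obstacle here once the maximal injective summand is in place; the only subtlety is noting that modifying the structure map by $p\circ(-)\circ X(i)$ gives a genuine object of $\mathcal{C}^{T(X)}$ (not merely of $\overline{\mathcal{C}}{}^{T(X)}$), and that the resulting object is related to the original one by a morphism in $\overline{\mathcal{C}}{}^{T(X)}$ whose underlying arrow is $i$. Both are immediate from $i\circ p = 1_M$ in $\overline{\mathcal{C}}$.
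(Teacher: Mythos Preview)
Your proof is correct and follows essentially the same approach as the paper: lift the structure map to $\mathcal{C}$, split off a maximal injective summand of the underlying object, and restrict the structure map to the non-injective part. The paper simply says the resulting object is ``clearly'' isomorphic to $\module{M}$ in $\overline{\mathcal{C}}{}^{T(X)}$, whereas you spell out explicitly that $i$ is a morphism in $\overline{\mathcal{C}}{}^{T(X)}$ (using $i\circ p=1_M$ in $\overline{\mathcal{C}}$) and that the forgetful functor reflects isomorphisms; this extra detail is welcome but does not constitute a different argument.
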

	
	\begin{proof}
		By Lemma \ref{Eilenberg--Moore category of stable free monad} the data of an object $\module{M}$ in $\overline{\mathcal{C}}{}^{T(X)}$ is the same as the data of a morphism $h_\module{M}\colon X(M)\to M$ in $\overline{\mathcal{C}}$. Choose a lift $h'\colon X(M)\to M$ to $\mathcal{C}$ of $h_{\module{M}}$, and write $M\cong\widehat{M}\oplus J$ where $J$ is injective and $\widehat{M}$ has no injective summands. Now $h'$ gives a morphism $X(\widehat{M})\oplus X(J)\to \widehat{M}\oplus J$. Let $h''\colon X(\widehat{M})\to \widehat{M}$ be the restriction of $h'$, and let $\widehat{\module{M}}=(\widehat{M},h'')$ be the corresponding object in $\mathcal{C}^{T(X)}$. Clearly $\widehat{\module{M}}$ is isomorphic to $\module{M}$ in $\overline{\mathcal{C}}{}^{T(X)}$ and $\widehat{M}$ has no injective summands by construction.  
	\end{proof}
	
	We can now extend the Mimo-construction to objects in $\overline{\mathcal{C}}{}^{T(X)}$. Here the superscript $\cong$ indicates that we are considering isomorphism classes of objects.
	
	\begin{prop}\label{Proposition:MimoOnStableCat}
		For any $\module{M}\in \overline{\mathcal{C}}{}^{T(X)}$, choose an object $\widehat{\module{M}}$ in $\mathcal{C}^{T(X)}$ which is isomorphic to $\module{M}$ in $\overline{\mathcal{C}}{}^{T(X)}$ and such that $f^*(\widehat{\module{M}})=\widehat{M}$ has no nonzero injective summands in $\mathcal{C}$. Then the association $\module{M}\mapsto \Mimo \widehat{\module{M}}$ induces a well-defined map
		\begin{align}\label{MimoMapOnStable} 
		\{\text{objects in }\overline{\mathcal{C}}{}^{T(X)}\}^{\cong} \xrightarrow{\Mimo} \{\text{objects in } \Mono(X)\}^{\cong}.
		\end{align} 
		Furthermore, it is independent of choice of $\widehat{\module{M}}$.
	\end{prop}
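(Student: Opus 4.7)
The plan is to assemble three earlier results. First, given $\module{M}\in \overline{\mathcal{C}}{}^{T(X)}$, the existence of a lift $\widehat{\module{M}}\in\mathcal{C}^{T(X)}$ which is isomorphic to $\module{M}$ in $\overline{\mathcal{C}}{}^{T(X)}$ and whose underlying object $\widehat{M}$ has no nonzero injective summands is precisely the content of Lemma~\ref{Lemma: Lift from stable cat}; at this step I would invoke the standing hypothesis that $\mathcal{C}$ admits maximal injective summands, which is what makes the decomposition $M\cong \widehat{M}\oplus J$ in the proof of that lemma available.

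Second, since $\mathcal{C}$ has injective envelopes (which follows from admitting maximal injective summands, as assumed in this subsection), the object $\Mimo\widehat{\module{M}}$ is defined by Definition~\ref{Definition: Mimo}. By Theorem~\ref{Mimo=Minimal} it is a minimal right $\Mono(X)$-approximation of $\widehat{\module{M}}$, so its isomorphism class in $\Mono(X)$ is determined by $\widehat{\module{M}}$.

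The key step, and the only one with any substance, is the independence of the choice of $\widehat{\module{M}}$. I would argue as follows: suppose $\widehat{\module{M}}_1$ and $\widehat{\module{M}}_2$ are two lifts of $\module{M}$ whose underlying objects $\widehat{M}_1$ and $\widehat{M}_2$ have no nonzero injective summands in $\mathcal{C}$. By transitivity of isomorphism in $\overline{\mathcal{C}}{}^{T(X)}$, we have $\widehat{\module{M}}_1 \cong \widehat{\module{M}}_2$ in $\overline{\mathcal{C}}{}^{T(X)}$. Now Corollary~\ref{Mimo well-defined up to factoring through injective} applies verbatim and yields an isomorphism $\Mimo\widehat{\module{M}}_1 \cong \Mimo\widehat{\module{M}}_2$ in $\Mono(X)$. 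The same corollary also handles the very last requirement of the proposition, namely that the assignment descends to isomorphism classes on the source: if $\module{M}\cong \module{M}'$ in $\overline{\mathcal{C}}{}^{T(X)}$, then the respective lifts $\widehat{\module{M}}$ and $\widehat{\module{M}'}$ are still isomorphic in $\overline{\mathcal{C}}{}^{T(X)}$, and both have underlying objects without nonzero injective summands, so Corollary~\ref{Mimo well-defined up to factoring through injective} again delivers $\Mimo\widehat{\module{M}} \cong \Mimo\widehat{\module{M}'}$ in $\Mono(X)$.

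There is no genuine obstacle left: all the technical content, namely that $\Mimo\widehat{\module{M}}$ acquires no new injective summand (Lemma~\ref{No injective summands}) and that the complementary non-injective part is unique up to isomorphism (Lemma~\ref{Lemma:UniquenessMaxInjSummand}), was already settled, and it is precisely this content that is packaged in Corollary~\ref{Mimo well-defined up to factoring through injective}. The proof therefore reduces to a short citation argument.
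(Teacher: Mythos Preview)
Your proposal is correct and matches the paper's own proof, which is the one-line citation ``It is well-defined and independent of choice by Corollary~\ref{Mimo well-defined up to factoring through injective} and Lemma~\ref{Lemma: Lift from stable cat}.'' You have simply unpacked these two citations (and added the harmless extra remark about Theorem~\ref{Mimo=Minimal}); one tiny quibble is that having injective envelopes does not \emph{follow} from admitting maximal injective summands but is part of the standing hypotheses of the subsection, per Definition~\ref{Definition:MaximalInjSummand}.
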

	
	\begin{proof}
		It is well-defined and independent of choice by Corollary \ref{Mimo well-defined up to factoring through injective} and Lemma \ref{Lemma: Lift from stable cat}.
	\end{proof}
	
	We want to show that the map in Proposition \ref{Proposition:MimoOnStableCat} restricts to an isomorphism between indecomposable objects in $\overline{\mathcal{C}}{}^{T(X)}$ and non-injective indecomposable objects in $\Mono(X)$. We first show that it can be used to give an inverse to the bijection 
	\begin{align}\label{Bijection:RepEquivIndec}
	\{\text{objects in } \overline{\Mono}(X)\}^{\cong}\to \{\text{objects in }\overline{\mathcal{C}}{}^{T(X)}\}^{\cong}  
	\end{align}
	coming from the epivalence $\overline{\Mono}(X)\to \overline{\mathcal{C}}{}^{T(X)}$ in Theorem \ref{Theorem: Canonical functor representation equivalence}. 
	\begin{prop}\label{MimoInverseToRepEquivalence}
		Composing \eqref{MimoMapOnStable} with the functor $\Mono(X)\to \overline{\Mono}(X)$ gives an inverse to the bijection \eqref{Bijection:RepEquivIndec}. In particular, \eqref{MimoMapOnStable} is injective and preserves indecomposables.
	\end{prop}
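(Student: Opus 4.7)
The plan is to reduce the proposition to verifying the single identity $F \circ \overline{\Mimo} = \operatorname{id}$ on isomorphism classes, where $F$ denotes the canonical epivalence $\overline{\Mono}(X) \to \overline{\mathcal{C}}{}^{T(X)}$ from Theorem~\ref{Theorem: Canonical functor representation equivalence} and $\overline{\Mimo}$ is the composite of \eqref{MimoMapOnStable} with the quotient $\Mono(X) \to \overline{\Mono}(X)$. Since $F$ already induces a bijection on isomorphism classes by the epivalence property, establishing a one-sided inverse automatically upgrades it to a two-sided inverse, which gives the first claim of the proposition.

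The computational heart of the argument is to unpack the defining pushout \eqref{Defining Mo} for $\Mimo\widehat{\module{M}}$, which yields a short exact sequence $0 \to f_!(J) \to \Mimo\widehat{\module{M}} \xrightarrow{p_{\widehat{\module{M}}}} \widehat{\module{M}} \to 0$ in $\mathcal{C}^{T(X)}$ with $J$ injective. The key observation is that the underlying object $f^*f_!(J) = \bigoplus_{i\geq 0} X^i(J)$ is a finite direct sum of injectives, since $X$ preserves injectives and is locally nilpotent; hence $f_!(J)$ becomes a zero object in $\overline{\mathcal{C}}{}^{T(X)}$. Using the explicit formulas for $p_{\widehat{\module{M}}}$ and $h_{\Mimo\widehat{\module{M}}}$ from Lemma~\ref{Lemma:Mimo explicit}, I would then show that $p_{\widehat{\module{M}}}$ descends to an isomorphism in $\overline{\mathcal{C}}{}^{T(X)}$, so that $F(\overline{\Mimo}(\module{M})) \cong \widehat{\module{M}} \cong \module{M}$, as required.

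For the auxiliary claims of injectivity and preservation of indecomposables in the original Mimo map \eqref{MimoMapOnStable}, the strategy is to lift them through the quotient $\Mono(X) \to \overline{\Mono}(X)$. Injectivity is formal: if $\Mimo\widehat{\module{M}_1} \cong \Mimo\widehat{\module{M}_2}$ in $\Mono(X)$, their classes in $\overline{\Mono}(X)$ agree, and applying $F$ yields $\module{M}_1 \cong \module{M}_2$ in $\overline{\mathcal{C}}{}^{T(X)}$. For indecomposables, the core input is Lemma~\ref{No injective summands}, which guarantees that $\Mimo\widehat{\module{M}}$ has no nonzero summand in $\add f_!(\inj \mathcal{C})$; by Corollary~\ref{Corollary:InjectivesinMono} this is precisely the class of injective objects in $\Mono(X)$, so every nontrivial direct sum decomposition in $\Mono(X)$ survives in $\overline{\Mono}(X)$. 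Since $\overline{\Mimo}(\module{M})$ must be indecomposable in $\overline{\Mono}(X)$ (the inverse of an epivalence preserves indecomposability), indecomposability lifts back to $\Mono(X)$.

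The main obstacle I anticipate lies in the second paragraph: showing that $p_{\widehat{\module{M}}}$ is not merely objectwise an isomorphism in $\overline{\mathcal{C}}$, but an isomorphism in $\overline{\mathcal{C}}{}^{T(X)}$ compatible with the $X$-actions as dictated by Lemma~\ref{Eilenberg--Moore category of stable free monad}. This amounts to checking that, under the identification of $f^*(\Mimo\widehat{\module{M}})$ with $\widehat{M}$ in $\overline{\mathcal{C}}$, the structure morphism $h_{\Mimo\widehat{\module{M}}}$ reduces to $h_{\widehat{\module{M}}}$. This can be read off directly from Lemma~\ref{Lemma:Mimo explicit}\eqref{Lemma:Mimo explicit:4}, but it is the one bookkeeping step where the pushout structure cannot be avoided.
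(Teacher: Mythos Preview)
Your proposal is correct and follows essentially the same approach as the paper: verify that $p_{\widehat{\module{M}}}$ becomes an isomorphism in $\overline{\mathcal{C}}{}^{T(X)}$, deduce $F\circ\overline{\Mimo}=\operatorname{id}$, and then handle injectivity and indecomposability exactly as you describe via Lemma~\ref{No injective summands}. One small simplification concerning your anticipated obstacle: since $p_{\widehat{\module{M}}}$ is already a morphism in $\mathcal{C}^{T(X)}$, once you know $f^*(p_{\widehat{\module{M}}})$ is an isomorphism in $\overline{\mathcal{C}}$ (immediate from the objectwise split exact sequence with injective kernel $f^*f_!(J)$), its inverse automatically respects the $X$-action, so the separate check via Lemma~\ref{Lemma:Mimo explicit}\eqref{Lemma:Mimo explicit:4} is unnecessary.
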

	
	\begin{proof}
		Let $\module{M}\in \overline{\mathcal{C}}{}^{T(X)}$. Since the canonical morphism $\Mimo \widehat{\module{M}}\to \widehat{\module{M}}$ is an isomorphism in $\overline{\mathcal{C}}{}^{T(X)}$, and there is a canonical isomorphism $\widehat{\module{M}}\cong \module{M}$ in $\overline{\mathcal{C}}{}^{T(X)}$, the composite of \eqref{MimoMapOnStable} with the functor $\Mono(X)\to \overline{\Mono}(X)$ must be an inverse to \eqref{Bijection:RepEquivIndec}. In particular, the composite is injective, so \eqref{MimoMapOnStable} must be injective. To see that \eqref{MimoMapOnStable} preserves indecomposables, note that its composite with the functor $\Mono(X)\to \overline{\Mono}(X)$ preserves indecomposables, since it is an inverse \eqref{Bijection:RepEquivIndec}. Furthermore,  $\Mimo \widehat{\module{M}}$ has no nonzero injective summands by Lemma \ref{No injective summands}, and must therefore be indecomposable in $\Mono(X)$ if it is indecomposable in $\overline{\Mono}(X)$. This proves the claim.
	\end{proof}

	We now characterize the indecomposables in $\Mono(X)$. This is a very useful result, in particular when $\overline{\mathcal{C}}$ is abelian, so that the indecomposable objects in $\overline{\mathcal{C}}{}^{T(X)}$ are easier to compute. We illustrate its power in the next section for quiver representations of different classes of Artin algebras.
	
	\begin{thm}\label{Theorem: Characterization of indecomposables}
		We have bijections 
		\begin{align}
		\renewcommand{\arraystretch}{1.8}
		\begin{array}{ccc}
		\renewcommand{\arraystretch}{1.1}
		\begin{Bmatrix}
		\text{Indecomposable injective} \\
		\text{objects in $\mathcal{C}$}
		\end{Bmatrix}^{\cong}
		&
		\xrightarrow{\cong}
		&
		\renewcommand{\arraystretch}{1.2}
		\begin{Bmatrix}
		\text{Indecomposable injective} \\
		\text{objects in $\Mono(X)$}
		\end{Bmatrix}^{\cong} \quad I \mapsto  f_!(I).  \\ 
		\end{array}\label{Bijection:Injective}
		\renewcommand{\arraystretch}{1}
		\end{align}
		\begin{align}
		\renewcommand{\arraystretch}{1.8}
		\begin{array}{ccc}
		\renewcommand{\arraystretch}{1.1}
		\begin{Bmatrix}
		\text{Indecomposable} \\
		\text{objects in $\overline{\mathcal{C}}{}^{T(X)}$}
		\end{Bmatrix}^{\cong}
		&
		\xrightarrow{\cong}
		&
		\renewcommand{\arraystretch}{1.2}
		\begin{Bmatrix}
		\text{Indecomposable non-injective} \\
		\text{objects in $\Mono(X)$}
		\end{Bmatrix}^{\cong} \quad \module{M} \mapsto  \Mimo \module{M}.\\ 
		\end{array}\label{Bijection:Non-injective}
		\renewcommand{\arraystretch}{1}
		\end{align}
	\end{thm}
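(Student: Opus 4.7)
The plan is to handle the two bijections separately, chaining together results already established in the paper.

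The first bijection \eqref{Bijection:Injective} is essentially bookkeeping. The assignment $I \mapsto f_!(I)$ produces an injective object of $\Mono(X)$ by Proposition \ref{Proposition:f_!(I)Injective}, which is indecomposable whenever $I$ is by Corollary \ref{cor:f_!Indecomposable}. Injectivity of the assignment on isomorphism classes follows from the natural isomorphism $\Kopf_X \circ f_! \cong \id$ of Lemma \ref{top}~\eqref{top:2}: an isomorphism $f_!(I_1) \cong f_!(I_2)$ descends to $I_1 \cong I_2$ after applying $\Kopf_X$. Surjectivity is Corollary \ref{Corollary:InjectivesinMono} combined with the reverse direction of Corollary \ref{cor:f_!Indecomposable}.

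For the second bijection \eqref{Bijection:Non-injective}, the starting point is Proposition \ref{MimoInverseToRepEquivalence}: the Mimo-construction is injective on isomorphism classes and sends indecomposables in $\overline{\mathcal{C}}{}^{T(X)}$ to indecomposables in $\overline{\Mono}(X)$. Together with Lemma \ref{No injective summands}, which ensures $\Mimo\widehat{\module{M}}$ has no nonzero summand in $\add f_!(\inj \mathcal{C})$, this forces $\Mimo\widehat{\module{M}}$ to be indecomposable and non-injective already in $\Mono(X)$: any decomposition $\Mimo\widehat{\module{M}} \cong \module{A} \oplus \module{B}$ would make one summand zero in $\overline{\Mono}(X)$, hence injective in $\Mono(X)$, hence zero.

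For surjectivity I would fix an indecomposable non-injective $\module{N} \in \Mono(X)$ and let $\module{M}$ denote its image in $\overline{\mathcal{C}}{}^{T(X)}$. Picking a lift $\widehat{\module{M}}$ with $\widehat{M}$ free of injective summands via Lemma \ref{Lemma: Lift from stable cat}, both $\widehat{\module{M}}$ and $\module{N}$ lift $\module{M}$ and are hence isomorphic in $\overline{\mathcal{C}}{}^{T(X)}$. Lemma \ref{Independent of a morphism factoring through an injective} then gives $\Mimo\widehat{\module{M}} \cong \Mo\module{N}$ in $\overline{\Mono}(X)$ for any choice of $\Mo\module{N}$, and since $\module{N}$ already lies in $\Mono(X)$ one has $L_1\Kopf_X\module{N}=0$, so that the pushout in \eqref{Defining Mo} trivializes and $\Mo\module{N} \cong \module{N}$. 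Consequently $\Mimo\widehat{\module{M}} \cong \module{N}$ in $\overline{\Mono}(X)$, which unpacks to an isomorphism $\module{N} \oplus f_!(J_1) \cong \Mimo\widehat{\module{M}} \oplus f_!(J_2)$ in $\Mono(X)$ for some $J_1, J_2 \in \inj \mathcal{C}$.

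The main obstacle is upgrading this to an actual isomorphism in $\Mono(X)$ via Lemma \ref{Lemma:UniquenessMaxInjSummand}. The hypothesis to verify is that every morphism from an injective of $\Mono(X)$ into either $\module{N}$ or $\Mimo\widehat{\module{M}}$ lies in the radical of $\Mono(X)$. For $\Mimo\widehat{\module{M}}$ this is precisely Lemma \ref{No injective summands}. For $\module{N}$, the plan is to prove an analogue of Lemma \ref{Lemma:RadicalProperties}~\eqref{Lemma:RadicalProperties:2} internally to the exact category $\Mono(X)$: since $\module{N}$ is indecomposable and non-injective it has no nonzero injective summand, and for a morphism $g \colon f_!(I) \to \module{N}$ with $I \in \inj \mathcal{C}$ the inclusion $\ker g \hookrightarrow f_!(I)$ (which lies in $\Mono(X)$ by Lemma \ref{Mono:resolving}) can be shown to be an injective envelope in $\Mono(X)$ by transcribing the proof of Lemma \ref{Lemma:RadicalProperties}~\eqref{Lemma:RadicalProperties:2} inside $\Mono(X)$, using Proposition \ref{Theorem:InjectiveEnvelopesMono}. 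Then $g$ factors through the cokernel of this envelope, which is in the radical by Lemma \ref{Lemma:RadicalProperties}~\eqref{Lemma:RadicalProperties:1}. With this in place, Lemma \ref{Lemma:UniquenessMaxInjSummand} delivers $\module{N} \cong \Mimo\widehat{\module{M}}$ in $\Mono(X)$, completing the proof.
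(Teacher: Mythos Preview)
Your treatment of the first bijection and of well-definedness/injectivity for the second bijection matches the paper. The gap is in the surjectivity argument for \eqref{Bijection:Non-injective}.

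You want to apply Lemma \ref{Lemma:UniquenessMaxInjSummand} to the isomorphism $\module{N}\oplus f_!(J_1)\cong \Mimo\widehat{\module{M}}\oplus f_!(J_2)$, which requires every morphism from an injective of $\Mono(X)$ into $\module{N}$ to lie in the radical. Your plan is to transcribe Lemma \ref{Lemma:RadicalProperties}\,\eqref{Lemma:RadicalProperties:2} into the exact category $\Mono(X)$ and conclude that $\ker g\hookrightarrow f_!(I)$ is an injective envelope. But that transcription breaks: the abelian proof lifts the map $\operatorname{im}g\to J'$ along $\operatorname{im}g\hookrightarrow \module{N}$ using injectivity of $J'$, and in $\Mono(X)$ injectives only extend along \emph{inflations}. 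The inclusion $\operatorname{im}g\hookrightarrow \module{N}$ is an inflation precisely when $\operatorname{coker}g\in\Mono(X)$, which need not hold. Concretely, take $Q=(\mathtt{1}\to\mathtt{2})$, $\Lambda=\Bbbk[x]/(x^2)$, $\module{N}=(\Bbbk\hookrightarrow\Lambda)$ the socle inclusion, and $g\colon f_!(\Lambda(\mathtt{2}))=(0\to\Lambda)\to\module{N}$ given by $\operatorname{id}_\Lambda$ at vertex $\mathtt{2}$. Then $\ker g=0$, so your claim ``$\ker g\hookrightarrow f_!(I)$ is an injective envelope'' would force $f_!(\Lambda(\mathtt{2}))=0$, which is absurd; correspondingly $\operatorname{coker}g=(\Bbbk\to 0)\notin\Mono(X)$.

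The paper avoids this entirely by an asymmetric matrix argument that only uses the radical property on the $\Mimo\widehat{\module{M}}$ side. Writing the mutually inverse isomorphisms between $\module{M}'\oplus f_!(I)$ and $\Mimo\widehat{\module{M}}\oplus f_!(J)$ as $2\times 2$ matrices $(\phi_{rs})$, $(\psi_{rs})$, the component $\phi_2\colon f_!(I)\to\Mimo\widehat{\module{M}}$ lies in the radical by Lemma \ref{No injective summands}, so $\phi_1\psi_1=1_{\Mimo\widehat{\module{M}}}-\phi_2\psi_3$ is invertible. Thus $\phi_1\colon \module{M}'\to\Mimo\widehat{\module{M}}$ is a split epimorphism, and indecomposability of $\module{M}'$ (plus $\module{M}'$ being non-injective, to rule out $\Mimo\widehat{\module{M}}=0$) forces $\phi_1$ to be an isomorphism. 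No statement about morphisms from injectives into $\module{M}'$ is needed.
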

	
	\begin{proof}
		By Proposition \ref{Proposition:f_!(I)Injective} and Corollary \ref{Corollary:InjectivesinMono} the injective objects in $\Mono(X)$ are precisely the objects of the form $f_!(J)$ with $J$ injective in $\mathcal{C}$. Furthermore, $f_!(J)$ is indecomposable if and only if $J$ is indecomposable by Corollary \ref{cor:f_!Indecomposable}. This implies that the map \eqref{Bijection:Injective} is well-defined and surjective. For injectivity, note that $f_!(J)\cong f_!(J')$ implies $J\cong \Kopf f_!(J)\cong \Kopf f_!(J')\cong J'$. 
		
	 Next, consider the map in \eqref{Bijection:Non-injective}. It is well-defined and injective by Proposition \ref{MimoInverseToRepEquivalence}. To see that it is surjective, let $\module{M}'\in \Mono(X)$ be an arbitrary indecomposable non-injective object, and let $\module{M}$ be the image of $\module{M}'$ in $\overline{\mathcal{C}}{}^{T(X)}$. Since composing \eqref{MimoMapOnStable} with the functor $\Mono(X)\to \overline{\Mono}(X)$ gives an inverse to \eqref{Bijection:RepEquivIndec} by Proposition \ref{MimoInverseToRepEquivalence}, we get that $\module{M}'\cong \Mimo \widehat{\module{M}}$ in $\overline{\Mono}(X)$. Hence, there exist inverse isomorphisms 
		\[
		\begin{pmatrix}
		\phi_1&\phi_2\\
		\phi_3&\phi_4
		\end{pmatrix}\colon \module{M}'\oplus f_!(I)\to \Mimo \widehat{\module{M}}\oplus f_!(J)
		\quad \text{and} \quad \begin{pmatrix}
		\psi_1&\psi_2\\
		\psi_3&\psi_4
		\end{pmatrix}\colon \Mimo \widehat{\module{M}}\oplus f_!(J)\to \module{M}'\oplus f_!(I)
		\]
		in $\Mono(X)$ for some injective objects $I$ and $J$ in $\mathcal{C}$. In particular, we have that 
		\[
		1_{\Mimo \widehat{\module{M}}}=\phi_1\circ \psi_1+\phi_2\circ \psi_3.
		\] 
		Now by Lemma \ref{No injective summands} the morphism $\phi_2$ is in the radical of $\Mono(X)$, so $\phi_1\circ \psi_1=1_{\Mimo \widehat{\module{M}}}-\phi_2\circ \psi_3$ is an isomorphism. Hence $\phi_1\colon \module{M}'\to \Mimo \widehat{\module{M}}$ is a split epimorphism, and since $\module{M}'$ is indecomposable it must be an isomorphism. This shows that \eqref{Bijection:Non-injective} is surjective.
	\end{proof}
	
	\begin{rmk}
		Theorem \ref{Theorem: Characterization of indecomposables} recovers \cite[Theorem 2]{RZ17} in the setting of Remark \ref{Remark:DualNumbers}.
	\end{rmk}
	
		\begin{ex}\label{Example:MimoStableModulation}
		Let $Q$ be a finite and acyclic quiver and $\mathfrak{B}$ a modulation by abelian categories $\mathcal{B}_\mathtt{i}$ and exact functors $F_\alpha$ preserving injective objects, see Example \ref{Example: Phyla}. Set $\mathcal{C}=\prod_{\mathtt{i}\in Q_0}\mathcal{B}_\mathtt{i}$. Then $\mathcal{C}$ has injective envelopes (resp. maximal injective summands) if and only if $\mathcal{B}_\mathtt{i}$ has injective envelopes (resp. maximal injective summands), for all $\mathtt{i}\in Q_0$. The Mimo map in Proposition \ref{Proposition:MimoOnStableCat} goes from $\operatorname{rep}\overline{\mathfrak{B}}$ to the monomorphism category of $\mathfrak{B}$, where $\overline{\mathfrak{B}}$ is the modulation by stable categories as in Example \ref{Example:ModulationStableCats}. Explicitly, it sends $(B_\mathtt{i},B_\alpha)$ to $\Mimo (\widehat{B}_\mathtt{i},B'_\alpha)$ where $\widehat{B}_\mathtt{i}$ is any object in $\mathcal{B}_\mathtt{i}$ with no nonzero injective summands and which is isomorphic to $B_{\mathtt{i}}$ in $\overline{\mathcal{B}}_\mathtt{i}$, and $B'_\alpha\colon F_{\alpha}(\widehat{B}_{s(\alpha)})\to \widehat{B}_{t(\alpha)}$ is any lift of $B_\alpha$ to $\mathcal{B}_{t(\alpha)}$, and $\Mimo (\widehat{B}_\mathtt{i},B'_\alpha)$ is the Mimo-construction described in Example \ref{Example: Mimo construction}.
	\end{ex}

	\section{Applications to quiver representations over Artin algebras}\label{Section:Applications}
	
	The goal of this section is to study monomorphism categories of quivers over Artin algebras, using our results. Throughout the section $Q$ denotes a finite acyclic quiver, $\mathbbm{k}$ is a commutative artinian ring, and $\Lambda$ is an Artin $\mathbbm{k}$-algebra. The categories of representations of $Q$ are denoted by $\rep (Q,\Mod \Lambda)$ and $\rep (Q,\operatorname{mod} \Lambda)$, see Example \ref{Example:Quiver Representations}. They can be identified with the module categories $\operatorname{Mod}\Lambda Q^{\op}$ and $\operatorname{mod}\Lambda Q^{\op}$, respectively. If we set $\Lambda Q_0\coloneqq\prod_{\mathtt{i}\in Q_0}\Lambda$ then we get 
	\[
	\operatorname{Mod}\Lambda Q_0=\prod_{\mathtt{i}\in Q_0}\operatorname{Mod}\Lambda \quad \text{and} \quad \operatorname{mod}\Lambda Q_0=\prod_{\mathtt{i}\in Q_0}\operatorname{mod}\Lambda.
	\]
	We have adjoint functors 
	\[
	f_!\colon \operatorname{Mod}\Lambda Q_0\to \rep (Q,\Mod \Lambda) \quad \text{and} \quad f^*\colon \rep (Q,\Mod \Lambda)\to \operatorname{Mod}\Lambda Q_0
	\]
	which restrict to
	\[
	f_!\colon \operatorname{mod}\Lambda Q_0\to \rep (Q,\operatorname{mod} \Lambda) \quad \text{and} \quad f^*\colon \rep (Q,\operatorname{mod} \Lambda)\to \operatorname{mod}\Lambda Q_0.
	\]
	The monomorphism subcategories of $\rep (Q,\Mod \Lambda)$ and $\rep (Q,\operatorname{mod} \Lambda)$ are denoted by $\Mono_Q(\Lambda)$ and $\mono_Q(\Lambda)$, respectively. They consist of representations $(M_\mathtt{i},M_\alpha)_{\mathtt{i}\in Q_0, \alpha\in Q_1}$ for which 
	\[
	M_{\mathtt{i},\operatorname{in}}\colon \bigoplus_{\substack{\alpha\in Q_1\\t(\alpha)=\mathtt{i}}}M_{s(\alpha)}\xrightarrow{(M_\alpha)} M_\mathtt{i}
	\]
	is a monomorphism for all $\mathtt{i}\in Q_0$, see Example \ref{Example: Monomorphism category}. Next we recall the bijection in Theorem \ref{Theorem: Characterization of indecomposables} in this context. For $M\in \operatorname{mod}\Lambda$ we let $M(\mathtt{i})\in \operatorname{mod}\Lambda Q_0$ denote the object given by $$M(\mathtt{i})_\mathtt{j}=
	\begin{cases}
	M, & \text{if}\ \mathtt{j}=\mathtt{i} \\
	0, & \text{if}\ \mathtt{j}\neq\mathtt{i}.
	\end{cases}$$
	
	\begin{thm}\label{Theorem: Characterization of indecomposablesArtin}
		We have bijections
		\begin{align*}
		\renewcommand{\arraystretch}{1.8}
		\begin{array}{ccc}
		\renewcommand{\arraystretch}{1.1}
		\begin{Bmatrix}
		\text{Indecomposable objects} \\
		\text{in $\rep (Q,\overline{\operatorname{Mod}}\, \Lambda)$}
		\end{Bmatrix}^{\cong}
		&
		\xrightarrow{\cong}
		&
		\renewcommand{\arraystretch}{1.2}
		\begin{Bmatrix}
		\text{Indecomposable non-injective} \\
		\text{objects in $\Mono_Q(\Lambda)$}
		\end{Bmatrix}^{\cong} \quad \module{M} \mapsto  \Mimo \module{M}\\ 
		\end{array}
		\renewcommand{\arraystretch}{1}\\
		\renewcommand{\arraystretch}{1.8}
		\begin{array}{ccc}
		\renewcommand{\arraystretch}{1.1}
		\begin{Bmatrix}
		\text{Indecomposable objects} \\
		\text{in $\rep (Q,\overline{\operatorname{mod}}\, \Lambda)$}
		\end{Bmatrix}^{\cong}
		&
		\xrightarrow{\cong}
		&
		\renewcommand{\arraystretch}{1.2}
		\begin{Bmatrix}
		\text{Indecomposable non-injective} \\
		\text{objects in $\mono_Q(\Lambda)$}
		\end{Bmatrix}^{\cong} \quad \module{M} \mapsto  \Mimo \module{M}\\ 
		\end{array}
		\renewcommand{\arraystretch}{1} \\
		\renewcommand{\arraystretch}{1.8}
		\begin{array}{ccc}
		\renewcommand{\arraystretch}{1.1}
		\begin{Bmatrix}
		\text{Indecomposable injective } \\
		\text{right $\Lambda$-modules}
		\end{Bmatrix}^{\cong}\times Q_0
		&
		\xrightarrow{\cong}
		&
		\renewcommand{\arraystretch}{1.2}
		\begin{Bmatrix}
		\text{Indecomposable injective} \\
		\text{objects in $\mono_Q(\Lambda)$}
		\end{Bmatrix}^{\cong} \quad (I,\mathtt{j}) \mapsto  f_!(I(\mathtt{j}))\\ 
		\end{array}
		\renewcommand{\arraystretch}{1}
		\end{align*}
	\end{thm}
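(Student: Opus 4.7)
The plan is to deduce this theorem as a direct application of Theorem~\ref{Theorem: Characterization of indecomposables} to the two relevant pairs $(\mathcal{C},X)$ arising from the quiver $Q$. I will consider the setup of Example~\ref{Example:Quiver Representations} with either $\mathcal{B}=\operatorname{Mod}\Lambda$ or $\mathcal{B}=\operatorname{mod}\Lambda$, so $\mathcal{C}=\operatorname{Mod}\Lambda Q_0$ or $\mathcal{C}=\operatorname{mod}\Lambda Q_0$, and $X\colon\mathcal{C}\to\mathcal{C}$ is the endofunctor sending $(M_\mathtt{i})_{\mathtt{i}\in Q_0}$ to $(\bigoplus_{t(\alpha)=\mathtt{j}}M_{s(\alpha)})_{\mathtt{j}\in Q_0}$. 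With this, $\mathcal{C}^{T(X)}\cong \rep(Q,\mathcal{B})$ and the monomorphism category $\Mono(X)$ coincides with $\Mono_Q(\Lambda)$ or $\mono_Q(\Lambda)$ respectively, by the discussion in Example~\ref{Example: Monomorphism category}.

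First I would verify the hypotheses required by Theorem~\ref{Theorem: Characterization of indecomposables}. Since $\Lambda$ is an Artin algebra, $\operatorname{Mod}\Lambda$ is locally noetherian and $\operatorname{mod}\Lambda$ is noetherian and artinian; hence $\operatorname{Mod}\Lambda Q_0$ and $\operatorname{mod}\Lambda Q_0$ are abelian categories with injective envelopes that admit maximal injective summands by Proposition~\ref{Proposition:AbelianCatWithMaxInjSummand}~(\ref{Proposition:AbelianCatWithMaxInjSummand:4}) and Proposition~\ref{Proposition:AbelianCatWithMaxInjSummand}~(\ref{Proposition:AbelianCatWithMaxInjSummand:2}), respectively. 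The functor $X$ is a finite direct sum of identity functors (one summand per incoming arrow at each vertex), hence exact. Local nilpotency $X^n=0$ for $n>|Q_0|$ follows since $Q$ is finite and acyclic, in particular $X$ restricts to $\operatorname{mod}\Lambda Q_0$. Finally, $X$ preserves injectives in both cases: for $\operatorname{mod}\Lambda Q_0$ this is a finite direct sum of injectives, and for $\operatorname{Mod}\Lambda Q_0$ the local noetherianity of $\operatorname{Mod}\Lambda$ ensures arbitrary direct sums of injectives are injective.

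Next, I would identify the Eilenberg--Moore category of the stable monad. By Example~\ref{Example:ModulationStableCats} applied to the trivial modulation $F_\alpha=\operatorname{Id}$, we have natural equivalences
\[
\overline{\mathcal{C}}{}^{T(X)}\cong \rep(Q,\overline{\operatorname{Mod}}\,\Lambda)\quad\text{and}\quad \overline{\mathcal{C}}{}^{T(X)}\cong \rep(Q,\overline{\operatorname{mod}}\,\Lambda)
\]
in the two respective settings. Under this identification, the bijection \eqref{Bijection:Non-injective} of Theorem~\ref{Theorem: Characterization of indecomposables} becomes exactly the first two claimed bijections, with inverse given by $\Mimo$ as described in Proposition~\ref{Proposition:MimoOnStableCat}.

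For the third bijection, I apply \eqref{Bijection:Injective} from Theorem~\ref{Theorem: Characterization of indecomposables}, which asserts that $I'\mapsto f_!(I')$ induces a bijection between isomorphism classes of indecomposable injectives in $\mathcal{C}=\operatorname{mod}\Lambda Q_0$ and in $\mono_Q(\Lambda)$. The indecomposable injectives in the product $\prod_{\mathtt{i}\in Q_0}\operatorname{mod}\Lambda$ are precisely the objects $I(\mathtt{j})$, where $I$ is an indecomposable injective right $\Lambda$-module and $\mathtt{j}\in Q_0$ indicates the component in which it is concentrated; composing this identification with \eqref{Bijection:Injective} yields the asserted bijection. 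The only non-routine aspect is checking that no hypothesis of Theorem~\ref{Theorem: Characterization of indecomposables} fails in the large category $\operatorname{Mod}\Lambda Q_0$, where preservation of injectives by $X$ crucially requires $\Lambda$ to be noetherian; all remaining steps are bookkeeping through the identifications above.
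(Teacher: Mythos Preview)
Your proposal is correct and follows exactly the approach the paper intends: the paper simply presents this theorem as a restatement of Theorem~\ref{Theorem: Characterization of indecomposables} in the quiver setting, and your verification of the hypotheses and identifications via Examples~\ref{Example:Quiver Representations}, \ref{Example: Monomorphism category}, and \ref{Example:ModulationStableCats} together with Proposition~\ref{Proposition:AbelianCatWithMaxInjSummand} is precisely what is required. One small remark: since $Q$ is finite, $X$ involves only \emph{finite} direct sums at each vertex, so preservation of injectives holds in $\operatorname{Mod}\Lambda Q_0$ automatically and does not need local noetherianity; the latter is only used to guarantee maximal injective summands.
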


	\subsection{Stable equivalences}\label{Subsecition:StableEquivalences}
	
	Throughout this subsection we fix two Artin $\mathbbm{k}$-algebras $\Lambda$ and $\Gamma$. Our goal is to investigate the categories $\mono_Q(\Lambda)$ and $\mono_Q(\Gamma)$ when $\Lambda$ and $\Gamma$ are stably equivalent. We show that there is a bijection between indecomposable non-injective objects in these categories, and we investigate when this bijection commutes with an induced map on the split  Grothendieck groups of $\operatorname{mod}\Lambda$ and $\operatorname{mod}\Gamma$, see Theorem \ref{Theorem:StableEquivBijection}. 
	
	We start by showing that a stable equivalence induces a bijection between indecomposable non-injective objects in the monomorphism categories.
	
	\begin{thm}\label{Theorem:SimpleStableEquiv}
		Assume we are given an equivalence $\overline{\modu}\, \Lambda\xrightarrow{\cong} \overline{\modu}\, \Gamma$. Then there exists a bijection $\phi_Q$ between isomorphism classes of indecomposable non-injective objects in $\mono_Q(\Lambda)$ and $\mono_Q(\Gamma)$.    
	\end{thm}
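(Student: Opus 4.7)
The plan is to build the bijection as a three-step composition using Theorem \ref{Theorem: Characterization of indecomposablesArtin} as the bridge between the monomorphism categories and representation categories over stable categories.

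First, I would observe that any equivalence of additive categories $F\colon \overline{\modu}\,\Lambda \xrightarrow{\cong}\overline{\modu}\,\Gamma$ induces, by post-composition, an equivalence
\[
\rep(Q,F)\colon \rep(Q,\overline{\modu}\,\Lambda)\xrightarrow{\cong}\rep(Q,\overline{\modu}\,\Gamma),
\]
since a representation of $Q$ in an additive category is just a functor from $\mathbbm{k} Q$. This equivalence then restricts to a bijection between isomorphism classes of indecomposable objects in $\rep(Q,\overline{\modu}\,\Lambda)$ and in $\rep(Q,\overline{\modu}\,\Gamma)$.

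Next, I would invoke the bijection from Theorem \ref{Theorem: Characterization of indecomposablesArtin}, namely
\[
\{\text{indecomposables in }\rep(Q,\overline{\modu}\,\Lambda)\}^{\cong}\xrightarrow{\cong}\{\text{indecomposable non-injectives in }\mono_Q(\Lambda)\}^{\cong},\quad \module{M}\mapsto \Mimo\module{M},
\]
and analogously for $\Gamma$. Both bijections apply: $\Lambda$ and $\Gamma$ are Artin algebras, so $\modu\Lambda$ and $\modu\Gamma$ are noetherian abelian categories with injective envelopes, hence admit maximal injective summands by Proposition~\ref{Proposition:AbelianCatWithMaxInjSummand}\eqref{Proposition:AbelianCatWithMaxInjSummand:3}, and the standing hypotheses of Section \ref{subsection: A characterization of the indecomposable objects} are satisfied.

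Finally, $\phi_Q$ is defined as the composition of the inverse of the bijection for $\Lambda$, the bijection $\rep(Q,F)$ on indecomposables, and the bijection for $\Gamma$:
\[
\{\text{indec. non-inj. in }\mono_Q(\Lambda)\}^{\cong}\xrightarrow{\cong}\{\text{indec. in }\rep(Q,\overline{\modu}\,\Lambda)\}^{\cong}\xrightarrow{\rep(Q,F)}\{\text{indec. in }\rep(Q,\overline{\modu}\,\Gamma)\}^{\cong}\xrightarrow{\Mimo}\{\text{indec. non-inj. in }\mono_Q(\Gamma)\}^{\cong}.
\]
There is essentially no obstacle here beyond unpacking definitions; the only minor subtlety is verifying that $\rep(Q,-)$ sends additive equivalences to equivalences, which is immediate from the functor-category interpretation of $\rep(Q,\mathcal{B})\cong \mathcal{B}^{\mathbbm{k} Q}$ mentioned in Example \ref{Example:Quiver Representations}. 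All heavy lifting has already been done in Theorem \ref{Theorem: Characterization of indecomposables}, so this result is essentially a formal consequence.
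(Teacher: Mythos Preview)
Your proof is correct and follows essentially the same approach as the paper: induce an equivalence $\rep(Q,\overline{\modu}\,\Lambda)\cong\rep(Q,\overline{\modu}\,\Gamma)$ from the given stable equivalence, then compose with the bijections from Theorem~\ref{Theorem: Characterization of indecomposablesArtin} on both sides. You have simply spelled out a few details (the functor-category description of $\rep(Q,-)$ and the applicability of Proposition~\ref{Proposition:AbelianCatWithMaxInjSummand}) that the paper leaves implicit.
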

	
	\begin{proof}
		The equivalence $\overline{\modu}\, \Lambda\cong \overline{\modu}\, \Gamma$ induces an equivalence $\rep (Q,\overline{\operatorname{mod}}\, \Lambda)\cong \rep (Q,\overline{\operatorname{mod}}\, \Gamma)$. Since by Theorem \ref{Theorem: Characterization of indecomposablesArtin} there is a bijection between the isomorphism classes of indecomposable objects in $\rep (Q,\overline{\operatorname{mod}}\, \Lambda)$ (respectively, $\rep (Q,\overline{\operatorname{mod}}\, \Gamma)$), and the non-injective indecomposable objects in $\mono_Q(\Lambda)$ (respectively, $\mono_Q(\Gamma)$), we get the bijection $\phi_Q$.
	\end{proof}
	
	\begin{cor}
		Assume $\Lambda$ and $\Gamma$ are stably equivalent. Then $\mono_Q(\Lambda)$ is representation-finite if and only if $\mono_Q(\Gamma)$ is representation-finite.
	\end{cor}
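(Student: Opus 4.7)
The plan is to reduce the statement to Theorem \ref{Theorem:SimpleStableEquiv} by accounting separately for the injective and non-injective indecomposable objects in each monomorphism category. Recall that $\mono_Q(\Lambda)$ is representation-finite if and only if it admits only finitely many isomorphism classes of indecomposable objects, which in turn splits into counting the indecomposable injectives and the indecomposable non-injectives. I would first observe that by Theorem \ref{Theorem: Characterization of indecomposablesArtin} the indecomposable injective objects of $\mono_Q(\Lambda)$ are in bijection with pairs $(I,\mathtt{j})$ where $I$ is an indecomposable injective right $\Lambda$-module and $\mathtt{j}\in Q_0$. Since $\Lambda$ is an Artin algebra there are only finitely many isomorphism classes of indecomposable injective right $\Lambda$-modules (they correspond to the simple modules), and since $Q_0$ is finite, $\mono_Q(\Lambda)$ has only finitely many isomorphism classes of indecomposable injective objects. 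The same applies to $\Gamma$.

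Next, a stable equivalence $\overline{\modu}\,\Lambda\xrightarrow{\cong}\overline{\modu}\,\Gamma$ is given, so Theorem \ref{Theorem:SimpleStableEquiv} produces a bijection $\phi_Q$ between the isomorphism classes of indecomposable non-injective objects in $\mono_Q(\Lambda)$ and in $\mono_Q(\Gamma)$. Putting the two observations together, $\mono_Q(\Lambda)$ is representation-finite if and only if both its finite set of indecomposable injective objects and its set of indecomposable non-injective objects are finite, which (using $\phi_Q$ and the analogous count for $\Gamma$) is equivalent to $\mono_Q(\Gamma)$ being representation-finite.

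There is no real obstacle here; the only thing to keep in mind is the bookkeeping of injective versus non-injective objects, since Theorem \ref{Theorem:SimpleStableEquiv} only gives a bijection on the non-injective part. The injective part is handled uniformly on both sides of the stable equivalence by Theorem \ref{Theorem: Characterization of indecomposablesArtin} together with the fact that Artin algebras have only finitely many indecomposable injective modules, so this contribution is automatically finite for both $\Lambda$ and $\Gamma$ and does not affect the equivalence of the two finiteness conditions.
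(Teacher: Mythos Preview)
Your argument is correct and follows essentially the same route as the paper: both note that $\mono_Q(\Lambda)$ and $\mono_Q(\Gamma)$ have only finitely many indecomposable injectives, so representation-finiteness is governed by the non-injective indecomposables, and then invoke Theorem~\ref{Theorem:SimpleStableEquiv}. You supply slightly more detail by citing Theorem~\ref{Theorem: Characterization of indecomposablesArtin} for the count of injectives, but the structure is the same.
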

	\begin{proof}
		Since $\mono_Q(\Lambda)$ and $\mono_Q(\Gamma)$ have finitely many indecomposable injective objects up to isomorphism, it follows that $\mono_Q(\Lambda)$ and $\mono_Q(\Gamma)$ are representation-finite if and only if they have finitely many indecomposable non-injective objects up to isomorphism. The claim now follows from Theorem \ref{Theorem:SimpleStableEquiv}.
	\end{proof}
	
	Next we investigate when $\phi_Q$ in Theorem \ref{Theorem:SimpleStableEquiv} can be extended to a bijection between all indecomposable objects.

	\begin{thm}\label{Derived equivalence}
		Assume $\Lambda$ and $\Gamma$ are selfinjective and derived equivalent. Then there exists a bijection between the isomorphism classes of indecomposable objects in $\mono_Q(\Lambda)$ and $\mono_Q(\Gamma)$. 
	\end{thm}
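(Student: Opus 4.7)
The plan is to reduce the statement to Theorem \ref{Theorem:SimpleStableEquiv} (which handles the non-injective part) and then match the injective indecomposables by counting. The key input from outside the paper is Rickard's theorem: a derived equivalence between selfinjective Artin algebras induces a triangle equivalence between their stable categories modulo projectives. Since $\Lambda$ and $\Gamma$ are selfinjective, projectives coincide with injectives, so $\underline{\operatorname{mod}}\,\Lambda \cong \underline{\operatorname{mod}}\,\Gamma$ gives an equivalence $\overline{\operatorname{mod}}\,\Lambda \cong \overline{\operatorname{mod}}\,\Gamma$. Applying Theorem \ref{Theorem:SimpleStableEquiv} then yields a bijection $\phi_Q$ between the isomorphism classes of indecomposable non-injective objects in $\mono_Q(\Lambda)$ and in $\mono_Q(\Gamma)$.

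For the injective part, I would invoke the third bijection in Theorem \ref{Theorem: Characterization of indecomposablesArtin}: the indecomposable injective objects of $\mono_Q(\Lambda)$ are parametrised (up to isomorphism) by pairs $(I,\mathtt{j})$ where $I$ runs through the indecomposable injective right $\Lambda$-modules and $\mathtt{j}\in Q_0$, via $(I,\mathtt{j})\mapsto f_!(I(\mathtt{j}))$, and similarly for $\Gamma$. Since $\Lambda$ and $\Gamma$ are selfinjective, indecomposable injectives coincide with indecomposable projectives, hence are parametrised by isomorphism classes of simple modules. A derived equivalence between Artin algebras preserves the rank of the Grothendieck group of the bounded derived category, which for any Artin algebra equals the number of isomorphism classes of simple modules; therefore $\Lambda$ and $\Gamma$ have the same finite number of indecomposable injectives up to isomorphism. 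Any bijection between these sets, together with the identity on $Q_0$, produces a bijection between the indecomposable injective objects in $\mono_Q(\Lambda)$ and in $\mono_Q(\Gamma)$. Combining this with $\phi_Q$ gives the desired bijection on all indecomposables.

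The main subtlety is not conceptual but bibliographic: one has to cite Rickard's stable equivalence result correctly and observe that for selfinjective algebras the injectively stable category $\overline{\operatorname{mod}}$ (defined as the quotient by morphisms factoring through injectives, which is the notation used throughout this paper) agrees with the projectively stable category $\underline{\operatorname{mod}}$ used in Rickard's theorem. Everything else is straightforward book-keeping, since Theorem \ref{Theorem:SimpleStableEquiv} and Theorem \ref{Theorem: Characterization of indecomposablesArtin} together already partition the indecomposables in $\mono_Q(\Lambda)$ and $\mono_Q(\Gamma)$ into the injective and non-injective parts, and both parts are now in bijection. Note that this argument does \emph{not} construct a canonical bijection: the choice on the injective side is arbitrary, and even on the non-injective side it depends on the chosen stable equivalence. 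However, the existence statement in the theorem is all that is claimed.
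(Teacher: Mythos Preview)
Your proof is correct and follows essentially the same approach as the paper: invoke Rickard's theorem to obtain a stable equivalence from the derived equivalence, apply Theorem \ref{Theorem:SimpleStableEquiv} for the non-injective indecomposables, and then match the injective indecomposables by counting via Theorem \ref{Theorem: Characterization of indecomposablesArtin} and the fact that derived equivalence preserves the number of simples. Your presentation is slightly more explicit about why projectively and injectively stable categories coincide in the selfinjective case, but the argument is the same.
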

	
	\begin{proof}
		By a result of Rickard \cite[Corollary 2.2]{Ric89b}, a derived equivalence induces a stable equivalence for selfinjective algebras, and thus by Theorem  \ref{Theorem:SimpleStableEquiv} we have a bijection $\phi_Q$ between the indecomposable non-injective objects in $\mono_Q(\Lambda)$ and $\mono_Q(\Gamma)$. Furthermore, since $\Lambda$ and $\Gamma$ are derived equivalent, the number of indecomposable projective $\Lambda$- and $\Gamma$-modules are the same. Hence, the number of indecomposable injective $\Lambda$- and $\Gamma$-modules are also the same. Therefore, by Theorem \ref{Theorem: Characterization of indecomposablesArtin} the number of indecomposable injective objects in $\mono_Q(\Lambda)$ and $\mono_Q(\Gamma)$ are equal. Hence, $\phi_Q$ can be extended to a bijection between all indecomposable objects, which proves the claim. 
	\end{proof}
	
	In general, the number of indecomposable injective objects in $\mono_Q(\Lambda)$ and $\mono_Q(\Gamma)$ need not coincide even if $\Lambda$ and $\Gamma$ are injectively stably equivalent as the following example shows:
	
	\begin{ex}
		Let $\Lambda=\Bbbk(\mathtt{1}\to \mathtt{2})$ and let $\Gamma=\Bbbk[x]/(x^2)$, and let $Q$ be the quiver with one vertex and no arrows. Then $\overline{\modu}\, \Lambda\cong \overline{\modu}\, \Gamma\cong \modu \Bbbk$. However, $\mono_{Q}(\Lambda)\cong \modu \Lambda$ has two indecomposable injective objects while $\mono_{Q}(\Gamma)\cong \modu \Gamma$ has only one. 
	\end{ex}
	
	However, the conclusion holds for certain selfinjective algebras over algebraically closed fields:
	
	\begin{cor}\label{selfinjective_stable_number}
		Let $\Lambda$ and $\Gamma$ be two connected selfinjective $\Bbbk$-algebras of finite representation type where $\Bbbk$ is an algebraically closed field. Assume $\Lambda$ and $\Gamma$ are stably equivalent. Then, there exists a bijection between isomorphism classes of indecomposable objects in $\mono_Q(\Lambda)$ and $\mono_Q(\Gamma)$.
	\end{cor}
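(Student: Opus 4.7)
The plan is to extend the bijection $\phi_Q$ of Theorem \ref{Theorem:SimpleStableEquiv} between indecomposable non-injective objects in $\mono_Q(\Lambda)$ and $\mono_Q(\Gamma)$ to a bijection on all indecomposables. By the third bijection in Theorem \ref{Theorem: Characterization of indecomposablesArtin}, the indecomposable injective objects of $\mono_Q(\Lambda)$ are parametrized by pairs $(I,\mathtt{j})$ with $I$ an indecomposable injective right $\Lambda$-module and $\mathtt{j}\in Q_0$, and analogously for $\Gamma$. Since $|Q_0|$ agrees on both sides, the task reduces to producing an equality between the numbers of isomorphism classes of indecomposable injective $\Lambda$-modules and of indecomposable injective $\Gamma$-modules; because both algebras are selfinjective, this equals the number of isomorphism classes of simple modules.

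I would split into two cases. If $\Lambda$ is semisimple then $\overline{\modu}\,\Lambda = 0$, so the stable equivalence forces $\overline{\modu}\,\Gamma = 0$, and hence $\Gamma$ is semisimple as well; connectedness together with $\Bbbk$ being algebraically closed then pins both algebras to full matrix algebras over $\Bbbk$, each admitting a unique simple module. Otherwise neither algebra is semisimple, and I would argue that connectedness makes every simple module of $\Lambda$ (and of $\Gamma$) non-projective: if a simple $\Lambda$-module $S$ were projective, then by selfinjectivity $S$ would also be injective, so both $\Ext^1(-,S)$ and $\Ext^1(S,-)$ vanish; the corresponding vertex of the Gabriel quiver of $\Lambda$ would then admit no arrows in or out, forcing it to be the only vertex by connectedness and $\Lambda$ to be simple, contradicting non-semisimplicity.

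In this non-semisimple case I would invoke Mart\'inez-Villa's theorem, a known instance of the Auslander--Reiten conjecture, which states that any stable equivalence between two representation-finite finite-dimensional algebras over an algebraically closed field preserves the number of isomorphism classes of non-projective simple modules. As every simple is non-projective in our setting, this yields the required equality, and therefore the equality between the numbers of indecomposable injective objects in $\mono_Q(\Lambda)$ and $\mono_Q(\Gamma)$. Choosing any bijection between these equinumerous sets of injective indecomposables and combining it with $\phi_Q$ produces the asserted bijection on all indecomposables. The main obstacle is the invocation of Mart\'inez-Villa's invariance theorem; once that is granted, the remainder is a direct application of Theorems \ref{Theorem:SimpleStableEquiv} and \ref{Theorem: Characterization of indecomposablesArtin}.
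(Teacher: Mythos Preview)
Your proof is correct but follows a different route from the paper. The paper's argument is a two-line reduction: by Asashiba's theorem (\cite[Corollary 2.2]{Asa99}), two connected selfinjective $\Bbbk$-algebras of finite representation type over an algebraically closed field are stably equivalent if and only if they are derived equivalent, and then Theorem~\ref{Derived equivalence} applies directly. In particular, the equality of the number of simples is obtained as a consequence of derived equivalence (via the invariance of the Grothendieck group), rather than by a separate counting argument.

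Your approach bypasses derived equivalence entirely. You instead count the injective indecomposables in $\mono_Q$ directly via Theorem~\ref{Theorem: Characterization of indecomposablesArtin}, reduce to counting simples, and then invoke Mart\'inez-Villa's invariance of the number of non-projective simples under stable equivalence, after verifying that in the connected non-semisimple selfinjective case every simple is non-projective. This is a perfectly valid alternative; indeed, Mart\'inez-Villa's result holds for selfinjective algebras without any representation-finiteness assumption, so your argument in principle uses slightly less about $\Lambda$ and $\Gamma$ at that step (though Asashiba's theorem, which the paper uses, does need representation-finiteness). The trade-off is that the paper's route reuses Theorem~\ref{Derived equivalence} and avoids the case split and the auxiliary argument about simple projectives, making it shorter and more uniform, while yours is more elementary in that it never leaves the stable world.
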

	
	\begin{proof}
		By \cite[Corollary 2.2]{Asa99} two selfinjective algebras of finite representation type are stably equivalent if and only if they are derived equivalent. The claim follows now from Theorem \ref{Derived equivalence}. 
	\end{proof}
	
	Next we investigate when $\phi_Q$ commutes with an induced map on the split  Grothendieck groups of $\operatorname{mod}\Lambda$ and $\operatorname{mod}\Gamma$. For this we need some preliminary results. 
	
	Recall that the \emphbf{socle} of a module $M$, denoted $\operatorname{soc}M$, is the sum of all its simple submodules. It induces a left exact functor $\operatorname{soc}(-)\colon \operatorname{mod} \Lambda\to \operatorname{mod}\Lambda$ which sends a morphism $g\colon M\to N$ to its restriction $\operatorname{soc}(g)\colon \operatorname{soc}M\to \operatorname{soc}N$. Let $\mathcal{S}_\Lambda$ and $\mathcal{S}_\Gamma$ denote the subcategories of semisimple $\Lambda$- and $\Gamma$-modules with no nonzero injective summands, respectively. For an object $M\in \overline{\operatorname{mod}}\, \Lambda$ we write $\widehat{M}$ for a $\Lambda$-module which has no nonzero injective summand and which is isomorphic to $M$ in $\overline{\operatorname{mod}}\, \Lambda$. Note that $\widehat{M}$ is unique up to isomorphism.
	
	\begin{lem}\label{Lemma:SocleFunctorStable}
		The associations 
		\[
		M\mapsto \operatorname{soc}\widehat{M} \quad \text{and} \quad (M\xrightarrow{g}N)\mapsto (\operatorname{soc}\widehat{M}\xrightarrow{\operatorname{soc}(\widehat{g})}\operatorname{soc}\widehat{N})
		\]
		induce a functor $\operatorname{soc}\colon \overline{\operatorname{mod}}\, \Lambda\to \mathcal{S}_\Lambda$, where $\widehat{g}$ is any choice of a lift of $g$ to $\operatorname{mod}\Lambda$. Furthermore, this functor is right adjoint to the inclusion functor $\mathcal{S}_\Lambda\to \overline{\operatorname{mod}}\, \Lambda$.
	\end{lem}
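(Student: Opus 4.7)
The plan is to prove this in four steps, the key tool being Lemma~\ref{Lemma:RadicalProperties}\eqref{Lemma:RadicalProperties:2}. That lemma will imply the crucial fact that any morphism from an injective $\Lambda$-module to a $\Lambda$-module with no nonzero injective summand must vanish on socles: if $\beta\colon I\to E$ is such a map, then $\ker\beta$ is essential in $I$, so every simple submodule $S\subseteq I$ satisfies $S\cap \ker\beta\neq 0$, forcing $S\subseteq \ker\beta$, and thus $\operatorname{soc}I\subseteq \ker\beta$.

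First I will verify well-definedness on objects. By Lemma~\ref{Lemma:UniquenessMaxInjSummand} the representative $\widehat{M}$ is unique up to isomorphism in $\operatorname{mod}\Lambda$, so $\operatorname{soc}\widehat{M}$ is well-defined up to isomorphism. To see that $\operatorname{soc}\widehat{M}$ lies in $\mathcal{S}_\Lambda$, I will observe that a simple injective summand of $\operatorname{soc}\widehat{M}$ would be an injective submodule of $\widehat{M}$ and would therefore split off, contradicting the choice of $\widehat{M}$.

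Next I will verify well-definedness on morphisms. Given two lifts $\widehat{g},\widehat{g}'\colon \widehat{M}\to \widehat{N}$ of a morphism $g$ in $\overline{\operatorname{mod}}\,\Lambda$, their difference factors as $\widehat{M}\xrightarrow{\alpha}I\xrightarrow{\beta}\widehat{N}$ with $I$ injective. Applying the key socle-vanishing observation above to $\beta$ gives $\operatorname{soc}(\beta)=0$, hence $\operatorname{soc}(\widehat{g}-\widehat{g}')=\operatorname{soc}(\beta)\circ\operatorname{soc}(\alpha)=0$, so $\operatorname{soc}(\widehat{g})=\operatorname{soc}(\widehat{g}')$. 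Independence from the choice of $\widehat{M},\widehat{N}$ reduces to transporting lifts along the unique-up-to-isomorphism identifications. Functoriality is then immediate: for composable $g,h$ in $\overline{\operatorname{mod}}\,\Lambda$, the composite $\widehat{h}\circ\widehat{g}$ is a lift of $h\circ g$, and $\operatorname{soc}$ is functorial on $\operatorname{mod}\Lambda$.

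Finally, for the adjunction I will construct natural isomorphisms $\operatorname{Hom}_{\overline{\operatorname{mod}}\,\Lambda}(S,M)\xrightarrow{\cong} \operatorname{Hom}_{\mathcal{S}_\Lambda}(S,\operatorname{soc}\widehat{M})$ for $S\in \mathcal{S}_\Lambda$ and $M\in \overline{\operatorname{mod}}\,\Lambda$. Since $S$ is semisimple, any morphism $S\to \widehat{M}$ in $\operatorname{mod}\Lambda$ has image in $\operatorname{soc}\widehat{M}$, giving $\operatorname{Hom}_{\operatorname{mod}\Lambda}(S,\widehat{M})=\operatorname{Hom}_{\mathcal{S}_\Lambda}(S,\operatorname{soc}\widehat{M})$. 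It then remains to check that no nonzero morphism $S\to \widehat{M}$ in $\operatorname{mod}\Lambda$ factors through an injective: any such factorization $S\xrightarrow{\alpha}I\xrightarrow{\beta}\widehat{M}$ satisfies $\alpha(S)\subseteq \operatorname{soc}I$ and $\beta|_{\operatorname{soc}I}=0$ by the key observation, so $\beta\alpha=0$. Naturality in both variables will follow directly from the constructions. The only substantive obstacle is the socle-vanishing statement, which is handled by Lemma~\ref{Lemma:RadicalProperties}\eqref{Lemma:RadicalProperties:2}; everything else is formal.
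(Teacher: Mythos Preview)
Your proposal is correct and follows essentially the same approach as the paper: both arguments hinge on Lemma~\ref{Lemma:RadicalProperties}\eqref{Lemma:RadicalProperties:2} to show that a morphism from an injective into a module without injective summands kills the socle. The only difference is cosmetic: in the adjunction step the paper argues that a nonzero morphism $S\to\widehat{M}$ factoring through an injective would force the injective envelope of $S$ to split off from $\widehat{M}$, whereas you reuse the same socle-vanishing observation a second time; your route is slightly more uniform but not substantively different.
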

	
	\begin{proof}
		Let $M,N\in \overline{\operatorname{mod}}\, \Lambda$. To prove that we have a well-defined functor, it suffices to show that the map 
		\[
		\operatorname{Hom}_{\Lambda}(\widehat{M},\widehat{N})\to \operatorname{Hom}_{\Lambda}(\operatorname{soc}\widehat{M},\operatorname{soc}\widehat{N}) \quad \quad g\mapsto \operatorname{soc}(g)
		\]
		vanishes on any morphism factoring through an injective object. So assume $g\colon \widehat{M}\to \widehat{N}$ can be written as a composite $\widehat{M}\xrightarrow{g_1} I\xrightarrow{g_2} \widehat{N}$ where $I$ is an injective $\Lambda$-module. Now by Lemma \ref{Lemma:RadicalProperties} \eqref{Lemma:RadicalProperties:2} we know that the inclusion $\operatorname{ker}g_2\to I$ is an injective envelope. Since the socle of an injective envelope is an isomorphism, it follows that the map $\operatorname{soc}\operatorname{ker}g_2\to \operatorname{soc}I$ is an isomorphism. Since the socle is left exact, we get that $\operatorname{soc}(g_2)=0$, and hence $\operatorname{soc}(g)=0$. 
		
		To prove that $\operatorname{soc}\colon \overline{\operatorname{mod}}\, \Lambda\to \mathcal{S}_\Lambda$ is right adjoint to the inclusion functor, it suffices to show that there is a natural isomorphism
		\[
		\overline{\operatorname{Hom}}_{\Lambda}(S,M)\xrightarrow{\cong} \operatorname{Hom}_\Lambda (S,\operatorname{soc}\widehat{M})
		\]
		when $S$ is simple non-injective and $M\in \overline{\operatorname{mod}}\, \Lambda$. First note that there is an isomorphism 
		\[
		\overline{\operatorname{Hom}}_{\Lambda}(S,M)\xrightarrow{\cong} \overline{\operatorname{Hom}}_{\Lambda}(S,\widehat{M})
		\]
		since $M$ and $\widehat{M}$ are isomorphic in $\overline{\operatorname{mod}}\, \Lambda$. Let  $g\colon S\to \widehat{M}$ be a morphism in $\operatorname{mod}\Lambda$ which factors through an injective module. Then it factors through the injective envelope $I$ of $S$ via a morphism $I\to \widehat{M}$. If $g$ is nonzero, then it must be a monomorphism, and hence the induced morphism $I\to \widehat{M}$ must be a monomorphism. Since $I$ is injective, the monomorphism must split, so $I$ must be a summand of $\widehat{M}$. This contradicts the definition of $\widehat{M}$. Hence there are no nonzero morphisms $S\to \widehat{M}$ which factor through an injective object. It follows that the canonical map
		\[
		\operatorname{Hom}_{\Lambda}(S,\widehat{M})\xrightarrow{\cong}\overline{\operatorname{Hom}}_{\Lambda}(S,\widehat{M})
		\]
		is an isomorphism. Finally, any morphism from a simple module to $\widehat{M}$ must factor through the socle of $\widehat{M}$. Hence, we have an isomorphism  $\operatorname{Hom}_{\Lambda}(S,\widehat{M})\xrightarrow{\cong}\operatorname{Hom}_{\Lambda}(S,\operatorname{soc}\widehat{M})$. Combining these isomorphisms, we get the result.    
	\end{proof}
	
	In the following let $K_0(\operatorname{mod}\Lambda)$ and $K_0(\operatorname{mod}\Gamma)$ denote the split Grothendieck groups of the categories $\operatorname{mod}\Lambda$ and $\operatorname{mod}\Gamma$, respectively. The image of a module $M$  in the split Grothendieck group is denoted by $[M]$. Let $G_\Lambda$ and $G_\Gamma$ be the free subgroups of $K_0(\operatorname{mod}\Lambda)$ and $K_0(\operatorname{mod}\Gamma)$, respectively, which are generated by all elements $[M]$ where $M$ is indecomposable and not simple injective. 
	
	\begin{lem}\label{Lemma:IsoGrothendieckGroups}
		Assume we are given an equivalence $\Phi\colon \overline{\modu}\, \Lambda\cong \overline{\modu}\, \Gamma$ which restricts to an equivalence between the simple non-injective $\Lambda$- and $\Gamma$-modules. Then there exists an isomorphism 
		\[
		\phi\colon G_\Lambda\xrightarrow{\cong} G_\Gamma
		\]
		uniquely defined by:
		\begin{itemize}
			\item If $[M]\in G_\Lambda$ is indecomposable non-injective, then $\phi([M])=[N]$ where $N$ is the indecomposable non-injective $\Gamma$-module whose image in $\overline{\operatorname{mod}}\, \Gamma$ is isomorphic to $\Phi(M)$. 
			\item If $[I]\in G_\Lambda$ where $I$ is indecomposable, injective, and not simple, then $\phi([I])=[J]$ where $J$ is the indecomposable injective $\Gamma$-module whose socle is isomorphic to $\Phi(\operatorname{soc}I)$ in $\overline{\modu}\, \Gamma$.
		\end{itemize}
		If in addition there exists a bijection $\psi$ between the isomorphism classes of simple injective $\Lambda$- and $\Gamma$-modules, then $\phi$ can be extended uniquely to an isomorphism
		\[
		\phi\colon K_0(\operatorname{mod}\Lambda)\xrightarrow{\cong} K_0(\operatorname{mod}\Gamma).
		\]
		by setting $\phi([S])=[\psi(S)]$ for any simple injective $\Lambda$-module $S$.
		\begin{proof}
			It is clear by construction that $\phi$ gives a bijection between the sets of isomorphism classes of indecomposable $\Lambda$- and $\Gamma$-modules which are not simple injective. Since $G_{\Lambda}$ and $G_\Gamma$ are the free groups on these sets, this shows that $\phi\colon G_\Lambda \to G_\Gamma$ is an isomorphism. Under the assumption that $\psi$ exists, we see that $\phi$ restricts to a bijection between all isomorphism classes of indecomposable $\Lambda$- and $\Gamma$-modules. Hence, $\phi\colon K_0(\operatorname{mod}\Lambda)\xrightarrow{} K_0(\operatorname{mod}\Gamma)$ must be an isomorphism.
		\end{proof}     
		
	\end{lem}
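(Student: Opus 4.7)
The plan is to construct $\phi$ by specifying its values on a $\mathbb{Z}$-basis of $G_\Lambda$ and checking that these values form a $\mathbb{Z}$-basis of $G_\Gamma$. Recall that $G_\Lambda$ is freely generated by the isomorphism classes $[M]$ where $M$ is an indecomposable $\Lambda$-module that is not simple injective, and similarly for $G_\Gamma$. I would partition this basis into two types: (a) the indecomposable non-injective $\Lambda$-modules, and (b) the indecomposable injective $\Lambda$-modules that are not simple.

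For type (a), the canonical functor $\modu\,\Lambda\to\overline{\modu}\,\Lambda$ induces a bijection between isomorphism classes of indecomposable non-injective $\Lambda$-modules and indecomposable objects of $\overline{\modu}\,\Lambda$. This is standard and follows from Lemma \ref{Lemma:UniquenessMaxInjSummand} applied to $\modu\,\Lambda$, which admits maximal injective summands by Proposition \ref{Proposition:AbelianCatWithMaxInjSummand} since it is both artinian and noetherian. Composing this with the bijection on indecomposables induced by the equivalence $\Phi$, and then with the analogous bijection for $\Gamma$, I obtain a bijection between isomorphism classes of indecomposable non-injective $\Lambda$-modules and of indecomposable non-injective $\Gamma$-modules. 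By construction it matches the prescribed value of $\phi$ on type (a).

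For type (b), the key observation is that over an Artin algebra the socle of an indecomposable injective module is simple, and taking socles and injective envelopes yields mutually inverse bijections between isomorphism classes of indecomposable injective modules and of simple modules; under these, indecomposable injective non-simples correspond exactly to simple non-injectives. The hypothesis that $\Phi$ restricts to an equivalence between simple non-injective $\Lambda$- and $\Gamma$-modules then gives a bijection between simple non-injective $\Lambda$-modules and simple non-injective $\Gamma$-modules. Composing, I obtain a bijection between isomorphism classes of indecomposable injective non-simple $\Lambda$-modules and of indecomposable injective non-simple $\Gamma$-modules which sends $[I]$ to $[J]$, where $J$ is the injective envelope in $\modu\,\Gamma$ of the simple module underlying $\Phi(\soc I)$; this matches the prescribed value of $\phi$ on type (b).

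Combining types (a) and (b) produces a bijection between the free generating sets of $G_\Lambda$ and $G_\Gamma$, which extends uniquely to a group isomorphism $\phi\colon G_\Lambda\xrightarrow{\cong}G_\Gamma$. For the second part, if $\psi$ is a bijection between isomorphism classes of simple injective $\Lambda$- and $\Gamma$-modules, I extend the basis bijection by $[S]\mapsto[\psi(S)]$ on the remaining generators; since $K_0(\modu\,\Lambda)$ and $K_0(\modu\,\Gamma)$ are freely generated by all isomorphism classes of indecomposable modules, this yields the desired isomorphism $\phi\colon K_0(\modu\,\Lambda)\xrightarrow{\cong}K_0(\modu\,\Gamma)$. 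The main (if mild) obstacle is articulating precisely the passage from indecomposable objects of $\overline{\modu}\,\Lambda$ to indecomposable non-injective $\Lambda$-modules, which relies on the uniqueness of the maximal injective summand provided by Lemma \ref{Lemma:UniquenessMaxInjSummand}.
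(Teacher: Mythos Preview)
Your proposal is correct and follows essentially the same approach as the paper: you construct $\phi$ by exhibiting a bijection between the free generating sets of $G_\Lambda$ and $G_\Gamma$ (partitioned into indecomposable non-injectives and indecomposable non-simple injectives), then extend by freeness. The paper's own proof says exactly this in two sentences, merely asserting that the prescribed rule gives such a bijection; your version spells out the supporting facts (the bijection between indecomposable non-injectives and indecomposables in the stable category via Lemma~\ref{Lemma:UniquenessMaxInjSummand}, and the socle/injective-envelope correspondence for Artin algebras), but the underlying argument is identical.
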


	\begin{thm}\label{Theorem:StableEquivBijection}
		Assume we are given an equivalence $\Phi\colon\overline{\modu}\, \Lambda\xrightarrow{\cong} \overline{\modu}\, \Gamma$ which restricts to an equivalence between the simple non-injective $\Lambda$- and $\Gamma$-modules. Let $\phi_Q$ denote the bijection in Theorem \ref{Theorem:SimpleStableEquiv} and let $\phi$ denote the isomorphism in Lemma \ref{Lemma:IsoGrothendieckGroups}. The following hold:
		\begin{enumerate}
			\item\label{Theorem:StableEquivBijection:1} $\phi_Q$ commutes pointwise with $\phi$, i.e. 
			\[ [\phi_Q(\module{M})_\mathtt{i}]=\phi([\module{M}_\mathtt{i}]).
			\] 
			\item\label{Theorem:StableEquivBijection:2} Assume the existence of a bijection between the simple injective $\Lambda$- and $\Gamma$-modules, and let $\phi \colon K_0(\operatorname{mod}\Lambda)\xrightarrow{\cong} K_0(\operatorname{mod}\Gamma)$ be the extension given in Lemma \ref{Lemma:IsoGrothendieckGroups}. Then $\phi_Q$ can be extended uniquely to a bijection between the isomorphism classes of all indecomposable objects in $\mono_Q(\Lambda)$ and $\mono_Q(\Gamma)$ such that
			\[ [\phi_Q(\module{M})_\mathtt{i}]=\phi([\module{M}_\mathtt{i}]).
			\]
		\end{enumerate}
		
	\end{thm}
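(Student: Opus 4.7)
For part \eqref{Theorem:StableEquivBijection:1}, fix an indecomposable non-injective $\module{M}\in \mono_Q(\Lambda)$ and let $\module{N}\in \rep(Q,\overline{\modu}\,\Lambda)$ denote its image under the canonical functor. By Theorem \ref{Theorem: Characterization of indecomposablesArtin} we have $\module{M}\cong \Mimo(\widehat{\module{N}})$ for any lift $\widehat{\module{N}}\in \rep(Q,\modu\,\Lambda)$ of $\module{N}$ whose components have no nonzero injective summands, and $\phi_Q(\module{M})\cong \Mimo(\widehat{\Phi(\module{N})})$ for a corresponding lift over $\Gamma$. The explicit formula from Example \ref{Example:MimoFormulaQuiverRep} decomposes both sides at each vertex:
\[
\module{M}_\mathtt{i}\cong \widehat{\module{N}}_\mathtt{i}\oplus \bigoplus_{\substack{p\in Q_{\geq 0}\\ t(p)=\mathtt{i}}}J^\Lambda_{s(p)},\quad \phi_Q(\module{M})_\mathtt{i}\cong \widehat{\Phi(\module{N})}_\mathtt{i}\oplus \bigoplus_{\substack{p\in Q_{\geq 0}\\ t(p)=\mathtt{i}}}J^\Gamma_{s(p)},
\]
where $J^\Lambda_\mathtt{j}$ is the injective envelope of $K^\Lambda_\mathtt{j}\coloneqq \ker(\widehat{\module{N}}_{\mathtt{j},\arrowin})$ and similarly $J^\Gamma_\mathtt{j}$. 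Since each $\widehat{\module{N}}_\mathtt{k}$ has no injective summands, neither does the direct sum $\bigoplus_{t(\alpha)=\mathtt{j}}\widehat{\module{N}}_{s(\alpha)}$; hence it has no injective submodules, so $K^\Lambda_\mathtt{j}$ has socle free of simple injective summands, and each $J^\Lambda_\mathtt{j}$ is a direct sum of non-simple indecomposable injectives. In particular $[\module{M}_\mathtt{i}]\in G_\Lambda$, and $\phi([\module{M}_\mathtt{i}])$ makes sense.

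The equation then reduces to the two identities $\phi([\widehat{\module{N}}_\mathtt{i}]) = [\widehat{\Phi(\module{N})}_\mathtt{i}]$ and $\phi([J^\Lambda_\mathtt{j}]) = [J^\Gamma_\mathtt{j}]$. The first follows directly from Lemma \ref{Lemma:IsoGrothendieckGroups} applied to each indecomposable summand, since no injective summands appear. The second is the main obstacle, and I will reduce it to the socle isomorphism $\soc K^\Gamma_\mathtt{j}\cong \Phi(\soc K^\Lambda_\mathtt{j})$ in $\mathcal{S}_\Gamma$. Granted this, decomposing $\soc K^\Lambda_\mathtt{j}\cong \bigoplus_k S_k$ into simple non-injective summands gives $J^\Lambda_\mathtt{j}\cong \bigoplus_k E^\Lambda(S_k)$ and $J^\Gamma_\mathtt{j}\cong \bigoplus_k E^\Gamma(\Phi(S_k))$, and the injective case of Lemma \ref{Lemma:IsoGrothendieckGroups} concludes. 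To establish the socle isomorphism, the key step is that $\soc$ commutes with $\Phi$: since $\Phi$ preserves direct sums and simple non-injectives, it restricts to an equivalence $\Phi_S\colon \mathcal{S}_\Lambda\to \mathcal{S}_\Gamma$, and uniqueness of right adjoints applied to the commutativity $\Phi\circ \iota_\Lambda = \iota_\Gamma\circ \Phi_S$ (using Lemma \ref{Lemma:SocleFunctorStable}) yields a natural isomorphism $\soc\circ\Phi\cong \Phi_S\circ \soc$. Finally, left-exactness of $\soc$ gives $\soc K^\Lambda_\mathtt{j} = \ker\soc(h^\Lambda_\mathtt{j})$ (and similarly over $\Gamma$); applying the natural isomorphism to the structure morphism of $\module{N}$ at vertex $\mathtt{j}$ and using that $\Phi_S$ preserves kernels as an equivalence of semisimple abelian categories completes the argument.

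For part \eqref{Theorem:StableEquivBijection:2}, I extend $\phi_Q$ on the indecomposable injective objects of $\mono_Q(\Lambda)$, which by Theorem \ref{Theorem: Characterization of indecomposablesArtin} are of the form $f_!(I(\mathtt{j}))$ with $I$ an indecomposable injective $\Lambda$-module. The natural definition is $\phi_Q(f_!(I(\mathtt{j})))\coloneqq f_!(I'(\mathtt{j}))$, where $I'$ is the indecomposable injective $\Gamma$-module with $\phi([I]) = [I']$ under the extended $\phi$ from Lemma \ref{Lemma:IsoGrothendieckGroups}. The equation $[\phi_Q(\module{M})_\mathtt{i}] = \phi([\module{M}_\mathtt{i}])$ on such objects is immediate from the description \eqref{DescriptionInjectives}, giving $f_!(I(\mathtt{j}))_\mathtt{i}\cong I^{\oplus N_{\mathtt{j},\mathtt{i}}}$ where $N_{\mathtt{j},\mathtt{i}}$ denotes the number of paths from $\mathtt{j}$ to $\mathtt{i}$ in $Q$. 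For uniqueness, any candidate $\phi_Q(f_!(I(\mathtt{j}))) = f_!(J(\mathtt{k}))$ must satisfy $N_{\mathtt{k},\mathtt{i}}[J] = N_{\mathtt{j},\mathtt{i}}[I']$ for every $\mathtt{i}$; since $Q$ is acyclic, one has $N_{\mathtt{j},\mathtt{j}} = 1$ and $N_{\mathtt{k},\mathtt{j}} = 0$ whenever $\mathtt{k}\neq \mathtt{j}$, forcing $\mathtt{k}=\mathtt{j}$ and hence $J\cong I'$.
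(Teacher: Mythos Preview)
Your argument for part~\eqref{Theorem:StableEquivBijection:1} is essentially the same as the paper's: both use the Mimo formula to decompose $\module{M}_\mathtt{i}$, reduce to matching the injective envelopes $J^\Lambda_\mathtt{j}$ and $J^\Gamma_\mathtt{j}$, pass to socles, and invoke the commutativity of $\operatorname{soc}$ with $\Phi$ via Lemma~\ref{Lemma:SocleFunctorStable}. You do a bit more than the paper by explicitly verifying that $\operatorname{soc}K^\Lambda_\mathtt{j}\in\mathcal{S}_\Lambda$ (so that $[\module{M}_\mathtt{i}]\in G_\Lambda$ and $\phi$ is applicable); the paper leaves this implicit.

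There is one genuine slip in your uniqueness argument for part~\eqref{Theorem:StableEquivBijection:2}. The claim ``$N_{\mathtt{k},\mathtt{j}}=0$ whenever $\mathtt{k}\neq\mathtt{j}$'' is false: in any quiver with an arrow $\mathtt{k}\to\mathtt{j}$ one has $N_{\mathtt{k},\mathtt{j}}\geq 1$. The fix is symmetric: setting $\mathtt{i}=\mathtt{j}$ gives $N_{\mathtt{k},\mathtt{j}}[J]=[I']$, so $N_{\mathtt{k},\mathtt{j}}\geq 1$; setting $\mathtt{i}=\mathtt{k}$ gives $[J]=N_{\mathtt{j},\mathtt{k}}[I']$, so $N_{\mathtt{j},\mathtt{k}}\geq 1$. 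Having nontrivial paths in both directions between distinct vertices contradicts acyclicity, hence $\mathtt{k}=\mathtt{j}$ and then $[J]=[I']$. (The paper, incidentally, constructs the extension and checks the equation but does not spell out uniqueness either; your corrected argument supplies this.)
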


	\begin{proof}
		Consider the functors $\operatorname{soc}\colon \overline{\operatorname{mod}}\, \Lambda \to \mathcal{S}_\Lambda$ and $\operatorname{soc}\colon \overline{\operatorname{mod}}\, \Gamma \to \mathcal{S}_\Gamma$ from Lemma \ref{Lemma:SocleFunctorStable}. We claim that the following square 
		\begin{equation}\label{Equation:CommutativeSquare}
		\begin{tikzcd}
		\overline{\modu}\, \Lambda\arrow{d}{\cong}\arrow{r}{\operatorname{soc}}&\mathcal{S}_\Lambda \arrow{d}{\cong}  \\
		\overline{\modu}\, \Gamma\arrow{r}{\operatorname{soc}}{}&\mathcal{S}_\Gamma
		\end{tikzcd}
		\end{equation}
		commutes up to natural isomorphism, where the vertical functors are given by $\Phi$ and its restriction to the simple non-injective modules. Indeed, this follows from the horizontal functors being left adjoint to the inclusion functors $\mathcal{S}_\Lambda\to \overline{\modu}\, \Lambda$ and $\mathcal{S}_\Gamma\to \overline{\modu}\, \Gamma$ by Lemma \ref{Lemma:SocleFunctorStable}, and the fact that the vertical functors commute with the inclusion functors.
		Hence, postcomposing with the functors in \eqref{Equation:CommutativeSquare}, we get a diagram
		\begin{equation}\label{Equation:PointwiseCommutativeSquare}
		\begin{tikzcd}
		\operatorname{rep}(Q,\overline{\operatorname{mod}}\, \Lambda)\arrow{d}{\cong}\arrow{r}{\operatorname{soc}}&\operatorname{rep}(Q,\mathcal{S}_\Lambda)\arrow{d}{\cong}  \\
		\operatorname{rep}(Q,\overline{\operatorname{mod}}\, \Gamma)\arrow{r}{\operatorname{soc}}{}&\operatorname{rep}(Q,\mathcal{S}_\Gamma)
		\end{tikzcd}
		\end{equation}
		of functors which commutes up to natural isomorphism. 
		
		Now let $\module{M}$ be an indecomposable object in $\operatorname{rep}(Q,\overline{\operatorname{mod}}\, \Lambda)$, and let $\module{N}$ be the corresponding indecomposable object in $\operatorname{rep}(Q,\overline{\operatorname{mod}}\, \Gamma)$ under the left vertical equivalence in \eqref{Equation:PointwiseCommutativeSquare}.  Let $\widehat{\module{M}}$ and $\widehat{\module{N}}$ be objects as in Lemma \ref{Lemma: Lift from stable cat}, so that $\Mimo \module{M}=\Mimo \widehat{\module{M}}$ and $\Mimo \module{N}=\Mimo \widehat{\module{N}}$. By construction we have that $\phi_Q(\Mimo \module{M})=\Mimo \module{N}$. Also
		\[
		(\Mimo \module{M})_{\mathtt{k}}\cong \widehat{\module{M}}_{\mathtt{k}}\oplus  I_{\mathtt{k}} \quad \text{and} \quad (\Mimo \module{N})_{\mathtt{k}}\cong \widehat{N}_{\mathtt{k}}\oplus I_{\mathtt{k}}'
		\]
		for all $\mathtt{k}\in Q_0$ where $I_{\mathtt{k}}$ and $I'_{\mathtt{k}}$ are injective $\Lambda$- and $\Gamma$-modules, respectively. By construction of $\phi$ it follows that $\phi([\widehat{\module{M}}_{\mathtt{k}}])=[\widehat{\module{N}}_{\mathtt{k}}]$, so we only need to show that $\phi[I_{\mathtt{k}}]=[I_{\mathtt{k}}']$. Now by the Mimo-construction in Example \ref{Example: Mimo construction} 
		we get 
		\[
		I_{\mathtt{k}}\cong \bigoplus_{\substack{p\in Q_{\geq 0}\\ t(p)=\mathtt{k}}}J_{s(p)} \quad \text{and} \quad I_{\mathtt{k}}'\cong \bigoplus_{\substack{p\in Q_{\geq 0}\\ t(p)=\mathtt{k}}}J_{s(p)}' 
		\]
		where $J_\mathtt{i}$ and $J_{\mathtt{i}}'$ are the injective envelopes of $\operatorname{ker}\widehat{M}_{\mathtt{i},\operatorname{in}}$ and $\operatorname{ker}\widehat{N}_{\mathtt{i},\operatorname{in}}$, respectively. Since $\phi$ is additive, it suffices to show that $\phi([J_\mathtt{i}])=[J_\mathtt{i}']$ for each $\mathtt{i}\in Q_0$.  By definition of $\phi$, this is equivalent to requiring $\phi([\operatorname{soc}J_{\mathtt{i}}])=[\operatorname{soc}J_{\mathtt{i}}']$. Since the socle of a module and the socle of its injective envelope are isomorphic, it follows that $[\operatorname{soc}J_{\mathtt{i}}]=[\operatorname{soc}\operatorname{ker}\widehat{M}_{\mathtt{i},\operatorname{in}}]$ and $[\operatorname{soc}J'_{\mathtt{i}}]=[\operatorname{soc}\operatorname{ker}\widehat{N}_{\mathtt{i},\operatorname{in}}]$. Hence, we need to show that $\phi([\operatorname{soc}\operatorname{ker}\widehat{M}_{\mathtt{i},\operatorname{in}}])=[\operatorname{soc}\operatorname{ker}\widehat{N}_{\mathtt{i},\operatorname{in}}]$. But this follows immediately from the commutativity of \ref{Equation:PointwiseCommutativeSquare}, which proves \eqref{Theorem:StableEquivBijection:1}.
		
		Now assume we have a bijection between the simple $\Lambda$- and $\Gamma$-modules as in \eqref{Theorem:StableEquivBijection:2}. We want to extend $\phi_Q$ to a bijection between all indecomposable objects in $\mono_Q(\Lambda)$ and $\mono_Q(\Gamma)$. To do this we need to define it on the injective objects. Assume $\module{M}\in \mono_Q(\Lambda)$ is indecomposable injective. Then it is of the form $f_!(J(\mathtt{i}))$ for an indecomposable injective $\Lambda$-module $J$ and a vertex $\mathtt{i}$ in $Q$, see Theorem \ref{Theorem: Characterization of indecomposablesArtin}. We define $\phi_Q(f_!(J(\mathtt{i})))=f_!(J'(\mathtt{i}))$, where $J'$ is the unique (up to isomorphism) $\Gamma$-module satisfying $[J']=\phi([J])$. Clearly this gives a bijection between the indecomposable injective objects in $\mono_Q(\Lambda)$ and $\mono_Q(\Gamma)$, and hence between all indecomposable objects. Finally, by the formula \eqref{Formula:f_!} in Example \ref{Example: Phyla} we have
		\[
		f_!(J(\mathtt{i}))_{\mathtt{k}}= \bigoplus_{\substack{p\in Q_{\geq 0}\\ s(p)=\mathtt{i}, t(p)=\mathtt{k}}}J
		\]
		and so $[\phi_Q(f_!(J(\mathtt{i})))_{\mathtt{k}}]=\phi([f_!(J(\mathtt{i}))_{\mathtt{k}}])$ for all $\mathtt{k}\in Q_0$, which proves \eqref{Theorem:StableEquivBijection:2}.
	\end{proof}

	\begin{rmk}
		The assumptions in Theorem \ref{Theorem:StableEquivBijection} are quite restrictive: Assume $\mathbbm{k}$ is perfect field and $\Lambda$ and $\Gamma$ are non-semisimple connected and selfinjective $\mathbbm{k}$-algebras. Let $\Phi\colon \overline{\operatorname{mod}}\, \Lambda\xrightarrow{\cong} \overline{\operatorname{mod}}\, \Gamma$ be an equivalence, and assume it restricts to an equivalence between the simple non-injective $\Lambda$- and $\Gamma$-modules. We claim that if $\Phi$ is induced from an exact functor $F\colon \operatorname{mod}\Lambda\to \operatorname{mod}\Gamma$ which preserves projectives, then $\Lambda$ and $\Gamma$ must be Morita equivalent. 
		
		Indeed, since $F$ is right exact, $F\cong -\otimes_{\Lambda}M$ where $M\coloneqq F(\Lambda)$ is a $\Lambda$-$\Gamma$-bimodule. Since $F$ is exact and preserves projectives, $M$ must be projective both as a left $\Lambda$-module and as a right $\Gamma$-module. By the same argument as for \cite[Proposition 2.4]{Lin96} we have an isomorphism 
		\[
		M\cong M'\oplus M''
		\]
		 where $M'$ is indecomposable non-projective and $M''$ is projective as $\Lambda$-$\Gamma$-bimodules. Since $M''$ is projective as a bimodule, $-\otimes_\Lambda M''$ sends any $\Lambda$-module to a projective $\Gamma$-module, and therefore induces the zero functor on the stable categories. Hence $-\otimes_\Lambda M'$ induces the same functor as $-\otimes_{\Lambda}M$ on the stable categories, i.e. the functor $\Phi$. Now if $S$ is a simple $\Lambda$-module, then $S\otimes_\Lambda M'$ has no nonzero projective summands by \cite[Proposition 2.3]{Lin96}. Since $S\otimes_\Lambda M'$ is isomorphic to a simple $\Gamma$-module in the stable category $\overline{\operatorname{mod}}\, \Gamma$, it must itself be a simple $\Gamma$-module. Therefore, by the proof of \cite[Proposition 2.5]{Lin96} the functor $-\otimes_\Lambda M'\colon \operatorname{mod}\Lambda\to \operatorname{mod}\Gamma$ must be an equivalence. This shows that $\Lambda$ and $\Gamma$ are Morita equivalent.
		
		Similarly, assume we are given a derived equivalence $-\otimes^{\mathbb{L}}_\Lambda T\colon D^b(\Lambda)\xrightarrow{\cong} D^b(\Gamma)$ by a tilting complex $T$. Then by \cite[Corollary 5.5]{Ric91} the induced equivalence $\overline{\operatorname{mod}}\, \Lambda\xrightarrow{\cong}\overline{\operatorname{mod}}\, \Gamma$ between the stable categories is given by an exact functor $\operatorname{mod}\Lambda\to \operatorname{mod}\Gamma$. Hence, if the stable equivalence induced from the derived equivalence gives a bijection between the simple objects, then $\Lambda$ and $\Gamma$ must be Morita equivalent. 
		
		It follows from this observation that most of the interesting examples of stable equivalence in Theorem \ref{Derived equivalence} and Corollary \ref{selfinjective_stable_number} do not satisfy Theorem \ref{Theorem:StableEquivBijection}. Our main example that satisfy Theorem \ref{Theorem:StableEquivBijection} are local uniserial algebras of Loewy length $3$, which we discuss in the next subsection. 
	\end{rmk}

	\subsection{Local uniserial rings of Loewy length $3$}\label{Subsection:Local uniserial rings of Loewy length 3}
	
	Let $\Lambda$  and $\Gamma$ be commutative local rings which are uniserial, i.e. they have a unique compositition series. Assume furthermore that they have Loewy length $3$ and the same residue field. In particular, $\Lambda$ and $\Gamma$ are commutative artinian rings. There are three indecomposable $\Lambda$- and $\Gamma$-modules, and they are uniquely determined by their length. This can be seen for example by using \cite[Theorem VI.2.1 a)]{ARS95}. We denote the indecomposable $\Lambda$- and $\Gamma$-modules by $M_1,M_2,M_3$ and $N_1,N_2,N_3$, respectively, so that $M_i$ and $N_i$ have length $i$. 
	
	Our goal is to construct a bijection between the indecomposable objects in $\mono_Q(\Lambda)$ and $\mono_Q(\Gamma)$ as in Theorem \ref{Theorem:StableEquivBijection}. This is particularly useful when $\Lambda=\mathbb{Z}/(p^3)$ and  $\Gamma=\mathbbm{k}[x]/(x^3)$ where  $\mathbbm{k}=\mathbb{Z}/(p)$, since the indecomposables in $\mono_Q(\Gamma)$ are in general easier to compute than the ones in $\mono_Q(\Lambda)$. For example, for $\mono_Q(\Gamma)$ one can use covering theory (e.g. see \cite{Moo09} and \cite{RS08b}). We are not aware of such methods for $\mono_Q(\Lambda)$.

	\begin{prop}\label{Proposition:StableEquivLocalUniserial}
		Let $\Lambda$ and $\Gamma$ be commutative local uniserial rings of Loewy length smaller than or equal to $3$ with residue field $\mathbbm{k}$. Then there exists an equivalence $\overline{\modu}\, \Lambda\cong \overline{\modu}\, \Gamma$ which preserves the simple object. 
	\end{prop}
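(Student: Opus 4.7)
The plan is to reduce to a direct classification of $\overline{\modu}\,\Lambda$ via the structure theory of commutative uniserial artinian rings, and exhibit the stable category as a $\mathbbm{k}$-linear category depending only on $\mathbbm{k}$ and the Loewy length. Since $\Lambda$ is commutative local uniserial artinian it is Frobenius and therefore selfinjective; the indecomposable $\Lambda$-modules are precisely $M_i:=\Lambda/\mathfrak{m}_\Lambda^i$ for $i=1,\dots,n$, where $n$ is the Loewy length, and $M_n=\Lambda$ is the unique indecomposable injective up to isomorphism. Hence the indecomposable objects of $\overline{\modu}\,\Lambda$ are $M_1,\dots,M_{n-1}$, and analogously $N_i:=\Gamma/\mathfrak{m}_\Gamma^i$ for $\Gamma$. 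The cases $n\leq 2$ are immediate: $n=1$ gives $\overline{\modu}\,\Lambda=0$, and $n=2$ gives a single indecomposable $M_1$ with $\End(M_1)=\mathbbm{k}$, so $\overline{\modu}\,\Lambda\cong\modu\,\mathbbm{k}$. In both small cases the equivalence is canonical and preserves $M_1\mapsto N_1$.

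The main case is $n=3$. Here I will compute the additive category $\overline{\modu}\,\Lambda$ explicitly on the pair $\{M_1,M_2\}$. Using the standard identification
\[
\Hom_\Lambda(\Lambda/\mathfrak{m}^i,\Lambda/\mathfrak{m}^j)\cong \mathfrak{m}^{\max(0,j-i)}/\mathfrak{m}^j,
\]
each of $\End(M_1),\End(M_2),\Hom(M_1,M_2),\Hom(M_2,M_1)$ is a finitely generated module whose $\mathbbm{k}$-vector space structure is inherited through the filtration by powers of $\mathfrak{m}$. I then determine the ideal of morphisms factoring through $M_3=\Lambda$ by lifting each map $M_i\to\Lambda$ into the annihilator $\mathfrak{m}^{3-i}$ and then composing with an arbitrary $\Lambda\to M_j$. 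A direct but careful calculation shows that $\overline{\End}(M_1)=\overline{\End}(M_2)=\mathbbm{k}$ and $\overline{\Hom}(M_1,M_2)=\overline{\Hom}(M_2,M_1)=\mathbbm{k}$, and moreover that every composition of a pair of non-identity morphisms between distinct $M_i$'s already vanishes in $\overline{\modu}\,\Lambda$ (for $M_1\to M_2\to M_1$ this is immediate; for $M_2\to M_1\to M_2$ the composite is multiplication by a uniformizer on $M_2$, which factors through $\Lambda$ and hence is zero in the stable category).

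Packaging this, $\overline{\End}_\Lambda(M_1\oplus M_2)$ is isomorphic as a $\mathbbm{k}$-algebra to $\mathbbm{k} Q/J^2$, where $Q$ is the two-vertex quiver with one arrow in each direction and $J$ is the arrow ideal; the isomorphism sends the idempotent at vertex $i$ to the projector onto $M_i$. The same calculation for $\Gamma$ produces the same algebra. Since both stable categories are Krull--Remak--Schmidt $\mathbbm{k}$-linear categories with exactly two indecomposables whose endomorphism $\mathbbm{k}$-algebra on the direct sum is $\mathbbm{k} Q/J^2$, a standard Morita argument yields an additive equivalence $\overline{\modu}\,\Lambda\xrightarrow{\cong}\overline{\modu}\,\Gamma$ sending $M_i\mapsto N_i$, in particular preserving the simple object $M_1\mapsto N_1$.

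The main obstacle is the bookkeeping in the composition computation when $\Lambda$ does not contain a copy of $\mathbbm{k}$ as a subring (as for $\Lambda=\mathbb{Z}/p^3$), since then $\Hom$-spaces are not literal $\mathbbm{k}$-vector spaces but $\Lambda$-modules whose $\mathbbm{k}$-structure is mediated by the successive quotients $\mathfrak{m}^i/\mathfrak{m}^{i+1}$. One must verify that multiplication by a uniformizer induces $\mathbbm{k}$-linear isomorphisms between the relevant graded pieces, so that the two generators of $\overline{\Hom}(M_1,M_2)$ and $\overline{\Hom}(M_2,M_1)$ can be chosen canonically up to $\mathbbm{k}^\times$ and the equivalence so obtained is $\mathbbm{k}$-linear. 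Once this is checked, the identification with $\mathbbm{k} Q/J^2$ and therefore the desired equivalence follows formally.
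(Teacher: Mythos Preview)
Your proposal is correct and follows essentially the same approach as the paper: both identify the indecomposables $M_1,M_2$ in $\overline{\modu}\,\Lambda$, compute all four stable $\Hom$-spaces to be one-dimensional over $\mathbbm{k}$, verify that the two nontrivial composites vanish, and conclude by matching generators. Your write-up is in fact more detailed than the paper's (which simply asserts the $\Hom$-space computations and relations without justification), and your remark about the $\mathbbm{k}$-linearity bookkeeping when $\mathbbm{k}\not\subset\Lambda$ is a point the paper's proof also leaves implicit.
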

	
	\begin{proof}
		We only prove the case of Loewy length $3$. The cases of Loewy length $2$ and $1$ follow similarly. Note first that $\Lambda$ and $\Gamma$ are selfinjective with a unique injective module given by $M_3$ and $N_3$, respectively. Therefore the indecomposables in $\overline{\operatorname{mod}}\, \Lambda$ and $\overline{\operatorname{mod}}\, \Gamma$ are $M_1,M_2$, and $N_1,N_2$, respectively. Furthermore, their hom-spaces are
		\begin{equation}\label{Equation:NatIsoSquare}
		\overline{\operatorname{Hom}}_{\Lambda}(M_i,M_j)\cong \mathbbm{k} \quad \text{and} \quad \overline{\operatorname{Hom}}_{\Gamma}(N_i,N_j)\cong \mathbbm{k}
		\end{equation}
		for $1\leq i,j\leq 2$. Let $f_{i,j}$ and $g_{i,j}$ be the basis vector of $\overline{\operatorname{Hom}}_{\Lambda}(M_i,M_j)$ and $\overline{\operatorname{Hom}}_{\Gamma}(N_i,N_j)$, respectively, so that $f_{i,i}=\operatorname{id}_{M_i}$ and $g_{i,i}=\operatorname{id}_{N_i}$ for $i=1,2$. Then we have the relations
		\begin{align*}
		&f_{2,1}\circ f_{1,2}=0 \quad \text{and} \quad f_{1,2}\circ f_{2,1}=0 \\
		& g_{2,1}\circ g_{1,2}=0 \quad \text{and} \quad g_{1,2}\circ g_{2,1}=0
		\end{align*}
		and hence the associations $M_i\mapsto N_i$ and $f_{i,j}\mapsto g_{i,j}$ extend to an equivalence $\overline{\operatorname{mod}}\, \Lambda\xrightarrow{\cong} \overline{\operatorname{mod}}\, \Gamma$. Since this equivalence preserves the simple object, we are done.
	\end{proof}
	
	\begin{rmk}
		Proposition \ref{Proposition:StableEquivLocalUniserial} does not hold when the Loewy length is greater than $3$. For example, consider $\Lambda=\mathbb{Z}/(p^n)$ and $\Gamma=\Bbbk[x]/(x^n)$ with $\Bbbk=\mathbb{Z}/(p)$ and $n\geq 4$. Then 
		\[
		\overline{\Hom}_\Lambda(\mathbb{Z}/(p^2), \mathbb{Z}/(p^2))\cong \mathbb{Z}/(p^2)
		\]
		and there is no object with that endomorphism ring in $\overline{\modu}\,\Gamma$. 
	\end{rmk}
	
	Let $\Lambda$ be a local uniserial ring of Loewy length $3$. Then there is a bijection between finitely generated $\Lambda$-modules and partitions $(\alpha_1\geq \alpha_2\geq \dots \geq \alpha_s)$ with $3\geq \alpha_1$. Explicitly, it sends the partition $\alpha=(\alpha_1\geq \alpha_2\geq \dots \geq \alpha_s)$ to the module
	\[
	M(\alpha)=\bigoplus_{i=1}^sM_{\alpha_i}
	\]
	where $M_{\alpha_i}$ is the indecomposable $\Lambda$-module of Loewy length $\alpha_i$.
	Given a representation $\module{M}\in \operatorname{rep}(Q,\operatorname{mod}\Lambda)$, the \emphbf{partition vector} of $\module{M}$ is the tuple $(\alpha^{\mathtt{i}})_{\mathtt{i}\in Q_0}$ where $\alpha^\mathtt{i}$ is the unique partition for which $M(\alpha^i)\cong\module{M}_\mathtt{i}$. Note that this is called the type of $\module{M}$ in \cite{Sch08}. 
	
	\begin{thm}\label{Theorem:BijectionLoewyLength3}
		Let $\Lambda$ and $\Gamma$ be commutative local uniserial rings of Loewy length smaller than or equal to $3$ with same residue field $\mathbbm{k}$. Then there exists a bijection which preserves partition vectors between indecomposable objects in $\mono_Q(\Lambda)$ and in $\mono_Q(\Gamma)$.
	\end{thm}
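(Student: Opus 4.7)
The plan is to derive Theorem \ref{Theorem:BijectionLoewyLength3} as a direct corollary of Theorem \ref{Theorem:StableEquivBijection}(2), applied to the stable equivalence furnished by Proposition \ref{Proposition:StableEquivLocalUniserial}, after verifying that the induced isomorphism on split Grothendieck groups preserves the lengths of indecomposables.

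First I would invoke Proposition \ref{Proposition:StableEquivLocalUniserial} to obtain an equivalence $\Phi\colon \overline{\modu}\,\Lambda \xrightarrow{\cong} \overline{\modu}\,\Gamma$ sending the unique simple $\Lambda$-module $M_1$ to the unique simple $\Gamma$-module $N_1$. In a local uniserial ring the unique simple module is injective precisely when the Loewy length equals $1$, so $\Phi$ restricts either to a bijection between simple non-injective modules (when the common Loewy length is $2$ or $3$) or to a bijection on simple injective modules (when it is $1$); in either situation the hypotheses of Theorem \ref{Theorem:StableEquivBijection}(2) are met. (The Loewy length $1$ case is in any case trivial, since both rings then coincide with $\mathbbm{k}$.) Theorem \ref{Theorem:StableEquivBijection}(2) then produces a bijection $\phi_Q$ between isomorphism classes of indecomposables in $\mono_Q(\Lambda)$ and $\mono_Q(\Gamma)$, together with an isomorphism $\phi\colon K_0(\operatorname{mod}\Lambda)\xrightarrow{\cong} K_0(\operatorname{mod}\Gamma)$ satisfying the pointwise compatibility $[\phi_Q(\module{M})_\mathtt{i}] = \phi([\module{M}_\mathtt{i}])$ for every vertex $\mathtt{i} \in Q_0$.

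The remaining step is to verify that $\phi([M_i]) = [N_i]$ for every $i$ up to the common Loewy length; combined with the pointwise compatibility above and Krull--Schmidt, this translates into preservation of partition vectors. The case $i=1$ is immediate from $\Phi(M_1) \cong N_1$. For non-injective indecomposables $M_i$ with $i$ strictly less than the Loewy length (namely $M_2$ when the Loewy length is $3$), the explicit construction of $\Phi$ in the proof of Proposition \ref{Proposition:StableEquivLocalUniserial} sends $M_i \mapsto N_i$ in the stable category, so $\phi([M_i]) = [N_i]$ by the definition of $\phi$ on non-injective indecomposables given in Lemma \ref{Lemma:IsoGrothendieckGroups}. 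The only genuinely new computation is for the top indecomposable injective $M_i$ (with $i$ equal to the Loewy length and $i>1$): by Lemma \ref{Lemma:IsoGrothendieckGroups}, $\phi([M_i])=[J']$ where $J'$ is the indecomposable injective $\Gamma$-module whose socle is isomorphic to $\Phi(\operatorname{soc} M_i)$ in $\overline{\modu}\,\Gamma$. Since $\operatorname{soc} M_i \cong M_1$ and $\Phi(M_1) \cong N_1 \cong \operatorname{soc} N_i$, this forces $J' \cong N_i$, that is, $\phi([M_i]) = [N_i]$.

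I do not anticipate a substantial obstacle: the argument is essentially a careful unwinding of the explicit formula for $\phi$ on injective indecomposables via the socle, together with the explicit matching $M_i \leftrightarrow N_i$ coming from the proof of Proposition \ref{Proposition:StableEquivLocalUniserial}. The mildest subtlety is ensuring the Loewy length case analysis is complete and that the degenerate Loewy length $1$ case is handled trivially.
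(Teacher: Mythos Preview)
Your proposal is correct and follows exactly the same approach as the paper, which simply writes ``This follows from Proposition \ref{Proposition:StableEquivLocalUniserial} and Theorem \ref{Theorem:StableEquivBijection}.'' You have usefully spelled out the verification that $\phi([M_i])=[N_i]$ for each $i$, which is precisely the content needed to pass from the pointwise Grothendieck-group compatibility in Theorem \ref{Theorem:StableEquivBijection} to preservation of partition vectors; this step is left implicit in the paper.
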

	
	\begin{proof}
		This follows from Proposition \ref{Proposition:StableEquivLocalUniserial} and Theorem  \ref{Theorem:StableEquivBijection}.
	\end{proof}
		
	The \emphbf{length vector} of a representation $\module{M}\in \operatorname{rep}(Q,\Lambda)$, is defined to be the tuple $\ell(\module{M})=(\ell(\module{M}_\mathtt{i}))_{\mathtt{i}\in Q_0}$ where $\ell(\module{M}_{\mathtt{i}})$ denotes the length of the $\Lambda$-module $\module{M}_{\mathtt{i}}$. 
	
	\begin{ex}\label{Example:A_3LoewyLength3}
		Let $\mathbb{F}_p=\mathbb{Z}/(p)$ be the finite field with $p$ elements. We compare $\mono_Q(\mathbb{F}_p[x]/(x^3))$ and   $\mono_Q(\mathbb{Z}/(p^3))$ for different choices of quivers $Q$, using Theorem \ref{Theorem:BijectionLoewyLength3}. 
		
		For $Q=\mathtt{1}\to \mathtt{2}$ the indecomposable objects in $\mono_Q(\mathbb{F}_p[x]/(x^3))$ have been classified in \cite[Section 6.3]{RS08b}. They are the unique representations of the form 
		\[
		\mathbb{F}_p[x]/(x^i)\xrightarrow{f_{j,i}} \mathbb{F}_p[x]/(x^j)
		\]
		where $f_{j,i}$ is a monomorphism and $0\leq i\leq j\leq 3$ with $j\neq 0$, and
		\[
		\mathbb{F}_p[x]/(x^2)\xrightarrow{\begin{pmatrix}\pi\\\iota\end{pmatrix}}\mathbb{F}_p\oplus \mathbb{F}_p[x](x^3)
		\]
		where $\pi$ is the canonical projection and $\iota$ is the canonical inclusion. It follows that the indecomposable objects in $\mono_Q(\mathbb{Z}/(p^3))$ are the representations of the form 
		\[
		\mathbb{Z}/(p^i)\xrightarrow{g_{j,i}} \mathbb{Z}/(p^j)
		\]
		where $g_{j,i}$ is a monomorphism and $0\leq i\leq j\leq 3$ with $j\neq 0$, and
		\[
		\mathbb{Z}/(p^2)\xrightarrow{\begin{pmatrix}\pi'\\\iota'\end{pmatrix}}\mathbb{Z}/(p)\oplus \mathbb{Z}/(p^3).
		\]
		where $\pi'$ and $\iota'$ are the canonical projection and inclusion, respectively. Now \cite[Theorem 1.2]{RRW84} implies that the indecomposable objects $M_1\to M_2$ in $\mono_Q(\mathbb{Z}/(p^n))$ where $M_1\neq 0$ are in bijection with the indecomposable finitely generated valuated $p$-groups (in the sense of \cite{RW79}) whose value is bounded by $n$. Using this and the fact that the group $M_1$ at vertex $\mathtt{1}$ in the classification above is always cyclic, we recover \cite[Corollary 4.3]{RRW84}.
		
		For $Q=\mathtt{1}\to \mathtt{2}\to \mathtt{3}$, the isomorphism classes of  indecomposable objects in $\mono_Q(\mathbb{F}_p[x]/(x^3))$ are uniquely determined by their dimension vector \cite[Theorem 3.2]{Moo09}. There are $23$ indecomposables up to isomorphism, and the different dimension vectors that occur are
		\begin{multline*}
		\{001, 002, 011, 012, 111, 112, 122, 222, 003, 013, 023 \\
		113, 123, 223, 333, 024, 124, 224, 234, 244, 135, 245, 246 \}.
		\end{multline*}
		Since the bijection in  Theorem \ref{Theorem:BijectionLoewyLength3}  preserves partition vectors, it also preserves length vectors. So we can conclude that the indecomposable objects in $\mono_Q(\mathbb{Z}/(p^3))$ are uniquely determined by their length vectors, and the different length vectors that occur are given in the list above. The list of partition vectors for the indecomposable objects in $\mono_Q(\mathbb{F}_p[x]/(x^3))$ (and hence in $\mono_Q(\mathbb{Z}/(p^3))$),  are given in Figure 1 in Section 6 of \cite{XZZ14}, see also \cite[Example 5.1]{Lu20}.
		
		For $Q_1=\mathtt{1}\to \mathtt{2}\leftarrow \mathtt{3}$ and $Q_2=\mathtt{1}\leftarrow \mathtt{2}\to \mathtt{3}$ we have equivalences
		\[
		\overline{\mono}_{Q_1}(\mathbb{F}_p[x]/(x^3))\cong \overline{\mono}_{Q}(\mathbb{F}_p[x]/(x^3))\cong \overline{\mono}_{Q_2}(\mathbb{F}_p[x]/(x^3))
		\]
		by \cite[Theorem 1]{LS22}, where $Q$ is the quiver $\mathtt{1}\to \mathtt{2}\to \mathtt{3}$ considered in the previous paragraph. In particular, $\mono_{Q_1}(\mathbb{F}_p[x]/(x^3))$ and $\mono_{Q}(\mathbb{F}_p[x]/(x^3))$ and $\mono_{Q_2}(\mathbb{F}_p[x]/(x^3))$ have the same number of indecomposable objects. The indecomposables in $\mono_{Q_1}(\mathbb{F}_p[x]/(x^3))$ and in $\mono_{Q_2}(\mathbb{F}_p[x]/(x^3))$ are classified in \cite[Figure 7]{LS22} and \cite[Figure 8]{LS22}, respectively, in terms of their partition vector.
		Using this, we get the indecomposables for  $\mono_{Q_1}(\mathbb{Z}/(p^3))$ and $\mono_{Q_2}(\mathbb{Z}/(p^3))$.

		For $Q=\mathtt{1}\to \mathtt{2}\to \mathtt{3}\to \mathtt{4}$, there are 84 isomorphism classes of indecomposable objects in $\mono_Q(\mathbb{F}_p[x]/(x^3))$, and a list of them is given in Figure 2 in Section 6 of \cite{XZZ14}, see also \cite[Example 5.2]{Lu20}. The indecomposables are described in terms of their restriction to each vertex, which is equivalent to giving their partition vector. Hence, we get a list of the indecomposable objects in $\mono_Q(\mathbb{Z}/(p^3))$ in terms of their partition vector. 
		
		If $Q=\mathtt{1}\to \mathtt{2}\to \mathtt{3}\to \mathtt{4}\to \mathtt{5}$, then it follows from \cite[Theorem 1.3]{Sim02} that the categories $\mono_Q(\mathbb{F}_p[x]/(x^3))$ and $\mono_Q(\mathbb{Z}/(p^3))$ have infinitely many indecomposable objects. Furthermore, if $\mathbbm{k}$ is an algebraically closed field, then $\mono_Q(\mathbbm{k}[x]/(x^3))$ is of tame representation type by \cite[Theorem 1.5]{Sim02}. Hence, one could try to determine the indecomposable objects in $\mono_Q(\mathbb{F}_p[x]/(x^3))$ using a similar approach as in \cite{Moo09} and \cite{RS08b}. Then one could use Theorem \ref{Theorem:BijectionLoewyLength3} to transfer the results to determine the indecomposables in $\mono_Q(\mathbb{Z}/(p^3))$.
	\end{ex}

	\subsection{The monomorphism category for  $\operatorname{rad}^2$-zero Nakayama algebras}\label{Subsection:Rad^2-zeroNakayama}
	
	Recall that an Artin algebra $\Lambda$ is a \textbf{Nakayama algebra} if all indecomposable left and right $\Lambda$-modules are uniserial, i.e. have a unique composition series. In this section, we will describe the indecomposable objects of $\mono_Q(\Lambda)$ when $\Lambda$ is a radical square zero Nakayama algebra. We start with the following well-known lemma, a proof of which we provide for convenience. 
	
	\begin{lem}\label{lem:injectively-stable-rad2-Nakayama}
		Let $\Lambda$ be a radical square zero Nakayama algebra. Then $\overline{\modu}\, \Lambda$ is semisimple. More precisely, it is equivalent to the module category of a product of skew fields  -- as many as there are isomorphism classes of non-injective simple modules.
	\end{lem}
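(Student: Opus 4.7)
The plan is to show that the non-injective indecomposable $\Lambda$-modules are precisely the non-injective simple modules, and that the stable endomorphism ring of each such simple equals its ordinary endomorphism ring. Since $\operatorname{rad}^2\Lambda=0$, every indecomposable $\Lambda$-module has length at most $2$: the length one modules are the simples, while a length two indecomposable $M$ is uniserial with composition series $0\subset \soc M\subset M$.

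The first key step is the claim that every length two indecomposable is injective. Given such $M$ with $\soc M = S_\mathtt{i}$, consider the injective envelope $E(S_\mathtt{i})$, which also has length at most $2$ since $\operatorname{rad}^2\Lambda=0$. If $E(S_\mathtt{i})\cong S_\mathtt{i}$, the socle inclusion $S_\mathtt{i}\hookrightarrow M$ would split, contradicting the indecomposability of $M$; hence $E(S_\mathtt{i})$ has length exactly two with socle $S_\mathtt{i}$. The Nakayama property---that an indecomposable module over a Nakayama algebra is determined up to isomorphism by its socle and length---yields $M\cong E(S_\mathtt{i})$, so $M$ is injective. Thus the non-injective indecomposable $\Lambda$-modules are exactly the non-injective simples $S_{\mathtt{i}_1},\dots,S_{\mathtt{i}_t}$, and $D_r\coloneqq \End_\Lambda(S_{\mathtt{i}_r})$ is a skew field by Schur's lemma.

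The second step is to show $\overline{\End}_\Lambda(S_{\mathtt{i}_r}) = D_r$, i.e., no nonzero endomorphism of $S_{\mathtt{i}_r}$ factors through an injective. Reducing to a factorization $S_{\mathtt{i}_r}\xrightarrow{g} J\xrightarrow{h} S_{\mathtt{i}_r}$ through an indecomposable injective $J$: if $J$ is simple then $g\neq 0$ forces $J\cong S_{\mathtt{i}_r}$, making $S_{\mathtt{i}_r}$ injective, a contradiction; if $J$ has length two, then $g$ factors through $\soc J$ (as $S_{\mathtt{i}_r}$ is simple and $\soc J$ is the unique simple submodule of uniserial $J$) while a nonzero $h$ factors through $J/\soc J$, and the composition vanishes because $\soc J$ lies in the kernel of $J\twoheadrightarrow J/\soc J$. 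Note this also covers the case of a loop in the Gabriel quiver, where $J$ may have isomorphic top and socle.

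Combining these observations with Krull--Schmidt for $\overline{\modu}\,\Lambda$, the indecomposables are the $S_{\mathtt{i}_r}$ with endomorphism rings the skew fields $D_r$ and no Homs between non-isomorphic ones (already in $\modu\Lambda$). This yields the equivalence $\overline{\modu}\,\Lambda\simeq \modu(D_1\times\cdots\times D_t)$. The main subtlety is the Nakayama-determinacy step, which for general Artin (not necessarily split) algebras requires invoking the standard structure theory of Nakayama algebras to ensure uniqueness of the length two indecomposable with a given socle.
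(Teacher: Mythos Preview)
Your proof is correct and follows essentially the same strategy as the paper: show that the only non-injective indecomposables are the non-injective simples, so that $\overline{\modu}\,\Lambda$ is semisimple. The paper argues on the projective side---every indecomposable over a Nakayama algebra is $P/\radoperator^m P$ for an indecomposable projective $P$, and the non-simple ones (namely $P$ itself) are then injective by a cited lemma in Auslander--Reiten--Smal{\o}---whereas you take the dual route via injective envelopes and socle-plus-length determinacy. Your argument is more self-contained in that you verify the stable endomorphism ring computation explicitly, which the paper leaves as ``the claim follows''; one minor imprecision is that your opening line ``since $\operatorname{rad}^2\Lambda=0$, every indecomposable has length at most $2$'' tacitly uses uniseriality (the Nakayama hypothesis), not just the radical-square-zero condition.
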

	
	\begin{proof}
		Since $\Lambda$ is a Nakayama algebra, it is well-known, see e.g. \cite[Theorem VI.2.1]{ARS95}, that every indecomposable $\Lambda$-module $M$ is of the form $P/\radoperator^m P$ for an indecomposable projective module $P$. Since $\radoperator^2 \Lambda=0$, it follows that $m\leq 1$. Suppose that $M$ is not simple. Then $m=0$ and $M\cong P$ for some indecomposable projective. However, in this case \cite[Lemma IV.2.15]{ARS95} implies that $M$ is also injective. Therefore, $M=0$ in $\overline{\modu}\, \Lambda$. It follows that every object in $\overline{\modu}\, \Lambda$ is isomorphic to a direct sum of simple modules. Thus, $\overline{\modu}\, \Lambda$ is a semisimple category. The claim follows.
	\end{proof}
	We need the following variant of Gabriel's theorem for path algebras over skew fields, which is a special case of the main result of \cite{DR76}. 
	
	\begin{thm}\label{Theorem:Gabriel-for-skew-fields}
		Let $D$ be a skew field and let $Q$ be a finite quiver. Then $\modu DQ$ is of finite representation type if and only if $Q$ is Dynkin. In this case, there is a bijection between the set of isomorphism classes of indecomposable $DQ$-modules and the set of positive roots $\Phi^+$ for the corresponding Dynkin diagram.
	\end{thm}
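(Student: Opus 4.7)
The plan is to follow the classical Gabriel-style argument, noting that the reflection-functor machinery works verbatim over an arbitrary skew field $D$ because it only uses $D$-linear constructions (kernels, cokernels, direct sums). Let me sketch the main steps.

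First I would introduce the dimension vector map $\dimv\colon \modu DQ \to \mathbb{Z}^{Q_0}$ sending $M$ to $(\dim_D M_\mathtt{i})_{\mathtt{i}\in Q_0}$, and the associated Tits form $q_Q(\mathbf{d}) = \sum_{\mathtt{i}\in Q_0} d_\mathtt{i}^2 - \sum_{\alpha\in Q_1} d_{s(\alpha)}d_{t(\alpha)}$, which depends only on the underlying graph $\bar Q$. A standard argument shows that $q_Q$ is positive definite if and only if $\bar Q$ is a disjoint union of Dynkin diagrams of type ADE, in which case the positive roots $\Phi^+$ are exactly the positive integer vectors $\mathbf{d}$ with $q_Q(\mathbf{d}) = 1$. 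For the ``only if'' direction I would use the Tits-form obstruction: if $Q$ is not Dynkin, then $q_Q$ is either positive semidefinite but degenerate (extended Dynkin) or indefinite, and in both cases one exhibits a one-parameter family of pairwise non-isomorphic indecomposable $DQ$-representations (for instance via the well-known constructions on an extended Dynkin subquiver, which carry over to skew fields without change).

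For the ``if'' direction I would use Bernstein--Gelfand--Ponomarev reflection functors $R_\mathtt{i}^+\colon \modu DQ \to \modu D(s_\mathtt{i} Q)$ at a sink $\mathtt{i}$, and dually $R_\mathtt{i}^-$ at a source. These are defined by taking $M_\mathtt{i}$ to the kernel of the canonical map $\bigoplus_{\alpha\colon \mathtt{j}\to \mathtt{i}} M_{\mathtt{j}} \to M_\mathtt{i}$ (respectively the cokernel at a source), and they work identically over any skew field since $D$-linear maps have $D$-linear kernels and cokernels. The key properties are: $R_\mathtt{i}^+$ restricts to a bijection between indecomposables of $\modu DQ$ not isomorphic to the simple $S_\mathtt{i}$ at vertex $\mathtt{i}$ and indecomposables of $\modu D(s_\mathtt{i}Q)$ not isomorphic to the simple at $\mathtt{i}$; and $\dimv R_\mathtt{i}^+(M) = s_\mathtt{i}(\dimv M)$, where $s_\mathtt{i}$ is the simple reflection on $\mathbb{Z}^{Q_0}$.

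Next, choose an admissible ordering $\mathtt{i}_1,\dots, \mathtt{i}_n$ of the vertices of $Q$ (such that each $\mathtt{i}_k$ is a sink of $s_{\mathtt{i}_{k-1}}\cdots s_{\mathtt{i}_1} Q$) and consider the Coxeter element $c = s_{\mathtt{i}_n}\cdots s_{\mathtt{i}_1}$ in the Weyl group. In the Dynkin case $c$ has finite order, and a standard inductive argument shows that for any indecomposable $M$ there exist $k\geq 0$ and $1\leq j \leq n$ such that the composition of the first $(k-1)n+j-1$ reflection functors sends $M$ to the simple module at vertex $\mathtt{i}_j$ of the reflected quiver. Unwinding this, $\dimv M$ is obtained from the coordinate vector $\mathbf{e}_{\mathtt{i}_j}$ by applying a sequence of simple reflections, hence is a positive root. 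Conversely, starting from any positive root $\mathbf{d} \in \Phi^+$, the same sequence of reflections realizes $\mathbf{d}$ as the dimension vector of an indecomposable, so the map $M \mapsto \dimv M$ is a surjection onto $\Phi^+$. Injectivity (indecomposables with the same dimension vector are isomorphic) follows by reducing via reflection functors to the case of a simple module at some vertex of a reflected quiver, where the statement is trivial.

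The main technical obstacle, in my view, is the ``only if'' direction for non-Dynkin $Q$: the classical construction for extended Dynkin type uses one-parameter families parametrized by the base field, and over a skew field $D$ one must check that the relevant family of representations (indexed by elements of the center $Z(D)$ or by $D/\!\!\sim$ under conjugation) still yields infinitely many pairwise non-isomorphic indecomposables. This can be handled either by invoking the Dlab--Ringel analysis in \cite{DR76} directly, or by noting that a skew field is in particular infinite (if $D$ is finite then $D$ is a field by Wedderburn and Gabriel's theorem applies as stated), so that even the centre or any infinite conjugacy-closed subset gives enough parameters. Everything else is a routine adaptation of Gabriel's proof, since no step uses commutativity or algebraic closedness.
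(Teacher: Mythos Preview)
The paper does not prove this theorem; it simply states it as ``a special case of the main result of \cite{DR76}'' and moves on. Your proposal is a correct sketch of exactly the Dlab--Ringel argument specialized from species to the case of a single skew field, so in substance you are reproducing the cited proof rather than giving an alternative.

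One minor remark on your discussion of the ``only if'' direction: your worry about one-parameter families over skew fields is somewhat overstated. For a connected non-Dynkin $Q$ the Coxeter element has infinite order on the root lattice, so the preprojective (or preinjective) component already contains infinitely many pairwise non-isomorphic indecomposables, and these are all produced by iterated reflection functors applied to simples---a construction that is purely $D$-linear and needs no parameters at all. So you do not actually need to split into the finite/infinite skew field cases or invoke regular one-parameter families; the reflection-functor machinery you set up for the ``if'' direction already handles the ``only if'' direction as well.
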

	
	Combining these we obtain the classification of indecomposables in $\mono_Q(\Lambda)$. If $\Lambda=\mathbbm{k}[x]/(x^2)$ with $k$ algebraically closed, then the first part is equivalent to \cite[Theorem 4.6 (i)]{Lu20}.
	
	\begin{thm}\label{Theorem:rad2Nakayama}
		Let $Q$ be a finite connected acyclic quiver and let $\Lambda$ be a non-semisimple radical square zero Nakayama algebra. Let $m$ be the number of simple $\Lambda$-modules and let $t$ be the number of non-injective simple $\Lambda$-modules. The following hold.
		\begin{enumerate}
			\item The category $\mono_Q(\Lambda)$ is of finite representation type if and only if $Q$ is Dynkin. 
			
			\item The number of indecomposable injective objects in $\mono_Q(\Lambda)$ is $m \cdot |Q_0|$. 
			
			\item The number of indecomposable non-injective objects is $t \cdot |\Phi^+|$, where $\Phi^+$ is the set of positive roots of the (corresponding) Dynkin diagram. 
		\end{enumerate}
	\end{thm}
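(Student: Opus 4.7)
The plan is to reduce the description of indecomposables in $\mono_Q(\Lambda)$ to representations of $Q$ over a product of skew fields, where Gabriel's theorem for skew fields applies directly. The main ingredient is Theorem \ref{Theorem: Characterization of indecomposablesArtin}, combined with Lemma \ref{lem:injectively-stable-rad2-Nakayama} to identify the injectively stable category.

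First, I would invoke Lemma \ref{lem:injectively-stable-rad2-Nakayama} to obtain an equivalence $\overline{\modu}\,\Lambda \cong \modu(D_1 \times \cdots \times D_t)$ where $D_1,\ldots,D_t$ are skew fields (the endomorphism rings of the non-injective simples, one per isomorphism class). Since $\Lambda$ is non-semisimple, $t \geq 1$; this needs a brief remark, using that a Nakayama algebra with nonzero radical must admit at least one indecomposable projective that is not simple, whose top is therefore a non-injective simple (otherwise it would be both projective and injective, forcing it to be uniserial of length $> 1$ with injective top, contradicting injectivity of the projective cover of a simple submodule). This identification gives
\[
\rep(Q,\overline{\modu}\,\Lambda) \;\cong\; \prod_{i=1}^{t} \rep(Q,\modu D_i) \;\cong\; \prod_{i=1}^{t} \modu D_i Q^{\op},
\]
and in such a product category an object is indecomposable if and only if it is indecomposable in exactly one factor and zero in the others.

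Next, I would apply Theorem \ref{Theorem: Characterization of indecomposablesArtin}. The bijection there identifies indecomposable non-injective objects of $\mono_Q(\Lambda)$ with indecomposable objects of $\rep(Q,\overline{\modu}\,\Lambda)$. Combining with the factorization above and Theorem \ref{Theorem:Gabriel-for-skew-fields}, the set of indecomposables in each factor $\modu D_i Q^{\op}$ is finite if and only if $Q$ is Dynkin, in which case the count is $|\Phi^+|$. Summing over the $t$ factors gives $t\cdot|\Phi^+|$ indecomposable non-injective objects when $Q$ is Dynkin, and infinitely many otherwise.

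For the injective part, the third bijection in Theorem \ref{Theorem: Characterization of indecomposablesArtin} identifies indecomposable injective objects of $\mono_Q(\Lambda)$ with pairs $(I,\mathtt{j})$, where $I$ is an indecomposable injective $\Lambda$-module and $\mathtt{j}\in Q_0$. For a Nakayama algebra, the indecomposable injectives are in bijection with the simple modules (since each indecomposable injective is uniserial with a unique simple socle, and every simple appears as the socle of a unique indecomposable injective), giving exactly $m$ of them. Hence the number of indecomposable injectives in $\mono_Q(\Lambda)$ is $m\cdot|Q_0|$, which is always finite. Combining the two counts, $\mono_Q(\Lambda)$ is of finite representation type if and only if $Q$ is Dynkin, and in that case the total number of indecomposables is $m\cdot|Q_0| + t\cdot|\Phi^+|$. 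The only non-routine point is verifying $t \geq 1$; everything else is a bookkeeping exercise once Theorems \ref{Theorem: Characterization of indecomposablesArtin} and \ref{Theorem:Gabriel-for-skew-fields} and Lemma \ref{lem:injectively-stable-rad2-Nakayama} are in hand.
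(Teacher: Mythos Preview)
Your proposal is correct and follows essentially the same route as the paper: both combine Lemma~\ref{lem:injectively-stable-rad2-Nakayama} with Theorem~\ref{Theorem: Characterization of indecomposablesArtin} (equivalently Theorem~\ref{Theorem: Characterization of indecomposables}) to reduce to $\rep(Q,\modu D_1\times\cdots\times D_t)$ and then invoke Theorem~\ref{Theorem:Gabriel-for-skew-fields}. One small wrinkle: your parenthetical justification that $t\geq 1$ is garbled---the clean argument uses the \emph{socle} of a non-simple indecomposable projective $P$ (which is injective by \cite[Lemma~IV.2.15]{ARS95}), not its top, since that socle then has $P$ as injective envelope and hence cannot itself be injective.
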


	\begin{proof}
		Let $\mathbb{L}$ be the endomorphism ring of the direct sum of the simple non-injective $\Lambda$-modules (one for each isomorphism class). Combining  Theorem \ref{Theorem: Characterization of indecomposables} with Lemma \ref{lem:injectively-stable-rad2-Nakayama}, there is a bijection between the indecomposable non-injective objects in $\mono_Q(\Lambda)$ and the indecomposable objects in $\modu \mathbb{L}Q$. As there are only finitely many indecomposable injective objects in $\mono_Q(\Lambda)$, $\mono_Q(\Lambda)$ is of finite representation type if and only if $\modu \mathbb{L}Q$ is of finite representation type. 
		
		In this case, we count indecomposables.  There are exactly $m\cdot |Q_0|$ indecomposable injective modules in $\mono_Q(\Lambda)$, as each of them is of the form $f_!(J)$ for some indecomposable injective module in $\modu \Lambda Q_0$. According to Theorem  \ref{Theorem:Gabriel-for-skew-fields}, there are exactly $t\cdot |\Phi^+|$ indecomposable non-injective modules in $\mono_Q(\Lambda)$, so the claim follows. 
	\end{proof}
	
	\begin{rmk}\label{Remark:NumberIndecA_n}
		It is well-known that the number of positive roots of $\mathbb{A}_n$ is $\binom{n+1}{2}$. By Theorem \ref{Theorem:rad2Nakayama} this is also the number of indecomposable non-injective object in $\Mono_{Q}(\mathbbm{k}[x]/(x^2))$ when the underlying diagram of $Q$ is $\mathbb{A}_n$. This recovers an observation in \cite[6.1 (i)]{XZZ14}.
	\end{rmk}
	
	\begin{rmk}
		An obvious question given the preceding theorem is whether this is something special about $\mono_Q(\Lambda)$ or whether the same result holds even for $\modu \Lambda Q$. This is however not the case. For this remark we restrict to  a finite-dimensional algebra $\Lambda$ over an algebraically closed field $\Bbbk$. Furthermore assume that $\Lambda$ and $Q$ are connected. There are three cases to distinguish: $\Lambda\cong \Bbbk$, $\Lambda\cong \left(\begin{smallmatrix}\Bbbk&\Bbbk\\0&\Bbbk\end{smallmatrix}\right)$, and the rest. 
		
		If $\Lambda\cong \Bbbk$, then indeed Gabriel's theorem states that $\Lambda Q$ is of finite representation type if and only if $Q$ is Dynkin. 
		
		If $\Lambda\cong \left(\begin{smallmatrix}\Bbbk&\Bbbk\\0&\Bbbk\end{smallmatrix}\right)$, then $\Lambda Q\cong T_2(\Bbbk Q)$. The representation type of $T_2(A)$ for a finite-dimensional algebra $A$ was studied in \cite{LS00} and from Theorem 4 therein it follows that $\Lambda Q$ is of finite representation type if and only if $Q$ is of type $A_n$ for $n\leq 4$, with arbitrary orientation. For $A_2$ or non-linearly oriented $A_3$, a proof of representation-finiteness can already be found in \cite[Proposition 1.2, Theorem 1.3]{AR75/76}, for linearly oriented $A_3$, see \cite[Theorem 2.4(a)]{Les94}.  
		
		Lastly, if $\Lambda$ is any other radical square zero Nakayama algebra, then $\Lambda Q$ is of finite representation type if and only if $Q$ is of type $A_n$ for $n\leq 3$. For $\Lambda\cong \Bbbk[x]/(x^2)$, this follows from \cite[Proposition 13.1]{GLS16}. For general $\Lambda$ (linear over an algebraically closed field) an analogous proof using covering theory and the knitting algorithm yields the result. (The cases of $Q=A_2$ or linearly oriented $A_3$ can also  be found in \cite[Proposition 1.2]{AR75/76} and \cite[Theorem 2.4(b)]{Les94}. In case $Q=A_4$, one can use the Happel--Vossieck list \cite{HV83} to find a subquiver with relations of the form 
		\[\begin{tikzcd}\bullet\arrow[no head]{r}&\bullet\arrow{d}&\bullet\arrow{l}\arrow[no head, dotted]{ld}\arrow{d}&\bullet\arrow[no head, dotted]{ld}\arrow{l}\arrow{d}\\
		&\bullet&\bullet\arrow{l}&\bullet\arrow{l}&\bullet\arrow[no head]{l}\end{tikzcd}\]
		to conclude that the algebra is representation-infinite.) 
	\end{rmk}
	
	\begin{ex}
		Let $\Bbbk$ be a field and let $\Lambda=\Bbbk[x]/(x^2)$. Let $Q$ be a linearly oriented quiver of type $\mathbb{A}_n$. In this case, there is only one simple $\Lambda$-module which is non-injective. According to Theorem \ref{Theorem: Characterization of indecomposablesArtin}, there are $n$ indecomposable injective modules given by $f_!(\Lambda(\mathtt{i}))$ where $\Lambda(\mathtt{i})$ denotes the representation of $\mathbb{A}_n$ given by $\Lambda$ at vertex $\mathtt{i}$, and zero elsewhere. As a representation, these look as follows:
		\[0\to \dots\to 0\to \Lambda\stackrel{\id}{\to} \Lambda\stackrel{\id}{\to} \dots\stackrel{\id}{\to} \Lambda\]
		On the other hand, the indecomposable non-injectives are given by applying the $\Mimo$-construction to objects in $\modu \Bbbk \mathbb{A}_n$, since $\overline{\modu}\, \Lambda\cong \modu \Bbbk$. The indecomposable objects of $\modu \Bbbk \mathbb{A}_n$ are given by the interval modules 
		\[0\to \dots \to 0\to \Bbbk\stackrel{\id}{\to} \dots \stackrel{\id}{\to} \Bbbk\to 0\to \dots\to 0.\]
		where the the last $\Bbbk$ is in position $\mathtt{j}$. It is easy to check that for such an interval module $M$ we have $\ker M_{\mathtt{k},\arrowin}=0$ unless $\mathtt{k}=\mathtt{j}+1$, in which case $\ker M_{\mathtt{k}, \arrowin}=\Bbbk$, which has injective envelope $\Lambda$. Therefore, following the explicit description of the $\Mimo$-construction in Example \ref{Example:MimoFormulaQuiverRep}, we obtain that the indecomposable non-injective objects in $\Mono_{\mathbb{A}_n}(\Lambda)$ are given by 
		\[0\to \dots\to 0\to \Bbbk\stackrel{\id}{\to} \dots\stackrel{\id}{\to} \Bbbk\stackrel{\iota}{\to} \Lambda\stackrel{\id}{\to}\dots\stackrel{\id}{\to} \Lambda,\]
		where $\iota$ is a chosen embedding of $\Bbbk$ into $\Lambda$. This recovers the description of the indecomposables in \cite[Theorem 3.1]{Moo09} for $n=3$. For arbitrary $n$, counting the number of indecomposable non-injective, we see that it is equal to $\binom{n+1}{2}$ as noted in Remark \ref{Remark:NumberIndecA_n}.
		
		It follows from Theorem \ref{Theorem:BijectionLoewyLength3} that $\mono_Q(\Bbbk[x]/(x^2))$ and $\mono_Q(\mathbb{Z}/(p^2))$ have the same number of indecomposable objects of the same form. In fact, this could also be seen by following the same steps as above when $\Bbbk=\mathbb{Z}/(p)$ and $\Lambda=\mathbb{Z}/(p^2)$. 
		
		More generally, the same arguments yield that for $\Lambda$ an arbitrary radical square zero Nakayama algebra and $Q$ a linearly oriented type $\mathbb{A}_n$ quiver, the indecomposable injective objects of $\mono_Q(\Lambda)$ are given by
		\[0\to \dots\to 0\to I\stackrel{\id}{\to} \dots\stackrel{\id}{\to} I,\]
		where $I$ runs through the indecomposable injective $\Lambda$-modules, while the indecomposable non-injective objects of $\mono_Q(\Lambda)$ are given by 
		\[0\to \dots\to 0\to L\stackrel{\id}{\to} \dots\stackrel{\id}{\to} L\stackrel{\iota}{\to} I(L)\stackrel{\id}{\to} \dots\stackrel{\id}{\to} I(L)\]
		where $L$ runs through the simple non-injective $\Lambda$-modules up to isomorphism, $I(L)$ denotes an injective envelope of $L$, with $\iota\colon L\to I(L)$ a chosen embedding. If we set $n=2$ or $n=3$ and let $\Lambda$ be the algebra $\begin{tikzcd}1\arrow[r,yshift=0.1cm,"\alpha"above]&2\arrow[l,yshift=-0.1cm,"\beta" below]\end{tikzcd}$ with relations $\alpha\circ \beta=0=\beta\circ \alpha$, then this recovers the description of the indecomposable objects obtained in \cite[Section 6.2]{XZZ14}. 
	\end{ex}
	
	\begin{ex}
		Let $Q$ be the quiver $\mathtt{1}\to \mathtt{2}\leftarrow \mathtt{3}\to \mathtt{4}$. Let $\Lambda=\Bbbk[x]/(x^2)$. Similarly to the preceding example, the 4 indecomposable injective objects are given by 
		\begin{align*}f_!(\Lambda(\mathtt{1}))&=(\Lambda\stackrel{1}{\to} \Lambda\leftarrow 0\to 0),\qquad 
		f_!(\Lambda(\mathtt{2}))=(0\to \Lambda\leftarrow 0\to 0),\\ f_!(\Lambda(\mathtt{3}))&=(0\to \Lambda\stackrel{1}{\leftarrow} \Lambda\stackrel{1}{\to} \Lambda),\qquad
		f_!(\Lambda(\mathtt{4}))=(0\to 0\leftarrow 0\to \Lambda).
		\end{align*}
		On the other hand, the indecomposable non-injective objects are given by applying the Mimo-construction to the following indecomposable objects in $\modu \Bbbk Q$, since $\overline{\modu}\, \Lambda\cong \modu \Bbbk$:
		\begin{align*}
		&0 \to \Bbbk \leftarrow 0 \to 0,\qquad 0\to 0\leftarrow 0\to \Bbbk,\qquad    
		\Bbbk\stackrel{1}{\to} \Bbbk \leftarrow 0 \to 0,\qquad 0\to \Bbbk \stackrel{1}{\leftarrow} \Bbbk\stackrel{1}{\to} \Bbbk,\\
		&\Bbbk\stackrel{1}{\to} \Bbbk \stackrel{1}{\leftarrow} \Bbbk \stackrel{1}{\to} \Bbbk,\qquad 0\to \Bbbk\stackrel{1}{\leftarrow} \Bbbk \to 0,\qquad
		0\to 0 \leftarrow \Bbbk\stackrel{1}{\to} \Bbbk,\\ 
		&\Bbbk\stackrel{1}{\to} \Bbbk \stackrel{1}{\leftarrow} \Bbbk\to 0,\qquad
		0\to 0 \leftarrow \Bbbk\to 0,\qquad \Bbbk\to 0\leftarrow 0\to 0.
		\end{align*}
		This yields the following indecomposable objects in $\mono_Q(\Lambda)$:
		\begin{align*}
		&0\to \Bbbk\leftarrow 0\to 0, \qquad 0\to 0\leftarrow 0\to \Bbbk,\qquad
		\Bbbk\stackrel{1}{\to} \Bbbk \leftarrow 0\to 0, \qquad 0\to \Bbbk\stackrel{1}{\leftarrow} \Bbbk \stackrel{1}{\to}\Bbbk,\\
		&\Bbbk\stackrel{\left(\begin{smallmatrix}1\\\iota\end{smallmatrix}\right)}{\to} \Bbbk\oplus \Lambda \stackrel{\left(\begin{smallmatrix}1\\0\end{smallmatrix}\right)}{\leftarrow} \Bbbk\stackrel{1}{\to} \Bbbk, \qquad 0\to \Bbbk\stackrel{1}{\leftarrow} \Bbbk\stackrel{\iota}{\to} \Lambda,\qquad 0\to \Lambda \stackrel{\iota}{\leftarrow} \Bbbk\to \Bbbk,\\ &\Bbbk\stackrel{\left(\begin{smallmatrix}1\\\iota\end{smallmatrix}\right)}{\to} \Bbbk\oplus \Lambda \stackrel{\left(\begin{smallmatrix}1\\0\end{smallmatrix}\right)}{\leftarrow} \Bbbk\stackrel{\iota}{\to} \Lambda,\qquad
		0\to \Lambda \stackrel{\iota}{\leftarrow} \Bbbk \stackrel{\iota}{\to}\Lambda, \qquad \Bbbk\stackrel{\iota}{\to} \Lambda  \leftarrow 0\to 0.
		\end{align*}
		From this and the previous example we see that the dimension vectors of $\mono_Q(\Lambda)$ depend on the orientation of $Q$. Replacing $\Lambda$ by a general radical square zero Nakayama algebra is done similarly as in the preceding example. 
	\end{ex}

	\subsection{The Kronecker quiver}\label{Subsection:Kronecker}
	Assume $\mathbbm{k}$ is algebraically closed. Let 
	$Q=\begin{tikzcd}1\arrow[r,yshift=0.1cm,""above]\arrow[r,yshift=-0.1cm,"" below]&2\end{tikzcd}$ be the Kronecker quiver and let $\Lambda=\mathbbm{k}[x]/(x^2)$. An object of $\mono_Q(\Lambda)$ consists of a triple $(U,V,T)$ where $V$ is a finite-dimensional $\mathbbm{k}$-vector space, $U\oplus U$ is a $\mathbbm{k}$-subspace of $V$, and $T$ is a linear operator on $V$ satisfying $T^2=0$, and which restricts to a linear operator of the form
	\[
	\begin{pmatrix}
	T'&0\\
	0&T'
	\end{pmatrix}\colon U\oplus U\to U\oplus U
	\]
	on $U\oplus U$. We use  Theorem \ref{Theorem: Characterization of indecomposablesArtin} to describe the indecomposables in $\mono_Q(\Lambda)$. Indeed, the indecomposable injective objects are given by
	\[
	f_!(\Lambda(\mathtt{1}))= \begin{tikzcd}\Lambda\arrow[r,yshift=0.1cm,"i_1"above]\arrow[r,yshift=-0.1cm,"i_2"below]&\Lambda^2\end{tikzcd} \quad \text{and} \quad f_!(\Lambda(\mathtt{2}))= \begin{tikzcd}0\arrow[r,yshift=0.1cm,""above]\arrow[r,yshift=-0.1cm,"" below]&\Lambda\end{tikzcd}
	\]
 where $i_1$ and $i_2$ are the inclusions of the first and second summand of $\Lambda^2$, respectively.
	By Theorem \ref{Theorem: Characterization of indecomposablesArtin} we can obtain the indecomposable non-injective objects in $\mono_Q(\Lambda)$ from the indecomposable objects in $\rep (Q,\overline{\operatorname{mod}}\,\Lambda)\cong \rep (Q,\operatorname{mod}\mathbbm{k})$. The latter is just the category of representations of the Kronecker quiver over $\mathbbm{k}$. This has tame representation type and its indecomposable finite-dimensional representations are well-known, e.g. see \cite[Section VIII.7]{ARS95}. To describe them we let $V_n$ denote the vector space of homogenous polynomials of degree $n$ in variables $y$ and $z$, and we let $V_n^*$ denote its $\mathbbm{k}$-dual. Then the indecomposables of $\rep (Q,\operatorname{mod}\mathbbm{k})$ are the preprojective and preinjective representations
	\[
	P_n=\begin{tikzcd}V_{n-1}\arrow[r,yshift=0.1cm,"y "above]\arrow[r,yshift=-0.1cm,"z" below]&V_n\end{tikzcd} \quad \text{and} \quad I_n=\begin{tikzcd}V_n^*\arrow[r,yshift=0.1cm,"(y)^*"above]\arrow[r,yshift=-0.1cm,"(z)^*" below]&V_{n-1}^*\end{tikzcd} \quad  \quad n\geq 0
	\]
	and the regular representations 
	\[
	R_{p^n}=\begin{tikzcd}V_{n-1}\arrow[r,yshift=0.1cm,"y"above]\arrow[r,yshift=-0.1cm,"z" below]&V_n/\mathbbm{k}p^n\end{tikzcd} \quad \text{where } n\geq 1 \text{ and } 0\neq p\in V_1.
	\]
	Here, $R_{p^n}\cong R_{q^n}$ if and only if $p=ay+bz$ and $q=cy+dz$ where $(a\colon b)=(c\colon d)$ in $\mathbb{P}^1$. Hence, the regular representations are indexed by the projective line. Since there are exact sequences
	\begin{equation}\label{Equation:Homogenous}
	0\to V_{n-2} \xrightarrow{\begin{pmatrix}
		-z\\y\end{pmatrix}}V_{n-1}\oplus V_{n-1}\xrightarrow{\begin{pmatrix}
		y&z
		\end{pmatrix}}V_n\to 0 \quad n\geq 1
	\end{equation}
	\begin{equation}\label{Equation:HomogenousDual}
	0\to V^*_{n+1} \xrightarrow{\begin{pmatrix}
		-(z)^*\\(y)^*\end{pmatrix}}V^*_{n}\oplus V^*_{n}\xrightarrow{\begin{pmatrix}
		(y)^*&(z)^*
		\end{pmatrix}}V^*_{n-1}\to 0 \quad n\geq 1
	\end{equation}
	\begin{equation}\label{Equation:HomogeneousQuotient}
	0\to V_{n-2}\oplus \mathbbm{k}p^{n-1} \xrightarrow{\begin{pmatrix}
		-z &a\\y &b\end{pmatrix}}V_{n-1}\oplus V_{n-1}\xrightarrow{\begin{pmatrix}
		y&z
		\end{pmatrix}}V_n/\mathbbm{k}p^n\to 0 \quad n\geq 1
	\end{equation}
	we get
	\[
	L_1\Kopf_X(P_n)=(0,V_{n-2}) \quad  \quad L_1\Kopf_X(I_n)=(0,V_{n+1}^*) \quad \quad L_1\Kopf_X(R_{p^n})=(0,V_{n-2}\oplus \mathbbm{k}p^{n-1}).
	\]
	Using the formula in Example \ref{Example:MimoFormulaQuiverRep} we can calculate the Mimo of $P_n$, $I_n$ and $R_{p^n}$. For simplicity we write $V[x]/(x^2)=V\otimes_{\mathbbm{k}} \mathbbm{k}[x]/(x^2)$ for a $\mathbbm{k}$-vector space $V$. Then
	\[
	\Mimo P_n\cong \begin{tikzcd}(V_{n-1}\arrow[r,yshift=0.1cm,"g_1"above]\arrow[r,yshift=-0.1cm,"g_2" below]&V_n\oplus V_{n-2}[x]/(x^2))\end{tikzcd} \quad  \text{with} \quad g_1=\begin{pmatrix}
	y \\ g_1'
	\end{pmatrix} \quad \text{and} \quad g_2=\begin{pmatrix}
	z \\ g_2'
	\end{pmatrix}
	\]
	where $V_{n-1}$ and $V_n$ are considered as $\mathbbm{k}[x]/(x^2)$-modules with trivial action by $x$, and where $g_1'=xg_1''$ and $g_2'=xg_2''$ and $\begin{pmatrix}
	g_1''&g_2''
	\end{pmatrix}\colon V_{n-1}\oplus V_{n-1}\to V_{n-2}$ is a choice of a retraction of the leftmost map in the exact sequence \eqref{Equation:Homogenous}.
	Similarly,
	\[
	\Mimo I_n\cong \begin{tikzcd}(V^*_{n}\arrow[r,yshift=0.1cm,"h_1"above]\arrow[r,yshift=-0.1cm,"h_2" below]&V^*_{n-1}\oplus V^*_{n+1}[x]/(x^2))\end{tikzcd} \quad  \text{with} \quad h_1=\begin{pmatrix}
	(y)^* \\ h_1'
	\end{pmatrix} \quad \text{and} \quad h_2=\begin{pmatrix}
	(z)^* \\ h_2'
	\end{pmatrix}
	\]
	where $h_1'=x h_1''$ and $h_2'=x h_2''$ and $\begin{pmatrix}
	h_1''&h_2''
	\end{pmatrix}\colon V^*_{n}\oplus V^*_{n}\to V^*_{n+1}$ is a choice of a retraction to the leftmost map in the exact sequence \eqref{Equation:HomogenousDual}.
	Finally, $\Mimo R_{p^n}$ is given by 
	\[
	\Mimo R_{p^n}\cong \begin{tikzcd}(V_{n-1}\arrow[r,yshift=0.1cm,"k_1"above]\arrow[r,yshift=-0.1cm,"k_2" below]&V_n\oplus (V_{n-2}\oplus \mathbbm{k}p^{n-1})[x]/(x^2))\end{tikzcd} \quad \text{with} \quad k_1=\begin{pmatrix}
	y \\ k_1'
	\end{pmatrix} \quad \text{and} \quad k_2=\begin{pmatrix}
	z \\ k_2'
	\end{pmatrix}  
	\]
	where $k_1'=x k_1''$ and $k_2'=x k_2''$
	and $\begin{pmatrix}
	k_1''&k_2''
	\end{pmatrix}\colon V_{n-1}\oplus V_{n-1}\to V_{n-2}\oplus \mathbbm{k}p^{n-1}$ is a choice of a retraction to the leftmost map in the exact sequence \eqref{Equation:HomogeneousQuotient}.
	It follows from Theorem \ref{Theorem: Characterization of indecomposablesArtin} that 
	\[
	f_!(\Lambda(\mathtt{1})),\text{ }f_!(\Lambda(\mathtt{2})),\text{ }\Mimo P_n,\text{ } \Mimo I_n,\text{ and }\Mimo R_{p^n}
	\]
	for all possible $n$ and $p$ are up to isomorphism all the indecomposable objects in $\mono_Q(\Lambda)$.
	
	We can also consider the category $\Mono_Q(\Lambda)$ of all monic representations, consisting of triples $(U,V,T)$ as above, but where $U$ and $V$ are not necessarily finite-dimensional. By Theorem \ref{Theorem: Characterization of indecomposablesArtin} the indecomposable non-injective objects in $\Mono_Q(\Lambda)$ can be obtained from the indecomposable objects in $\rep (Q,\overline{\operatorname{Mod}}\,\Lambda)\cong \rep (Q,\operatorname{Mod}\mathbbm{k})$ using the Mimo-construction. For example, for the generic module
	\[
	G=\begin{tikzcd}\mathbbm{k}(t)\arrow[r,yshift=0.1cm,"t "above]\arrow[r,yshift=-0.1cm,"1" below]&\mathbbm{k}(t)\end{tikzcd}
	\]
	we have an exact sequence
	\[
	0\to \mathbbm{k}(t) \xrightarrow{\begin{pmatrix}
		-1\\t\end{pmatrix}}\mathbbm{k}(t)\oplus \mathbbm{k}(t)\xrightarrow{\begin{pmatrix}
		t&1
		\end{pmatrix}}\mathbbm{k}(t)\to 0
	\]
	and hence we get an indecomposable object in $\Mono_Q(\Lambda)$
	\[
	\Mimo G=\begin{tikzcd}(\mathbbm{k}(t)\arrow[r,yshift=0.1cm,"l_1 "above]\arrow[r,yshift=-0.1cm,"l_2" below]&\mathbbm{k}(t)\oplus k(t)[x]/(x^2)) \quad  \text{with} \quad l_1=\begin{pmatrix}
	t \\ -x
	\end{pmatrix} \quad \text{and} \quad l_2=\begin{pmatrix}
	1 \\ 0
	\end{pmatrix}.\end{tikzcd}
	\]
	In general, classifying all indecomposables in $\Mono_Q(\Lambda)$ is difficult, since it is difficult for the category $\rep (Q,\operatorname{Mod}\mathbbm{k})$. For example, there is an exact embedding of the category of representations of the 3-Kronecker quiver into 
	$\operatorname{rep}(Q,\operatorname{Mod}\mathbbm{k})$, see \cite{Rin99}. 
	
	\section{Applications to modulations}\label{Section:ApplicationToModulations}
	
	In this section we apply our results to representations of modulations over radical square zero selfinjective Nakayama algebras over $\mathbbm{k}$. In particular, we recover results in \cite{LW22}, and give a characterization for when the GLS algebras introduced in \cite{GLS16} are of finite Cohen--Macaulay type, assuming the entries  in the symmetrizers are $\leq 2$. 
	
	\subsection{Prospecies of radical square zero cyclic Nakayama algebras}
	
	Fix a field $\mathbbm{k}$, a finite acyclic quiver $Q$, and a \emphbf{prospecies} on $Q$ in the sense of \cite{Kul17}, i.e. for each vertex $\mathtt{i}\in Q_0$ a finite-dimensional $\mathbbm{k}$-algebra $\Lambda_\mathtt{i}$ and for each arrow $\alpha\colon \mathtt{i}\to \mathtt{j}$ in $Q$ a $\Lambda_\mathtt{j}$-$\Lambda_\mathtt{i}$-bimodule $M_\alpha$ which is projective as a left $\Lambda_\mathtt{j}$-module and right $\Lambda_\mathtt{i}$-module. Associated to this we have a modulation  $\mathfrak{B}$  on $Q$ where $\mathcal{B}_\mathtt{i}=\operatorname{mod}\Lambda_\mathtt{i}^{\operatorname{op}}$ is the category of finite-dimensional left $\Lambda_\mathtt{i}$-modules  and $$F_\alpha=M_\alpha\otimes_{\Lambda_\mathtt{i}}-\colon \operatorname{mod}\Lambda_\mathtt{i}^{\operatorname{op}}\to \operatorname{mod}\Lambda_\mathtt{j}^{\operatorname{op}}$$ is given by the tensor product, see Example \ref{Example: Phyla}. We also have the tensor algebra
	\[
	T(M)\coloneqq \Lambda\oplus M\oplus (M\otimes_{\Lambda}M)\oplus \dots
	\]
	where $\Lambda=\prod_{\mathtt{i}\in Q_0}\Lambda_{\mathtt{i}}$ and $M=\bigoplus_{\alpha\in Q_1}M_\alpha$ is a $\Lambda$-bimodule in the natural way. The category $\operatorname{rep}\mathfrak{B}$ of $\mathfrak{B}$-representations is equivalent to the category $\operatorname{mod}T(M)^{\operatorname{op}}$ of finitely generated left $T(M)$-modules, see \cite[Lemma 2.3.4]{Geu17}. 
	If the $\Lambda_\mathtt{i}$'s are products of the field $\mathbbm{k}$, then $T(M)$ is path algebra of a quiver, which we describe.
	
	\begin{defn}\label{Definition:ModulationOverField}
		Let $\mathcal{M}=(t_\mathtt{i},m_{k,l}^\alpha)$ be a tuple consisting  of a non-negative integer $t_\mathtt{i}$ for each vertex $\mathtt{i}$ in $Q$ and a non-negative integer $m_{k,l}^\alpha$ for each arrow $\alpha\colon \mathtt{i}\to \mathtt{j}$ in $Q$ and each pair of integers $1\leq k\leq t_\mathtt{i}$ and $1\leq l\leq t_\mathtt{j}$. The quiver $Q(\mathcal{M})$ is defined as follows: 
		\begin{itemize}
			\item $Q(\mathcal{M})_0=\{(\mathtt{i},m)\mid \mathtt{i}\in Q_0\text{ and }1\leq m\leq t_\mathtt{i} \}$.
			\item The number of arrows from $(\mathtt{i},k)$ to $(\mathtt{j},l)$ is $\sum m_{k,l}^\alpha$ where the sums runs over all arrows $\alpha$ with source $\mathtt{i}$ and target $\mathtt{j}$.
		\end{itemize}
	\end{defn} 
	\begin{lem}\label{Lemma:ModulationOverField}
		Assume $\Lambda_{\mathtt{i}}\cong \mathbbm{k}\times \dots \times \mathbbm{k}$ is a product of $t_\mathtt{i}$-copies of $\mathbbm{k}$ for each $\mathtt{i}\in Q_0$. Let $e^{\mathtt{i}}_m$ be the idempotent corresponding to the $m$'th copy of $\mathbbm{k}$ in $\Lambda_\mathtt{i}$. For an arrow $\alpha\colon \mathtt{i}\to \mathtt{j}$  in $Q$ and integers $1\leq k\leq t_\mathtt{i}$ and $1\leq l\leq t_\mathtt{j}$ let  $m_{k,l}^\alpha\coloneqq \operatorname{dim}_{\mathbbm{k}}e^{\mathtt{j}}_lM_\alpha e^{\mathtt{i}}_k$. Then $T(M)\cong \mathbbm{k}(Q(\mathcal{M}))$.
	\end{lem}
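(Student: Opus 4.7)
The plan is to recognize both sides as tensor algebras of the same bimodule over the same semisimple base, and then invoke the universal property of the tensor algebra.

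First I would identify the degree-zero parts. Under the hypothesis $\Lambda_\mathtt{i}\cong \mathbbm{k}^{t_\mathtt{i}}$, the idempotents $\{e^\mathtt{i}_m\mid \mathtt{i}\in Q_0,\ 1\leq m\leq t_\mathtt{i}\}$ form a complete set of primitive orthogonal idempotents in $\Lambda=\prod_{\mathtt{i}\in Q_0}\Lambda_\mathtt{i}$, so $\Lambda\cong \prod_{(\mathtt{i},m)\in Q(\mathcal{M})_0}\mathbbm{k}$. On the other hand, the degree-zero part of the path algebra $\mathbbm{k}(Q(\mathcal{M}))$ is exactly the product of one copy of $\mathbbm{k}$ for each vertex of $Q(\mathcal{M})$; let $\widehat{e}_{(\mathtt{i},m)}$ denote the idempotent corresponding to the lazy path at $(\mathtt{i},m)$. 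Sending $e^\mathtt{i}_m\mapsto \widehat{e}_{(\mathtt{i},m)}$ yields an isomorphism $\Lambda\xrightarrow{\cong}\mathbbm{k} Q(\mathcal{M})_0$ of $\mathbbm{k}$-algebras, identifying the two semisimple bases.

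Next I would compare the degree-one parts as bimodules. Decomposing $M=\bigoplus_{\alpha\in Q_1}M_\alpha$ along the idempotents, for $\alpha\colon \mathtt{i}\to\mathtt{j}$ we have $e^{\mathtt{j}'}_l M_\alpha e^{\mathtt{i}'}_k=0$ unless $\mathtt{i}'=\mathtt{i}$ and $\mathtt{j}'=\mathtt{j}$, in which case $\dim_\mathbbm{k} e^\mathtt{j}_l M_\alpha e^\mathtt{i}_k=m^\alpha_{k,l}$ by definition. Summing over $\alpha$ gives
\[
\dim_\mathbbm{k} \widehat{e}_{(\mathtt{j},l)}M\widehat{e}_{(\mathtt{i},k)}=\sum_{\alpha\colon \mathtt{i}\to \mathtt{j}}m^\alpha_{k,l},
\]
which by Definition \ref{Definition:ModulationOverField} equals the number of arrows from $(\mathtt{i},k)$ to $(\mathtt{j},l)$ in $Q(\mathcal{M})$. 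Choosing a $\mathbbm{k}$-basis for each summand $e^\mathtt{j}_lM_\alpha e^\mathtt{i}_k$ and putting those bases into bijection with the arrows of $Q(\mathcal{M})$ produces a $\mathbbm{k}$-linear isomorphism $\varphi\colon M\xrightarrow{\cong}A$, where $A$ denotes the $\mathbbm{k}$-span of arrows of $Q(\mathcal{M})$ inside $\mathbbm{k}(Q(\mathcal{M}))$. By construction $\varphi$ is compatible with the identification $\Lambda\cong \mathbbm{k} Q(\mathcal{M})_0$ on both sides, so it is an isomorphism of $\Lambda$-bimodules.

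Finally, $T(M)$ is by definition the tensor $\Lambda$-algebra on $M$, and the path algebra $\mathbbm{k}(Q(\mathcal{M}))$ is naturally the tensor algebra of $A$ over $\mathbbm{k} Q(\mathcal{M})_0$ (paths of length $n$ are exactly $n$-fold tensor products of arrows along matching vertices). The universal property of tensor algebras then extends $\varphi$ to a $\mathbbm{k}$-algebra isomorphism $T(M)\xrightarrow{\cong}\mathbbm{k}(Q(\mathcal{M}))$. No real obstacle appears; the only care needed is the bookkeeping in matching basis vectors of $e^\mathtt{j}_lM_\alpha e^\mathtt{i}_k$ with the corresponding arrows in $Q(\mathcal{M})$, which is a finite combinatorial choice.
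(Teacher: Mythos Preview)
Your proof is correct and follows essentially the same route as the paper: the paper identifies $\Lambda$ with the product of copies of $\mathbbm{k}$ indexed by $Q(\mathcal{M})_0$ and then cites \cite[Proposition III.1.3]{ARS95}, which amounts precisely to the bimodule comparison and tensor-algebra argument you spell out. Your write-up is a faithful unpacking of that citation.
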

	
	\begin{proof}
		Since $\Lambda=\prod_{\mathtt{i}\in Q_0}\Lambda_{\mathtt{i}}$ and $\Lambda_\mathtt{i}$ is a product of $t_\mathtt{i}$ copies of $\mathbbm{k}$, the algebra $\Lambda$ is a product of $\mathbbm{k}$'s indexed over the vertex set of $Q(\mathcal{M})$. By \cite[Proposition III.1.3]{ARS95} the claim follows.
	\end{proof}
	By assumption, the functors $F_\alpha=M_\alpha\otimes_{\Lambda_{\mathtt{i}}}-$ are exact and preserve projective modules. If the algebras $\Lambda_{\mathtt{i}}$ are selfinjective, then the functors also preserve injective modules. Therefore the endofunctor $X$ defined from the modulation satisfies the standing assumptions in this paper. Furthermore, in this case the monomorphism category $\Mono(X)$ of $\mathfrak{B}$ coincides with the category $\operatorname{Gproj}T(M)$ of finitely generated Gorenstein projective right $T(M)$-modules, see \cite[Proposition 3.8]{Kul17}. In particular, by Theorem \ref{Theorem: Canonical functor representation equivalence} we have an epivalence $\overline{\operatorname{Gproj}}\, T(M)^{\operatorname{op}}\to \operatorname{rep}\overline{\mathfrak{B}}$ where $\overline{\mathfrak{B}}$ denotes the modulation in Example \ref{Example:ModulationStableCats}. Since $T(M)$ is $1$-Gorenstein, see \cite[Proposition 3.5]{Kul17}, the stable category $\overline{\operatorname{Gproj}}\, T(M)$ is equivalent to the singularity category of $T(M)$ \cite{Buc86}.
	
	We now consider modulations of radical square zero cyclic Nakayama algebras. This covers both the modulations in \cite{GLS16} and in \cite{LW22}. Here by a radical square zero cyclic Nakayama algebra we mean the path algebra of the cyclic quiver \[
	\tilde{A}_n=\begin{tikzcd}
	1 \arrow[r,""] & 2 \arrow[r, ""] & \dots  \arrow[r, ""]& n \ar[lll,bend right=20,"" above]
	\end{tikzcd}
	\]
	for some integer $n$, modulo the ideal making the composite of any two arrows zero. 
	
	\begin{thm}\label{Theorem:MainThmModulations}
		Assume $\Lambda_\mathtt{i}$ is either a radical square zero cyclic Nakayama algebras or $\mathbbm{k}$ for $\mathtt{i}\in Q_0$. Set 
		\[
		t_{\mathtt{i}}= \begin{cases}
		0, & \text{if}\ \Lambda_\mathtt{i}=\mathbbm{k} \\
		\text{number of simples of }\Lambda_{\mathtt{i}}, & \text{otherwise}.
		\end{cases}
		\]
		For $1\leq k\leq t_{\mathtt{i}}$ let $e^{\mathtt{i}}_k$ be the idempotent at vertex $k$ of $\Lambda_\mathtt{i}$. For each arrow $\alpha\colon \mathtt{i}\to \mathtt{j}$ write $M_\alpha\cong M_\alpha'\oplus M_\alpha''$ where $M''_{\alpha}$ is a maximal projective summand of $M_\alpha$ as a $\Lambda_\mathtt{j}$-$\Lambda_{\mathtt{i}}$-bimodule. For $1\leq k\leq t_{\mathtt{i}}$ and $1\leq l\leq t_\mathtt{j}$ let $m_{k,l}^\alpha$ be the corank of the linear transformation $e^{\mathtt{j}}_lM'_\alpha e^{\mathtt{i}}_{k+1}\to e^{\mathtt{j}}_lM'_\alpha e^{\mathtt{i}}_k$ induced from the arrow $k\to (k+1)$ in $\Lambda_\mathtt{i}$ (where $t_\mathtt{i}+1$ is identified with $1$). Let $\mathcal{M}=(t_\mathtt{i},m_{k,l}^\alpha)$ be the tuple formed by these integers. The following hold:
		
		\begin{enumerate}
			\item\label{Theorem:MainThmModulations:1} We have an equivalence $\operatorname{rep}\overline{\mathfrak{B}}\cong \operatorname{mod}\mathbbm{k}(Q(\mathcal{M}))^{\operatorname{op}}$
			\item\label{Theorem:MainThmModulations:2} We have an epivalence
			\[
			\overline{\operatorname{Gproj}}\,T(M)^{\operatorname{op}}\to \operatorname{mod}\mathbbm{k}(Q(\mathcal{M}))^{\operatorname{op}}. 
			\]
			\item\label{Theorem:MainThmModulations:3} $T(M)$ is Cohen--Macaulay finite if and only if $Q(\mathcal{M})$ is Dynkin. 
			\item\label{Theorem:MainThmModulations:4} We obtain a bijection 
			\[
			\renewcommand{\arraystretch}{1.8}
			\begin{array}{ccc}
			\renewcommand{\arraystretch}{1.1}
			\begin{Bmatrix}
			\text{Indecomposable objects} \\
			\text{in $\operatorname{mod}\mathbbm{k}(Q(\mathcal{M}))^{\operatorname{op}} $}
			\end{Bmatrix}^{\cong}
			&
			\xrightarrow{\cong}
			&
			\renewcommand{\arraystretch}{1.2}
			\begin{Bmatrix}
			\text{Indecomposable non-injective} \\
			\text{objects in $\operatorname{Gproj}T(M)$}
			\end{Bmatrix}^{\cong}
			\end{array}
			\renewcommand{\arraystretch}{1}
			\]
			by composing the equivalence in \eqref{Theorem:MainThmModulations:1} with the Mimo-construction described in Example \ref{Example:MimoStableModulation}.
		\end{enumerate}
	\end{thm}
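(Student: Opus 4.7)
\textbf{Proof plan for Theorem \ref{Theorem:MainThmModulations}.}

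The overall strategy is to reduce all four parts to the general framework established earlier in the paper, with the main work being the explicit identification of the induced modulation on stable categories. Since each $\Lambda_\mathtt{i}$ is either $\mathbbm{k}$ or a radical square zero self-injective cyclic Nakayama algebra, it is self-injective, so the tensor functors $F_\alpha=M_\alpha\otimes_{\Lambda_\mathtt{i}}-$ are exact and preserve injectives. Thus the endofunctor $X$ associated to $\mathfrak{B}$ satisfies all the standing hypotheses, and by \cite[Proposition 3.8]{Kul17} the monomorphism category $\Mono(X)$ coincides with $\operatorname{Gproj}T(M)^{\operatorname{op}}$. Then Theorem \ref{Theorem: Canonical functor representation equivalence} immediately gives an epivalence $\overline{\operatorname{Gproj}}\,T(M)^{\operatorname{op}}\to \operatorname{rep}\overline{\mathfrak{B}}$, so part \eqref{Theorem:MainThmModulations:2} follows from part \eqref{Theorem:MainThmModulations:1}.

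The key step is therefore part \eqref{Theorem:MainThmModulations:1}: identifying the stable modulation $\overline{\mathfrak{B}}$ with the $\mathbbm{k}$-linear modulation on $Q(\mathcal{M})$ determined by the trivial assignment of $\operatorname{mod}\mathbbm{k}$ at each vertex. First I would observe that, since $\Lambda_\mathtt{i}$ is either $\mathbbm{k}$ or radical square zero Nakayama, every finite-dimensional $\Lambda_\mathtt{i}$-module decomposes as a sum of simples and projective-injectives, so $\overline{\operatorname{mod}}\,\Lambda_\mathtt{i}^{\operatorname{op}}$ is the semisimple $\mathbbm{k}$-linear category with $t_\mathtt{i}$ isomorphism classes of simples $\{S^\mathtt{i}_k\}_{1\leq k\leq t_\mathtt{i}}$, each with endomorphism ring $\mathbbm{k}$. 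Then for an arrow $\alpha\colon\mathtt{i}\to\mathtt{j}$, I would compute the induced functor $\overline{F}_\alpha$ on simples. Writing $M_\alpha=M'_\alpha\oplus M''_\alpha$, tensoring with the projective bimodule summand $M''_\alpha$ yields a projective $\Lambda_\mathtt{j}$-module, hence vanishes in $\overline{\operatorname{mod}}\,\Lambda_\mathtt{j}^{\operatorname{op}}$. For the remaining summand, using the projective presentation of $S^\mathtt{i}_k$ coming from the arrow at vertex $k$ of $\Lambda_\mathtt{i}$, the module $M'_\alpha\otimes_{\Lambda_\mathtt{i}}S^\mathtt{i}_k$ is the cokernel of a map between projective $\Lambda_\mathtt{j}$-modules whose restriction to each vertex $l$ of $\Lambda_\mathtt{j}$ is precisely the map $e^\mathtt{j}_l M'_\alpha e^\mathtt{i}_{k+1}\to e^\mathtt{j}_l M'_\alpha e^\mathtt{i}_k$ appearing in the definition of $m_{k,l}^\alpha$. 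Decomposing this cokernel into simples and projective-injectives, the multiplicity of $S^\mathtt{j}_l$ is exactly the corank of this map, namely $m_{k,l}^\alpha$, so $\overline{F}_\alpha(S^\mathtt{i}_k)\cong\bigoplus_l (S^\mathtt{j}_l)^{m^\alpha_{k,l}}$ in $\overline{\operatorname{mod}}\,\Lambda_\mathtt{j}^{\operatorname{op}}$. This identifies $\overline{\mathfrak{B}}$ with the trivial $\mathbbm{k}$-modulation on $Q(\mathcal{M})$, and Example \ref{Example:Quiver Representations} then gives $\operatorname{rep}\overline{\mathfrak{B}}\cong \operatorname{mod}\mathbbm{k}(Q(\mathcal{M}))^{\operatorname{op}}$.

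Part \eqref{Theorem:MainThmModulations:3} follows by combining the epivalence in \eqref{Theorem:MainThmModulations:2} with classical Gabriel's theorem: since the epivalence induces a bijection between indecomposables of $\overline{\operatorname{Gproj}}\,T(M)^{\operatorname{op}}$ and of $\operatorname{mod}\mathbbm{k}(Q(\mathcal{M}))^{\operatorname{op}}$, and since the injective objects of $\operatorname{Gproj}T(M)^{\operatorname{op}}$ form only finitely many isomorphism classes (they are the $f_!(J)$ for $J$ injective in $\prod\mathcal{B}_\mathtt{i}$, by Corollary \ref{Corollary:InjectivesinMono}), Cohen--Macaulay finiteness of $T(M)$ is equivalent to representation-finiteness of $\mathbbm{k}(Q(\mathcal{M}))$, i.e.\ to $Q(\mathcal{M})$ being Dynkin. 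Finally, part \eqref{Theorem:MainThmModulations:4} is immediate from Theorem \ref{Theorem: Characterization of indecomposables} applied to $\Mono(X)=\operatorname{Gproj}T(M)^{\operatorname{op}}$, using the identification in \eqref{Theorem:MainThmModulations:1} to rewrite the source as $\operatorname{mod}\mathbbm{k}(Q(\mathcal{M}))^{\operatorname{op}}$, with the inverse being the Mimo construction from Example \ref{Example:MimoStableModulation}. The main obstacle in the argument is the bookkeeping in part \eqref{Theorem:MainThmModulations:1}, particularly verifying that stripping off the maximal projective bimodule summand $M''_\alpha$ corresponds exactly to killing all projective-injective summands of $M_\alpha\otimes_{\Lambda_\mathtt{i}}S^\mathtt{i}_k$ (and not leaving any hidden projective contributions coming from $M'_\alpha$), together with checking that the indexing convention for the corank matches the arrows in the cyclic quiver defining $\Lambda_\mathtt{i}$.
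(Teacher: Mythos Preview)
Your proof plan follows essentially the same route as the paper: reduce parts \eqref{Theorem:MainThmModulations:2}--\eqref{Theorem:MainThmModulations:4} to part \eqref{Theorem:MainThmModulations:1} via Theorems \ref{Theorem: Canonical functor representation equivalence} and \ref{Theorem: Characterization of indecomposables}, and then prove \eqref{Theorem:MainThmModulations:1} by identifying $\overline{\operatorname{mod}}\,\Lambda_\mathtt{i}^{\operatorname{op}}$ with a product of copies of $\operatorname{mod}\mathbbm{k}$ and computing the induced functors $\overline{F}_\alpha$ on simples using the projective presentation $\Lambda_\mathtt{i}e^\mathtt{i}_{k+1}\to\Lambda_\mathtt{i}e^\mathtt{i}_k\to S^\mathtt{i}_k\to 0$. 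The paper packages the last step through Lemma \ref{Lemma:ModulationOverField}, but the substance is identical.

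The one point where your plan is incomplete is precisely the obstacle you flag at the end: you need that $M'_\alpha\otimes_{\Lambda_\mathtt{i}}S^\mathtt{i}_k$ has \emph{no} projective-injective summands, so that the dimension of $e^\mathtt{j}_l(M'_\alpha\otimes_{\Lambda_\mathtt{i}}S^\mathtt{i}_k)$ (the corank $m^\alpha_{k,l}$) equals the multiplicity of $S^\mathtt{j}_l$ rather than overcounting. The paper resolves this by citing \cite[Proposition 2.3]{Lin96}: over selfinjective algebras, if a bimodule has no nonzero projective bimodule summand, then its tensor product with any simple module has no nonzero projective summand. This is a genuine external input; without it the corank could in principle exceed the multiplicity. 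Once you plug in this result, your argument goes through exactly as written.
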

	
	\begin{proof}
		Since Gorenstein projective modules are the same as monomorphic representations, parts \eqref{Theorem:MainThmModulations:2} and \eqref{Theorem:MainThmModulations:4} follow from part \eqref{Theorem:MainThmModulations:1} and Theorems \ref{Theorem: Canonical functor representation equivalence} and \ref{Theorem: Characterization of indecomposables}. Furthermore, since epivalences induce bijections between the indecomposable objects, part \eqref{Theorem:MainThmModulations:3} follows from part \eqref{Theorem:MainThmModulations:2}. Hence, we only need to show part \eqref{Theorem:MainThmModulations:1}. Since $\Lambda_\mathtt{i}$ is a radical square zero Nakayama algebras, $\overline{\operatorname{mod}}\,\Lambda_\mathtt{i}^{\operatorname{op}}$ is equivalent to the module category of a product of copies of $\mathbbm{k}$. Hence, we can apply Lemma \ref{Lemma:ModulationOverField} to $\overline{\mathfrak{B}}$, so it suffices to show that the integers $t_\mathtt{i}$ and $m_{k,l}^\alpha$ defined in the theorem are equal to the ones in Lemma \ref{Lemma:ModulationOverField}. This is clear for the $t_\mathtt{i}$'s, since $\overline{\operatorname{mod}}\,\Lambda_\mathtt{i}^{\operatorname{op}}\cong \operatorname{mod}k^{t_\mathtt{i}}$  where $t_\mathtt{i}=0$ gives the zero category. Let $S_k^{\mathtt{i}}$ be the simple $\Lambda_\mathtt{i}$-module concentrated at vertex $k$ of $\Lambda_\mathtt{i}$. Note that the integer $m_{k,l}^\alpha$ for $\overline{\mathfrak{B}}$ in Lemma \ref{Lemma:ModulationOverField} is equal to the number of summands of $S_l^{\mathtt{j}}$ in $M_\alpha\otimes_{\Lambda_{\mathtt{i}}}S^{\mathtt{i}}_k$. Since $M_\alpha''$ is projective as a bimodule, $M_\alpha''\otimes_{\Lambda_{\mathtt{i}}}S^{\mathtt{i}}_k$ must be a projective $\Lambda_\mathtt{j}$-module, and hence has no summands of the form $S_l^{\mathtt{j}}$. Therefore the integer $m_{k,l}^\alpha$ for $\overline{\mathfrak{B}}$ in Lemma \ref{Lemma:ModulationOverField} is equal to the number of summands of $S_l^{\mathtt{j}}$ in $M'_\alpha\otimes_{\Lambda_{\mathtt{i}}}S^{\mathtt{i}}_k$. Since $M'_\alpha\otimes_{\Lambda_{\mathtt{i}}}S^{\mathtt{i}}_k$ has no projective summands by \cite[Proposition 2.3]{Lin96}, the number must be equal to the dimension of $e^{\mathtt{j}}_lM'_\alpha\otimes_{\Lambda_{\mathtt{i}}}S^{\mathtt{i}}_k$. Tensoring $e^{\mathtt{j}}_lM'_\alpha$ with the exact sequence $\Lambda e^{\mathtt{i}}_{k+1}\to \Lambda e^{\mathtt{i}}_{k}\to S^{\mathtt{i}}_k\to 0$ gives an exact sequence
		\[
		e^{\mathtt{j}}_lM'_{\alpha} e^{\mathtt{i}}_{k+1}\to e^{\mathtt{j}}_lM'_{\alpha} e^{\mathtt{i}}_k\to e^{\mathtt{j}}_lM'_\alpha\otimes_{\Lambda_{\mathtt{i}}}S^{\mathtt{i}}_k\to 0.
		\]
		Since the corank of the leftmost map is equal to the dimension of its cokernel, which is $e^{\mathtt{j}}_lM'_\alpha\otimes_{\Lambda_{\mathtt{i}}}S^{\mathtt{i}}_k$, this proves the claim.
	\end{proof}
	
	\begin{rmk}
		As noted in the proof, the integers $m_{k,l}^\alpha$ in Theorem \ref{Theorem:MainThmModulations} could equivalently be defined as the number of summands of $S_l^{\mathtt{j}}$ in $M_\alpha\otimes_{\Lambda_{\mathtt{i}}}S^{\mathtt{i}}_k$, or as the dimension of $e^{\mathtt{j}}_lM'_\alpha\otimes_{\Lambda_{\mathtt{i}}}S^{\mathtt{i}}_k$. They are also equal to the nullity of the linear transformations $e^{\mathtt{j}}_lM'_{\alpha} e^{\mathtt{i}}_{k-1}\to e^{\mathtt{j}}_lM'_{\alpha} e^{\mathtt{i}}_{k-2}$ associated to the arrow $(k-1)\to (k-2)$ (where $0$ and $-1$ are identified with $t_{\mathtt{i}}$ and $t_{\mathtt{i}}-1$, respectively).
	\end{rmk}
	
	\subsection{$\imath$Quiver algebras and algebras associated to symmetrizable Cartan matrices}\label{Subsection:ModulationsiQuiverGLS}
	
	Let $Q$ be a finite acyclic quiver with an involutive automorphism $\tau$ respecting the arrows. In \cite{LW22} they call  such a pair $(Q,\tau)$ an $\imath$\emphbf{quiver}, and associate an algebra $\Lambda^{\imath}$ to it. Their goal is to extend the work of Bridgeland on the realization of quantum groups via Hall algebras to $\imath$quantum groups. In particular, semi-derived Hall algebras of algebras of the form $\Lambda^{\imath}$ are isomorphic to universal quasi-split $\imath$quantum groups of finite type \cite[Theorems G and I]{LW22}. 
	
	It turns out that the category of finitely generated $\Lambda^{\imath}$-modules is equivalent to the category of representations of a prospecies satisfying the conditions in Theorem \ref{Theorem:MainThmModulations}, see \cite[Section 2.4]{LW22}. Furthermore, the monomorphism category of the corresponding modulation is equal to the category of Gorenstein projective $\Lambda^{\imath}$-modules, and therefore plays an important role when computing the semi-derived Hall algebra, see \cite[Theorem C]{LW22}. For Dynkin quivers the monomorphism category is in addition equivalent to the category of finitely generated projectives over the regular Nakajima--Keller--Scherotzke categories considered in \cite{LW21b}.
	
	Explicitly, the prospecies they consider is as follows: Choose a representative for each $\tau$-orbit of a vertex in $Q$, and let $\mathbb{I}_\tau$ be the set of these representatives. The quiver $Q'$ has as vertices the set $\mathbb{I}_\tau$, it has no double arrows, and there is an arrow from $\mathtt{i}$ to $\mathtt{j}$ in $Q'$ if and only if there is an arrow from a vertex in the $\tau$-orbit of $\mathtt{i}$ to the $\tau$-orbit of $\mathtt{j}$ in $Q$. The prospecies on $Q'$ is given by a tuple $(\mathbb{H}_\mathtt{i},{}_\mathtt{j}\mathbb{H}_\mathtt{i})$ where 
	\[
	\mathbb{H}_\mathtt{i}=
	\begin{cases}
	\mathbbm{k}[x]/(x^2), & \text{if}\ \tau(\mathtt{i})=\mathtt{i} \\
	\mathbbm{k}(\begin{tikzcd}
	1 \arrow[r,bend right=20,"x" below] & 2 \ar[l,bend right=20,"y" above]
	\end{tikzcd})/(xy,yx) & \text{if}\ \tau(\mathtt{i})\neq \mathtt{i}.
	\end{cases}
	\]
	For the description of the bimodules ${}_\mathtt{j}\mathbb{H}_\mathtt{i}$ see page 16 in \cite{LW22}. 
	
	\begin{prop}
		Let $(Q,\tau)$ be an $\imath$quiver, let $(\mathbb{H}_\mathtt{i},{}_\mathtt{j}\mathbb{H}_\mathtt{i})$ be the associated prospecies on the quiver $Q'$ as above, and let $\mathcal{M}=(t_\mathtt{i},m_{k,l}^\alpha)$ be the associated tuple of integers in Theorem \ref{Theorem:MainThmModulations}. Then $Q'(\mathcal{M})=Q$.
	\end{prop}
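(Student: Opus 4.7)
The plan is to exhibit an explicit bijection $\phi \colon Q'(\mathcal{M})_0 \to Q_0$ and then match the arrows via the explicit description of the bimodules ${}_\mathtt{j}\mathbb{H}_\mathtt{i}$.

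First I would identify the vertex sets. Inspection of the two possible algebras $\mathbb{H}_\mathtt{i}$ shows that $t_\mathtt{i}=1$ when $\tau(\mathtt{i})=\mathtt{i}$ (since $k[x]/(x^2)$ has one simple module) and $t_\mathtt{i}=2$ when $\tau(\mathtt{i})\neq\mathtt{i}$ (since the cyclic radical square zero Nakayama algebra on two vertices has two simple modules). Summing $t_\mathtt{i}$ over the set $\mathbb{I}_\tau$ of $\tau$-orbit representatives then recovers $|Q_0|$. Concretely, define $\phi$ by $(\mathtt{i},1)\mapsto \mathtt{i}$ when $\tau(\mathtt{i})=\mathtt{i}$, and by $(\mathtt{i},1)\mapsto \mathtt{i}$ together with $(\mathtt{i},2)\mapsto \tau(\mathtt{i})$ when $\tau(\mathtt{i})\neq\mathtt{i}$. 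Since $\mathbb{I}_\tau$ contains exactly one representative per $\tau$-orbit, $\phi$ is a bijection.

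Next I would match the arrows. Fix an arrow $\alpha\colon \mathtt{i}\to \mathtt{j}$ in $Q'$. By the remark immediately following Theorem \ref{Theorem:MainThmModulations}, the integer $m_{k,l}^\alpha$ equals the multiplicity of the simple module $S^{\mathtt{j}}_l$ as a summand of ${}_\mathtt{j}\mathbb{H}_\mathtt{i}\otimes_{\mathbb{H}_\mathtt{i}} S^{\mathtt{i}}_k$, where $S^{\mathtt{i}}_k$ and $S^{\mathtt{j}}_l$ denote the simple modules at the $k$'th and $l$'th vertex of $\mathbb{H}_\mathtt{i}$ and $\mathbb{H}_\mathtt{j}$, respectively. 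Using the explicit formulas for ${}_\mathtt{j}\mathbb{H}_\mathtt{i}$ given in \cite[Section 2.4]{LW22}, I would check that this multiplicity equals the number of arrows in $Q$ from $\phi(\mathtt{i},k)$ to $\phi(\mathtt{j},l)$. The point is that ${}_\mathtt{j}\mathbb{H}_\mathtt{i}$ was constructed precisely so that it records these arrows, with its maximal projective bimodule summand $M''_\alpha$ absorbing the contributions that do not come from arrows in $Q$.

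The main obstacle is the case-by-case analysis for the arrow count. There are four configurations to handle, depending on whether $\tau$ fixes $\mathtt{i}$ and/or $\mathtt{j}$, and in each case the bimodule ${}_\mathtt{j}\mathbb{H}_\mathtt{i}$ has a different structure and a different maximal projective summand $M''_\alpha$. In each configuration I would: extract the non-projective part $M'_\alpha$ from the description in \cite{LW22}; compute the tensor product $M'_\alpha \otimes_{\mathbb{H}_\mathtt{i}}S^{\mathtt{i}}_k$; read off the simple summands isomorphic to $S^{\mathtt{j}}_l$; and verify that the resulting count equals the number of arrows in $Q$ from $\phi(\mathtt{i},k)$ to $\phi(\mathtt{j},l)$. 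Once this verification is carried out in all four cases, the bijection $\phi$ extends to an isomorphism of quivers $Q'(\mathcal{M})\cong Q$, proving the claim.
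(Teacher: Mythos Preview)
Your overall strategy matches the paper's: set up the same vertex bijection $(\mathtt{i},1)\mapsto\mathtt{i}$, $(\mathtt{i},2)\mapsto\tau(\mathtt{i})$, then use the description of the bimodules from \cite{LW22} to count arrows. The paper, however, makes a single observation that short-circuits your four-case analysis: every nonzero ${}_\mathtt{j}\mathbb{H}_\mathtt{i}$ has Loewy length $1$ as an $\mathbb{H}_\mathtt{j}$-$\mathbb{H}_\mathtt{i}$-bimodule (because any product of two of the nilpotent elements $\varepsilon_\mathtt{i},\varepsilon_\mathtt{j}$ kills it), whereas every nonzero projective bimodule has Loewy length $2$. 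Hence $M''_\alpha=0$ and $M'_\alpha={}_\mathtt{j}\mathbb{H}_\mathtt{i}$ in all cases, so there is nothing to extract. The arrow count then reduces directly to computing the corank of $e^{\mathtt{j}}_l\,{}_\mathtt{j}\mathbb{H}_\mathtt{i}\,e^{\mathtt{i}}_{k+1}\to e^{\mathtt{j}}_l\,{}_\mathtt{j}\mathbb{H}_\mathtt{i}\,e^{\mathtt{i}}_{k}$, which one reads off uniformly from the explicit basis in \cite{LW22} as the number of arrows in $Q$ from $\tau^{k-1}(\mathtt{i})$ to $\tau^{l-1}(\mathtt{j})$.

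So your plan would go through, but your expectation that $M''_\alpha$ is nontrivial and ``absorbs contributions that do not come from arrows in $Q$'' is mistaken; this makes your proposed case-by-case computation longer than it needs to be. The Loewy-length argument is worth internalizing, since the same trick is reused verbatim in the proof of Proposition~\ref{Proposition:ApplicationToGLS}.
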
 
	
	\begin{proof}
		First note that the association 
		\[
		(\mathtt{i},1)\mapsto \mathtt{i} \quad \text{and} \quad (\mathtt{i},2)\mapsto \tau(\mathtt{i})
		\]
		gives a bijection between $Q'(\mathcal{M})_0$ and $Q_0$. Also, all nonzero ${}_\mathtt{j}\mathbb{H}_\mathtt{i}$ are of Loewy length $1$ as $\mathbb{H}_\mathtt{j}$-$\mathbb{H}_\mathtt{i}$-bimodules, since they vanish when multiplying with any combination of two of the nilpotent elements $\varepsilon_\mathtt{i}$ and $\varepsilon_{\mathtt{j}}$ of $\mathbb{H}_i$ and $\mathbb{H}_j$, see the description on page 16 in \cite{LW22}. Since all nonzero projective $\mathbb{H}_\mathtt{j}$-$\mathbb{H}_\mathtt{i}$-bimodules have Loewy length $2$, it follows that ${}_\mathtt{j}\mathbb{H}_\mathtt{i}$ has no nonzero summands which are projective. Therefore, by Theorem \ref{Theorem:MainThmModulations} the number of arrows from $(\mathtt{i},k)$ to $(\mathtt{j},l)$ in $Q'(\mathcal{M})$ is equal to the corank of the $\mathbbm{k}$-morphism $e^{\mathtt{j}}_l{}_\mathtt{j}\mathbb{H}_\mathtt{i}e^{\mathtt{i}}_{k+1}\to e^{\mathtt{j}}_l{}_\mathtt{j}\mathbb{H}_\mathtt{i}e^{\mathtt{i}}_{k}$ as in Theorem \ref{Theorem:MainThmModulations}. Using the $\mathbbm{k}$-linear basis of the bimodules ${}_\mathtt{j}\mathbb{H}_\mathtt{i}$ on page 16 in \cite{LW22}, we see that this is equal to the number of arrows in $Q$ from the vertex $\tau^{k-1}(\mathtt{i})$ to $\tau^{l-1}(\mathtt{j})$. The claim follows.
	\end{proof}
	
	\begin{rmk}
		By Theorem \ref{Theorem:MainThmModulations} we have an epivalence
		\[
		\overline{\operatorname{Gproj}}\,(\Lambda^{\imath})^{\operatorname{op}}\to \operatorname{mod}(\mathbbm{k}Q)^{\operatorname{op}}.
		\]  
		In particular, it induces a bijection from the indecomposable non-projective Gorenstein projective $\Lambda^{\imath}$-modules to the indecomposable $\mathbbm{k}Q$-modules, which recovers \cite[Corollary 3.21]{LW22}. Furthermore, by Theorem \ref{Theorem:MainThmModulations} \eqref{Theorem:MainThmModulations:4} we have an explicit description of the inverse to this bijection. It would be interesting to investigate how this inverse can be used to study $\Lambda^{\imath}$ and its Hall-algebra.
	\end{rmk}
	
	Given a symmetric Cartan matrix $C=(c_{\mathtt{i},\mathtt{j}})_{\mathtt{i},\mathtt{j}\in I}$ with (acyclic) orientation $\Omega\subset I\times I$, one can associate a path algebra $\mathbbm{k}Q$ whose quiver $Q$ has vertex set $I$ and has $\lvert c_{\mathtt{i},\mathtt{j}}\rvert$ arrows from $i$ to $j$ if $(j,i)\in \Omega$. This was extended in \cite{GLS16}, where they associate an algebra $H=H(C,D,\Omega)$ to the data of a symmetrizable Cartan matrix $C=(c_{\mathtt{i},\mathtt{j}})_{\mathtt{i},\mathtt{j}\in I}$ with symmetrizer $D=\operatorname{diag}(d_\mathtt{i}\mid \mathtt{i}\in I)$ and (acyclic) orientation $\Omega\subset I\times I$. The category of finitely generated left modules over $H(C,D,\Omega)$ is equivalent to representations of a prospecies $(H_\mathtt{i},{}_{\mathtt{j}}H_{\mathtt{i}})$ over a quiver $Q'$. Explicitly, the quiver $Q'$ has vertex set $I$, it has no double arrows, and there is an arrow from $\mathtt{i}$ to $\mathtt{j}$ if $(j,i)\in \Omega$. The algebra $H_\mathtt{i}$ is equal to $\mathbbm{k}[x]/(x^{d_\mathtt{i}})$, and the bimodule ${}_{\mathtt{j}}H_{\mathtt{i}}$ is described in \cite[Section 5]{GLS16}. If $d_\mathtt{i}\leq 2$ for all $\mathtt{i}\in I$, then we can apply Theorem \ref{Theorem:MainThmModulations} to this prospecies. The following proposition gives a description of $Q(\mathcal{M})$ in this case. We use it to deduce Theorem \ref{Theorem:GLSALgebras} in the introduction.
	
	\begin{prop}\label{Proposition:ApplicationToGLS}
		Let $C=(c_{\mathtt{i},\mathtt{j}})_{\mathtt{i},\mathtt{j}\in I}$ be a symmetrizable Cartan matrix with symmetrizer $D=\operatorname{diag}(d_\mathtt{i}\mid \mathtt{i}\in I)$, and let $\Omega\subset I\times I$ be an orientation of $C$. Assume $d_\mathtt{i}\leq 2$ for all $\mathtt{i}\in I$. Let $I'\subset I$ be the subset consisting of all $\mathtt{i}$ for which $d_\mathtt{i}=2$, and let $\mathcal{M}=(t_\mathtt{i},m_{k,l}^\alpha)$ be the tuple defined from the prospecies $(H_\mathtt{i},{}_{\mathtt{j}}H_{\mathtt{i}})$ as in Theorem \ref{Theorem:MainThmModulations}. Then $Q'(\mathcal{M})$ is equal to the quiver defined by the symmetric Cartan matrix $C|_{I'\times I'}$ with orientation $\Omega|_{I'\times I'}$.
	\end{prop}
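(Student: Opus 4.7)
The plan is to compute the vertex set and arrow counts of $Q'(\mathcal{M})$ directly from the prospecies $(H_\mathtt{i},{}_\mathtt{j}H_\mathtt{i})$ via Theorem \ref{Theorem:MainThmModulations}, and check that they agree with those of the quiver attached to $C|_{I'\times I'}$ with orientation $\Omega|_{I'\times I'}$.

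First I would compute the vertex set. Since $H_\mathtt{i}=\mathbbm{k}[x]/(x^{d_\mathtt{i}})$ with $d_\mathtt{i}\in\{1,2\}$, we have $H_\mathtt{i}=\mathbbm{k}$ when $d_\mathtt{i}=1$ (giving $t_\mathtt{i}=0$ by definition) and $H_\mathtt{i}$ is local with a unique simple when $d_\mathtt{i}=2$ (giving $t_\mathtt{i}=1$). Thus $Q'(\mathcal{M})_0=\{(\mathtt{i},1)\mid \mathtt{i}\in I'\}$ is in natural bijection with $I'$. Moreover, symmetrizability $d_\mathtt{i}c_{\mathtt{i},\mathtt{j}}=d_\mathtt{j}c_{\mathtt{j},\mathtt{i}}$ combined with $d_\mathtt{i}=d_\mathtt{j}=2$ on $I'$ yields $c_{\mathtt{i},\mathtt{j}}=c_{\mathtt{j},\mathtt{i}}$, so $C|_{I'\times I'}$ is indeed a symmetric Cartan matrix.

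Next I would count arrows. Any arrow of $Q'(\mathcal{M})$ must run from some $(\mathtt{i},1)$ to some $(\mathtt{j},1)$ with $\mathtt{i},\mathtt{j}\in I'$, and the unique arrow $\alpha\colon\mathtt{i}\to\mathtt{j}$ of $Q'$ between them exists if and only if $(\mathtt{j},\mathtt{i})\in\Omega$. In that case I need to show $m^\alpha_{1,1}=|c_{\mathtt{i},\mathtt{j}}|$. I would use the reformulation of $m^\alpha_{1,1}$ from the remark following Theorem \ref{Theorem:MainThmModulations} as the number of summands of $S^\mathtt{j}_1$ in ${}_\mathtt{j}H_\mathtt{i}\otimes_{H_\mathtt{i}}S^\mathtt{i}_1$; one may work with the full bimodule in place of its non-projective part since any projective $H_\mathtt{j}$-$H_\mathtt{i}$-bimodule summand is a sum of copies of $H_\mathtt{j}\otimes_\mathbbm{k} H_\mathtt{i}$, whose tensor product with $S^\mathtt{i}_1$ is the free $H_\mathtt{j}$-module $H_\mathtt{j}$ and hence has no simple summand. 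Recalling from \cite[\S5]{GLS16} that in this situation ${}_\mathtt{j}H_\mathtt{i}$ has $\mathbbm{k}$-basis $\{\alpha^{(g)},\,\epsilon_\mathtt{j}\alpha^{(g)}\mid 1\le g\le |c_{\mathtt{i},\mathtt{j}}|\}$ with bimodule relation $\alpha^{(g)}\epsilon_\mathtt{i}=\epsilon_\mathtt{j}\alpha^{(g)}$, the quotient ${}_\mathtt{j}H_\mathtt{i}/{}_\mathtt{j}H_\mathtt{i}\epsilon_\mathtt{i}$ is spanned by the classes of the $\alpha^{(g)}$ with $\epsilon_\mathtt{j}$ acting by zero, so it is isomorphic to $(S^\mathtt{j}_1)^{\oplus |c_{\mathtt{i},\mathtt{j}}|}$ as a left $H_\mathtt{j}$-module. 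Hence $m^\alpha_{1,1}=|c_{\mathtt{i},\mathtt{j}}|$, which matches exactly the arrow count of the quiver attached to $C|_{I'\times I'}$ with orientation $\Omega|_{I'\times I'}$.

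I expect the only obstacle to be aligning the left/right-action conventions of the GLS bimodule ${}_\mathtt{j}H_\mathtt{i}$ with those of Theorem \ref{Theorem:MainThmModulations}; once this bookkeeping is fixed the tensor product computation above is essentially immediate. As a sanity check, the alternative formulation via the corank of right multiplication by $\epsilon_\mathtt{i}$ on ${}_\mathtt{j}H_\mathtt{i}$ (which has image spanned by $\{\epsilon_\mathtt{j}\alpha^{(g)}\}$ of dimension $|c_{\mathtt{i},\mathtt{j}}|$ inside a space of dimension $2|c_{\mathtt{i},\mathtt{j}}|$) produces the same value $|c_{\mathtt{i},\mathtt{j}}|$.
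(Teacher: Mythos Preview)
Your proposal is correct and essentially follows the paper's approach. The only cosmetic difference is that the paper argues first (via a Loewy-length observation) that ${}_\mathtt{j}H_\mathtt{i}$ has no nonzero projective bimodule summand and then computes the corank of right multiplication by $x$ using ${}_\mathtt{j}H_\mathtt{i}\cong H_\mathtt{i}^{|c_{\mathtt{i},\mathtt{j}}|}$ as a right $H_\mathtt{i}$-module, whereas you primarily use the equivalent simple-summand description of $m^\alpha_{1,1}$ from the remark after Theorem~\ref{Theorem:MainThmModulations} and do the corank computation as a sanity check; both routes amount to the same calculation.
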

	
	\begin{proof}
		By definition, $t_{\mathtt{i}}=1$ if $\mathtt{i}\in I'$, and $t_{\mathtt{i}}=0$ otherwise. Hence, the vertex set $Q'(\mathcal{M})_0$ can be identified with $I'$.  Now assume $t_\mathtt{i}=1=t_\mathtt{j}$. Note that all nonzero ${}_\mathtt{j}H_\mathtt{i}$ are of Loewy length $1$ as $H_\mathtt{j}$-$H_\mathtt{i}$-bimodules, since they vanish when multiplying by any combination of two of the nilpotent elements $\varepsilon_\mathtt{i}$ and $\varepsilon_{\mathtt{j}}$ of $H_i$ and $H_j$, see the description in \cite[Section 5]{GLS16}. Since all nonzero projective $H_\mathtt{j}$-$H_\mathtt{i}$-bimodules have Loewy length $2$, it follows that ${}_\mathtt{j}H_\mathtt{i}$ have no nonzero projective summands. Therefore by Theorem \ref{Theorem:MainThmModulations} the number of arrows from $\mathtt{i}$ to $\mathtt{j}$ in $Q'(\mathcal{M})_0$ is equal to the corank of the map ${}_{\mathtt{j}}H_{\mathtt{i}}\xrightarrow{} {}_{\mathtt{j}}H_{\mathtt{i}}$ given by multiplication with $x$ on the right. Now ${}_{\mathtt{j}}H_{\mathtt{i}}\cong H_{\mathtt{i}}^{\lvert c_{\mathtt{i},\mathtt{j}}\rvert}$ as a right $H_\mathtt{i}$-module, see Section $5$ in \cite{GLS16}. Hence, the corank is equal to $\lvert c_{\mathtt{i},\mathtt{j}}\rvert$, which proves the claim.
	\end{proof}
	
	\begin{proof}[Proof of Theorem \ref{Theorem:GLSALgebras}]
		Since the algebra $H=H(C,D,\Omega)$ is $1$-Gorenstein by \cite[Theorem 1.2]{GLS16} (see also the discussion above), the singularity category of $H$ is equivalent to the stable category $\overline{\operatorname{Gproj}}\,H$ by Buchweitz' theorem \cite{Buc86}. By Theorem \ref{Theorem:MainThmModulations} there is an epivalence from $\overline{\operatorname{Gproj}}\,H$ to $\operatorname{mod}\mathbbm{k}(Q'(\mathcal{M}))^{\operatorname{op}}$, and hence there is a bijection between their indecomposable objects. By Proposition \ref{Proposition:ApplicationToGLS} the quiver $Q'(\mathcal{M})$ is obtained from the symmetric Cartan matrix $C|_{I'\times I'}$ with orientation $\Omega|_{I'\times I'}$. Hence, by Gabriel's theorem there are finitely many isomorphism classes of indecomposable $\mathbbm{k}Q'(\mathcal{M})$-modules if and only if $C|_{I'\times I'}$ is Dynkin, and in that case they are in bijection with the positive roots of $C|_{I'\times I'}$. This proves the claim.
	\end{proof}

	\section*{Acknowledgements}
	
	We would like to thank Karin M. Jacobsen for asking the question which lead to Theorem \ref{Theorem:EquivalenceHereditary}. We would also like to thank Henning Krause for mentioning how to apply \cite[Theorem 13.1.28]{Kra22} to Dedekind domains in Remark \ref{Remark:DedekindDomain}. We would like to thank the referee for useful comments and suggestions.
	
	\bibliographystyle{alpha}
	\bibliography{publication}

\begin{thebibliography}{GKKP22}

\bibitem[ABM98]{ABM98}
Ibrahim Assem, Apostolos Beligiannis, and Nikolaos Marmaridis.
\newblock Right triangulated categories with right semi-equivalences.
\newblock In {\em Algebras and modules, {II} ({G}eiranger, 1996)}, volume~24 of
  {\em CMS Conf. Proc.}, pages 17--37. Amer. Math. Soc., Providence, RI, 1998.

\bibitem[AR76]{AR75/76}
Maurice Auslander and Idun Reiten.
\newblock On the representation type of triangular matrix rings.
\newblock {\em J. London Math. Soc. (2)}, 12(3):371--382, 1975/76.

\bibitem[Arn00]{Arn00}
David~M. Arnold.
\newblock {\em Abelian groups and representations of finite partially ordered
  sets}, volume~2 of {\em CMS Books in Mathematics/Ouvrages de
  Math\'{e}matiques de la SMC}.
\newblock Springer-Verlag, New York, 2000.

\bibitem[ARS95]{ARS95}
Maurice Auslander, Idun Reiten, and Sverre~Olaf Smal{\o}.
\newblock {\em {Representation Theory of {A}rtin Algebras}}.
\newblock Cambridge University Press, 1995.

\bibitem[Asa99]{Asa99}
Hideto Asashiba.
\newblock The derived equivalence classification of representation-finite
  selfinjective algebras.
\newblock {\em J. Algebra}, 214(1):182--221, 1999.

\bibitem[Aus71]{Aus71}
Maurice Auslander.
\newblock Representation dimension of artin algebras.
\newblock {\em Queen Mary College Mathematics Notes}, 1971.
\newblock republished in \emph{Selected works of Maurice Auslander}. Amer.
  Math. Soc., Providence 1999.

\bibitem[Bau95]{Bau95}
Hans~Joachim Baues.
\newblock Homotopy types.
\newblock In {\em Handbook of algebraic topology}, pages 1--72. North-Holland,
  Amsterdam, 1995.

\bibitem[BBOS20]{BBOS20}
Ulrich Bauer, Magnus~B. Botnan, Steffen Oppermann, and Johan Steen.
\newblock Cotorsion torsion triples and the representation theory of filtered
  hierarchical clustering.
\newblock {\em Adv. Math.}, 369:107171, 51, 2020.

\bibitem[Bir35]{Bir35}
Garrett Birkhoff.
\newblock Subgroups of {A}belian {G}roups.
\newblock {\em Proc. London Math. Soc. (2)}, 38:385--401, 1935.

\bibitem[BM94]{BM94}
Apostolos Beligiannis and Nikolaos Marmaridis.
\newblock Left triangulated categories arising from contravariantly finite
  subcategories.
\newblock {\em Comm. Algebra}, 22(12):5021--5036, 1994.

\bibitem[Bor94]{Bor94a}
Francis Borceux.
\newblock {\em Handbook of categorical algebra. 2}, volume~51 of {\em
  Encyclopedia of Mathematics and its Applications}.
\newblock Cambridge University Press, Cambridge, 1994.
\newblock Categories and Structures.

\bibitem[Buc21]{Buc86}
Ragnar-Olaf Buchweitz.
\newblock {\em Maximal {C}ohen-{M}acaulay modules and {T}ate cohomology},
  volume 262 of {\em Mathematical Surveys and Monographs}.
\newblock American Mathematical Society, Providence, RI, [2021] \copyright
  2021.

\bibitem[B{\"u}h10]{Bue10}
Theo B{\"u}hler.
\newblock Exact categories.
\newblock {\em Expo. Math.}, 28(1):1--69, 2010.

\bibitem[Che11]{Che11}
Xiao-Wu Chen.
\newblock The stable monomorphism category of a {F}robenius category.
\newblock {\em Math. Res. Lett.}, 18(1):125--137, 2011.

\bibitem[Che12]{Che12}
Xiao-Wu Chen.
\newblock Three results on {F}robenius categories.
\newblock {\em Math. Z.}, 270(1-2):43--58, 2012.

\bibitem[CL20]{CL20}
Xiao-Wu Chen and Ming Lu.
\newblock Gorenstein homological properties of tensor rings.
\newblock {\em Nagoya Math. J.}, 237:188--208, 2020.

\bibitem[DELO21]{DELO21}
Zhenxing Di, Sergio Estrada, Li~Liang, and Sinem Odaba\c{s}\i.
\newblock Gorenstein flat representations of left rooted quivers.
\newblock {\em J. Algebra}, 584:180--214, 2021.

\bibitem[DLLY22]{DLLY22}
Zhenxing Di, Liping Li, Li~Liang, and Nina Yu.
\newblock Representations over diagrams of categories and abelian model
  structures, 2022.
\newblock arXiv:2210.08558.

\bibitem[DR76]{DR76}
Vlastimil Dlab and Claus~Michael Ringel.
\newblock {Indecomposable representations of graphs and algebras}.
\newblock {\em Memoirs of the American Mathematical Society}, 6:v+57, 1976.

\bibitem[EE05]{EE05}
Edgar Enochs and Sergio Estrada.
\newblock Projective representations of quivers.
\newblock {\em Comm. Algebra}, 33(10):3467--3478, 2005.

\bibitem[EEGR09]{EEG09}
E.~Enochs, S.~Estrada, and J.~R. Garc\'{\i}a~Rozas.
\newblock Injective representations of infinite quivers. {A}pplications.
\newblock {\em Canad. J. Math.}, 61(2):315--335, 2009.

\bibitem[EHHS13]{EHHS13}
Hossein Eshraghi, Rasool Hafezi, Esmaeil Hosseini, and Shokrollah Salarian.
\newblock Cotorsion theory in the category of quiver representations.
\newblock {\em J. Algebra Appl.}, 12(6):1350005, 16, 2013.

\bibitem[EM65]{EM65}
Samuel Eilenberg and John~C. Moore.
\newblock {Adjoint functors and triples}.
\newblock {\em Illinois Journal of Mathematics}, 9:381--398, 1965.

\bibitem[EOT04]{EOT04}
Edgar Enochs, Luis Oyonarte, and Blas Torrecillas.
\newblock Flat covers and flat representations of quivers.
\newblock {\em Comm. Algebra}, 32(4):1319--1338, 2004.

\bibitem[Fai66]{Fai66}
Carl Faith.
\newblock On {K}\"{o}the rings.
\newblock {\em Math. Ann.}, 164:207--212, 1966.

\bibitem[FGR75]{FGR75}
Robert~M. Fossum, Phillip~A. Griffith, and Idun Reiten.
\newblock {\em {Trivial extensions of abelian categories}}, volume 456 of {\em
  {Lecture Notes in Mathematics}}.
\newblock Springer, Berlin-New York, 1975.
\newblock Homological algebra of trivial extensions of abelian categories with
  applications to ring theory.

\bibitem[FS79]{FS79}
Ferdinand~Georg Frobenius and Ludwig Stickelberger.
\newblock Ueber {G}ruppen von vertauschbaren {E}lementen.
\newblock {\em Journal f\"{u}r die Reine und Angewandte Mathematik. [Crelle's
  Journal]}, 86:217--262, 1879.

\bibitem[Gab72]{G72}
Peter Gabriel.
\newblock {{U}nzerlegbare {D}arstellungen {I}}.
\newblock {\em Manuscripta Mathematica}, 6:71--103, 1972.

\bibitem[Geu17]{Geu17}
Jan Geuenich.
\newblock {\em Quiver modulations and potentials}.
\newblock Ph{D} thesis, 2017.

\bibitem[GK05]{GK05}
Peter~B. Gothen and Alastair~D. King.
\newblock Homological algebra of twisted quiver bundles.
\newblock {\em J. London Math. Soc. (2)}, 71(1):85--99, 2005.

\bibitem[GKKP22]{GKKP19}
Nan Gao, Julian K\"{u}lshammer, Sondre Kvamme, and Chrysostomos Psaroudakis.
\newblock A functorial approach to monomorphism categories for species {I}.
\newblock {\em Communications in Contemporary Mathematics}, 24(6):Paper No.
  2150069, 55, 2022.

\bibitem[GLS17]{GLS16}
Christof Geiss, Bernard Leclerc, and Jan Schr{\"o}er.
\newblock {Quiver with relations for symmetrizable {C}artan matrices {I}:
  {F}oundations}.
\newblock {\em Inventiones Mathematicae}, 209(1):61--158, 2017.

\bibitem[Hap87]{Hap87}
Dieter Happel.
\newblock {On the derived category of a finite-dimensional algebra}.
\newblock {\em Commentarii Mathematici Helvetici}, 62(3):339--389, 1987.

\bibitem[Har69]{Har69}
Hideki Harui.
\newblock On injective modules.
\newblock {\em J. Math. Soc. Japan}, 21:574--583, 1969.

\bibitem[Hel60]{Hel60}
Alex Heller.
\newblock The loop-space functor in homological algebra.
\newblock {\em Trans. Amer. Math. Soc.}, 96:382--394, 1960.

\bibitem[Hil07]{Hil07}
Harold Hilton.
\newblock On {S}ub-{G}roups of a {F}inite {A}belian {G}roup.
\newblock {\em Proc. London Math. Soc. (2)}, 5:1--5, 1907.

\bibitem[Hir00]{Hir00}
Y.~Hirano.
\newblock On injective hulls of simple modules.
\newblock {\em J. Algebra}, 225(1):299--308, 2000.

\bibitem[HJ19]{HJ19a}
Henrik Holm and Peter J{\o}rgensen.
\newblock Cotorsion pairs in categories of quiver representations.
\newblock {\em Kyoto J. Math.}, 59(3):575--606, 2019.

\bibitem[HMA21]{HM21}
Rasool Hafezi and Intan Muchtadi-Alamsyah.
\newblock Different exact structures on the monomorphism categories.
\newblock {\em Appl. Categ. Structures}, 29(1):31--68, 2021.

\bibitem[HRW84]{RRW84}
Roger Hunter, Fred Richman, and Elbert Walker.
\newblock Subgroups of bounded abelian groups.
\newblock In {\em Abelian groups and modules ({U}dine, 1984)}, volume 287 of
  {\em CISM Courses and Lect.}, pages 17--35. Springer, Vienna, 1984.

\bibitem[HV83]{HV83}
Dieter Happel and Dieter Vossieck.
\newblock Minimal algebras of infinite representation type with preprojective
  component.
\newblock {\em Manuscripta Math.}, 42(2-3):221--243, 1983.

\bibitem[Jan69]{Jan69}
J.~P. Jans.
\newblock On co-{N}oetherian rings.
\newblock {\em J. London Math. Soc. (2)}, 1:588--590, 1969.

\bibitem[Jat76]{Jat76}
Arun~Vinayak Jategaonkar.
\newblock Certain injectives are {A}rtinian.
\newblock In {\em Noncommutative ring theory ({I}nternat. {C}onf., {K}ent
  {S}tate {U}niv., {K}ent, {O}hio, 1975)}, Lecture Notes in Math., Vol. 545,
  pages 128--139. Springer, Berlin, 1976.

\bibitem[Kel64]{Kel64}
G.~M. Kelly.
\newblock On the radical of a category.
\newblock {\em J. Austral. Math. Soc.}, 4:299--307, 1964.

\bibitem[Kel90]{Kel90}
Bernhard Keller.
\newblock Chain complexes and stable categories.
\newblock {\em Manuscripta Math.}, 67(4):379--417, 1990.

\bibitem[Kel91]{Kel91}
Bernhard Keller.
\newblock Derived categories and universal problems.
\newblock {\em Comm. Algebra}, 19(3):699--747, 1991.

\bibitem[Kel96]{Kel96}
Bernhard Keller.
\newblock Derived categories and their uses.
\newblock In {\em Handbook of algebra, {V}ol. 1}, volume~1, pages 671--701.
  1996.

\bibitem[KLM13]{KLM13}
Dirk Kussin, Helmut Lenzing, and Hagen Meltzer.
\newblock Nilpotent operators and weighted projective lines.
\newblock {\em J. Reine Angew. Math.}, 685:33--71, 2013.

\bibitem[Kra97]{Kra97}
Henning Krause.
\newblock Stable equivalence preserves representation type.
\newblock {\em Comment. Math. Helv.}, 72(2):266--284, 1997.

\bibitem[Kra15]{Kra15}
Henning Krause.
\newblock Krull--{S}chmidt categories and projective covers.
\newblock {\em Expositiones Mathematicae}, 33(4):535--549, 2015.

\bibitem[Kra22]{Kra22}
Henning Krause.
\newblock {\em Homological theory of representations}, volume 195 of {\em
  Cambridge Studies in Advanced Mathematics}.
\newblock Cambridge University Press, Cambridge, 2022.

\bibitem[Kro70]{Kro70}
Leopold Kronecker.
\newblock Auseinandersetzung einiger eigenschaften der klassenzahl idealer
  complexer zahlen.
\newblock {\em Monatsbericht der K{\"o}niglich-Preussischen Akademie der
  Wissenschaften zu Berlin}, pages 881--889, 1870.

\bibitem[KS15]{KS15}
Justyna Kosakowska and Markus Schmidmeier.
\newblock Operations on arc diagrams and degenerations for invariant subspaces
  of linear operators.
\newblock {\em Trans. Amer. Math. Soc.}, 367(8):5475--5505, 2015.

\bibitem[KS22]{KS22}
Justyna Kosakowska and Markus Schmidmeier.
\newblock The socle tableau as a dual version of the {L}ittlewood-{R}ichardson
  tableau.
\newblock {\em J. Lond. Math. Soc. (2)}, 106(2):1357--1379, 2022.

\bibitem[K{\"u}l17]{Kul17}
Julian K{\"u}lshammer.
\newblock {Pro-species of Algebras {I}: Basic properties}.
\newblock {\em Algebras and Representation Theory}, 20(5):1215--1238, 2017.

\bibitem[KV87]{KV87}
Bernhard Keller and Dieter Vossieck.
\newblock Sous les cat\'{e}gories d\'{e}riv\'{e}es.
\newblock {\em C. R. Acad. Sci. Paris S\'{e}r. I Math.}, 305(6):225--228, 1987.

\bibitem[Kva20]{Kva20b}
Sondre Kvamme.
\newblock A generalization of the {N}akayama functor.
\newblock {\em Algebr. Represent. Theory}, 23(4):1319--1353, 2020.

\bibitem[Les94]{Les94}
Zbigniew Leszczy\'{n}ski.
\newblock On the representation type of tensor product algebras.
\newblock {\em Fund. Math.}, 144(2):143--161, 1994.

\bibitem[Lin96]{Lin96}
Markus Linckelmann.
\newblock Stable equivalences of {M}orita type for self-injective algebras and
  {$p$}-groups.
\newblock {\em Math. Z.}, 223(1):87--100, 1996.

\bibitem[LO17]{LO17}
Boris Lerner and Steffen Oppermann.
\newblock {A recollement approach to {G}eigle-{L}enzing weight projective
  varieties}.
\newblock {\em Nagoya Mathematical Journal}, 226:71--105, 2017.

\bibitem[LS00]{LS00}
Zbigniew Leszczy\'{n}ski and Andrzej Skowro\'{n}ski.
\newblock Tame triangular matrix algebras.
\newblock {\em Colloq. Math.}, 86(2):259--303, 2000.

\bibitem[LS22]{LS22}
Xiu-Hua Luo and Markus Schmidmeier.
\newblock A reflection equivalence for {G}orenstein-projective quiver
  representations, 2022.
\newblock Preprint, arXiv:2204.04695.

\bibitem[Lu20]{Lu20}
Ming Lu.
\newblock Singularity categories of representations of algebras over local
  rings.
\newblock {\em Colloq. Math.}, 161(1):1--33, 2020.

\bibitem[LW21a]{LW21a}
Ming Lu and Weiqiang Wang.
\newblock Hall algebras and quantum symmetric pairs {II}: {R}eflection
  functors.
\newblock {\em Comm. Math. Phys.}, 381(3):799--855, 2021.

\bibitem[LW21b]{LW21b}
Ming Lu and Weiqiang Wang.
\newblock Hall algebras and quantum symmetric pairs {III}: {Q}uiver varieties.
\newblock {\em Adv. Math.}, 393:Paper No. 108071, 70, 2021.

\bibitem[LW22]{LW22}
Ming Lu and Weiqiang Wang.
\newblock Hall algebras and quantum symmetric pairs {I}: {F}oundations.
\newblock {\em Proc. Lond. Math. Soc. (3)}, 124(1):1--82, 2022.
\newblock With an appendix by Lu.

\bibitem[LZ10]{LZ10}
Zhi-Wei Li and Pu~Zhang.
\newblock A construction of {G}orenstein-projective modules.
\newblock {\em J. Algebra}, 323(6):1802--1812, 2010.

\bibitem[LZ13]{LZ13}
Xiu-Hua Luo and Pu~Zhang.
\newblock {Monic representations and {G}orenstein-projective modules}.
\newblock {\em Pacific Journal of Mathematics}, 264(1):163--194, 2013.

\bibitem[{Mac}98]{McL98}
Saunders {Mac Lane}.
\newblock {\em {Categories for the working mathematician}}, volume~5 of {\em
  {Graduate Texts in Mathematics}}.
\newblock Springer-Verlag, New York, second edition, 1998.

\bibitem[Mat58]{Mat58}
Eben Matlis.
\newblock Injective modules over {N}oetherian rings.
\newblock {\em Pacific J. Math.}, 8:511--528, 1958.

\bibitem[Mat60]{Mat60}
Eben Matlis.
\newblock Modules with descending chain condition.
\newblock {\em Trans. Amer. Math. Soc.}, 97:495--508, 1960.

\bibitem[Mil04]{Mil04}
G.~A. Miller.
\newblock On the subgroups of an abelian group.
\newblock {\em Ann. of Math. (2)}, 6(1):1--6, 1904.

\bibitem[Mil05]{Mil05}
G.~A. Miller.
\newblock Determination of all the characteristic subgroups of any abelian
  group.
\newblock {\em Amer. J. Math.}, 27(1):15--24, 1905.

\bibitem[Moo09]{Moo09}
Audrey Moore.
\newblock {\em Auslander-{R}eiten theory for systems of submodule embeddings}.
\newblock ProQuest LLC, Ann Arbor, MI, 2009.
\newblock Thesis (Ph.D.)--Florida Atlantic University.

\bibitem[Moz20]{Moz20}
Sergey Mozgovoy.
\newblock Quiver representations in abelian categories.
\newblock {\em J. Algebra}, 541:35--50, 2020.

\bibitem[MZ15]{MZ15}
Alex Martsinkovsky and Dali Zangurashvili.
\newblock The stable category of a left hereditary ring.
\newblock {\em J. Pure Appl. Algebra}, 219(9):4061--4089, 2015.

\bibitem[Pla76]{Pla76}
V.~V. Plahotnik.
\newblock Representations of partially ordered sets over commutative rings.
\newblock {\em Izv. Akad. Nauk SSSR Ser. Mat.}, 40(3):527--543, 709, 1976.

\bibitem[Ric89]{Ric89b}
Jeremy Rickard.
\newblock Derived categories and stable equivalence.
\newblock {\em J. Pure Appl. Algebra}, 61(3):303--317, 1989.

\bibitem[Ric91]{Ric91}
Jeremy Rickard.
\newblock {Derived equivalences as derived functors}.
\newblock {\em J. London Math. Soc. (2)}, 43(1):37--48, 1991.

\bibitem[Rin99]{Rin99}
Claus~Michael Ringel.
\newblock Tame algebras are wild.
\newblock {\em Algebra Colloq.}, 6(4):473--480, 1999.

\bibitem[RS06]{RS06}
Claus~Michael Ringel and Markus Schmidmeier.
\newblock {Submodule categories of wild representation type}.
\newblock {\em Jounal of Pure and Applied Algebra}, 205(2):412--422, 2006.

\bibitem[RS08a]{RS08b}
Claus~Michael Ringel and Markus Schmidmeier.
\newblock Invariant subspaces of nilpotent linear operators. {I}.
\newblock {\em J. Reine Angew. Math.}, 614:1--52, 2008.

\bibitem[RS08b]{RS08}
Claus~Michael Ringel and Markus Schmidmeier.
\newblock {The {A}uslander-{R}eiten translation in submodule categories}.
\newblock {\em Transactions of the American Mathematical Society},
  360(2):691--716, 2008.

\bibitem[RS24]{RS24}
Claus~Michael Ringel and Markus Schmidmeier.
\newblock Invariant subspaces of nilpotent operators. level, mean, and colevel:
  The triangle $\mathbb{T}(n)$, 2024.

\bibitem[RW79]{RW79}
Fred Richman and Elbert~A. Walker.
\newblock Valuated groups.
\newblock {\em J. Algebra}, 56(1):145--167, 1979.

\bibitem[RW99]{RW99}
Fred Richman and Elbert~A. Walker.
\newblock Subgroups of {$p^5$}-bounded groups.
\newblock In {\em Abelian groups and modules ({D}ublin, 1998)}, Trends Math.,
  pages 55--73. Birkh\"{a}user, Basel, 1999.

\bibitem[RZ14]{RZ14}
Claus~Michael Ringel and Pu~Zhang.
\newblock From submodule categories to preprojective algebras.
\newblock {\em Math. Z.}, 278(1-2):55--73, 2014.

\bibitem[RZ17]{RZ17}
Claus~Michael Ringel and Pu~Zhang.
\newblock Representations of quivers over the algebra of dual numbers.
\newblock {\em Journal of Algebra}, 475:327--360, 2017.

\bibitem[Sch05]{Sch05a}
Markus Schmidmeier.
\newblock A construction of metabelian groups.
\newblock {\em Arch. Math. (Basel)}, 84(5):392--397, 2005.

\bibitem[Sch08]{Sch08}
Markus Schmidmeier.
\newblock Systems of submodules and an isomorphism problem for
  {A}uslander-{R}eiten quivers.
\newblock {\em Bull. Belg. Math. Soc. Simon Stevin}, 15(3):523--546, 2008.

\bibitem[Sch11]{Sch11}
Markus Schmidmeier.
\newblock The entries in the {LR}-tableau.
\newblock {\em Math. Z.}, 268(1-2):211--222, 2011.

\bibitem[Sch12]{Sch12}
Markus Schmidmeier.
\newblock Hall polynomials via automorphisms of short exact sequences.
\newblock {\em Algebr. Represent. Theory}, 15(3):449--481, 2012.

\bibitem[Sim02]{Sim02}
Daniel Simson.
\newblock Chain categories of modules and subprojective representations of
  posets over uniserial algebras.
\newblock In {\em Proceedings of the {S}econd {H}onolulu {C}onference on
  {A}belian {G}roups and {M}odules ({H}onolulu, {HI}, 2001)}, volume~32, pages
  1627--1650, 2002.

\bibitem[Sim18]{Sim18}
Daniel Simson.
\newblock Representation-finite {B}irkhoff type problems for nilpotent linear
  operators.
\newblock {\em J. Pure Appl. Algebra}, 222(8):2181--2198, 2018.

\bibitem[V\'68]{Vam68}
P.~V\'{a}mos.
\newblock The dual of the notion of ``finitely generated''.
\newblock {\em J. London Math. Soc.}, 43:643--646, 1968.

\bibitem[XZ12]{XZ12}
Bao-Lin Xiong and Pu~Zhang.
\newblock {Gorenstein-projective modules over triangular matrix {A}rtin
  algebras}.
\newblock {\em Journal of Algebra and its Applications}, 11(4):1250066, 14,
  2012.

\bibitem[XZZ14]{XZZ14}
Bao-Lin Xiong, Pu~Zhang, and Yue-Hui Zhang.
\newblock {Auslander-{R}eiten translation in monomorphism categories}.
\newblock {\em Forum Mathematicum}, 26(3):863--912, 2014.

\bibitem[Zha11]{Zha11}
Pu~Zhang.
\newblock {Monomorphism categories, cotilting theory, and
  {G}orenstein-projective modules}.
\newblock {\em J. Algebra}, 339:181--202, 2011.

\bibitem[Zha13]{Zha13}
Pu~Zhang.
\newblock Gorenstein-projective modules and symmetric recollements.
\newblock {\em J. Algebra}, 388:65--80, 2013.

\end{thebibliography}

\end{document}